%-----------------------------------------------------------------

\documentclass[12pt,oneside]{amsart}

%-----------------------------------------------------------------

\usepackage{setspace}

%-----------------------------------------------------------------

% amssymb contains amsfonts, which in turn contains eufrak 
\usepackage{amssymb}   

\usepackage{amsmath}

\usepackage{mathrsfs}

\usepackage{stmaryrd}

% use \begin{enumerate}[resume] to resume a list
\usepackage{enumitem}

% do not use \usepackage{bbold}: it provides blackbaord bold
% numbers (e.g. $\mathbb{1}$) but causes type 3 fonts to be used
% for any $\mathbb$ command, which is bad; warning: $\mathbb{1}$
% will appear to work without \usepackage{bbold} but will produce
% junk output

\usepackage[all]{xy}

% for \textquotesingle
\usepackage{textcomp}

% for \boldsymbol
\usepackage{amsbsy}

\usepackage{graphicx}

%-----------------------------------------------------------------

\usepackage{datetime}
\renewcommand{\dateseparator}{-}
\renewcommand{\today}{\the\year \dateseparator \twodigit\month
\dateseparator \twodigit\day}

\usepackage
[pdfauthor={Firstname X Lastname},
 pdftitle={The Title},
 bookmarks=false]
{hyperref}

\title{The Title} 
\author{Firstname X Lastname}

%-----------------------------------------------------------------

%\addtolength{\oddsidemargin}{-1 in}
%\addtolength{\evensidemargin}{-1 in}
%\addtolength{\textwidth}{2 in}

% default text width in amsart is 30pc = 360pt = 4.98in
% (see amsart.cls)
% so margin in letter paper will be about 1.76in
% to see value of a variable, use eg \the\oddsidemargin
% see Lamport p182 for what it means

\addtolength{\oddsidemargin}{-0.25 in} % make it >=1.5
\addtolength{\evensidemargin}{-0.75 in} % make it >=1
\addtolength{\textwidth}{1 in}

\addtolength{\topmargin}{-0.45in} % make it about 1
\addtolength{\textheight}{0.6in}

%-----------------------------------------------------------------

% number theorems within subsection
% e.g. 2.1.1, 2.1.2 etc in subsection 2.1
% \newtheorem{thm}{Theorem}[subsection]

% number theorems within section
\newtheorem{thm}{Theorem}[section]

% lemmas, propositions, corollaries, conjectures share the
% numbering sequence of theorems
\newtheorem{theorem}[thm]{Theorem}
\newtheorem{lemma}[thm]{Lemma}
\newtheorem{proposition}[thm]{Proposition}

\newtheorem{condition}[thm]{Condition}
\newtheorem{assumption}[thm]{Assumption}

\theoremstyle{definition}
\newtheorem{definition}[thm]{Definition}

\newtheorem{example}{Example}

\theoremstyle{remark}
\newtheorem{remark}[thm]{Remark}

\numberwithin{equation}{section}

%\setlist[1]{leftmargin=0in,labelsep=*,itemindent=0.3in}

%-----------------------------------------------------------------

%-----------------------------------------------------------------

%-----------------------------------------------------------------

\begin{document}

\pagenumbering{roman}

% ----------------------------------------------------------------

\thispagestyle{empty}

\vspace*{\fill}

\begin{center}
Formulation and properties of a divergence used to compare probability measures without absolute continuity and its application to uncertainty quantification\\
\vspace{0.2in}
A dissertation presented\\
\vspace{0.2in}
by\\
\vspace{0.2in}
Yixiang Mao\\
\vspace{0.2in}
to\\
\vspace{0.2in}
The Department of Mathematics\\
\vspace{0.2in}
in partial fulfillment of the requirements\\
for the degree of\\
Doctor of Philosophy\\
in the subject of\\
Mathematics\\
\vspace{0.2in}
Harvard University\\
Cambridge, Massachusetts\\
\vspace{0.2in}
August 2020\\
\end{center}

\vspace*{\fill}

\pagebreak

%\maketitle

% ----------------------------------------------------------------

%%% Local Variables: 
%%% mode: latex
%%% TeX-master: "main"
%%% End: 

% ----------------------------------------------------------------

% ----------------------------------------------------------------

\thispagestyle{empty} 

\vspace*{\fill}

\begin{center}
\copyright \, 2020 -- Yixiang Mao \\
All rights reserved.
\end{center}

\vspace*{\fill}

\pagebreak

% ----------------------------------------------------------------

%%% Local Variables: 
%%% mode: latex
%%% TeX-master: "main"
%%% End: 

% ----------------------------------------------------------------

\doublespacing

% ----------------------------------------------------------------

\noindent Dissertation Advisor: Professor Paul Dupuis \hfill
Yixiang Mao

\vspace{0.5in}

\centerline{Formulation and properties of a divergence used to compare probability measures}
\centerline{without absolute continuity and its application to uncertainty quantification}

\vspace{0.8in}

\centerline{Abstract}

\vspace{0.3in}

This thesis develops a new divergence that generalizes relative entropy and can be used to compare probability measures without a requirement of absolute continuity. We establish properties of the divergence, and in particular derive and exploit a representation as an infimum convolution of optimal transport cost and relative entropy. We include examples of computation and approximation of the divergence, and its  applications in uncertainty quantification in discrete models and Gauss-Markov models.

\pagebreak

% ----------------------------------------------------------------

%%% Local Variables: 
%%% mode: latex
%%% TeX-master: "main"
%%% End: 

% ----------------------------------------------------------------

% ----------------------------------------------------------------

\tableofcontents

\pagebreak

% ----------------------------------------------------------------

%%% Local Variables: 
%%% mode: latex
%%% TeX-master: "main"
%%% End: 

% ----------------------------------------------------------------

% ----------------------------------------------------------------

\section*{Acknowledgements}

First and foremost, I would like to express my sincere gratitude to my advisor Professor Paul Dupuis for his continuous support, encouragement and patience throughout the entire period of my graduate studies. Without his generous sharing of knowledge and constant communication, I would have no chance of finishing my thesis.

I want to thank Professor Markos Katsoulakis for helpful discussion on various topics about my research. I want to thank Professor H.-T. Yau for sharing his knowledge on my work and allowing me to attend his research seminars, where I have benefitted a lot. I want to thank Professor S.-T. Yau for helpful discussion and help during my job search.

I would like to thank the professors and staff in the Department of Mathematics for all the support during my Ph.D. years. Especially, I want to thank Larissa Kennedy and Bell Marjorie, who helped me so much during my last year. I want to thank Professor S.-T. Yau and Tsinghua University for the generous financial support throughout the years, and my advisor Professor Paul Dupuis for financially supporting my last year. I want to give my special thanks to Jameel Al-Aidroos and Robin Gottlieb for their sharing of knowledge on teaching calculus and accessibility on answering my question of how to deliver mathematics idea more effectively.

I owe a lot to my colleagues and friends Arka Adhikari, Amol Aggarwal, Ziliang Che, Jun Hou Fung, Meng Guo, Jiaoyang Huang, Chi-Yun Hsu, Ben Landon, Yusheng Luo, Danny Shi, Koji Shimizu, Dennis Tseng, Ziquan Yang, Zijian Yao, Chenglong Yu, Boyu Zhang, Jonathan Zhu, for helpful discussion on mathematics, as well as all the good memories throughout my graduate studies. 

Last but not the least, I want to thank my family for their unconditional love and support throguhout writing this thesis, and my life in general.

\pagebreak

% ----------------------------------------------------------------

%%% Local Variables: 
%%% mode: latex
%%% TeX-master: "main"
%%% End: 

% ----------------------------------------------------------------

\pagenumbering{arabic}

\section{Introduction}
To compare different probabilistic models for a given application, one
needs a notion of \textquotedblleft distance\textquotedblright\ between
the distributions. The specification of this distance is a subtle
issue. Probability models are typically large or infinite dimensional, and the
usefulness of the distance will depend on its mathematical properties. Is it
convenient for analysis and optimization? Does it scale well with system size?

For situations that require an analysis of (probabilistic) model form uncertainly, the
quantity known as relative entropy (or Kullback-Leibler divergence)
is the most widely used such distance. This is true because relative entropy
has all the attractive properties asked for in the last paragraph, and
many more. (Relative entropy is not a true metric since it is not symmetric in
its arguments, but owing to its other attributes it is more widely
used for these purposes than any legitimate metric.)

The definition of relative entropy is as follows. Suppose $S$ is a Polish
space with metric $d(\cdot,\cdot)$ and associated Borel $\sigma$-algebra
$\mathcal{B}$. Let $\mathcal{P}(S)$ be the space of probability measures over
$(S,\mathcal{B})$. If $\mu,\nu\in\mathcal{P}(S)$ and $\mu$ is absolutely continuous with respect to $\nu$ (denoted $\mu
\ll\nu$), then
\begin{align}\label{RE_exp}
R(\mu\lVert\nu)\doteq\int_{S}\left(  \log\frac{d\mu}{d\nu}\right)  d\mu
\end{align}
(even though $\log{d\mu}/{d\nu}$ can take both positive and negative values,
as we discuss in the beginning of section \ref{section_2}, the definition is never ambiguous). Otherwise, we set $R(\mu\lVert\nu)=\infty$. 

While we cannot go into
all the reasons why relative entropy is so useful, it is essential that we
describe why it is convenient for the analysis of 
model form uncertainty. This is due to a
dual pair of variational formulas which relate $R(\mu\lVert\nu)$, integrals
with respect $\mu$, and what are called \textbf{risk-sensitive} integrals with respect
to $\nu$. Let $M_{b}(S)$ denote the set of bounded and measurable functions on $S$. Then
\cite[Proposition 1.4.2 and Lemma 1.4.3]{dupell4} gives

\begin{equation}
R(\mu\left\Vert \nu\right.  )=\sup_{g\in M_{b}(S)}\left\{  \int_{S}gd\mu
-\log\int_{S}e^{g}d\nu\right\},   \label{eqn:var_forms}%
\end{equation}
and for any $g\in M_{b}(S)$,
\begin{align}\label{dual_logexp_relative_entropy}
\log\int_{S}e^{g}d\nu=\sup_{\mu
\in\mathcal{P}(S)}\left\{  \int_{S}gd\mu-R(\mu\left\Vert \nu\right.
)\right\}  .
\end{align}
It is immediate from either of these that for $\mu,\nu\in\mathcal{P}(S)$
and $g\in M_{b}(S)$,%
\[
\int_{S}gd\mu\leq R(\mu\left\Vert \nu\right.  )+\log\int_{S}e^{g}d\nu.
\]
If we interpret $\nu$ as the \textbf{nominal}\ or \textbf{design }model
(chosen perhaps on the basis of data or for computational tractability) and
$\mu$ as the \textbf{true} model (or at least a {more accurate} model),
then according to the last display one obtains a bound on an integral with respect to the true model. 
(In fact by introducing a parameter one can obtain bounds that are in some sense optimal \cite{dupkatpanple}.) 
We typically interpret the integral $\int_{S}gd\mu$ as a \textbf{performance measure}, and so we have a bound
on the performance of the system under the true distribution in terms of the
relative entropy distance $R(\mu\left\Vert \nu\right.  )$, plus a
risk-sensitive performance measure under the design model. From this elementary
but fundamental inequality, and by exploiting the helpful qualitative and
quantitative properties of relative entropy, there has emerged a set of tools
that can be used to answer many questions where probabilistic model form
uncertainty is important, including 
\cite{baylov, brecsi,chodup,dupjampet,dupkatpanple,glaxu,hansar,lam,limshawat, arnlau, petjamdup}.

However, relative entropy has one important shortcoming: for the bound to be
meaningful we must have $R(\mu\left\Vert \nu\right.  )<\infty$, which imposes
the requirement of absolute continuity of the true model with respect to the
design model. For various uses, such as model building and model
simplification, this restriction can be significant. In the context of model
building, it can happen that one attempts to fit distributions to data by
comparing an empirical measure constructed using data with the elements of a
parameterized family, such as a collection of Gaussian distributions. In this
case the two distributions one would compare are singular, and so relative
entropy cannot be used. A second example, and one that occurs frequently in
the physical sciences, operations research and elsewhere,
is that a detailed model (such as the population process of a
chemical reaction network, which takes values in a lattice) is approximated by
a simpler process that takes values in the continuum (for example a diffusion
process). For exactly the same reason as in the previous example, these
processes, as well as their corresponding stationary distributions, are not
absolutely continuous.

Because relative entropy is not directly applicable to such problems,
significant effort has been put into investigating alternatives (\cite{blakanmur,blamur} and references therein). 
A class that has attracted some
attention (e.g., in the machine learning community) are the \textit{Wasserstein} or, more generally, 
\textit{optimal transport} distances \cite{kolparthosleroh,racrus,vil}. These distances, which are true
metrics, have certain attractive properties but also some 
shortcomings. The most important shortcomings are: (a) Wasserstein distances
do not in general scale well with respect to system dimension, and (b) such 
distances do not have an interpretation as the dual of a strictly convex function. To
be a little more concrete about point (b), it is
the strict concavity of the mapping
\[
g\rightarrow\int_{S}gd\mu-\log\int_{S}e^{g}d\nu
\]
in the variational representation for $R(\mu\left\Vert \nu\right.  )$ that
leads to tight bounds when applied to problems of control or optimization of
uncertain stochastic systems. In contrast, the analogous variational representation for
Wasserstein type distances involves the mapping $g\rightarrow\int_{S}%
gd\mu-\int_{S}gd\nu$. Point (a) is an issue in applications
to problems from the physical sciences, where large time horizons and large
dimensions are common.

Rather than give up entirely the attractive features of the dual pair
($R(\mu\left\Vert \nu\right.  )$, $\log\int_{S}e^{g}d\nu$), an alternative is to
be more restrictive regarding the class of costs or performance measures for
which bounds are required.  Indeed, the requirement of absolute
continuity in relative entropy is entirely due to the very large class of
functions, $M_{b}(S)$, appearing in (\ref{eqn:var_forms}). For a collection
$\Gamma\subset M_{b}(S)$ one can consider in lieu of $R(\mu\left\Vert
\nu\right.  )$ what we call the $\Gamma$-divergence, which is defined by
\[
G_{\Gamma}(\mu\left\Vert \nu\right.  )\doteq \sup_{g\in\Gamma}\left\{  \int_{S}%
gd\mu-\log\int_{S}e^{g}d\nu\right\}  .
\]
By imposing regularity conditions on $\Gamma$ (e.g., Lipschitz continuity,
additional smoothness) one generates (under mild additional conditions on $\Gamma$)
divergences which relax the absolute continuity condition. Thus one is trading
restrictions on the class of performance measures or observables for which
bounds are valid, for the enlargement of the class of distributions to which
the bounds apply. These divergences are of course not as nice as relative
entropy, but one can prove that they retain versions of its most important
properties. In addition, the dual function (which serves as the cost to be
minimized when considering problems of optimization or control) remains
$\log\int_{S}e^{g}d\nu$. This is important because the corresponding risk-sensitive
optimization and optimal control problems are well studied in the literature.

In our formulation of the $\Gamma$-divergence the underlying idea is that to extend the range of probability measures that can be compared, one must restrict the class of integrands that will be considered. 
However, this leads directly to an interesting connection with the Wasserstein distance
mentioned previously, which is that for suitable collections $\Gamma$ we will 
prove the inf-convolution expression
\begin{align*}
G_{\Gamma}(\mu\left\Vert \nu\right.  )&=\inf_{\gamma\in\mathcal{P}(S)}\left\{
W_{\Gamma}(\mu-\gamma)+R(\gamma\left\Vert \nu\right.  )\right\}  ,
\end{align*}
where $W_{\Gamma}$ is the Wasserstein metric whose dual (sup) formulation uses
the set of functions $\Gamma$. Moreover one recovers relative entropy by taking the
limit $b\rightarrow\infty$ in $G_{b\Gamma}(\mu\left\Vert \nu\right.  )$, which
may be useful if one wants to allow relatively small violations of the absolute
continuity restriction, while at the same time taking advantage of simple
approximations for the Wasserstein distance in the high transportation cost limit.

The organization of this thesis is as follows.
In Section 2 we define the $\Gamma$-divergence, and prove the first main result of this paper, which is the inf-convolution formula described above (Theorem \ref{thm:main}). In the same section, we show several properties of the $\Gamma$-divergence, and establish a convex duality formula for the $\Gamma$-divergence. 
In Section 3, we investigate the connection between $\Gamma$-divergence and optimal transport theory through investigation of a special choice of $\Gamma$, which are sets of bounded Lipschitz continuous functions. For this choice, we establish a relation between $\Gamma$-divergence and optimal transport cost, and prove existence and uniqueness for optimizers of variational representations of $\Gamma$-divergence (Theorem \ref{optimizer}), and also a formula for directional derivatives of the $\Gamma$-divergence (Theorem \ref{first_variation}). 
In Section 4, we look at several explicit examples where we draw intuition how relative entropy and Wasserstein metric interact with each other within the $\Gamma$-divergence. We also consider limits for $\Gamma$-divergence in this section.
Section 5 and Section 6 consider the application of $\Gamma$-divergence in uncertainty quantification, where Section 5 focuses on the application in the case where the reference measure $\nu$ is supported on discrete point settings and Section 6 focuses on the application in dealing with model uncertainty in stochastic differential equations.

\section{Formulation and Basic Properties of $\Gamma$-divergence}\label{section_2}
In this section, we rigorously define $\Gamma$ divergence and derive its basic properties. It will be seen that $\Gamma$ divergence is a way of generalizing relative entropy, which will be described first.

Throughout this section, $S$ is a Polish space with metric $d(\cdot,\cdot)$
and associated Borel $\sigma$-alegra $\mathcal{B}$. $C_{b}(S)$ denotes the
space of all bounded continuous functions from $S$ to $\mathbb{R}$, and
$M_{b}(S)$ denotes the space of all bounded measurable functions from $S$ to
$\mathbb{R}$. Let $\mathcal{P}(S)$ be the space of probability measures over
$(S,\mathcal{B})$, $\mathcal{M}(S)$ be the space of finite signed (Borel)
measures over $(S,\mathcal{B})$, and $\mathcal{M}_{0}(S)$ be the subspace of
$\mathcal{M}(S)$ whose total mass is $0$. $\overline{\mathbb{R}}%
\doteq\mathbb{R}\cup\{\infty\}$ is the extended real numbers. Throughout this
section, we consider $C_{b}(S)$ equipped with weak topology induced by
$\mathcal{M}(S)$. Thus for $f_n, f \in C_b(S)$, $f_{n}\rightarrow f$ if $\int_{S}f_{n}d\mu
\rightarrow\int_{S}fd\mu$ for all $\mu\in\mathcal{M}(S)$.

\subsection{Definition of the $\Gamma$-divergence }
We recall that for $\mu,\nu\in\mathcal{P}(S)$, relative entropy of $\mu$ with respect $\nu$ is defined as
\[
R(\mu\lVert\nu)\doteq\int_{S}\left(  \log\frac{d\mu}{d\nu}\right)  d\mu,
\]
whenever $\mu$ is absolutely continuous with respect to $\nu$. For $t\in\mathbb{R}$, define $t^{-}%
\doteq-(t\wedge0)$. Since the function $s(\log s)^{-}$ is bounded for
$s\in\lbrack0,\infty)$, whenever $\mu\ll\nu$,%

\[
\int_{S} \left(  \log\frac{d\mu}{d\nu}\right)  ^{-}d\mu=\int_{S} \frac{d\mu
}{d\nu}\left(  \log\frac{d\mu}{d\nu}\right)  ^{-}d\nu<\infty.
\]
Thus $R(\mu\lVert\nu)$ is always well defined.

We recall the Donsker-Varadhan variational representation (\ref{eqn:var_forms})
for relative
entropy.
We will use equation (\ref{eqn:var_forms}) as an equivalent
characterization of $R(\cdot\lVert\nu)$ on $\mathcal{P}(S)$, and consider an extension to
$\mathcal{M}(S)$ in the following lemma. With an abuse of notation, we will also call the extended function $R$.  
To set up the functionals of interest on a space with the proper structure (locally convex Hausdorff space),
we will use that 
\begin{equation}
\sup_{g\in C_{b}(S)}\left\{  \int_{S}gd\mu-\log\int_{S}e^{g}d\nu\right\}
=\sup_{g\in M_{b}(S)}\left\{  \int_{S}gd\mu-\log\int_{S}e^{g}d\nu\right\}
\label{eqn:altchar}%
\end{equation}
\cite[Lemma 1.4.3(a)]{dupell4}(It is worth notating in this reference, (\ref{eqn:altchar}) is only proved for $\mu,\nu\in \mathcal{P}(S)$. However, the exact same argument applies for $\mu,\nu\in\mathcal{M}(S)$, and we are using the latter version here). The fact that one obtains the same value
when supremizing over the smaller class $C_{b}(S)$ is closely related to the fact that 
$R(\mu\lVert\nu)$ is finite only when $\mu\ll\nu$.

\begin{lemma}
\label{lem:RE}Consider $R:\mathcal{M}(S)\times\mathcal{P}(S)\rightarrow
(-\infty,\infty]$ defined by (\ref{eqn:var_forms}). Then

\begin{enumerate}
\item $R(\mu\lVert\nu)\geq0$ and $R(\mu\lVert\nu)=0$ if and only if $\mu=\nu$,

\item $R(\cdot\lVert\cdot)$ is convex,

\item $R(\mu\lVert\nu)=\infty$ if $\mu\in\mathcal{M}(S)\backslash
\mathcal{P}(S)$.
\end{enumerate}
\end{lemma}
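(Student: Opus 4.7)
The plan is to derive all three claims directly from the variational formula, treating the three assertions as essentially independent and proving (3) first so it can feed into (1).

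For part (3), I would split into two cases according to why $\mu$ fails to be a probability measure. If $\mu(S) \neq 1$, I plug in the constant test function $g \equiv c$ and observe that the payoff equals $c(\mu(S)-1)$; sending $c\to +\infty$ or $c\to-\infty$ according to the sign of $\mu(S)-1$ forces the supremum to $+\infty$. If instead $\mu(S)=1$ but $\mu$ is signed, let $S=P\cup N$ be a Hahn decomposition for $\mu$, so that $\mu(N)=-\mu^-(S)<0$. Taking $g=-c\,\mathbf{1}_N\in M_b(S)$ yields
\[
\int_S g\,d\mu - \log\int_S e^g\,d\nu \;=\; c\,\mu^-(S)\;-\;\log\bigl(e^{-c}\nu(N)+\nu(N^c)\bigr)\;\geq\;c\,\mu^-(S),
\]
since the logarithm is $\leq 0$. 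Letting $c\to\infty$ gives $R(\mu\|\nu)=\infty$. The only mild obstacle is making sure we are allowed to use indicator tests, but $M_b(S)$ contains them, which is precisely why the definition is stated via $M_b(S)$ rather than $C_b(S)$.

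For part (1), the lower bound $R(\mu\|\nu)\geq 0$ is immediate from the test function $g\equiv 0$. For the equality clause, note that if $R(\mu\|\nu)=0$ then part (3) forces $\mu\in\mathcal{P}(S)$, and then the classical Donsker--Varadhan characterization of relative entropy on $\mathcal{P}(S)\times\mathcal{P}(S)$ (cited from \cite{dupell4}) gives $\mu=\nu$. The converse direction, $R(\nu\|\nu)=0$, follows from the same classical fact.

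For part (2), I would exploit that $R$ is a supremum of jointly convex functions. Fix $g\in M_b(S)$ and consider
\[
\Phi_g(\mu,\nu)\;\doteq\;\int_S g\,d\mu - \log\int_S e^g\,d\nu.
\]
The first term is linear in $\mu$. The second term is convex in $\nu$, because $\nu\mapsto\int e^g\,d\nu$ is linear and nonnegative and $-\log$ is convex and decreasing, so $\nu\mapsto -\log\int e^g d\nu$ is convex by the standard composition rule. Hence $\Phi_g$ is jointly convex on $\mathcal{M}(S)\times\mathcal{P}(S)$, and taking the pointwise supremum over $g\in M_b(S)$ preserves convexity. This yields (2). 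I expect no real obstacle here; the only thing to be careful about is that the supremum may equal $+\infty$, but this is permitted in the codomain $(-\infty,\infty]$ and does not affect the convexity inequality.
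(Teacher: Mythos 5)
Your proposal is correct and follows essentially the same route as the paper: item 3 is proved by the same two cases (constant test functions when $\mu(S)\neq 1$, and a two-valued test function $-c\,\mathbf{1}_N$ supported on a negative set when $\mu(S)=1$ but $\mu$ is signed), and item 1 is then reduced, via item 3, to the classical Donsker--Varadhan facts on $\mathcal{P}(S)\times\mathcal{P}(S)$. The only divergence is item 2: you prove joint convexity directly, writing $R$ as a supremum over $g\in M_b(S)$ of the jointly convex functionals $(\mu,\nu)\mapsto\int_S g\,d\mu-\log\int_S e^g\,d\nu$ (linear in $\mu$, convex in $\nu$ by the composition rule), whereas the paper simply cites the corresponding statement for probability measures and extends it using item 3; both arguments are valid, and yours is marginally more self-contained at no extra cost.
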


\begin{proof}
If we prove item 3, then items 1 and 2 will follow from the corresponding
statements when $\mu$ is restricted to $\mathcal{P}(S)~$\cite{dupell4}. If
$m=\mu(S)\neq1$, then taking $g(x)\equiv c$ a constant,
\[
\int_{S}gd\mu-\log\int_{S}e^{g}d\nu=c\mu(S)-c=c(m-1).
\]
Since $m\neq1$ and $c\in\mathbb{R}$, the right hand side of equation
(\ref{eqn:var_forms}) is $\infty$.

Suppose next that $\mu(S)=1$ but $\mu\in\mathcal{M}(S)\backslash
\mathcal{P}(S)$. Then there exist sets $A,B\in\mathcal{B}$ such that $A\cap
B=\varnothing,A\cup B=S,\mu(A)<0$ and $\mu(B)>0$. For $c>0$, let $g(x)=-c$ for
$x\in A$ and $g(x)=0$ for $x\in B$. Then
\[
\int_{S}gd\mu-\log\int_{S}e^{g}d\nu=c\left\vert \mu(A)\right\vert -C_{c},
\]
where $C_{c}\in(\log\nu(B),0)$ for all $c$. Letting $c\rightarrow\infty$ and
using (\ref{eqn:altchar}) shows $R(\mu\lVert\nu)=\infty$.
\end{proof}

\vspace{.5\baselineskip}
Though relative entropy has very attractive regularity and optimization
properties, as noted $R(\mu\lVert\nu)$ is finite if and only if $\mu\ll\nu$.
As such, it cannot be used to give a meaningful notion of ``distance'' without this absolute continuity restriction. In
order to define a meaningful divergence for a pair of probability measures
that are not mutually absolute continuous, but at the same time not to lose the
useful properties of the \textquotedblleft dual\textquotedblright\ function
$g\rightarrow\log\int_{S}e^{g}d\nu$ appearing in (\ref{eqn:var_forms}), a natural
approach is to restrict the set of test functions in the variational formula.
We define a criterion for the classes of \textquotedblleft
admissible\textquotedblright\ test functions we want to use.

\begin{definition}
\label{access} Let $\Gamma$ be a subset of $C_{b}(S)$ endowed with the inherited weak topology. We call $\Gamma$
\textbf{admissible} if the following hold.

1) $\Gamma$ is convex and closed.

2) $\Gamma$ is symmetric in that $g\in\Gamma$ implies $-g\in\Gamma$, and
$\Gamma$ contains all constant functions.

3) $\Gamma$ is determining for $\mathcal{P}(S)$, i.e., for any $\mu,\nu
\in\mathcal{P}(S)$ with $\mu\neq\nu$, there exists $g\in\Gamma$ such that%
\[
\int_{S} gd\mu\neq\int_{S} gd\nu.
\]

\end{definition}

We next define a new divergence by restricting the class of test functions in
the definition of relative entropy.

\begin{definition}
\label{def:defofV}Fix $\nu\in \mathcal{P}(S)$. For $\mu\in\mathcal{M}(S)$, we define the
$\mathbf{\Gamma}$\textbf{-divergence} associated with the admissible set $\Gamma$ by%
\[
G_{\Gamma}(\mu\lVert\nu)\doteq\sup_{g\in\Gamma}\left\{  \int_{S} gd\mu-\log\int_{S}
e^{g}d\nu\right\}  .
\]
We also define the following related quantity. For $\eta\in\mathcal{M}(S)$ let%
\[
W_{\Gamma}(\eta)\doteq\sup_{g\in\Gamma}\left\{  \int_{S} gd\eta\right\}
=\sup_{g\in\mathcal{C}_{b}(S)}\left\{  \int_{S} gd\eta-\infty1_{\{g\in\Gamma^{c}%
\}}\right\}  .
\]

\end{definition}

When $\Gamma$ is clear based on context, we will drop the subscript from
$G_{\Gamma}$ and $W_{\Gamma}$. Using a similar argument as in Lemma
\ref{lem:RE}, one can show that $G_{\Gamma}(\mu\lVert\nu)=\infty$ if $\mu(S)\neq 1$. The next theorem
states an important property of the ${\Gamma}$-divergence, which is that it
can be written as a convolution involving relative entropy and $W_{\Gamma}$.

\begin{theorem}
\label{thm:main} Assume $\Gamma$ is an admissible set. Then for $\mu\in \mathcal{M}(S)$, $\nu\in\mathcal{P}(S)$,
\[
G_{\Gamma}(\mu\lVert\nu)=\inf_{\gamma\in\mathcal{P}(S)}\left\{  R(\gamma
\lVert\nu)+W_{\Gamma}(\mu-\gamma)\right\}
\]

\end{theorem}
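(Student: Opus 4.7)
My plan is to prove the inf-convolution formula by Fenchel--Moreau biconjugate duality in the locally convex Hausdorff pairing between $\mathcal{M}(S)$ and $C_{b}(S)$. Write
\[
H(\mu)\doteq \inf_{\gamma\in\mathcal{P}(S)}\{R(\gamma\lVert\nu)+W_{\Gamma}(\mu-\gamma)\}
\]
for the right-hand side; by Lemma \ref{lem:RE} we may extend $R(\cdot\lVert\nu)$ by $+\infty$ off $\mathcal{P}(S)$, so $H$ becomes the inf-convolution $(R(\cdot\lVert\nu)\,\square\, W_{\Gamma})(\mu)$ of two convex functions on $\mathcal{M}(S)$. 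Throughout, I will restrict attention to $\mu(S)=1$, since $\Gamma$ contains all constants and hence both $G_{\Gamma}$ and $H$ are $+\infty$ otherwise.

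\textbf{Easy direction.} For any $g\in\Gamma$ and $\gamma\in\mathcal{P}(S)$, decompose
\[
\int_{S} g\,d\mu-\log\int_{S} e^{g}\,d\nu=\Bigl(\int_{S} g\,d\gamma-\log\int_{S} e^{g}\,d\nu\Bigr)+\int_{S} g\,d(\mu-\gamma)\le R(\gamma\lVert\nu)+W_{\Gamma}(\mu-\gamma),
\]
where the first estimate uses the variational representation \eqref{eqn:var_forms} and the second uses the definition of $W_{\Gamma}$. Taking the supremum in $g\in\Gamma$ and then the infimum in $\gamma$ gives $G_{\Gamma}(\mu\lVert\nu)\le H(\mu)$.

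\textbf{Conjugate computation.} For the reverse inequality, compute Fenchel conjugates on $C_{b}(S)$. By \eqref{dual_logexp_relative_entropy} together with Lemma \ref{lem:RE}, $R(\cdot\lVert\nu)^{*}(g)=\log\int_{S} e^{g}\,d\nu$. For $W_{\Gamma}$, the very definition forces $W_{\Gamma}^{*}(g)=0$ when $g\in\Gamma$; when $g\notin\Gamma$, the weak-closedness and convexity of $\Gamma$ allow Hahn--Banach separation by some $\eta\in\mathcal{M}(S)$ with $\int g\,d\eta>W_{\Gamma}(\eta)$, and replacing $\eta$ by $\lambda\eta$ with $\lambda\to\infty$ yields $W_{\Gamma}^{*}(g)=+\infty$. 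Hence $W_{\Gamma}^{*}$ is the convex indicator $\delta_{\Gamma}$. Since conjugation turns inf-convolution into a sum, $H^{*}(g)=\log\int_{S} e^{g}\,d\nu+\delta_{\Gamma}(g)$, and therefore
\[
H^{**}(\mu)=\sup_{g\in C_{b}(S)}\Bigl\{\int_{S} g\,d\mu-H^{*}(g)\Bigr\}=\sup_{g\in\Gamma}\Bigl\{\int_{S} g\,d\mu-\log\int_{S} e^{g}\,d\nu\Bigr\}=G_{\Gamma}(\mu\lVert\nu).
\]

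\textbf{Closing the gap.} Since $H^{**}\le H$ in general and $H^{**}=G_{\Gamma}(\cdot\lVert\nu)$, obtaining the equality $H=G_{\Gamma}$ is equivalent to showing that $H$ is weakly lower semi-continuous on $\mathcal{M}(S)$; by Fenchel--Moreau, this then yields $H=H^{**}$. This is the main obstacle, and it will be handled using two structural facts: the sub-level sets of $R(\cdot\lVert\nu)$ are weakly compact in $\mathcal{P}(S)$, and $W_{\Gamma}\ge 0$ on $\mathcal{M}_{0}(S)$ because $0\in\Gamma$. Given a weakly convergent net $\mu_{\alpha}\to\mu$ with $\limsup H(\mu_{\alpha})\le L<\infty$, pick near-minimizers $\gamma_{\alpha}\in\mathcal{P}(S)$; the nonnegativity of $W_{\Gamma}$ on $\mathcal{M}_{0}(S)$ forces $R(\gamma_{\alpha}\lVert\nu)\le H(\mu_{\alpha})+o(1)$, so the $\gamma_{\alpha}$ lie eventually in a weakly compact sub-level set. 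Extract a convergent subnet $\gamma_{\alpha}\to\gamma$; then lower semi-continuity of $R(\cdot\lVert\nu)$ and of $W_{\Gamma}$ (the latter as a supremum of weakly continuous affine functionals) gives $R(\gamma\lVert\nu)+W_{\Gamma}(\mu-\gamma)\le L$, whence $H(\mu)\le L$. Combined with the conjugate computation above, this delivers $H=G_{\Gamma}(\cdot\lVert\nu)$.
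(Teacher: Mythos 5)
Your proposal is correct, and it reaches the identity by a mirror image of the paper's argument rather than the same one. The paper works on the function side: it writes $G_{\Gamma}(\cdot\lVert\nu)=(H_{1}+H_{2})^{\ast}$ with $H_{1}(g)=\log\int_{S}e^{g}d\nu$ and $H_{2}=\infty 1_{\Gamma^{c}}$ on $C_{b}(S)$, invokes the cited theorem $\left(\sum f_{i}\right)^{\ast}=\overline{f_{1}^{\ast}\Box f_{2}^{\ast}}$ (Lemma \ref{inf-cov}), identifies $H_{1}^{\ast}=R(\cdot\lVert\nu)$ and $H_{2}^{\ast}=W_{\Gamma}$, and then shows the closure is superfluous. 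You instead conjugate from the measure side: you take $H=R(\cdot\lVert\nu)\,\Box\,W_{\Gamma}$ on $\mathcal{M}(S)$, use the elementary identity $(f\Box g)^{\ast}=f^{\ast}+g^{\ast}$ together with $R(\cdot\lVert\nu)^{\ast}(g)=\log\int_{S}e^{g}d\nu$ and $W_{\Gamma}^{\ast}=\delta_{\Gamma}$ (the latter obtained by Hahn--Banach separation, which is where the closedness and convexity of $\Gamma$ enter for you, just as they enter the paper through lower semicontinuity of $H_{2}$), and then close the gap with Fenchel--Moreau. What your route buys is self-containedness: no appeal to the Bot--Grad--Wanka theorem, only biconjugation plus an easy direct inequality $G_{\Gamma}\le H$ that the paper does not need; your use of nets is also the more careful choice, since $\sigma(\mathcal{M}(S),C_{b}(S))$ is not metrizable, whereas the paper argues with sequences. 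What it costs is that Fenchel--Moreau requires you to record, besides lower semicontinuity, that $H$ is convex (inf-convolution of convex functions), proper, and never $-\infty$ (both follow since $0\in\Gamma$ gives $W_{\Gamma}\ge 0$ and $H(\nu)\le 0$); these are routine but should be stated. The technical heart — compactness of relative-entropy sub-level sets, nonnegativity of $W_{\Gamma}$ to control $R(\gamma_{\alpha}\lVert\nu)$, passage to a convergent (sub)net, and lower semicontinuity of both terms — is identical to the paper's final step, so the two proofs share their only genuinely delicate ingredient.
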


\begin{remark}
The theorem tells us that by restricting the set of test functions in the
variational representation of relative entropy, we get a quantity which is an
inf-convolution of relative entropy and a metric. It will be pointed out in
Section \ref{Wass} that by restricting $\Gamma$ to Lipschitz functions
with respect to a cost function $c(x,y)$ that satisfies some specified
conditions, $W_{\Gamma}(\mu-\nu)$ will be the corresponding optimal transport
cost from $\mu$ to $\nu$.
\end{remark}

The rest of this section is focused on the proof of Theorem \ref{thm:main}. In
order to do this, we need a few definitions and also will find it convenient
to consider a more general setting.

\begin{definition}
Points $x$ and $y$ in a topological space $Y$ can be \textbf{separated} if
there exists an open neighborhood $U$ of $x$ and an open neighborhood $V$ of
$y$ such that $U$ and $V$ are disjoint ($U\cap V=\varnothing$). $Y$ is a
$\mathbf{Hausdorff}$ space if all distinct points in $Y$ are pairwise separable.
\end{definition}

\begin{definition}
A subset $C$ of a topological vector space $Y$ over the number field
$\mathbb{R}$ is

1. $\mathbf{convex}$ if for any $x,y\in C$ and any $t\in\lbrack0,1]$,
$tx+(1-t)y\in C$,

2. $\mathbf{balanced}$ if for all $x\in C$ and any $\lambda\in\mathbb{R}$ with
$|\lambda|\leq1$, $\lambda x\in C$,

3. $\mathbf{absorbant}$ if for all $y\in Y$, there exists $t>0$ and $x\in C$
such that $y=tx$.

A topological vector space $Y$ is called $\mathbf{locally}$ $\mathbf{convex}$
if the origin has a local topological basis of convex, balanced and absorbent sets.
\end{definition}

\begin{definition}
For a topological vector space $Y$ over the number field $\mathbb{R}$, its
$\mathbf{topological\ dual\ space}$ $Y^{*}$ is defined as the space of all
continuous linear functionals ${\displaystyle \varphi:Y\to{\mathbb{R} }}$.

The $\mathbf{weak^{\ast}\ topology}$ on $Y^{\ast}$ is the topology induced by
$Y$. In other words, it is the coarsest topology such that functional
$y:Y^{\ast}\rightarrow\mathbb{R}$, $y(\varphi)=\varphi(y)$ is continuous in
$Y^{\ast}$.

For $y\in Y$ and $\varphi\in Y^{\ast}$, we also write $\langle y,\varphi
\rangle\doteq\varphi(y)=y(\varphi)$.
\end{definition}

Now let $Y$ be a Hausdorff locally convex space with $Y^{\ast}$ being its
topological dual space and endowed with the weak* topology.

\begin{definition}\label{convex_dual_def}
For a function $f:Y\rightarrow\overline{\mathbb{R}}$, its $\mathbf{convex}%
\ \mathbf{dual}$ $f^{\ast}:Y^{\ast}\rightarrow\overline{\mathbb{R}}$ is
defined by%
\[
f^{\ast}(z)=\sup_{y\in Y}\left\{  \langle y,z\rangle-f(y)\right\}  .
\]

\end{definition}

\begin{definition}\label{inf_conv_def}
Let $f_{1},f_{2}:Y\rightarrow\overline{\mathbb{R}}$ be two functions. We
define the inf-convolution of $f_{1}$ and $f_{2}$ by%
\[
\left[  f_{1}\Box f_{2}\right]  (y)\doteq\inf_{y_{1}\in Y}\{f_{1}(y_{1}%
)+f_{2}(y-y_{1})\}.
\]

\end{definition}

\begin{definition}\label{lsc_hull_def}
For a function $f:Y\rightarrow\overline{\mathbb{R}}$ the
$\mathbf{lower\ semicontinuous\ hull}$ $\overline{f}$ is defined by%
\[
\overline{f}(x)\doteq\sup\{g(x):g\leq f,g:Y\rightarrow\overline{\mathbb{R}%
}\ is\ continuous\}.
\]

\end{definition}

\begin{definition}\label{proper&domain_def}
A convex function $f:Y\rightarrow\overline{\mathbb{R}}$ is \textbf{proper} if
there exists $y\in Y$ such that $f(y)<\infty$. The \textbf{domain} of a
convex, proper funciton $f$ is defined by%
\[
\mathrm{dom}(f)\doteq\{y\in Y:f(y)<\infty\}.
\]

\end{definition}

Now let us introduce an important lemma. 

\begin{lemma}
\label{inf-cov} \cite[Theorem 2.3.10]{botgrawan} Let $f_{i}:Y\rightarrow
\overline{\mathbb{R}}$ be convex, proper and lower-semicontinuous functions
fulfilling $\bigcap_{i=1}^{m}\mathrm{dom}(f_{i})\neq\varnothing$. Then one has%
\[
\left(  \sum_{i=1}^{m}f_{i}\right)  ^{\ast}=\overline{f_{1}^{\ast}\Box
\cdots\Box f_{m}^{\ast}}.
\]

\end{lemma}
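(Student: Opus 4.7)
The plan is to establish the Fenchel-duality identity by the standard ``apply biconjugate twice'' technique: once via Fenchel-Moreau on each $f_i$, and once on the infimal convolution $h \doteq f_1^{\ast} \Box \cdots \Box f_m^{\ast}$. Throughout I would write $F \doteq \sum_{i=1}^{m} f_i$, which is convex, lower semicontinuous as a finite sum of convex lsc functions, and proper because the hypothesis $\bigcap_{i} \mathrm{dom}(f_i) \neq \varnothing$ supplies a point where $F < \infty$. In particular $F^{\ast}$ is proper, convex, and weak-$\ast$ lower semicontinuous as a pointwise supremum of weak-$\ast$ continuous affine functionals $z \mapsto \langle y, z\rangle - F(y)$ on $Y^{\ast}$.

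For the easy direction, fix $z \in Y^{\ast}$ and any decomposition $z = z_1 + \cdots + z_m$. Splitting linearly, $\langle y, z \rangle - F(y) = \sum_{i=1}^m \bigl(\langle y, z_i\rangle - f_i(y)\bigr) \leq \sum_{i=1}^{m} f_i^{\ast}(z_i)$. Taking $\sup_{y \in Y}$ on the left and the infimum over decompositions of $z$ on the right yields $F^{\ast}(z) \leq h(z)$. Since $F^{\ast}$ is already lower semicontinuous, it satisfies $F^{\ast} \leq \overline{h}$ in the sense of Definition \ref{lsc_hull_def}.

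For the reverse direction I would compute $h^{\ast}$ explicitly. Interchanging the inner infimum with the outer supremum (legitimate because the maximization decouples across the independent summands), $h^{\ast}(y) = \sup_{z_1, \ldots, z_m \in Y^{\ast}} \sum_{i=1}^m \bigl(\langle y, z_i\rangle - f_i^{\ast}(z_i)\bigr) = \sum_{i=1}^m f_i^{\ast\ast}(y) = \sum_{i=1}^m f_i(y) = F(y)$, where $f_i^{\ast\ast} = f_i$ by Fenchel-Moreau applied to each $f_i$. Taking conjugates once more, $h^{\ast\ast} = F^{\ast}$. Because $h$ is convex (infimal convolution preserves convexity), not identically $+\infty$ (each $f_i^{\ast}$ is proper, so picking $z_i$ with $f_i^{\ast}(z_i) < \infty$ and setting $z = \sum z_i$ gives $h(z) < \infty$), and satisfies $h \geq F^{\ast} > -\infty$ pointwise by the first direction, $h$ is proper convex. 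Therefore $h^{\ast\ast}$ coincides with the closed convex hull of $h$, and for convex $h$ this equals $\overline{h}$ in the sense of Definition \ref{lsc_hull_def}. Combining, $F^{\ast} = h^{\ast\ast} = \overline{h}$.

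The crux of the argument, and the reason the joint-domain hypothesis is needed, lies in guaranteeing properness of $h$: without $\bigcap_i \mathrm{dom}(f_i) \neq \varnothing$, $F$ could be identically $+\infty$, forcing $F^{\ast} \equiv -\infty$ and permitting $h$ to take $-\infty$, which would invalidate Fenchel-Moreau. A secondary subtlety is reconciling the paper's Definition \ref{lsc_hull_def} of $\overline{h}$ (supremum of \emph{continuous} minorants) with the classical epigraphical lower semicontinuous hull; for a proper convex function on a locally convex Hausdorff space, this identification is supplied by Hahn-Banach, which produces enough continuous affine minorants to recover the lsc hull as a pointwise supremum.
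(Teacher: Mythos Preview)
The paper does not give its own proof of this lemma; it simply cites \cite[Theorem 2.3.10]{botgrawan} and uses the result as a black box. Your argument is the standard biconjugation proof and is correct: the inequality $F^{\ast}\le h$ is immediate, the computation $h^{\ast}=\sum_i f_i^{\ast\ast}=F$ is a clean change of variables (your phrase ``interchanging infimum with supremum'' is really just $-\inf=\sup(-\cdot)$ followed by separability), and the identification $h^{\ast\ast}=\overline{h}$ for proper convex $h$ is exactly Fenchel--Moreau on $(Y^{\ast},\text{weak}^{\ast})$. Your handling of the two delicate points---properness of $h$ via $h\ge F^{\ast}>-\infty$ and $h\not\equiv+\infty$, and the reconciliation of the paper's Definition~\ref{lsc_hull_def} with the biconjugate via the sandwich $\text{(affine continuous minorants)}\le\overline{h}\le\text{(lsc hull)}$---is accurate and is precisely where the hypothesis $\bigcap_i\mathrm{dom}(f_i)\neq\varnothing$ enters.
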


In our use we take $Y=C_{b}(S)$ equipped with topology induced by
$\mathcal{M}(S)$, i.e., the topological basis around $g\in Y$ is taken as sets
of the form%

\[
\left\{  f\in Y:\int_{S} fd\mu_{k}\in\left(  \int_{S}gd\mu_{k}-\epsilon_{k}%
,\int_{S}gd\mu_{k}+\epsilon_{k}\right)  ,k=1,2,\dots,m\right\}  ,
\]
where $m\in\mathbb{N},\{\mu_{k}\}_{k=1,2,\dots,m}\subset\mathcal{M}(S)$ and
$\epsilon_{k}>0,k=1,2,\dots,m$ are arbitrary. It can be easily verified that
under this topology, $C_{b}(S)$ is a Hausdorff locally convex space, with
$C_{b}(S)^{\ast}=\mathcal{M}(S)$ \cite[Theorem 3.10]{rud}. 
%\footnote{Y. since this is crucial, we should give a reference. did you find it in Folland's book?(Done, in Rudin's book)}
For $g\in C_{b}(S)$ and $\mu\in
\mathcal{M}(S)$, we define the bilinear form%

\[
\langle g,\mu\rangle\doteq\int_{S}gd\mu.
\]

We are now ready to prove the main theorem.

\vspace{.5\baselineskip}
\begin{proof}
[Proof of Theorem \ref{thm:main}]Define $H_{1},H_{2}:C_{b}(S)\rightarrow
\overline{\mathbb{R}}$ by%
\[
H_{1}(g)\doteq\log\int_{S}e^{g}d\nu\text{ and }H_{2}(g)\doteq\infty
1_{\Gamma^{c}}(g).
\]
Then
\begin{align*}
G_{\Gamma}(\mu\lVert\nu) &  =\sup_{g\in\Gamma}\left\{  \int_{S}gd\mu-\log
\int_{S}e^{g}d\nu\right\}  \\
&  =\sup_{g\in C_{b}(S)}\left\{  \int_{S}gd\mu-\log\int_{S}e^{g}d\nu
-\infty1_{\Gamma^{c}}(g)\right\}  \\
&  =\left(  H_{1}+H_{2}\right)  ^{\ast}(\mu).
\end{align*}

Notice that $\{0\}\in\mathrm{dom}(H_{1})\cap\mathrm{dom}(H_{2})\neq
\varnothing$, and both $H_{1}$ and $H_{2}$ are proper and convex. For
lower-semicontinuity, under the topology induced by $\mathcal{M}(S)$, $H_{1}$ is
lower semicontinuous because of (\ref{dual_logexp_relative_entropy}) and the fact that supremum of continuous functions are lower semicontinuous, and $H_{2}$ is lower semicontinuous since $\Gamma$ is
closed. Thus,
by Lemma \ref{inf-cov}%
\[
G_{\Gamma}(\mu\lVert\nu)=(H_{1}+H_{2})^{\ast}(\mu)=[\overline{H_{1}^{\ast}\Box
H_{2}^{\ast}}](\mu).
\]
By equation (\ref{eqn:var_forms}) and the definition of $W_{\Gamma}$, we know that%
\[
R(\mu\lVert\nu)=H_{1}^{\ast}(\mu)\text{ and }W_{\Gamma}(\eta)=H_{2}^{\ast
}(\eta).
\]
In the following display, the first equality is due to the definition of
inf-convolution, and the second is since $R(\gamma\lVert\nu)<\infty$ only when
$\gamma\in\mathcal{P}(S)$:%
\begin{align*}
H_{1}^{\ast}\Box H_{2}^{\ast}(\mu)  &  =\inf_{\gamma\in\mathcal{M}(S)}\left\{
R(\gamma\lVert\nu)+W_{\Gamma}(\mu-\gamma)\right\} \\
&  =\inf_{\gamma\in\mathcal{P}(S)}\left\{  R(\gamma\lVert\nu)+W_{\Gamma}%
(\mu-\gamma)\right\}  .
\end{align*}

Thus the last thing we need to prove is that $H_{1}^{\ast}\Box H_{2}^{\ast}$
is lower semicontinuous. Note that relative entropy is lower semicontinuous in
the first argument in the weak topology \cite[Lemma 1.4.3 (b)]{dupell4}, and
$W_{\Gamma}$ is lower semicontinuous in the weak topology since
it is the supremum of a collection of linear functionals. Let%
\[
F(\mu)\doteq H_{1}^{\ast}\Box H_{2}^{\ast}(\mu)=\inf_{\gamma\in\mathcal{P}%
(S)}\left\{  R(\gamma\lVert\nu)+W_{\Gamma}(\mu-\gamma)\right\}  .
\]
\noindent Consider any sequence $\mu_{n}\Rightarrow\mu$ with $\mu_n,\mu\in\mathcal{M}(S)$. Here ``$\Rightarrow$'' means convergence in the weak$^*$ topology, i.e., for any $f\in C_b(S)$, $\int f d\mu_n\to\int f d\mu$. Let $\varepsilon>0$, and
for each $\mu_n$ let $\gamma_n$ satisfy%
\[
R(\gamma_{n}\lVert\nu)+W_{\Gamma}(\mu_{n}-\gamma_{n})\leq F(\mu_{n}%
)+\varepsilon.
\]
\noindent We want to show that%
\begin{equation}
\liminf_{n\rightarrow\infty}F(\mu_{n})\geq F(\mu). \label{eqn:lsc}%
\end{equation}
If $\liminf_{n\rightarrow\infty} F(\mu_n)=\infty$, the inequality above holds
automatically. Assuming $\liminf_{n\rightarrow\infty} F(\mu_n)<\infty$, let
$n_k$ be a subsequence such that%

\[
\lim_{k\rightarrow\infty}F(\mu_{n_{k}})=\liminf_{n\rightarrow\infty}F(\mu
_{n}).
\]
Notice that%
\[
R(\gamma_{n_{k}}\lVert\nu)\leq R(\gamma_{n_{k}}\lVert\nu)+W_{\Gamma}%
(\mu_{n_{k}}-\gamma_{n_{k}})\leq F(\mu_{n_{k}})+\varepsilon.
\]
Since $\{F(\mu_{n_{k}})\}_{k\geq1}$ is bounded, we know that $\{\gamma_{n_{k}%
}\}_{k\geq1}$ is tight \cite[Lemma 1.4.3(c)]{dupell4}. Then we can take a further
subsequence that converges weakly. For simplicity of notation, let $n_{k}$
denote this subsequence, and let $\gamma_{\infty}$ denote the weak limit of
$\gamma_{n_{k}}$. Then using the lower semicontinuity of $R(\cdot\lVert\nu)$
on $\mathcal{P}(S)$ and the lower semicontinuity of $W_{\Gamma}$ on
$\mathcal{M}(S)$,%
\begin{align*}
\liminf_{n\rightarrow\infty}F(\mu_{n})+\varepsilon &  =\lim_{k\rightarrow
\infty}F(\mu_{n_{k}})+\varepsilon\\
&  \geq\lim_{k\rightarrow\infty}\left[  R(\gamma_{n_{k}}\lVert\nu
)+W(\mu_{n_{k}}-\gamma_{n_{k}})\right] \\
&  \geq R(\gamma_{\infty}\lVert\nu)+W(\mu-\gamma_{\infty})\\
&  \geq\inf_{\gamma\in\mathcal{P}(S)}\left\{  R(\gamma\lVert\nu)+W_{\Gamma
}(\mu-\gamma)\right\} \\
&  =F(\mu).
\end{align*}
Since $\varepsilon>0$ is arbitrary this establishes (\ref{eqn:lsc}), and thus
$F$ is lower semicontinuous in $\mathcal{M}(S)$. The theorem is proved.
\end{proof}

\subsection{Properties of the $\Gamma$-divergence}

Theorem \ref{thm:main} provides an interesting characterization of the $\Gamma$-divergence. Before we
continue to specific choices of $\Gamma$, we first state some general
properties associated with $\Gamma$-divergence. Throughout this section we fix an
admissible set $\Gamma$, and thus drop the subscript from $G_{\Gamma}$ and
$W_{\Gamma}$ in this section. Also, now that we have established the
expression for $G$ as an inf-convolution as in Theorem \ref{thm:main}, we no
longer need to consider $G$ as a function on $\mathcal{M}(S)\times
\mathcal{P}(S)$, and instead can consider it just on $\mathcal{P}(S)\times
\mathcal{P}(S)$, since we want to use $G$ as a measure of how two probability distributions differ.

\begin{lemma}
\label{basic} For $(\mu,\nu)\in\mathcal{P}(S)\times\mathcal{P}(S)$ define
$G(\mu\lVert\nu)$ by Definition \ref{def:defofV} and assume $\Gamma$ is
admissible. Then the following properties hold.

1) $G(\mu\lVert\nu)\geq0$, with $G(\mu\lVert\nu)=0$ if and only if $\mu=\nu$.

2) $G(\mu\lVert\nu)$ is a convex and lower semicontinuous function of
$(\mu,\nu)$. In particular, $G(\mu\lVert\nu)$ is a convex, lower
semicontinuous function of each variable $\mu$ or $\nu$ separately.

3) $G(\mu\lVert\nu)\leq R(\mu\lVert\nu)$ and $G(\mu\lVert\nu)\leq W(\mu-\nu)$.
\end{lemma}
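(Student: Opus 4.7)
The plan is to dispatch each of the three items mostly by direct use of the definition of $G$ together with Theorem \ref{thm:main}, with the only delicate part being the ``only if'' direction in item 1. I would actually prove items in the order 3, 2, 1, since item 3 is needed to get the easy direction of the equality case in item 1.

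For item 3, both inequalities are essentially one-liners. Since $\Gamma \subset C_b(S) \subset M_b(S)$, the variational formula (\ref{eqn:var_forms}) for $R(\mu \lVert \nu)$ is a supremum over a strictly larger class than that defining $G(\mu \lVert \nu)$, so $G \leq R$. For $G(\mu \lVert \nu) \leq W(\mu - \nu)$, I would plug $\gamma = \nu$ into the inf-convolution in Theorem \ref{thm:main} and use $R(\nu \lVert \nu) = 0$.

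For item 2, I would observe that for each fixed $g \in \Gamma$ the map $(\mu, \nu) \mapsto \int_S g\, d\mu - \log \int_S e^g d\nu$ is jointly continuous on $\mathcal{M}(S) \times \mathcal{P}(S)$ in the weak$^*$ topology (since $g$ and $e^g$ are bounded and continuous), and jointly convex, as it is the sum of a linear function of $\mu$ and a convex function of $\nu$ (convexity of $\nu \mapsto -\log \int e^g d\nu$ is standard, or can be read off from the variational formula (\ref{dual_logexp_relative_entropy})). The pointwise supremum over $g \in \Gamma$ therefore preserves joint convexity and lower semicontinuity; the separate-variable versions are immediate consequences.

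For item 1, non-negativity follows by taking $g \equiv 0 \in \Gamma$, which gives integrand $0$ in the supremum. The easy direction $\mu = \nu \Rightarrow G(\mu \lVert \nu) = 0$ follows from item 3 together with $R(\nu \lVert \nu) = 0$. The hard part is the converse: assuming $G(\mu \lVert \nu) = 0$, conclude $\mu = \nu$. My plan here is a small-parameter/derivative-at-zero argument exploiting the convexity and symmetry of $\Gamma$. Since $0 \in \Gamma$ and $\Gamma$ is convex, for any $g \in \Gamma$ and $\varepsilon \in [0,1]$ the function $\varepsilon g$ lies in $\Gamma$, so
\[
\varepsilon \int_S g\, d\mu - \log \int_S e^{\varepsilon g}\, d\nu \leq 0.
\]
Because $g$ is bounded, a Taylor expansion yields $\log \int_S e^{\varepsilon g} d\nu = \varepsilon \int_S g\, d\nu + O(\varepsilon^2)$ as $\varepsilon \to 0^+$, with the error uniform in the sense needed. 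Dividing by $\varepsilon$ and letting $\varepsilon \to 0^+$ gives $\int_S g\, d\mu \leq \int_S g\, d\nu$, and applying the same argument to $-g \in \Gamma$ (by symmetry) yields equality. Since $\Gamma$ is determining for $\mathcal{P}(S)$, this forces $\mu = \nu$. The main (minor) obstacle is simply justifying the remainder estimate in the Taylor expansion, which is standard from $\|g\|_\infty < \infty$.
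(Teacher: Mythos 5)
Your proof is correct, and for the crux of the lemma --- the implication $G(\mu\lVert\nu)=0\Rightarrow\mu=\nu$ --- it takes a genuinely different route from the paper. The paper proves all three items through the inf-convolution formula of Theorem \ref{thm:main}: nonnegativity comes from $R\geq0$ and $W\geq0$, and the equality case is argued by asserting that $G(\mu\lVert\nu)=0$ forces $W(\mu-\nu)=0$ (a step which, as written, leaves implicit a short tightness/lower-semicontinuity argument showing near-optimizing $\gamma$'s must converge to $\nu$), after which the determining property of $\Gamma$ gives $\mu=\nu$. You instead work directly with the defining supremum: since $0\in\Gamma$ and $\Gamma$ is convex and symmetric, $\pm\varepsilon g\in\Gamma$, and the bound $\log\int_S e^{\varepsilon g}d\nu\leq\varepsilon\int_S g\,d\nu+C_g\varepsilon^2$ (valid because $g$ is bounded, together with $\log(1+y)\leq y$) lets you divide by $\varepsilon$ and send $\varepsilon\to0^+$ to get $\int_S g\,d\mu=\int_S g\,d\nu$ for every $g\in\Gamma$, hence $\mu=\nu$. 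Your route is more elementary and self-contained --- it never invokes Theorem \ref{thm:main} for item 1 and it fills in exactly the detail the paper's terse argument glosses over --- while the paper's route is shorter once Theorem \ref{thm:main} is in hand and reflects its two-stage (relative entropy, then transport) picture; items 2 and 3 you handle essentially as the paper does (supremum of jointly convex continuous maps; choosing $\gamma=\nu$, respectively comparing with the larger test class or $\gamma=\mu$). One small caveat: the parenthetical suggestion that convexity of $\nu\mapsto-\log\int_S e^g d\nu$ ``can be read off'' from (\ref{dual_logexp_relative_entropy}) does not work as stated, since a supremum over $\mu$ of functions concave in $\nu$ need not be concave; your primary justification (linearity of $\nu\mapsto\int_S e^g d\nu$ composed with the increasing concave logarithm) is the correct one.
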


\begin{remark}
1) The first property justifies our calling $G$ a divergence as the term is used in information theory.

2) Relative entropy has the property that for each fixed $\nu\in\mathcal{P}(S)$,
$R(\cdot\lVert\nu)$ is strictly convex on $\{\mu\in\mathcal{P}(S):R(\mu
\lVert\nu)<\infty\}$. However,
$G(\cdot\lVert\nu)$ in general is not strictly convex.
\end{remark}

\begin{proof}
[Proof of Lemma \ref{basic}]
1) As noted in Lemma \ref{lem:RE}, $R(\cdot\lVert\cdot)$ is non-negative
\cite[Lemma 1.4.1]{dupell4}, and for any $\mu\in\mathcal{P}(S)$%
\[
W(\mu)=\sup_{g\in\Gamma}\left\{  \int_{S}gd\mu\right\}  \geq\int_{S}0d\mu=0.
\]
\noindent Thus%
\[
G(\mu\lVert\nu)=\inf\{R(\mu_{1}\lVert\nu)+W(\mu_{2}):\mu_{1}+\mu_{2}=\mu
\}\geq0.
\]
Also by Lemma \ref{lem:RE}, $R(\mu_{1}\lVert\nu)=0$ if and only if
$\mu_{1}=\nu$. Thus $G(\mu\lVert\nu)=0$ if and only if%
\[
W(\mu-\nu)=\sup_{g\in\Gamma}\left\{  \int_{S}gd(\mu-\nu)\right\}  =0,
\]
which tells us $\mu=\nu$ since $\Gamma$ is admissible.

2) This is a straightforward corollary of Theorem \ref{thm:main}, since the
supremum of a collection of linear and continuous functionals is both convex
and lower semicontinuous.

3) This follows from Theorem \ref{thm:main} and that $R(\nu\lVert\nu)=W(0)=0$.
\end{proof}

\vspace{.5\baselineskip}
For relative entropy we have the following lemma \cite[Proposition 1.4.2]{dupell4}.

\begin{lemma}\label{REvar} For all $g\in C_{b}(S)$%
\[
\log\int_{S}e^{g}d\nu=\sup_{\mu\in\mathcal{P}(S)}\left\{  \int_{S}gd\mu
-R(\mu\lVert\nu)\right\}  ,
\]
where the supremum is achieved uniquely at $\mu_{0}$ satisfying%
\[
\frac{d\mu_{0}}{d\nu}(x)\doteq\frac{e^{g(x)}}{\int_{S}e^{g}d\nu}.
\]
\end{lemma}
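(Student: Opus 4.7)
The plan has three steps: derive the upper bound from the dual representation already recorded in (\ref{eqn:var_forms}), exhibit the Gibbs measure that achieves equality, and deduce uniqueness from a change--of--reference identity. The inequality direction is essentially free: since $C_b(S) \subset M_b(S)$, equation (\ref{eqn:var_forms}) gives, for every $\mu \in \mathcal{P}(S)$ and every $g \in C_b(S)$,
\[
R(\mu\lVert\nu) \geq \int_S g\, d\mu - \log\int_S e^g d\nu,
\]
which rearranges to $\log \int_S e^g d\nu \geq \int_S g\, d\mu - R(\mu\lVert\nu)$. Taking the supremum over $\mu \in \mathcal{P}(S)$ yields one direction of the claimed identity.

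For the other direction I would exhibit a maximizer. Set $\mu_0$ to be the Gibbs measure with $d\mu_0/d\nu(x) \doteq e^{g(x)} / \int_S e^g d\nu$. Boundedness of $g$ guarantees that the normalizing constant is a positive finite real, that the density is bounded and strictly positive, and hence that $\mu_0$ is a genuine probability measure equivalent to $\nu$. A one-line substitution gives
\[
R(\mu_0 \lVert \nu) = \int_S \log\frac{d\mu_0}{d\nu}\, d\mu_0 = \int_S g\, d\mu_0 - \log\int_S e^g d\nu,
\]
so $\mu_0$ realizes equality in the variational formula.

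For uniqueness the slickest route is the identity
\[
R(\mu\lVert\mu_0) = R(\mu\lVert\nu) - \int_S g\, d\mu + \log\int_S e^g d\nu,
\]
valid for every $\mu \in \mathcal{P}(S)$ with $R(\mu\lVert\nu) < \infty$. To derive it I would observe that such $\mu$ satisfies $\mu \ll \nu$, and since $d\mu_0/d\nu$ is strictly positive, also $\mu \ll \mu_0$ with
\[
\frac{d\mu}{d\mu_0} = \frac{d\mu}{d\nu} \cdot \frac{\int_S e^g d\nu}{e^{g}};
\]
taking logs and integrating against $\mu$ produces the claimed identity. Combining this with $R(\mu\lVert\mu_0) \geq 0$ and the ``if and only if'' clause from Lemma \ref{lem:RE}(1) gives in one stroke both the inequality $\int g\, d\mu - R(\mu\lVert\nu) \leq \log \int e^g d\nu$ and the fact that equality forces $\mu = \mu_0$. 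The case $R(\mu\lVert\nu) = \infty$ is trivial.

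I do not anticipate a real obstacle here, as this is a classical Donsker--Varadhan type computation. The only technical items to watch are the integrability of $\log(d\mu/d\nu)$ against $\mu$ (ensured by $R(\mu\lVert\nu) < \infty$) and the boundedness of $g$, which is what makes $\mu_0$ a well-defined element of $\mathcal{P}(S)$ and keeps all $g$-integrals finite.
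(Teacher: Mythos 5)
Your proof is correct: the upper bound from (\ref{eqn:var_forms}), the direct computation showing the Gibbs measure $\mu_0$ attains it, and the identity $R(\mu\lVert\mu_0)=R(\mu\lVert\nu)-\int_S g\,d\mu+\log\int_S e^g d\nu$ together with Lemma \ref{lem:RE}(1) for uniqueness are all sound, including the handling of $R(\mu\lVert\nu)=\infty$. The paper does not prove this lemma itself but cites \cite[Proposition 1.4.2]{dupell4}, and your argument is essentially the classical Donsker--Varadhan computation used there, so there is nothing to add.
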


A similar duality formula holds for the $\Gamma$-divergence when $g\in\Gamma$.

\begin{theorem}
\label{incredible} If $\Gamma$ is admissible then for $g\in\Gamma$%
\[
\log\int_{S} e^{g}d\nu=\sup_{\mu\in\mathcal{P}(S)}\left\{  \int_{S} gd\mu-G(\mu
\lVert\nu)\right\}  .
\]
\end{theorem}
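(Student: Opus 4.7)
The plan is to prove the claimed duality by establishing matching inequalities, leveraging (a) the very definition of $G_{\Gamma}$ and (b) the bound $G(\mu\lVert\nu)\leq R(\mu\lVert\nu)$ from Lemma \ref{basic}(3) together with the Donsker--Varadhan identity stated as Lemma \ref{REvar}. Unlike Theorem \ref{thm:main}, no convex-duality machinery is needed here; the assertion reduces to two one-line manipulations.

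For the upper bound, I would fix $g\in\Gamma$. By definition of $G_\Gamma$, for every $\mu\in\mathcal{P}(S)$ one has
\[
\int_S g\,d\mu - \log\int_S e^{g}\,d\nu \;\leq\; G(\mu\lVert\nu),
\]
which rearranges to $\int_S g\,d\mu - G(\mu\lVert\nu) \leq \log\int_S e^{g}\,d\nu$. Taking the supremum over $\mu\in\mathcal{P}(S)$ yields the $\leq$ direction.

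For the lower bound, I would invoke Lemma \ref{basic}(3), which gives $G(\mu\lVert\nu)\leq R(\mu\lVert\nu)$ for every $\mu\in\mathcal{P}(S)$. Therefore,
\[
\int_S g\,d\mu - G(\mu\lVert\nu) \;\geq\; \int_S g\,d\mu - R(\mu\lVert\nu),
\]
and taking the supremum over $\mu\in\mathcal{P}(S)$ on both sides and applying Lemma \ref{REvar} gives
\[
\sup_{\mu\in\mathcal{P}(S)}\left\{\int_S g\,d\mu - G(\mu\lVert\nu)\right\} \;\geq\; \sup_{\mu\in\mathcal{P}(S)}\left\{\int_S g\,d\mu - R(\mu\lVert\nu)\right\} \;=\; \log\int_S e^{g}\,d\nu.
\]
Combining the two inequalities delivers the theorem.

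There is essentially no obstacle here; the content of the result lies entirely in Theorem \ref{thm:main} and Lemma \ref{basic}(3), which together ensure that restricting the dual test class to $\Gamma$ does not change the value $\log\int_S e^g d\nu$ when $g$ itself lies in $\Gamma$. It is perhaps worth noting that the Donsker--Varadhan maximizer $d\mu_0/d\nu = e^g/\int e^g d\nu$ from Lemma \ref{REvar} is a natural candidate for a maximizer in the new variational formula, since for this $\mu_0$ we have $R(\mu_0\lVert\nu)<\infty$ and $R(\mu_0\lVert\nu)$ realizes both inequalities above with equality; if desired this gives an explicit witness showing that the supremum is attained, though the two-sided bound suffices to prove the identity.
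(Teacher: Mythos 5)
Your proposal is correct and follows essentially the same two-sided argument as the paper: the upper bound from the definition of $G_\Gamma$ applied to the particular $g\in\Gamma$, and the lower bound from $G(\mu\lVert\nu)\leq R(\mu\lVert\nu)$ combined with the Donsker--Varadhan formula of Lemma \ref{REvar}. Your closing observation that the tilted measure $\mu_0$ attains the supremum (with $G(\mu_0\lVert\nu)=R(\mu_0\lVert\nu)$) also matches the discussion the paper gives immediately after the theorem.
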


\begin{proof}
Using the definition of $\Gamma$-divergence 
\begin{align*}
\sup_{\mu\in\mathcal{P}(S)}\left\{  \int_{S}gd\mu-G(\mu\lVert\nu)\right\}   &
=\sup_{\mu\in\mathcal{P}(S)}\left\{  \int_{S}gd\mu-\sup_{f\in\Gamma}\left\{
\int_{S}fd\mu-\log\int_{S}e^{f}d\nu\right\}  \right\} \\
&  \leq\sup_{\mu\in\mathcal{P}(S)}\left\{  \int_{S}gd\mu-\left\{  \int
_{S}gd\mu-\log\int_{S}e^{g}d\nu\right\}  \right\} \\
&  =\log\int_{S}e^{g}d\nu.
\end{align*}
On the other hand, we know for relative entropy that%
\[
\log\int_{S}e^{g}d\nu=\sup_{\mu\ll\nu}\left\{  \int_{S}gd\mu-R(\mu\lVert
\nu)\right\}  .
\]
Since $G(\mu\lVert\nu)\leq R(\mu\lVert\nu)$,
\begin{align*}
\log\int_{S}e^{g}d\nu &  =\sup_{\mu\ll\nu}\left\{  \int_{S}gd\mu-R(\mu
\lVert\nu)\right\} \\
&  \leq\sup_{\mu\ll\nu}\left\{  \int_{S}gd\mu-G(\mu\lVert\nu)\right\} \\
&  \leq\sup_{\mu\in\mathcal{P}(S)}\left\{  \int_{S}gd\mu-G(\mu\lVert
\nu)\right\}.
\end{align*}
The statement of the theorem follows from the two inequalities.
\end{proof}

\vspace{.5\baselineskip}
The last theorem has two important implications.
The first is related to the fact that Lemma \ref{REvar} implies bounds for $\int_S g d\mu$ when $R(\mu\lVert\nu)$ is bounded,
an observation that has served as the basis for the analysis of various aspects of model form uncertainty \cite{chodup,dupkatpanple}. Using Theorem \ref{incredible}, we obtain analogous bounds on $\int_S g d\mu$ for $g\in\Gamma$ when  $G(\mu\lVert\nu)$ is bounded. Applications of these bounds will be further developed in Section \ref{application_static} and Section \ref{application_diffusion}.
The second is that for $g\in\Gamma$, if we take $\mu_{0}$ as defined in Lemma \ref{REvar}, then
\begin{align*}
\log\int_{S} e^{g} d\nu &  = \int_{S} g d\mu_{0} - R(\mu_{0}\lVert\nu)\\
&  \leq\int_{S} g d\mu_{0} - G(\mu_{0}\lVert\nu)\\
&  \leq\sup_{\mu\in \mathcal{P}(S)}\left\{  \int_{S} g d\mu- G(\mu\lVert\nu) \right\} \\
&  = \log\int_{S} e^{g} d\nu,
\end{align*}
where the first inequality comes from $G(\mu_{0}\lVert\nu)\leq
R(\mu_0\lVert\nu)$. Since both
inequalities above must be equalities, we must have%
\[
R(\mu_{0}\lVert\nu) = G(\mu_{0}\lVert\nu).
\]
The next lemma gives a more detailed picture of $G(\mu\lVert\nu)$ when $\mu\ll\nu$.

\begin{lemma}
\label{goodcase}
For $\mu,\nu\in \mathcal{P}(S)$, if $\mu\ll\nu$ then 
$$G(\mu\lVert\nu) = \sup_{\gamma\in\mathcal{A}(S)}\left\{\int_S \log\left(\frac{d\gamma}{d\nu}\right)d\mu\right\},$$
where 
\[
\mathcal{A}(S)\doteq\left\{\gamma\in\mathcal{P}(S): \gamma\ll\nu,\exists g \in \Gamma \mbox{ such that }\frac{d\gamma}{d\nu}(x) =e^{g(x)} \mbox{ for } x\in\mathrm{supp}(\nu) \right\}.
\]
\end{lemma}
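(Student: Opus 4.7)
The plan is to realize the two sides of the identity by matching, for each $g\in\Gamma$ appearing in the definition of $G(\mu\lVert\nu)$, a corresponding $\gamma\in\mathcal{A}(S)$, and conversely. Since $\mu\ll\nu$, the measure $\mu$ is concentrated on $\mathrm{supp}(\nu)$, so the ``on $\mathrm{supp}(\nu)$'' qualifier in the definition of $\mathcal{A}(S)$ is exactly what is needed in order to freely replace $\log(d\gamma/d\nu)$ by the witnessing $g\in\Gamma$ inside integrals against $\mu$.

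For the inequality $\sup_{\gamma\in\mathcal{A}(S)}\int_S\log(d\gamma/d\nu)\,d\mu\leq G(\mu\lVert\nu)$, I would take $\gamma\in\mathcal{A}(S)$ with witness $g\in\Gamma$. Since $\gamma$ is a probability measure and $\nu(\mathrm{supp}(\nu)^c)=0$, integration gives $1=\gamma(S)=\int_S e^g\,d\nu$, so $\log\int_S e^g\,d\nu=0$. Then $\int_S\log(d\gamma/d\nu)\,d\mu=\int_S g\,d\mu=\int_S g\,d\mu-\log\int_S e^g\,d\nu\leq G(\mu\lVert\nu)$, and the supremum over $\gamma$ gives the inequality.

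For the reverse direction, given $g\in\Gamma$ set $c:=\log\int_S e^g\,d\nu$ and $g':=g-c$. If $g'\in\Gamma$, then $\gamma:=e^{g'}\nu$ is a probability measure lying in $\mathcal{A}(S)$ and $\int_S\log(d\gamma/d\nu)\,d\mu=\int_S g'\,d\mu=\int_S g\,d\mu-\log\int_S e^g\,d\nu$; taking the supremum over $g\in\Gamma$ then finishes that direction.

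The main obstacle is therefore to verify that $\Gamma$ is closed under the addition of a constant, i.e., that $g\in\Gamma$ and $c\in\mathbb{R}$ imply $g+c\in\Gamma$. This does not follow immediately from convexity plus containment of all constants, and is the one piece of real content in the proof. I would argue it by the following limiting device: for each $t\in(0,1)$, write $tg+c=tg+(1-t)\cdot\frac{c}{1-t}$ as a convex combination of $g\in\Gamma$ and the constant function $c/(1-t)\in\Gamma$, giving $tg+c\in\Gamma$. As $t\to 1$, for every $\mu\in\mathcal{M}(S)$ one has $\int_S(tg+c)\,d\mu=t\int_S g\,d\mu+c\mu(S)\to\int_S(g+c)\,d\mu$, so $tg+c\to g+c$ in the weak topology induced by $\mathcal{M}(S)$; closedness of $\Gamma$ then delivers $g+c\in\Gamma$. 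Putting this together with the two inequalities above yields the claimed identity.
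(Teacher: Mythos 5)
Your proof is correct and follows essentially the same route as the paper: each $g\in\Gamma$ is matched with the normalized tilted measure $\gamma = e^{\,g-\log\int_S e^{g}d\nu}\,\nu\in\mathcal{A}(S)$, and conversely each $\gamma\in\mathcal{A}(S)$ is matched with its witness $g$ (for which $\int_S e^{g}d\nu=1$), with $\mu\ll\nu$ used to identify the two integrands on $\mathrm{supp}(\nu)$. The only difference is that you explicitly verify that $\Gamma$ is closed under adding constants, via convex combinations $tg+(1-t)\tfrac{c}{1-t}$ and weak closedness of $\Gamma$ as $t\to 1$ — a step the paper uses implicitly when asserting that its $\gamma_g$ lies in $\mathcal{A}(S)$, so making it explicit is a small but genuine improvement rather than a change of approach.
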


\begin{proof}
We use the definition $$G(\mu\lVert \nu) = \sup_{g\in\Gamma} \left\{\int_S g d\mu - \log \int_S e^g d\nu \right\}$$
to prove this lemma.
For any $g\in\Gamma$, we define $\gamma_g \in \mathcal{P}(S)$ by the relation
$$\frac{d\gamma_g}{d\nu}(x)=\frac{e^{g(x)}}{\int_S e^g d\nu}$$
for $x\in \mathrm{supp}(\nu)$, and $\gamma_g(\mathrm{supp}(\nu)^c)=0$. Then for $x\in \mathrm{supp}(\nu)$,
$$\log\left(\frac{d\gamma_g}{d\nu}(x)\right) = g(x) - \log\int_S e^g d\nu.$$
Since $\mu\ll\nu$, we have
$$\int_S \log\left(\frac{d\gamma_g}{d\nu}\right)d\mu = \int_S gd\mu - \log\int_S e^g d\nu,$$
and thus
$$G(\mu\lVert \nu) = \sup_{g\in\Gamma} \left\{\int_S g d\mu - \log \int_S e^g d\nu \right\}\leq \sup_{\gamma\in\mathcal{A}(S)}\left\{\int_S \log\left(\frac{d\gamma}{d\nu}\right)d\mu\right\}.$$

On the other hand, for any $\gamma\in \mathcal{A}(S)$, by definition, we can find a $g_\gamma\in \Gamma$ such that
$$g_\gamma(x) = \log\left(\frac{d\gamma}{d\nu}(x)\right)$$
for $x\in\mathrm{supp}(\nu)$. Then
$$\int_S g_\gamma d\mu - \log\int_S e^{g_\gamma} d\nu = \int_S\log\left(\frac{d\gamma}{d\nu}\right)d\mu.$$
Thus
$$\sup_{\gamma\in \mathcal{A}(S)}\left\{\int_S \log\left(\frac{d\gamma}{d\nu}\right)d\mu\right\}\leq \sup_{g\in\Gamma} \left\{\int_S g d\mu - \log \int_S e^g d\nu \right\}=G(\mu\lVert \nu). $$
Combining  the two inequalities completes the proof.
\end{proof}

\begin{remark} 
When $\mu\in\mathcal{A}(S)$ we always have $G(\mu\lVert\nu)=R(\mu\lVert\nu)$.
This is because if $\gamma\in \mathcal{A}(S)$ then  $\mu\ll\gamma$, and therefore
$$\int_S\log\left(\frac{d\mu}{d\nu}\right)d\mu - \int_S\log\left(\frac{d\gamma}{d\nu}\right)d\mu =\int_S\log\left(\frac{d\mu}{d\gamma}\right)d\mu = R(\mu\lVert\gamma)\geq 0. $$
Rearranging gives 
\[
\int_S \log\left(\frac{d\gamma}{d\nu}\right)d\mu=R(\mu\lVert\nu)-R(\mu\lVert\gamma), 
\]
and so 
$$G(\mu\lVert\nu)= \sup_{\gamma\in\mathcal{A}(S)}\left\{\int_S \log\left(\frac{d\gamma}{d\nu}\right)d\mu\right\}= R(\mu\lVert\nu).$$

This statement is not valid when $\mu\ll\nu$ does not hold, since then $\log(d\gamma/d\nu)$ is not defined in $\mathrm{supp}(\mu)\backslash\mathrm{supp}(\nu)$, thus 
$$\int_S \log\left(\frac{d\gamma}{d\nu}\right)d\mu$$
is not well defined.
\end{remark}

\section{Connection with Optimal Transport Theory}\label{Wass}
In the proceeding sections, we discussed general properties for the $\Gamma$-divergence 
with an admissible set $\Gamma\subset C_{b}(S)$. In this section, we
discuss specific choices of $\Gamma$ which relate the $\Gamma$-divergence with
optimal transport theory. First we state some well known results in
optimal transport theory.

\subsection{Preliminary results from optimal transport theory}

The results in this section are from \cite[Chapter 4]{racrus}. The
general Monge-Kantorovich mass transfer problem with given marginals $\mu
,\nu\in\mathcal{P}(S)$ and cost function $c:S\times S\rightarrow\mathbb{R}%
_{+}$ is%
\[
\mathcal{C}(c;\mu,\nu)\doteq\inf_{\pi\in\Pi(\mu,\nu)}\left\{  \int_{S\times
S}c(x,y)\pi(dx,dy)\right\}  ,
\]
where $\Pi(\mu,\nu)$ denotes the collection of all probability
measures on $S\times S$ with first and second marginals being $\mu$ and $\nu$, respectively.

A natural dual problem with respect to this is%
\[
\mathcal{B}(c;\rho)\doteq \sup_{f\in\mathrm{Lip}(c,S;C_{b}(S))}\left\{  \int
_{S}f(x)\rho(dx)\right\}  ,
\]
where $\rho=\mu-\nu$, $C_b(S)$ denotes the set of bounded continuous functions mapping $S$ to $\mathbb{R}$ and%

\begin{align}\label{Lip_bdd}
\mathrm{Lip}(c,S;C_{b}(S))\doteq\left\{  f\in C_{b}(S):f(x)-f(y)\leq
c(x,y)\mbox{ for all } x,y\in S\right\}  .
\end{align}
We want to know when%
\begin{equation}
\mathcal{C}(c;\mu,\nu)=\mathcal{B}(c,\rho) \label{eqn:duality}%
\end{equation}
holds. The following is a necessary and sufficient condition.
As with many results in this section,
one can extend in a trivial way to the case where costs are bounded from below, rather than non-negative.
Recall that $S$ is a Polish space.

\begin{condition}
\label{con:crep}
There is a nonempty
subset $Q\subset C_{b}(S)$ such that the cost
$c:S\times S\rightarrow[0,\infty]$  has the representation%
\begin{equation}
c(x,y)=\sup_{u\in Q}\left(  u(x)-u(y)\right)  \quad\text{for\ all}\ (x,y)\in
S\times S.\label{eqn:crep}
\end{equation}
\end{condition}

\begin{theorem}
\cite[Theorem 4.6.6]{racrus}\label{massdual} Under Condition \ref{con:crep},  
 (\ref{eqn:duality}) holds.
\end{theorem}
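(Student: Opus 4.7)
My plan is to split the equality (\ref{eqn:duality}) into the two inequalities $\mathcal{B}(c;\rho)\leq\mathcal{C}(c;\mu,\nu)$ and $\mathcal{C}(c;\mu,\nu)\leq\mathcal{B}(c;\rho)$. The first is a direct calculation: for any $f\in\mathrm{Lip}(c,S;C_b(S))$ and any $\pi\in\Pi(\mu,\nu)$,
\[
\int_S f\,d\rho=\int_{S\times S}\bigl(f(x)-f(y)\bigr)\pi(dx,dy)\leq\int_{S\times S}c(x,y)\pi(dx,dy),
\]
so taking the supremum over $f$ on the left and the infimum over $\pi$ on the right yields the bound. Note that Condition \ref{con:crep} is not needed for this direction.

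For the reverse inequality the starting point is the standard Kantorovich duality,
\[
\mathcal{C}(c;\mu,\nu)=\sup\left\{\int_S\phi\,d\mu+\int_S\psi\,d\nu:\phi,\psi\in C_b(S),\,\phi(x)+\psi(y)\leq c(x,y)\right\},
\]
which applies because the representation (\ref{eqn:crep}) makes $c$ nonnegative and lower semicontinuous (being a pointwise supremum of continuous functions). This is where I would quote the unrestricted duality from Rachev-R\"uschendorf; the real content of the present theorem is that Condition \ref{con:crep} lets one collapse the two-function supremum to a single-function one.

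The extraction of a single function $f$ proceeds via two easy consequences of (\ref{eqn:crep}): (i) $c(x,x)=\sup_{u\in Q}(u(x)-u(x))=0$, and (ii) the triangle inequality $c(x,z)\leq c(x,y)+c(y,z)$, obtained by splitting $u(x)-u(z)=(u(x)-u(y))+(u(y)-u(z))$ and taking the supremum over $u\in Q$. Given an admissible pair $(\phi,\psi)$, set $f\doteq\phi^{cc}$, where $\phi^c(y)\doteq\inf_x[c(x,y)-\phi(x)]$ is the $c$-transform. Using (ii), a short computation shows $f(x)-f(x')\leq c(x,x')$, so $f$ belongs to $\mathrm{Lip}(c,S;C_b(S))$ (up to a routine continuity/boundedness verification). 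The inequalities $\psi\leq\phi^c$ and $f\geq\phi$ follow from the definitions, and setting $x=y$ in $f(x)+\phi^c(y)\leq c(x,y)$ together with (i) gives $\phi^c\leq -f$, hence $\psi\leq -f$. Therefore
\[
\int_S\phi\,d\mu+\int_S\psi\,d\nu\leq\int_S f\,d\mu-\int_S f\,d\nu=\int_S f\,d\rho\leq\mathcal{B}(c;\rho),
\]
and taking the supremum over admissible $(\phi,\psi)$ finishes the proof.

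The principal obstacle I anticipate is technical rather than conceptual: verifying that $f=\phi^{cc}$ actually lies in $C_b(S)$, since a priori infima of continuous families are only upper semicontinuous. Here the admissibility of $\Gamma$ plays no role, but one can invoke boundedness of $\phi,\psi$ and the Lipschitz estimate just derived to upgrade upper semicontinuity to continuity, possibly after a standard smoothing/regularization step. Once this is dispatched, the argument closes.
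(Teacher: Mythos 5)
The paper does not actually prove Theorem \ref{massdual}; it quotes it from \cite{racrus}, so there is no in-paper argument to compare against and your proposal has to stand on its own. Much of it does: the inequality $\mathcal{B}(c;\rho)\leq\mathcal{C}(c;\mu,\nu)$ is correct, quoting the two-function Kantorovich duality for a nonnegative lower semicontinuous cost is legitimate (and not circular), and the algebra of the $c$-transform step is right: $\psi\leq\phi^c$, $\phi\leq\phi^{cc}$, $\phi^{cc}(x)-\phi^{cc}(x')\leq c(x,x')$ via the triangle inequality, and $\phi^c\leq-\phi^{cc}$ via $c(x,x)=0$, giving $\int\phi\,d\mu+\int\psi\,d\nu\leq\int\phi^{cc}\,d(\mu-\nu)$.

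The genuine gap is precisely the point you defer to a ``routine'' verification: showing that $f=\phi^{cc}$ (or some substitute) lies in $C_b(S)$. Your proposed repair does not work under Condition \ref{con:crep} alone. First, $f$ is not an infimum of \emph{continuous} functions: the representation (\ref{eqn:crep}) only makes $x\mapsto c(x,y)$ lower semicontinuous, so $\phi^c$ and $\phi^{cc}$ are infima of lsc functions and need not be even upper semicontinuous (they are merely universally measurable). Second, the estimate $f(x)-f(x')\leq c(x,x')$ supplies no modulus of continuity, because Condition \ref{con:crep} does not force $c(x,x')$ to be small when $d(x,x')$ is small: for example $c(x,y)=1_{\{x\neq y\}}$ satisfies (\ref{eqn:crep}) (take $Q$ to consist of the continuous bumps $u_{x_0,n}(t)=\max(0,1-n\,d(t,x_0))$), and for this cost the $c$-Lipschitz condition is only an oscillation bound, so $\phi^{cc}$ is typically discontinuous and no smoothing preserves both the constraint $f(x)-f(y)\leq c(x,y)$ and the inequalities $\phi\leq f\leq-\psi$ in general. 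What your argument actually establishes is the duality with the supremum taken over bounded, universally measurable $c$-Lipschitz functions; the reduction to the class $\mathrm{Lip}(c,S;C_b(S))$ of \emph{continuous} test functions is the substantive content of the cited theorem and requires using the continuity of the functions $u\in Q$ (or a discrete-approximation/Hahn--Banach argument) in an essential way. Note that when the paper later needs local continuity of $c$ it imposes Condition \ref{Cond:c_cont} explicitly, whereas Theorem \ref{massdual} is stated without it, so you cannot appeal to any such continuity here.
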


\begin{remark}
Condition \ref{con:crep} implies that $c$ satisfies the triangle inequality, i.e., for all $x,y,z\in S$
$$c(x,z) \leq c(x,y)+c(y,z).$$
This follows easily from 
\begin{align*}
\sup_{u\in Q}\left(  u(x)-u(z)\right) &= \sup_{u\in Q}\left(  (u(x)-u(y)) +(u(y)-u(z))\right)\\
&\leq \sup_{u\in Q}\left(  u(x)-u(y)\right) +\sup_{u\in Q}\left(  u(y)-u(z)\right).
\end{align*}

On the other hand, Condition \ref{con:crep} also allows for a wide range of choices of
$c(x,y)$. For example, suppose that $c$ is a continuous metric on $S$, where
continuity is with respect to the underlying metric
of $S$. Then we can choose%
\[
Q=\left\{  \min(c(x,x_{0}),n):x_{0}\in S,n\in\mathbb{N}\right\}  .
\]
It is easily verified that $Q\subset C_{b}(S)$, and that with this choice of
$Q$ \eqref{eqn:crep} holds.
\end{remark}

\subsection{$\Gamma$-divergence with the choice $\Gamma=\mathrm{Lip}(c,S;C_{b}%
(S))$}

Suppose $\Gamma=\mathrm{Lip}(c,S;C_{b}(S))$, with $c:S\times S\rightarrow
[0, \infty]$ satisfying 
 Condition \ref{con:crep}. To make the presentation simple, we have assumed that $c$ is non-negative,
 and further assume it is symmetric, meaning $c(x,y) = c(y,x) \geq 0$ for any $x,y\in S$.
To distinguish from $W_\Gamma(\mu-\nu)$ for general $\Gamma$,
we denote the transport cost for $\mu, \nu\in \mathcal{P}(S)$ by
\[
W_{c}(\mu,\nu)\doteq\sup_{g\in \mathrm{Lip}(c,S;C_{b}(S))}\left\{  \int_{S}gd(\mu-\nu)\right\} .
\]
Then by Theorem \ref{massdual}%
\[
W_{c}(\mu,\nu)=\sup_{g\in\mathrm{Lip}(c,S;C_{b}(S))}\left\{  \int_{S}gd(\mu-\nu)\right\}
=\inf_{\pi\in\Pi(\mu,\nu)}\left\{  \int_{S\times S}c(x,y)\pi(dx,dy)\right\}
.
\]

\begin{condition}\label{mea-det}
Suppose $\mathrm{Lip}(c,S;C_{b}(S))$ is measure determining, i.e., 
for all $\mu,\nu\in\mathcal{P}(S)$, $\mu \neq \nu$, there exists $f\in \mathrm{Lip}(c,S;C_{b}(S))$ such that 
$$\int_S fd\mu \neq \int_S f d\nu.$$
\end{condition}
Under Condition \ref{mea-det}, $\Gamma$ is admissible (see Definition \ref{access}), and by
Theorem \ref{thm:main}%
\begin{equation}
G_{\Gamma}(\mu\lVert\nu)=\sup_{g\in\Gamma}\left\{  \int_{S}gd\mu-\log\int
_{S}e^{g}d\nu\right\}  =\inf_{\gamma\in\mathcal{P}(S)}\left\{  W_{c}(\mu,\gamma)+R(\gamma
\lVert\nu)\right\}  . \label{Vari}%
\end{equation}
Hence by choosing $\Gamma$ properly, we get that the $\Gamma$-divergence is an
infimal convolution of relative entropy, which is a convex function of likelihood ratios, and an
optimal transport cost, which depends on a cost structure on the space $S$.
Natural questions to raise here are the following. \quad

i) Do there exist optimizers $\gamma^{\ast}$ and $g^{\ast}$ in the variational
problem (\ref{Vari})? If so, are they unique?

ii) How can one characterize $\gamma^{\ast}$ and $g^{\ast}$?

iii) For a fixed $\nu\in\mathcal{P}(S)$, what is the effect of a perturbation of
$\mu$ on $G_{\Gamma}(\mu\lVert\nu)$?

\medskip We will address these questions sequentially in this section. From now on, we will drop the subscript $\Gamma$ in this section for the simplicity of writing. We consider the case where $G(\mu\lVert\nu)<\infty$. To
impose additional constraints on $\mu$ and $\nu$ such that $G(\mu\lVert\nu)<\infty$ holds, we make a further
assumption on $c$.

\begin{condition}
\label{finite} There exist $a:S\rightarrow\mathbb{R}_{+}$ such that%
\[
c(x,y)\leq a(x)+a(y).
\]
\end{condition}

Now consider $\mu,\nu\in L^{1}(a)\doteq\{\theta\in\mathcal{P}(S):\int
_{S}a(x)\theta(dx)<\infty\}$. Then
\begin{align*}
G(\mu\lVert\nu)  &  =\inf_{\gamma\in\mathcal{P}(S)}\left\{  W_{c}(\mu,\gamma)+R(\gamma\lVert
\nu)\right\} \\
&  \leq W_{c}(\mu,\nu)\\
&  =\inf_{\pi\in\Pi(\mu,\nu)}\left\{  \int_{S\times S}c(x,y)\pi(dx,dy)\right\}
\\
&  \leq\inf_{\pi\in\Pi(\mu,\nu)}\left\{  \int_{S\times S}\left[
a(x)+a(y)\right]  \pi(dx,dy)\right\} \\
&  =\int_{S}a(x)\mu(dx)+\int_{S}a(y)\nu(dy)\\
&  <\infty.
\end{align*}
We will assume the following mild conditions on the space $S$ and cost $c$ to make $\mathrm{Lip}(c,S;C_{b}(S))$ precompact.

\begin{condition}\label{Cond:c_cont}
There exists $\left\{K_m\right\}_{m\in\mathbb{N}}$ such that $K_m\subset S$ is compact, $K_{m}\subset K_{m+1}$ for all $m\in\mathbb{N}$, and $S = \cup_{m\in\mathbb{N}}K_m$. For each $m$, there exists $\theta_m: \mathbb{R}_+ \to \mathbb{R}_+$, such that $\lim_{a\to 0} \theta_m(a) = 0$, and $\delta_m >0$, such that for any $x,y\in K_m$ satisfying $d(x,y)\leq \delta_m$,
$$c(x,y)\leq \theta_m(d(x,y)).$$
\end{condition}
Recalling the definition (\ref{Lip_bdd}),
we define the unbounded version as follows
$$\mathrm{Lip}(c,S)\doteq\left\{  f\in C(S):f(x)-f(y)\leq
c(x,y)\mbox{ for all } x,y\in S\right\},$$
where $C(S)$ is the set of continuous functions mapping $S$ to $\mathbb{R}$. Before we proceed, we state the following lemma, which will be used repeatedly in this section.

\begin{lemma}\label{beyond_bounded}
If $g\in \mathrm{Lip}(c,S)$ and $\theta,\nu\in P(S)$ satisfy $\int_S |g|d\theta <\infty$,
then
$$\int_S gd\theta - \log \int_S e^g d\nu \leq G(\theta\lVert\nu)\leq R(\theta\lVert\nu).$$
\end{lemma}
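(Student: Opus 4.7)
The right inequality $G(\theta\lVert\nu)\le R(\theta\lVert\nu)$ is exactly part 3 of Lemma \ref{basic}, so the work lies in the left inequality. The obstacle is that $g$ need not be bounded and so does not itself belong to $\Gamma=\mathrm{Lip}(c,S;C_b(S))$; it cannot be inserted directly into the defining supremum of $G$. The plan is to approximate $g$ by bounded $c$-Lipschitz truncations $g_n$, apply the definition of $G$ to each $g_n$, and pass to the limit.

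Concretely, for $n\in\mathbb{N}$ I would set $g_n\doteq (g\wedge n)\vee(-n)$, which is continuous with $|g_n|\le n$. Using that $c$ is symmetric and non-negative and that $g\in\mathrm{Lip}(c,S)$, we have $|g(x)-g(y)|\le c(x,y)$ for all $x,y\in S$. Since truncation from above by $n$ and from below by $-n$ is a contraction for the absolute value, $|g_n(x)-g_n(y)|\le|g(x)-g(y)|\le c(x,y)$, so $g_n\in\Gamma$. Hence by the definition of $G$,
\[
\int_S g_n\,d\theta-\log\int_S e^{g_n}\,d\nu\le G(\theta\lVert\nu).
\]

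To finish, I would send $n\to\infty$. The hypothesis $|g_n|\le|g|$ together with $\int_S |g|\,d\theta<\infty$ lets dominated convergence give $\int_S g_n\,d\theta\to\int_S g\,d\theta$. For the exponential, split $S=\{g\ge 0\}\cup\{g<0\}$: on $\{g\ge 0\}$ the functions $e^{g_n}$ increase to $e^g$ (monotone convergence), while on $\{g<0\}$ one has $e^{g_n}\le 1$ and $e^{g_n}\to e^g$ (bounded convergence). Together these give $\int_S e^{g_n}\,d\nu\to\int_S e^g\,d\nu\in(0,\infty]$. If this limit equals $\infty$ the claimed inequality is trivial; otherwise $\log$ passes through the limit and the left inequality drops out. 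The only real subtlety is arranging the monotone/bounded split for $e^{g_n}$ so that $\int_S e^{g_n}\,d\nu$ cannot undershoot $\int_S e^g\,d\nu$ in the limit; aside from that, the argument is essentially a standard truncation.
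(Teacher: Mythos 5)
Your proof is correct and is essentially the paper's own truncation argument: the paper also sets $g_n=(g\wedge n)\vee(-n)\in\Gamma$, uses dominated convergence for $\int g_n\,d\theta$, and passes to the limit in $\int e^{g_n}\,d\nu$ (it dominates by $e^g+1$ after first disposing of the case $\int_S e^g\,d\nu=\infty$, whereas you split $\{g\ge 0\}$ and $\{g<0\}$ and use monotone plus bounded convergence, which handles both cases at once). One small remark: the direction that actually matters when passing to the limit is that $\int_S e^{g_n}\,d\nu$ not \emph{overshoot} $\int_S e^g\,d\nu$ (an undershoot would only make the inequality you pass to the limit stronger), but since your split yields exact convergence this is immaterial.
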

\begin{proof}
We use a standard truncation argument. Since by Lemma \ref{basic} we already have $G(\theta\lVert\nu)\leq R(\theta\lVert\nu)$,
we only need to prove the first inequality in the statement of the lemma.
If $\int_S e^g d\nu = \infty$, then 
$$\int_S gd\theta - \log \int_S e^g d\nu = -\infty < 0\leq G(\theta\lVert\nu).$$
Hence we only need consider the case $\int_S e^g d\nu < \infty$. Let $g_n = \min(\max(g,-n),n)\in \mathrm{Lip}(c,S;C_b(S))=\Gamma$ for $n\in\mathbb{N}$. We have 
$|g_n(x)|\leq |g(x)|$
and 
$$\lim_{n\to\infty} g_n(x) = g(x) \quad x\in S.$$
Thus by the dominated convergence theorem
$$\lim_{n\to\infty}\int_S g_n d\theta = \int_S g d\theta.$$
Also we have 
$$e^{g_n(x)} \leq e^{g(x)}+1 \mbox{ and }
\lim_{n\to\infty} e^{g_n(x)} = e^{g(x)}.$$
Since we only consider the case $\int_S e^gd\nu < \infty$, again using the dominated convergence theorem we have, 
$$\lim_{n\to\infty}\int_S e^{g_n} d\nu = \int_S e^g d\nu.$$
Together with (\ref{eqn:var_forms}), this gives
\begin{align*}
\int_S gd\theta - \log\int_S e^gd\nu &= \lim_{n\to\infty} \left(\int_S g_nd\theta - \log\int_S e^{g_n} d\nu\right)\\
&\leq \sup_{f\in \Gamma}\left\{\int_S fd\theta - \log\int_S e^f d\nu\right\}\\
&=G(\theta\lVert\nu).
\end{align*}
\end{proof}

\vspace{\baselineskip}
Now we are ready to state the first main theorem of this section.

\begin{theorem}\label{optimizer} Suppose Conditions \ref{con:crep}, \ref{mea-det}, \ref{finite} and \ref{Cond:c_cont} are 
satisfied. Fix $\mu,\nu\in L^{1}(a)$.
Then the following conclusions hold.

\noindent1) There exists a unique optimizer $\gamma^{\ast}$ in the expression
(\ref{Vari}).

\noindent2) There exists an optimizer $g^{\ast}\in\mathrm{Lip}(c,S)$ in the
expression (\ref{Vari}), which is unique up to an additive constant in
$\mathrm{supp}(\mu)\cup\mathrm{supp}(\nu)$.

\noindent3) $g^{\ast}$ and $\gamma^{\ast}$ satisfy the following
conditions:

i)
\[
\frac{d\gamma^{\ast}}{d\nu}(x)=\frac{e^{g^{\ast}(x)}}{\int_{S} e^{g^{\ast}(y)}%
d\nu},\quad \nu-a.s.
\]

ii)
\[
W_{c}(\mu,\gamma^{*})=\int_{S} g^{*} d(\mu-\gamma^{*}).
\]

\end{theorem}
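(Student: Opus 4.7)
The proof decomposes naturally into three tasks: (A) existence and uniqueness of the primal minimizer $\gamma^*$, (B) extraction of $g^*$ as a locally-uniform limit of a normalized maximizing sequence in the dual, and (C) verification of the complementary-slackness-style conditions (i)–(ii) by a saddle-point computation that ties the two formulations together.

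For (A), I would work with the primal representation $F(\gamma):=W_c(\mu,\gamma)+R(\gamma\lVert\nu)$ supplied by Theorem \ref{thm:main}. Both summands are weakly lower semicontinuous: $W_c(\mu,\cdot)$ as a supremum of weakly continuous affine functionals (since $\Gamma\subset C_b(S)$), and $R(\cdot\lVert\nu)$ by the standard reference \cite{dupell4}. A minimizing sequence satisfies $R(\gamma_n\lVert\nu)\leq G(\mu\lVert\nu)+1<\infty$ and is therefore tight by Lemma 1.4.3(c) of Dupuis–Ellis, so a weak limit $\gamma^*$ attains the infimum. Uniqueness follows because any minimizer lies in the finite-entropy set where $R(\cdot\lVert\nu)$ is strictly convex, and $W_c(\mu,\cdot)$ is convex.

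For (B), take a maximizing sequence $g_n\in\Gamma$ and normalize $g_n(x_0)=0$ at a fixed $x_0$ (this does not change the functional, since $\Gamma$ contains constants). Condition \ref{finite} yields the pointwise bound $|g_n(x)|\leq c(x,x_0)\leq a(x)+a(x_0)$, and Condition \ref{Cond:c_cont} gives equicontinuity on each compact $K_m$. Arzelà–Ascoli with a diagonal extraction produces $g_n\to g^*$ locally uniformly, with $g^*\in\mathrm{Lip}(c,S)$ and $|g^*|\leq a+a(x_0)$. Dominated convergence (the dominating function lies in $L^1(\mu)$ since $\mu\in L^1(a)$) gives $\int g_n\,d\mu\to\int g^*\,d\mu$, and Fatou applied to $e^{g_n}\to e^{g^*}$ gives $\log\int e^{g^*}d\nu \leq \liminf_n\log\int e^{g_n}d\nu$, so
\[
\int g^*\,d\mu-\log\int e^{g^*}d\nu \;\geq\; G(\mu\lVert\nu),
\]
while the reverse inequality is exactly Lemma \ref{beyond_bounded}. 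Hence $g^*$ is a dual optimizer.

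For (C), combine the two equal expressions for $G(\mu\lVert\nu)$. For each truncation $g_n^*:=(-n)\vee g^*\wedge n\in\Gamma$, the definition of $W_c$ and Lemma \ref{REvar} give
\[
W_c(\mu,\gamma^*)\geq \int g_n^*\,d(\mu-\gamma^*), \qquad R(\gamma^*\lVert\nu)\geq \int g_n^*\,d\gamma^*-\log\int e^{g_n^*}d\nu.
\]
Passing $n\to\infty$ via dominated convergence (the bound $|g^*|\leq a+a(x_0)$ is $\mu$-integrable, and $\gamma^*$-integrable since $W_c(\mu,\gamma^*)<\infty$ combined with $c\leq a\oplus a$ controls the $a$-mass of $\gamma^*$) produces the analogous inequalities for $g^*$. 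Summing and using $G(\mu\lVert\nu)=W_c(\mu,\gamma^*)+R(\gamma^*\lVert\nu)=\int g^*d\mu-\log\int e^{g^*}d\nu$ forces both inequalities to be equalities: the first is condition (ii), and the second, together with the uniqueness part of Lemma \ref{REvar} applied through truncations, yields condition (i). Uniqueness of $g^*$ up to an additive constant on $\mathrm{supp}(\mu)\cup\mathrm{supp}(\nu)$ is then obtained by using (i) to pin $g^*$ on $\mathrm{supp}(\nu)$ modulo a constant, and propagating to $\mathrm{supp}(\mu)$ via the Hölder-type strict convexity of $g\mapsto\log\int e^g d\nu$ applied to convex combinations of two hypothetical optimizers, combined with (ii).

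The main obstacle is the unboundedness of $g^*\in\mathrm{Lip}(c,S)\setminus\Gamma$: every dual tool (Lemma \ref{REvar}, the sup-definition of $W_c$, Lemma \ref{beyond_bounded}) is stated for bounded test functions, so each passage to the limit must be legitimized by a careful truncation together with the integrability estimates flowing from Condition \ref{finite} and $\mu,\nu\in L^1(a)$; in particular, obtaining $a$-integrability of $\gamma^*$ without a priori moment assumptions is the most delicate point.
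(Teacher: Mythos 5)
Your overall architecture mirrors the paper's proof: part (A) is exactly the paper's argument (tightness via \cite[Lemma 1.4.3(c)]{dupell4}, lower semicontinuity, strict convexity of $R$), part (B) is the paper's Arzel\`a--Ascoli/diagonalization construction with the same dominated convergence/Fatou steps, and the equality-forcing in (C) is the paper's decomposition after adding and subtracting $\int_S g^*\,d\gamma^*$. However, there are two genuine gaps. First, the integrability of $g^*$ against $\gamma^*$ --- which you correctly identify as the most delicate point and which the paper isolates as Lemma \ref{opt_integrability} --- is not justified by your stated reason: the inequality $c(x,y)\leq a(x)+a(y)$ is an \emph{upper} bound on $c$, so finiteness of $W_c(\mu,\gamma^*)$ together with $c\leq a\oplus a$ gives no control whatsoever on $\int_S a\,d\gamma^*$ (and $a$ need not be $\gamma^*$-integrable at all; what one needs and can get is $\int_S c(\cdot,x_0)\,d\gamma^*<\infty$, so the dominating function should be $|g^*(x_0)|+c(\cdot,x_0)$ rather than $a+a(x_0)$). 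A correct argument is the paper's contradiction: if a nonnegative $h\in\mathrm{Lip}(c,S)$ had $\int_S h\,d\gamma^*=\infty$, testing $W_c(\mu,\gamma^*)$ with $\max(-h,-n)$ and letting $n\to\infty$ forces $W_c(\mu,\gamma^*)=\infty$, contradicting $W_c(\mu,\gamma^*)\leq G(\mu\lVert\nu)<\infty$; alternatively one can use the triangle inequality for $c$ (a consequence of Condition \ref{con:crep}) along a near-optimal coupling to bound $\int_S c(\cdot,x_0)\,d\gamma^*$ by $W_c(\mu,\gamma^*)+\int_S c(\cdot,x_0)\,d\mu$. Either way, an argument is required where you have only an assertion.

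Second, your uniqueness argument for $g^*$ does not reach $\mathrm{supp}(\mu)$. The H\"older/strict-concavity argument applied to convex combinations of two optimizers only shows that their difference is constant $\nu$-a.s., i.e.\ it pins $g^*$ down on $\mathrm{supp}(\nu)$; since $\log\int_S e^{g}\,d\nu$ is blind to values of $g$ off $\mathrm{supp}(\nu)$ and the remaining term $\int_S g\,d\mu$ is linear, no strict-convexity argument alone can distinguish two optimizers that agree on $\mathrm{supp}(\nu)$ but differ on $\mathrm{supp}(\mu)\setminus\mathrm{supp}(\nu)$, and ``combined with (ii)'' is not yet an argument: condition (ii) only fixes the value of the integral $\int_S g\,d(\mu-\gamma^*)$, not pointwise values. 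The paper closes this by taking an optimal transport plan $\pi^*\in\Pi(\mu,\gamma^*)$ (whose existence requires the lower semicontinuity of $c$ and the cited result of \cite{ambgig}), deducing from (ii) that $g^*(x)-g^*(y)=c(x,y)$ $\pi^*$-a.s., which determines $g^*$ $\mu$-a.s.\ from its values on $\mathrm{supp}(\nu)$, and then using continuity of $g^*$ together with Condition \ref{Cond:c_cont} to upgrade the $\mu$-a.s.\ and $\nu$-a.s.\ statements to uniqueness on all of $\mathrm{supp}(\mu)\cup\mathrm{supp}(\nu)$ --- a final continuity step your sketch also omits. You would need to supply this (or an equivalent device, e.g.\ comparing two optimizers via their pointwise maximum, which is again $c$-Lipschitz) to complete part 2).
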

\begin{remark}
With many analogous expressions related to relative entropy, one can only conclude the uniqueness of $\gamma^*$ and $g^*$ (up to constant addition) almost everywhere according to either the measure $\mu$ or $\nu$. However, because of the regularity condition $g^*\in\mathrm{Lip}(c,S;C(S))$ and Condition \ref{Cond:c_cont}, the uniqueness of $g^*$ (up to constant addition) on $\mathrm{supp}(\mu)\cup\mathrm{supp}(\nu)$ will follow.
\end{remark}

\begin{proof}
For $n\in\mathbb{N}$ consider $\gamma_{n}\in\mathcal{P}(S)$ that satisfies%
\[
R(\gamma_{n}\lVert\nu)+W_{c}(\mu,\gamma_{n})\leq G(\mu\lVert\nu)+\frac{1}{n}.
\]
Then by \cite[Lemma 1.4.3(c)]{dupell4} $\{\gamma_{n}\}_{n\geq1}$ is precompact in the weak topology, and thus
has a convergent subsequence $\left\{\gamma_{n_{k}}\right\}_{k\geq 1}$. Denote $\gamma^{\ast}\doteq
\lim_{k\rightarrow\infty}\gamma_{n_{k}}$. Then by the lower semicontinuity of
both $R(\cdot\lVert\nu)$ and $W_{c}(\mu,\cdot)$, we have%
\[
R(\gamma^{\ast}\lVert\nu)+W_c(\mu,\gamma^{\ast})\leq\liminf_{k\rightarrow\infty
}\left(  R(\gamma_{n_{k}}\lVert\nu)+W_{c}(\mu,\gamma_{n_{k}})\right)  \leq
G(\mu\lVert\nu).
\]
Since
\[
G(\mu\lVert\nu)=\inf_{\gamma\in\mathcal{P}(S)}\left\{  R(\gamma\lVert
\nu)+W_{c}(\mu,\gamma)\right\}  \leq R(\gamma^{\ast}\lVert\nu)+W_c(\mu
,\gamma^{\ast})
\]
it follows that%
\[
G(\mu\lVert\nu)=R(\gamma^{\ast}\lVert\nu)+W_c(\mu,\gamma^{\ast}),
\]
which shows that $\gamma^{\ast}$ is an optimizer in expression (\ref{Vari}).
If there exist two optimizers $\gamma_{1}\neq\gamma_{2}$, the strict convexity
of $R(\cdot\lVert\nu)$ and convexity of $W_{c}(\mu,\cdot)$ imply that for
$\gamma_{3}=\frac{1}{2}(\gamma_{1}+\gamma_{2})$%
\begin{align*}
R(\gamma_{3}\lVert\nu)+W_{c}(\mu,\gamma_{3})  &  <\frac{1}{2}\left(  \left(
R(\gamma_{1}\lVert\nu)+W_{c}(\mu,\gamma_{1})\right)  +\left(  R(\gamma
_{2}\lVert\nu)+W_{c}(\mu,\gamma_{2})\right)  \right) \\
&  =G(\mu\lVert\nu)\leq R(\gamma_{3}\lVert\nu)+W_{c}(\mu,\gamma_{3}),
\end{align*}
a contradiction. Thus the existence and uniqueness of an optimizer
$\gamma^{\ast}$ of (\ref{Vari}) is proved, which establishes 1) in the statement of the theorem. Before proceeding, we establish the following lemma.

\begin{lemma}\label{opt_integrability}
If $g\in \mathrm{Lip}(c,S)$, then
$$\int_S g d\gamma^* <\infty.$$
\end{lemma}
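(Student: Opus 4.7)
The plan is to bound $\int_S |g|\,d\gamma^*$ by transferring integrability from $\mu$ to $\gamma^*$ via a (near-)optimal coupling in $\Pi(\mu,\gamma^*)$. The two key ingredients are: (i) since $c$ is symmetric and $g\in\mathrm{Lip}(c,S)$, we have the pointwise bound $|g(y)-g(x)|\leq c(x,y)$; and (ii) $W_c(\mu,\gamma^*)\leq G(\mu\lVert\nu)<\infty$, because $\gamma^*$ is the optimizer from part~1) of the theorem.

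First I would invoke existence of an optimal coupling $\pi^*\in\Pi(\mu,\gamma^*)$ attaining $W_c(\mu,\gamma^*)$ — a standard fact since $c$ is lower semicontinuous (by Condition~\ref{con:crep}) on the Polish space $S\times S$. If one wishes to avoid this appeal, $\varepsilon$-optimal couplings followed by $\varepsilon\downarrow 0$ work equally well. Using (i),
\begin{align*}
\int_S |g(y)|\,\gamma^*(dy)
&= \int_{S\times S} |g(y)|\,\pi^*(dx,dy) \\
&\leq \int_{S\times S}\bigl(|g(x)| + c(x,y)\bigr)\pi^*(dx,dy) \\
&= \int_S |g|\,d\mu + W_c(\mu,\gamma^*),
\end{align*}
so the task reduces to showing $\int_S |g|\,d\mu<\infty$.

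For that bound, I would fix any $x_0\in S$ with $a(x_0)<\infty$, which exists since $\mu\in L^1(a)$ forces $a<\infty$ on a set of full $\mu$-measure. The Lipschitz property combined with symmetry of $c$ and Condition~\ref{finite} yields
\[
|g(x)| \leq |g(x_0)| + c(x,x_0) \leq |g(x_0)| + a(x) + a(x_0),
\]
so integrating against $\mu\in L^1(a)$ gives $\int_S |g|\,d\mu \leq |g(x_0)| + a(x_0) + \int_S a\,d\mu < \infty$. Combining the two estimates produces $\int_S|g|\,d\gamma^*<\infty$, which is in fact stronger than the claim of the lemma and, in anticipation of the sequel, verifies the integrability hypothesis of Lemma~\ref{beyond_bounded} with $\theta=\gamma^*$.

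The main (mild) obstacle is the appeal to an optimal or $\varepsilon$-optimal coupling; past that, the argument is just arithmetic with the triangle inequality built into the Lipschitz structure. There is no subtlety from $c$ possibly equaling $\infty$: finiteness of $W_c(\mu,\gamma^*)$ forces $c<\infty$ $\pi^*$-a.s., which is all the integral estimates above actually use.
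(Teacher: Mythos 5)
Your proof is correct, but it follows a genuinely different route from the paper. The paper argues by contradiction entirely within the dual (sup) formulation of $W_c$: assuming some nonnegative $h\in\mathrm{Lip}(c,S)$ has $\int h\,d\gamma^*=\infty$, it plugs the truncations $\max(-h,-n)$ into $W_c(\mu,\gamma^*)=\sup_{g}\int g\,d(\mu-\gamma^*)$ and uses dominated convergence (for the $\mu$-integral, via the same $|h(x)|\leq h(x_0)+a(x_0)+a(x)$ bound you use) together with monotone convergence (for the $\gamma^*$-integral) to force $W_c(\mu,\gamma^*)=\infty$, contradicting $W_c(\mu,\gamma^*)\leq G(\mu\lVert\nu)<\infty$. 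You instead work on the primal side: invoking Kantorovich duality (Theorem \ref{massdual}, available under Condition \ref{con:crep}) and an optimal or $\varepsilon$-optimal coupling $\pi\in\Pi(\mu,\gamma^*)$, you transfer integrability from $\mu$ to $\gamma^*$ through the pointwise estimate $|g(y)|\leq|g(x)|+c(x,y)$, obtaining the quantitative bound $\int_S|g|\,d\gamma^*\leq\int_S|g|\,d\mu+W_c(\mu,\gamma^*)$. Your appeal to existence of an optimal plan is legitimate here (the paper itself cites \cite[Theorem 1.5]{ambgig} for this, since $c$ is lower semicontinuous as a supremum of continuous functions), and your $\varepsilon$-optimal fallback removes even that dependence. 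What each approach buys: yours is direct (no contradiction) and yields an explicit bound useful in its own right, at the cost of importing the duality theorem and a coupling; the paper's stays entirely inside the sup-representation already used to define $W_c$, needing only truncation and convergence theorems. Both correctly deliver the stronger conclusion $\int_S|g|\,d\gamma^*<\infty$, which is what is actually used later with Lemma \ref{beyond_bounded}.
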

\begin{proof}
This can be shown by contradiction. Assume there exists $h \in \mathrm{Lip}(c,S)$ such that $\int_S |h| d\gamma^*=\infty$. By symmetry, we can  just consider $h$ to be non-negative, since $\max (h,0)\in\mathrm{Lip}(c,S)$ and $h=\max(h,0) - \max(-h,0)$. Thus we can assume there exists non-negative $h\in \mathrm{Lip}(c,S)$ satisfying
$$\int_S h d\gamma^* = \infty,$$
and by the fact that $\mu\in L^1(a)$ together with Condition \ref{finite}, 
\begin{align*}
\int_S h d\mu &\leq \int_S \left[ h(0) + c(x,0)\right] \mu(dx)\\
%&\leq \int_S h(0)+a(0)+a(x) \mu(dx)\\
&= h(0) + a(0) +\int_S a(x) \mu(dx)<\infty.
\end{align*}
Then
\begin{align*}
W_c(\mu,\gamma^*) & = \sup_{g \in \mathrm{Lip(c,S)}}\int_S g d(\mu - \gamma^*)\\
&\geq \limsup_{n\to\infty}\int_S \max(-h,-n) d(\mu-\gamma^*)\\
& = \limsup_{n\to\infty}\left[ \int_S \max(-h,-n) d\mu + \int_S \min(h,n)d\gamma^*\right] \\
& = \int_S -h d\mu + \int_S h d\gamma^*\\
&= \infty,
\end{align*}
where the second to last equation comes from dominated and monotone convergence theorems applied to the first and second terms respectively. However, since $\gamma^*$ is the optimizer, we have 
$$W_c(\mu,\gamma^*) \leq W_c(\mu,\gamma^*) + R(\gamma^*\lVert \nu) = G(\mu\lVert \nu) <\infty.$$
This contradiction shows the integrability of $\gamma^*$ with respect to any $\mathrm{Lip}(c,S)$ function. 
\end{proof}

\vspace{\baselineskip}
Now we consider the other variational representation of $G(\mu\lVert \nu)$, which is 
$$G(\mu\lVert \nu) = \sup_{g\in\mathrm{Lip}(c,S;C_b(S))}\left\{\int_S gd\mu-\log\int_S e^{g}d\nu\right\}.$$
Take $g_n\in\mathrm{Lip}(c,S;C_b(S))$ such that 
$$ G(\mu\lVert\nu) - 1/n \leq \int_S g_n d\mu-\log\int_S e^{g_n}d\nu \leq G(\mu\lVert \nu) .$$
Without loss of generality, we can  assume $g_n(x_0)=0$ for some fixed $x_0\in K_0\subset S$.  Since for any $m\in\mathbb{N}$ $K_m\subset S$  is compact, we have that $\left\{g_n\right\}_{n\in\mathbb{N}}$ is bounded and equicontinuous on $K_m$ by Condition \ref{Cond:c_cont}. By the  Arzel\`{a}-Ascoli theorem, there exists a subsequence of $\left\{g_n\right\}_{n\in\mathbb{N}}$ that converges uniformly in $K_m$. Using a diagonalization argument, by taking subsequences sequentially along $\left\{K_m\right\}_{m\in\mathbb{N}}$, where the next subsequence is a subsequence of the former one, and taking one element from each sequence, we conclude there exists a subsequence $\left\{g_{n_j}\right\}_{j\in\mathbb{N}}$, that converges uniformly in any $K_m$. Since $S=\cup_{m\in\mathbb{N}}K_m$, we conclude that $\left\{g_{n_j}\right\}_{j\in\mathbb{N}}$ converges pointwise in $S$. Denotes its limit by $g^*$. It can be easily verified that $g^*\in \mathrm{Lip}(c,S)$.
 
Since $g_{n_j}(x)\leq g_{n_j}(x_0) + c(x_0,x)\leq a(x_0)+a(x)$ and $\int_S\left( a(x_0)+a(x)\right)d\mu <\infty$, by the dominated convergence theorem 
$$\lim_{j\to\infty} \int_S g_{n_j} d\mu = \int_S g^* d\mu.$$
By Fatou's lemma, we have 
$$\liminf_{j\to\infty} \int_S e^{g_{n_j}}d\nu \geq \int e^{g^*}d\nu,$$
and therefore 
$$-\log\int e^{g^*}d\nu\geq\limsup_{j\to\infty} -\int_S e^{g_{n_j}}d\nu.$$

Putting these together, we have 
\begin{align*}
G(\mu\lVert \nu) &= \sup_{g\in\mathrm{Lip}(c,S;C_b(S))}\left\{\int_S gd\mu-\log\int_S e^{g}d\nu\right\}\\
&\leq \limsup_{j\to\infty} \left\{\int_S g_{n_j}d\mu-\log\int_S e^{g_{n_j}}d\nu\right\}\\
&\leq \int_S g^* d\mu - \log\int_S e^{g^*} d\nu\\
&= \left(\int_S g^* d\mu - \int_S g^* d\gamma^* \right)+ \left(\int_S g^* d\gamma^* - \log\int_S e^{g^*} d\nu\right).
\end{align*}
We can add and subtract $\int_S g^*d\gamma^*$ because we have proved in Lemma \ref{opt_integrability} that $\gamma^*$ is integrable with respect to functions in $\mathrm{Lip}(c,S)$, and $g^*\in \mathrm{Lip}(c,S)$. 
By Lemma \ref{beyond_bounded} we have
$$\int_S g^* d\gamma^* - \log\int_S e^{g^*} d\nu \leq R(\gamma^*\lVert\nu). $$
We also have 
$$\int_S g^* d\mu - \int_S g^* d\gamma^* \leq  W_c(\mu,\gamma^*),$$
which is due to 
\begin{align*}
W_c(\mu,\gamma^*)&= \sup_{g\in\mathrm{Lip}(c,S;C_b(S))}\int_S gd(\mu-\gamma^*)\\
&\geq \limsup_{n\to\infty}\int_S\max(\min(g^*,n),-n)d(\mu-\gamma^*)\\
&= \int_S g^*d(\mu-\gamma^*),
\end{align*}
where the last equality is because of the dominated convergence theorem and integrability of $|g^*|$ with respect to $\mu$ and $\gamma^*$ (Lemma \ref{opt_integrability}).
We can therefore continue the calculation above as 
\begin{align*}
 &\left(\int_S g^* d\mu - \int_S g^* d\gamma^* \right)+ \left(\int_S g^* d\gamma^* - \log\int_S e^{g^*} d\nu\right)\\
&\qquad\leq W_c(\mu,\gamma^*) + R(\gamma^*\lVert\nu)\\
&\qquad= G(\mu\lVert\nu).
\end{align*}

Since both the upper and lower bounds on the inequalities coincide, we must have all inequalities to be equalities, and therefore
$$G(\mu\lVert \nu) = \int_S g^*d\mu - \log\int_S e^{g^*} d\nu,$$
$$\int_S g^* d\mu - \int_S g^* d\gamma^* =  W_c(\mu,\gamma^*),$$
and 
$$\int_S g^* d\gamma^* - \log\int_S e^{g^*} d\nu = R(\gamma^*\lVert\nu). $$
The last equation gives us the relationship
$$\frac{d\gamma^*}{d\nu}(x) = \frac{e^{g^*(x)}}{\int_S e^{g^*}d\nu} \quad \nu-a.s.$$

 Thus we have shown the existence of optimizer $g^*\in\mathrm{Lip}(c,S)$ and its relationship with $\gamma^*$. Lastly, for any other optimizer $\bar{g}\in\mathrm{Lip}(c,S)$ the analogous argument shows
% , we have in the same way
% \begin{align*}
% G(\mu\lVert \nu) &= \int_S \bar{g} d\mu - \log\int_S e^{\bar{g}} d\nu\\
% &= (\int_S \bar{g} d\mu - \int_S \bar{g} d\gamma^* )+ (\int_S \bar{g} d\gamma^* - \log\int_S \bar{g} d\nu)\\
% &\leq W_c(\mu,\gamma^*) + R(\gamma^*\lVert\nu)\\
% &= G(\mu\lVert\nu)
% \end{align*}
%
% We would also conclude 
$$\frac{d\gamma^*}{d\nu}(x) = \frac{e^{\bar{g}(x)}}{\int_S e^{\bar{g}}d\nu} \quad \nu-a.s.$$
Hence  uniqueness of the optimizer $g^*$ in $\mathrm{supp}(\nu)$ up to $\nu-a.s.$ is also proved. 

To determine the uniqueness of the optimizer $g^*$ in $\mathrm{supp}(\mu)$, we take an optimal transport plan between $\mu$ and $\gamma^*$, $\pi^{*}\in
\Pi(\mu,\gamma^{*})$ for $W_{c}(\mu,\gamma^{*})$, which means
$$W_c(\mu,\gamma^*) = \inf_{\pi\in\Pi(\mu,\gamma^*)}\left\{\int_{S\times
S}c(x,y)\pi(dx,dy)\right\}=\int_{S\times
S}c(x,y)\pi^*(dx,dy).$$
(Note that $c$ satisfying Condition \ref{con:crep} is lower semicontinuous, and therefore   \cite[Theorem 1.5]{ambgig} shows the existence of an optimal transport plan $\pi^*$.)

Since $g^*(x)-g^*(y)\leq c(x,y)$, 
\begin{align*}
W_c(\mu,\gamma^*) &= \int_{S\times
S}c(x,y)\pi^*(dx,dy)\\
 &\geq \int_{S\times
 S}\left[ g^*(x)-g^*(y)\right] \pi^*(dx,dy)\\
%&=\int_S g^*(x)\mu(dx) - \int_S g^*(y)\gamma^*(dy)\\
&= \int_S g^*(x) (\mu-\gamma^*)(dx)\\
&= W_c(\mu,\gamma^*).
\end{align*}
Then the only inequality above must be equality, which implies that for $(x,y)\in\mathrm{supp}(\gamma^*)$, $g^*(x)-g^*(y)=c(x,y)$, $\pi^*- a.s.$ This is also true for any other optimizer $\bar{g}\in \mathrm{Lip}(c,S)$ for (\ref{Vari}). Thus we are able to determine $g^*$ uniquely in $\mathrm{supp}(\mu)$ $\mu-a.s.$ with the help of $\pi^*$ and data of $g^*$ in $\mathrm{supp}(\nu)$. Lastly, since $g^*\in\mathrm{Lip}(c,S)$ and by Condition \ref{Cond:c_cont}, we conclude the uniqueness of $g^*$ in $\mathrm{supp}(\mu)\cup\mathrm{supp}(\nu)$ by the continuity of $g^*$.
\end{proof}

\begin{remark}
When $\mu\ll\nu$ Theorem \ref{optimizer} implies that for some constant $c_0$%
\[
g^{\ast}(x)=\log\left(  \frac{d\gamma^{\ast}}{d\nu}(x)\right)  -c_{0}%
\quad \nu-a.s.
\]
Hence 
\[
G(\mu\lVert\nu)=\int_{S}g^{\ast}d\mu-\log\int_{S}e^{g^{\ast}}d\nu=\int_{S}
\log\left(  \frac{d\gamma^{\ast}}{d\nu}(x)\right)  d\mu,
\]
and so the $\Gamma$-divergence of $\mu$ with respect to $\nu$ looks like a \textquotedblleft modified\textquotedblright\ version of relative entropy.
\end{remark}

The next theorem tells us that 3) of Theorem \ref{optimizer} is not only a
description of of the pair of optimizer $(g^{*},\gamma^{*})$, but also a
characterization of it.

\begin{theorem}
\label{verif} 
Suppose Conditions \ref{con:crep}, \ref{mea-det}, \ref{finite} and \ref{Cond:c_cont} are 
satisfied. Fix $\mu,\nu\in L^{1}(a)$. If $g_{1}\in$\emph{ Lip}$(c,S)$ and
$\gamma_{1}\in\mathcal{P}(S)$ satisfy condition 3) in Theorem
\ref{optimizer}, then $(g_{1},\gamma_{1})$ are optimizers in the corresponding
variational problem (\ref{Vari}).
\end{theorem}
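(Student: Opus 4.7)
The plan is essentially to start from the conditions in 3), collapse the two quantities $W_c(\mu,\gamma_1)$ and $R(\gamma_1 \lVert \nu)$ into a single expression involving $g_1$, and then sandwich that expression between $G(\mu\lVert\nu)$ coming from the variational formula on one side and from the infimum characterization on the other.

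First I would use condition 3)(i) to compute $R(\gamma_1\lVert\nu)$ directly. From $\frac{d\gamma_1}{d\nu}(x) = e^{g_1(x)}/\int_S e^{g_1}d\nu$ ($\nu$-a.s.), taking the log gives $\log(d\gamma_1/d\nu) = g_1 - \log\int_S e^{g_1}d\nu$ on $\mathrm{supp}(\gamma_1)$, so
\[
R(\gamma_1\lVert\nu) = \int_S g_1\, d\gamma_1 - \log\int_S e^{g_1}d\nu,
\]
provided $g_1$ is $\gamma_1$-integrable. This integrability and that of $g_1$ with respect to $\mu$ are implicit in 3)(ii), since the right-hand side $\int_S g_1\, d(\mu-\gamma_1)$ must be well defined; alternatively, $\mu$-integrability follows from $g_1\in\mathrm{Lip}(c,S)$, Condition \ref{finite}, and $\mu\in L^1(a)$, exactly as in the proof of Theorem \ref{optimizer}. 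Combining with 3)(ii) yields
\[
W_c(\mu,\gamma_1) + R(\gamma_1\lVert\nu) = \int_S g_1\, d(\mu-\gamma_1) + \int_S g_1\, d\gamma_1 - \log\int_S e^{g_1}d\nu = \int_S g_1\, d\mu - \log\int_S e^{g_1}d\nu.
\]

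Next I would invoke Lemma \ref{beyond_bounded} with $\theta = \mu$ and $g = g_1 \in \mathrm{Lip}(c,S)$: since $\int_S |g_1|\,d\mu < \infty$, we obtain
\[
\int_S g_1\, d\mu - \log\int_S e^{g_1}d\nu \leq G(\mu\lVert\nu).
\]
On the other hand, the inf-convolution representation in Theorem \ref{thm:main} (equivalently, (\ref{Vari})) gives the opposite direction
\[
G(\mu\lVert\nu) \leq W_c(\mu,\gamma_1) + R(\gamma_1\lVert\nu).
\]
Chaining these two inequalities with the identity derived in the previous paragraph forces equality throughout:
\[
G(\mu\lVert\nu) = W_c(\mu,\gamma_1) + R(\gamma_1\lVert\nu) = \int_S g_1\, d\mu - \log\int_S e^{g_1}d\nu.
\]
The first equality says $\gamma_1$ attains the infimum in (\ref{Vari}), and the second says $g_1$ attains the supremum (in the extended $\mathrm{Lip}(c,S)$ sense of Theorem \ref{optimizer}(2)), completing the proof.

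There is no real obstacle here beyond the routine integrability bookkeeping; the whole argument is essentially a telescoping cancellation between the $\int g_1\, d\gamma_1$ terms in $R(\gamma_1\lVert\nu)$ and $W_c(\mu,\gamma_1)$. The only point requiring a moment's care is ensuring that the hypotheses genuinely force $g_1$ to be integrable with respect to both $\mu$ and $\gamma_1$ so that the rearrangement in the first display and the application of Lemma \ref{beyond_bounded} are legal; both are guaranteed by the standing assumptions $\mu,\nu\in L^1(a)$, Condition \ref{finite}, and the finite value of $W_c(\mu,\gamma_1)$ appearing in 3)(ii).
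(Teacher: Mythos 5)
Your proposal is correct and follows essentially the same route as the paper's proof: derive $R(\gamma_1\lVert\nu)+W_c(\mu,\gamma_1)=\int_S g_1\,d\mu-\log\int_S e^{g_1}d\nu$ from condition 3), bound this above by $G(\mu\lVert\nu)$ via Lemma \ref{beyond_bounded} (using the Lipschitz bound, Condition \ref{finite}, and $\mu\in L^1(a)$ for $\mu$-integrability of $g_1$), and close the sandwich with the inf-convolution representation. Your extra remarks on the $\gamma_1$-integrability bookkeeping only make explicit what the paper leaves implicit.
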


\begin{proof}
The theorem follows from the two variational characterization of $\Gamma$-divergence
in (\ref{Vari}).
Condition 3) of Theorem \ref{optimizer} implies
\[
R(\gamma_{1}\lVert\nu)=\int_{S}g_{1}d\gamma_{1}-\log\int_{S}e^{g_{1}}d\nu
\mbox{ and }
W_{c}(\mu,\gamma_{1})=\int_{S}g_{1}d(\mu-\gamma_{1}),
\]
and therefore 
\[
R(\gamma_{1}\lVert\nu)+W_{c}(\mu,\gamma_{1})=\int_{S}g_{1}d\mu-\log\int
_{S}e^{g_{1}}d\nu.
\]
This implies
\begin{align*}
G(\mu\lVert\nu)  &  =\inf_{\gamma\in\mathcal{P}(S)}\left\{  R(\gamma\lVert
\nu)+W_{c}(\mu,\gamma)\right\} \\
&  \leq R(\gamma_{1}\lVert\nu)+W_{c}(\mu,\gamma_{1})\\
&  =\int_{S}g_{1}d\mu-\log\int_{S}e^{g_{1}}d\nu\\
&  \leq G(\mu\lVert\nu).
\end{align*}
The first inequality comes from the fact that $\gamma_1\in \mathcal{P}(S)$, while the second needs a little more discussion, which will be given below. Assuming this, the last display shows that 
$(g_{1},\gamma_{1})$ are optimizers. The second inequality follows from Lemma \ref{beyond_bounded} and the fact that 

\begin{align*}
\int_S |g_1(x)|\mu(dx) &\leq \int_S |g_1(0)|+c(0,x)\mu(dx)\\
&\leq \int_S |g_1(0)|+a(0)+a(x)\mu(dx)<\infty.
\end{align*}
The proof is complete.
\end{proof}

\vspace{\baselineskip}
The last theorem answers questions i) and ii) raised earlier in this section, now we want to answer iii), which is to characterize the directional derivative of $G(\mu\lVert\nu)$ in the first variable when fixing the second one, i.e., 
$$\lim_{\varepsilon\to 0^+} \frac{1}{\varepsilon}\left(G(\mu+\varepsilon \rho\lVert \nu)-G(\mu\lVert\nu)\right)$$
for $\rho\in\mathcal{M}_0(S)$ which satisfies certain conditions. From Theorem \ref{optimizer} and remarks following it  we know that any optimizer $g^*$ of expression (\ref{Vari}) is unique in $\mathrm{supp}(\mu)\cup \mathrm{supp}(\nu)$, up to an addition constant. However, there is still freedom to choose $g^*$ in $S\backslash\left\{\mathrm{supp}(\mu)\cup \mathrm{supp}(\nu)\right\}$, since the variational problem in (\ref{Vari}) does not take into account of the information of $g^*$ outside $\mathrm{supp}(\mu)\cup \mathrm{supp}(\nu)$, other than requiring that $g^*$ belong to $\mathrm{Lip}(c,S)$. We will define a special $g^*$ that is uniquely defined not only in $\mathrm{supp}(\mu)$ and $\mathrm{supp}(\nu)$, but also on $S\backslash\left\{\mathrm{supp}(\mu)\cup \mathrm{supp}(\nu)\right\}$. For $x\in \left\{\mathrm{supp}(\mu)\cup \mathrm{supp}(\nu)\right\}$, we let $g^*$ be an optimizer of (\ref{Vari}). For $x\in S\backslash\left\{\mathrm{supp}(\mu)\cup \mathrm{supp}(\nu)\right\}$, set
\begin{align}\label{the_opt}
g^*(x)\doteq\inf_{y\in\mathrm{supp}(\nu)}\left\{g^*(y)+c(x,y)\right\}.
\end{align}
From now on we will use the notation $g^*$ for the function defined in \eqref{the_opt}. The following lemma 
confirms that this construction of $g^*$ still lies in $\mathrm{Lip}(c,S)$.

\begin{lemma}
The following two statements hold.

1) For $x\in \mathrm{supp}(\mu)$, the expression $(\ref{the_opt})$ also holds. In other words, for $x\in S\backslash\mathrm{supp}(\nu)$, we have 

$$g^*(x) = \inf_{y\in\mathrm{supp}(\nu)}\left\{g^*(y)+c(x,y)\right\}.$$

2) $g^*$ defined by equation (\ref{the_opt}) is in $\mathrm{Lip}(c,S)$. In addition, 
\begin{align}\label{biggest}
g^*(x) = \sup\{h(x): h\in \mathrm{Lip}(c,S), h(y) = g^*(y)\ \mathrm{for}\ y\in\mathrm{supp}(\nu) \}
\end{align}
\end{lemma}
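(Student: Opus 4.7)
The plan is to prove (1) first using the structural properties of the optimizer derived in Theorem \ref{optimizer}, then use part of the work from (1) to handle (2).

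For part (1), the inequality
\[
g^*(x)\leq\inf_{y\in\mathrm{supp}(\nu)}\{g^*(y)+c(x,y)\}
\]
holds for every $x\in S$ directly from $g^*\in\mathrm{Lip}(c,S)$. To get the reverse on $\mathrm{supp}(\mu)$, I would invoke the identity established at the end of the proof of Theorem \ref{optimizer}: for an optimal coupling $\pi^*\in\Pi(\mu,\gamma^*)$ one has $g^*(x)-g^*(y)=c(x,y)$ for $\pi^*$-a.e.\ $(x,y)$. Disintegrating $\pi^*(dx,dy)=\mu(dx)K(x,dy)$ and using that the second marginal equals $\gamma^*\ll\nu$, the kernel $K(x,\cdot)$ is supported in $\mathrm{supp}(\nu)$ for $\mu$-a.e.\ $x$, and hence for $\mu$-a.e.\ $x$ there exists $y\in\mathrm{supp}(\nu)$ with $g^*(x)=g^*(y)+c(x,y)$. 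This yields $g^*(x)\geq\inf_{y\in\mathrm{supp}(\nu)}\{g^*(y)+c(x,y)\}$ on a $\mu$-full subset of $\mathrm{supp}(\mu)$. Since both sides of the claimed identity are continuous in the $d$-topology (the right side is $c$-Lipschitz by the triangle inequality for $c$, and Condition \ref{Cond:c_cont} upgrades $c$-Lipschitz to $d$-continuity), the coincidence set is closed; being $\mu$-full it necessarily contains $\mathrm{supp}(\mu)$.

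For part (2), I would first show that $g^*$ defined by (\ref{the_opt}) lies in $\mathrm{Lip}(c,S)$. Using the triangle inequality for $c$ (the remark after Condition \ref{con:crep}), for any $x_1,x_2\in S$ and $y\in\mathrm{supp}(\nu)$,
\[
g^*(y)+c(x_1,y)\leq g^*(y)+c(x_1,x_2)+c(x_2,y);
\]
taking the infimum over $y$ gives $g^*(x_1)\leq g^*(x_2)+c(x_1,x_2)$, and symmetry yields the two-sided bound. Continuity of $g^*$ in the $d$-topology again follows from Condition \ref{Cond:c_cont}, so $g^*\in\mathrm{Lip}(c,S)$.

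For the extremal characterization (\ref{biggest}), any competitor $h$ satisfies $h(x)\leq h(y)+c(x,y)=g^*(y)+c(x,y)$ for every $y\in\mathrm{supp}(\nu)$, hence $h(x)\leq g^*(x)$ upon taking the infimum. In the other direction, $g^*$ itself is a competitor: it is in $\mathrm{Lip}(c,S)$ as just shown, and for $y\in\mathrm{supp}(\nu)$ the formula (\ref{the_opt}) gives $g^*(y)\leq g^*(y)+c(y,y)=g^*(y)$ together with the reverse inequality from the Lipschitz property, so it matches the original optimizer on $\mathrm{supp}(\nu)$. Hence the supremum in (\ref{biggest}) is attained at $g^*$. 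The main technical obstacle is the disintegration step in part (1) and the passage from $\mu$-a.e.\ equality to equality on all of $\mathrm{supp}(\mu)$, which relies on $c$-Lipschitz functions being $d$-continuous via Condition \ref{Cond:c_cont}.
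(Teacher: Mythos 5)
Your proposal is correct and reaches the same conclusions, but part 1) is executed by a genuinely different mechanism than the paper's. The paper argues pointwise: for each $x\in\mathrm{supp}(\mu)$ it picks $y_x\in\mathrm{supp}(\nu)$ with $(x,y_x)\in\mathrm{supp}(\pi^*)$ and invokes the Ambrosio--Gigli remark on supports of optimal plans to get $g^*(x)=g^*(y_x)+c(x,y_x)$ at every such $x$, so no measure-theoretic upgrade is needed. You instead stay with the $\pi^*$-a.e.\ identity already established inside the proof of Theorem \ref{optimizer}, disintegrate $\pi^*$ over $\mu$ (correctly noting that the conditional kernel charges only $\mathrm{supp}(\nu)$ because the second marginal is $\gamma^*\ll\nu$), and then pass from a $\mu$-full coincidence set to all of $\mathrm{supp}(\mu)$ by closedness; this avoids the external citation at the price of needing continuity of $x\mapsto\inf_{y\in\mathrm{supp}(\nu)}\{g^*(y)+c(x,y)\}$, which you obtain from the triangle inequality for $c$ together with Condition \ref{Cond:c_cont} --- the same level of rigor at which the paper itself treats $d$-continuity of $c$-Lipschitz functions, so this is not a gap relative to the paper. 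For part 2) your argument is a streamlined version of the paper's: the paper checks the Lipschitz bound in three cases (both points in $\mathrm{supp}(\nu)$, one in, none in) using near-optimizers in the infimum, whereas you first observe that formula (\ref{the_opt}) in fact represents $g^*$ at every point of $S$ (on $\mathrm{supp}(\mu)$ by part 1, on $\mathrm{supp}(\nu)$ because $c(y,y)=0$ under Condition \ref{con:crep} plus the Lipschitz property of the optimizer) and then a single application of the triangle inequality covers all pairs; you should state that global representation explicitly before the triangle-inequality step, since as written that step quietly uses it at both $x_1$ and $x_2$. Your argument for (\ref{biggest}) coincides with the paper's.
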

\begin{proof}

1) For $x\in\mathrm{supp}(\mu)$, from an optimal transport plan between $\mu$ and $\gamma^*$, $\pi^*\in\Pi(\mu,\gamma^*)$ for $W_c(\mu,\gamma^*)$, we know there exists $y_x\in \mathrm{supp}(\nu)$ such that $(x,y_x)\in\mathrm{supp}(\pi^*)$. Thus by \cite{ambgig}[Remark 1.15],
$$g^*(x)= g^*(y_x) +c(x,y_x).$$
On the other hand, by Theorem \ref{optimizer}, $g^*|_{\mathrm{supp}(\nu)\cup\mathrm{supp}(\mu)}\in\mathrm{Lip}(c,S)$. Thus, for other $y\in\mathrm{supp}(\nu)$, $g^*(x)\leq c(x,y)+g^*(y)$, which in turn gives

$$g^*(x) \leq \inf_{y\in\mathrm{supp}(\nu)}\left\{g^*(y)+c(x,y)\right\}.$$
By combining the two expressions above, we have for $x\in\mathrm{supp}(\mu)$, (\ref{the_opt}) also holds. In other words, $g^*$ is totally characterized by $g^*|_\mathrm{supp}(\nu)$ and (\ref{the_opt}).

2) We check the Lipschitz condition for $g^*$ for pair of points according to whether they are in $\mathrm{supp}(\nu)$. First, since $g^*|_{\mathrm{supp}(\mu)\cup\mathrm{supp}(\nu)}$ is an optimizer for (\ref{Vari}), by Theorem \ref{optimizer}, $g^*|_{\mathrm{supp}(\mu)\cup\mathrm{supp}(\nu)}$ satisfies the Lipchitz condition, i.e., for $y_1,y_2\in\mathrm{supp}(\nu)$,
\begin{align}\label{Lip_in_supp_nu}
g^*(y_2)-c(y_1,y_2)\leq g^*(y_1)  \leq g^*(y_2)+c(y_1,y_2).
\end{align}

For $x\not\in\mathrm{supp}(\nu)$ and $y\in\mathrm{supp}(\nu)$, by (\ref{the_opt}) we have 
$$g^*(x) \leq g^*(y)+c(x,y).$$
On the other hand, for any $0<n<\infty$, there exists $y_1\in\mathrm{supp}(\nu)$ such that
$$g^*(x) \geq g^*(y_1) + c(x,y_1) -\frac{1}{n}.$$
Notice that both $y$ and $y_1$ are from $\mathrm{supp}(\mu)$, so from (\ref{Lip_in_supp_nu}), we have $g^*(y_1)\geq g^*(y) - c(y,y_1)$, thus we have 
\begin{align*}
g^*(x) &\geq g^*(y_1) + c(x,y_1) -\frac{1}{n}\\
&\geq g^*(y) - c(y,y_1) + c(x,y_1) -\frac{1}{n}\\
&\geq g^*(y) - c(y,x) -\frac{1}{n},
\end{align*}
where the last equation uses the triangle inequality property of $c$. Now since $n>0$ is arbitrary, by getting $n\to\infty$, we have 
$$g^*(x) \geq g^*(y) - c(x,y).$$
Combine both sides together, we have for $x\notin\mathrm{supp}(\nu)$, $y\in\mathrm{supp}(\mu)$,
$$g^*(y) - c(x,y) \leq g^*(x) \leq g^*(y) + c(x,y).$$

Lastly, we check for $x_1,x_2\not\in \mathrm{supp}(\nu)$ the Lipschitz constraint is satisfied. 
From  the definition (\ref{the_opt}),
%and uniqueness of $g^*$ in $\mathrm{supp}(\mu)$ in the proof of  Theorem \ref{optimizer}, 
we know for any $n<\infty$
there exists $y_1\in \mathrm{supp}(\nu)$ such that 
$$c(x_1,y_1) - 1/n \leq g^*(x_1)-g^*(y_1).$$
Also, because $y_1\in\mathrm{supp}(\nu)$,
$$g^*(x_2)-g^*(y_1) \leq c(x_2,y_1).$$
Therefore 
\begin{align*}
g^*(x_2)-g^*(x_1)&\leq (c(x_2,y_1)-c(x_1,y_1))+1/n\\
&\leq c(x_1,x_2)+1/n,
\end{align*}
where the last inequality uses the triangle inequality property of $c$. Since $n>0$ is arbitrary and we can swap the roles of $x_1$ and $x_2$, we have proved the Lipschitz condition of $g^*$ for $x_1,x_2\not\in\mathrm{supp}(\nu)$. Thus the statement that $g^*\in \mathrm{Lip}(c,S)$ is proven.

For (\ref{biggest}), notice that for $h\in\mathrm{Lip}(c,S)$,  $x\in S$ and $y\in\mathrm{supp}(\nu)$,
$$h(x) \leq h(y)+ c(x,y).$$
So if $h(y) = g^*(y)$ for $y\in\mathrm{supp}(\nu)$, then for $x\in S\backslash\mathrm{supp}(\nu)$,
$$h(x) \leq \inf_{y\in\mathrm{supp}(\nu)}\left\{h(y)+c(x,y)\right\} = \inf_{y\in\mathrm{supp}(\nu)}\left\{g^*(y)+c(x,y)\right\}=g^*(x).$$
Since $g^*$ is also in $\mathrm{Lip}(c,S)$, this proves (\ref{biggest}).
\end{proof}

\vspace{\baselineskip}
Then based on this construction, we have the following result.  

\begin{theorem}\label{first_variation}
Take $\Gamma=\mathrm{Lip}(c,S;C_b(S))$ where $c$ satisfies the conditions of Theorem \ref{optimizer} and $\mu,\nu\in L^1(a)$. Take $\rho=\rho_+-\rho_-\in\mathcal{M}_0(S)$ where $\rho_+,\rho_-\in\mathcal{P}(S)$ are mutually singular probability measures, $\rho_+\in L^1(a)$, and assume there exists $\varepsilon_0>0$ such that $\mu+\varepsilon\rho\in\mathcal{P}(S)$ for $0<\varepsilon\leq\varepsilon_0$. Then
$$\lim_{\varepsilon\to 0^+} \frac{1}{\varepsilon}\left(G(\mu+\varepsilon \rho\lVert \nu)-G(\mu\lVert\nu)\right)=\int_S g^* d\rho.$$
where $g^*$ is the optimizer found in (\ref{the_opt}). 
\end{theorem}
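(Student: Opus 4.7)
The plan is to prove the stated limit by matching lower and upper bounds on the difference quotient. First note that $\int_S g^*\,d\rho$ is well-defined: $g^*\in\mathrm{Lip}(c,S)$ combined with Condition \ref{finite} yields $|g^*(x)|\leq C+a(x)$ for some constant $C$, while $\rho_+\in L^1(a)$ by hypothesis and $\rho_-\in L^1(a)$ because the constraint $\mu+\varepsilon\rho\geq 0$ together with $\mu\in L^1(a)$ forces $\rho_-\leq\rho_++\mu/\varepsilon_0$. For the lower bound I would apply Lemma \ref{beyond_bounded} to $g^*$ with the probability measure $\mu+\varepsilon\rho$ (whose integrability against $|g^*|$ follows from the preceding sentence), yielding
$$G(\mu+\varepsilon\rho\lVert\nu)\geq\int_S g^*\,d(\mu+\varepsilon\rho)-\log\int_S e^{g^*}\,d\nu=G(\mu\lVert\nu)+\varepsilon\int_S g^*\,d\rho,$$
where the equality invokes Theorem \ref{optimizer} to recognize $g^*$ as an optimizer of $G(\mu\lVert\nu)$. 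Dividing by $\varepsilon$ gives $\liminf_{\varepsilon\to 0^+}[G(\mu+\varepsilon\rho\lVert\nu)-G(\mu\lVert\nu)]/\varepsilon\geq\int_S g^*\,d\rho$.

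For the upper bound I would choose, for each $\varepsilon\in(0,\varepsilon_0]$, an optimizer $g_\varepsilon$ of $G(\mu+\varepsilon\rho\lVert\nu)$ constructed via the formula \eqref{the_opt} relative to $\mathrm{supp}(\nu)$ and normalized so $g_\varepsilon(x_0)=g^*(x_0)$ at a fixed $x_0\in\mathrm{supp}(\nu)$. Applying Lemma \ref{beyond_bounded} to $g_\varepsilon$ and $\mu$, then subtracting from the optimality identity $G(\mu+\varepsilon\rho\lVert\nu)=\int g_\varepsilon\,d(\mu+\varepsilon\rho)-\log\int e^{g_\varepsilon}\,d\nu$, produces
$$G(\mu+\varepsilon\rho\lVert\nu)-G(\mu\lVert\nu)\leq\varepsilon\int_S g_\varepsilon\,d\rho,$$
so the task reduces to showing $\limsup_{\varepsilon\to 0^+}\int_S g_\varepsilon\,d\rho\leq\int_S g^*\,d\rho$.

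The main obstacle is obtaining enough control of $g_\varepsilon$ as $\varepsilon\to 0$. The normalization together with the Lipschitz condition gives the uniform bound $|g_\varepsilon(x)|\leq C+a(x)$, so by the Arzel\`{a}--Ascoli argument used in the proof of Theorem \ref{optimizer}, from any sequence $\varepsilon_k\to 0$ one can extract a further subsequence along which $g_{\varepsilon_k}\to\bar g\in\mathrm{Lip}(c,S)$ pointwise and uniformly on compacts. Using continuity of $G(\cdot\lVert\nu)$ at $\mu$ (which follows from the lower semicontinuity in Lemma \ref{basic} together with the matching upper bound $G(\mu+\varepsilon\rho\lVert\nu)\leq G(\mu\lVert\nu)+\varepsilon W_c(\rho_+,\rho_-)$ furnished by the inf-convolution representation and the triangle inequality for $W_c$), dominated convergence for $\int g_\varepsilon\,d\mu$, and Fatou for $\int e^{g_\varepsilon}\,d\nu$, the limit $\bar g$ is identified as an optimizer of $G(\mu\lVert\nu)$; the uniqueness part of Theorem \ref{optimizer} combined with the shared normalization then forces $\bar g=g^*$ on $\mathrm{supp}(\mu)\cup\mathrm{supp}(\nu)$, so the whole family $g_\varepsilon\to g^*$ pointwise on this set.

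Finally I would split $\int g_\varepsilon\,d\rho=\int g_\varepsilon\,d\rho_+-\int g_\varepsilon\,d\rho_-$. Since $\mathrm{supp}(\rho_-)\subset\mathrm{supp}(\mu)$ and $g_\varepsilon\to g^*$ there, dominated convergence with majorant $C+a$ gives $\int g_\varepsilon\,d\rho_-\to\int g^*\,d\rho_-$. For the $\rho_+$ piece, the Lipschitz bound $g_\varepsilon(x)\leq g_\varepsilon(y)+c(x,y)$ for each $y\in\mathrm{supp}(\nu)$ combined with the established convergence on $\mathrm{supp}(\nu)$ yields
$$\limsup_{\varepsilon\to 0^+}g_\varepsilon(x)\leq\inf_{y\in\mathrm{supp}(\nu)}\{g^*(y)+c(x,y)\}=g^*(x)$$
for all $x\in S$, and the reverse Fatou lemma with the same integrable majorant then produces $\limsup\int g_\varepsilon\,d\rho_+\leq\int g^*\,d\rho_+$. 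Combining these estimates gives $\limsup\int g_\varepsilon\,d\rho\leq\int g^*\,d\rho$, matching the lower bound and completing the proof.
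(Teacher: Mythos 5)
Your proposal is correct, and while the lower bound coincides with the paper's (apply Lemma \ref{beyond_bounded} to $g^*$ with $\theta=\mu+\varepsilon\rho$), your upper bound takes a genuinely different and arguably cleaner route. The paper treats $f(\varepsilon)=G(\mu+\varepsilon\rho\lVert\nu)$ as a convex function of $\varepsilon$, shows $f'(\varepsilon)=\int_S g^*_\varepsilon\,d\rho$ at points of differentiability, and then invokes the convex-analysis fact that $f'_+(0)=\lim f'(\varepsilon_n)$ (Rockafellar) before passing to a compactness limit of the $g^*_{\varepsilon_n}$; you bypass all of that by using the two-sided application of Lemma \ref{beyond_bounded} (optimality identity at $\mu+\varepsilon\rho$ minus the lemma at $\mu$) to get the one-line estimate $G(\mu+\varepsilon\rho\lVert\nu)-G(\mu\lVert\nu)\leq\varepsilon\int_S g_\varepsilon\,d\rho$ directly, which is in fact the same elementary inequality the paper uses inside its derivative computation, just deployed at $\delta=\varepsilon$. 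From there both arguments share the Arzel\`a--Ascoli extraction and the lower-semicontinuity/Fatou chain identifying the limit $\bar g$ as an optimizer for $G(\mu\lVert\nu)$. Your final comparison step is also more careful than the paper's: the paper concludes from $g_0^*\leq g^*$ (via (\ref{biggest})) that $\int g_0^*\,d\rho\leq\int g^*\,d\rho$, which for a signed $\rho$ implicitly needs equality of the two functions on $\mathrm{supp}(\rho_-)$; you make this explicit by noting $\rho_-\leq\mu/\varepsilon_0$ forces $\mathrm{supp}(\rho_-)\subset\mathrm{supp}(\mu)$, pinning $g_\varepsilon\to g^*$ there through the normalization at $x_0\in\mathrm{supp}(\nu)$ and the uniqueness in Theorem \ref{optimizer}, while on the $\rho_+$ side you re-derive the maximality property (\ref{biggest}) in limsup form and apply reverse Fatou with the majorant $C+a$. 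The only costs of your route are mild bookkeeping ones you already address (existence of optimizers at $\mu+\varepsilon\rho$ requires $\mu+\varepsilon\rho\in L^1(a)$, and the continuity bound $G(\mu+\varepsilon\rho\lVert\nu)\leq G(\mu\lVert\nu)+\varepsilon W_c(\rho_+,\rho_-)$ is actually dispensable, lower semicontinuity alone closes the identification chain); what it buys is the elimination of the a.e.-differentiability and derivative-convergence machinery and a tighter justification of the last inequality.
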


\begin{proof}
We use the variational formula (\ref{Vari}) for $G(\mu+\varepsilon\rho\lVert \nu)$, where $\mu+\varepsilon\rho\in\mathcal{P}(S)$ and $\rho_+\in L^1(a)$. Recall that $g^*$ is the optimizer for (\ref{Vari}). Using Lemma \ref{beyond_bounded} with $\theta = \mu+\varepsilon\rho$,
%\footnote{Y. if we can't use the lemma directly then can we rewrite it?(Done)}
\begin{align*}
G(\mu+\varepsilon\rho\lVert \nu) 
&\geq \int_S g^*d(\mu+\varepsilon\rho) - \log\int_S e^{g^*} d\nu\\
&= \varepsilon\int_S g^*d\rho + \int_S g^*d\mu-\log\int_S e^{g^*}d\nu\\
&=\varepsilon \int_S g^*d\rho + G(\mu\lVert\nu). 
\end{align*}
Thus
\begin{equation}
    \liminf_{\varepsilon\to 0^+} \frac{1}{\varepsilon}\left(G(\mu+\varepsilon \rho\lVert \nu)-G(\mu\lVert\nu)\right)\geq\int_S g^* d\rho.
    \label{eqn:LB}
\end{equation}

The other direction is more delicate. Take $f(\varepsilon) = G(\mu+\varepsilon\rho\lVert\nu)$. From Lemma \ref{basic} we know that $f$ is convex, lower semicontinuous and finite on $[0,\varepsilon_0]$. Using a property of convex functions in one dimension, we know $f$ is differentiable on $(0,\varepsilon_0)$ except for a countable number of points. Take $\varepsilon\in(0,\varepsilon_0)$ to be a place where $f$ is differentiable, and $\delta>0$ small. Take $g^*_\varepsilon\in \mbox{Lip}(c,S)$ to be the optimizer for $G(\mu+\varepsilon\rho\lVert\nu)$ satisfying $g^*_\varepsilon(0)=0$, so that
$$G(\mu+\varepsilon\rho\lVert\nu)=\int_S g^*_\varepsilon d(\mu+\varepsilon\rho) - \log\int_S e^{g^*_\varepsilon} d\nu.$$
Then using an argument that already appeared in this proof, we have 
$$G(\mu+(\varepsilon+\delta)\rho\lVert\nu) - G(\mu+\varepsilon\rho\lVert\nu )\geq \delta\int_S g^*_\varepsilon d\rho,$$
and 
$$G(\mu+(\varepsilon-\delta)\rho\lVert\nu) - G(\mu+\varepsilon\rho\lVert\nu)\geq -\delta\int_S g^*_\varepsilon d\rho.$$
It follows that 
\begin{align*}
\int_S g^*_\varepsilon d\rho&\leq \lim_{\delta\to 0} \frac{1}{\delta} \left(G(\mu+(\varepsilon+\delta)\rho\lVert\nu) - G(\mu+\varepsilon\rho\lVert\nu ) \right)\\
&= f'(\varepsilon)\\
&= \lim_{\delta\to 0}\frac{1}{\delta} \left(G(\mu+\varepsilon\rho\lVert\nu) - G(\mu+(\varepsilon - \delta)\rho\lVert\nu ) \right) \\
&\leq \int_S g^*_\varepsilon d\rho.
\end{align*}
and therefore 
\begin{equation}
    f'(\varepsilon) = \int_S g^*_\varepsilon d\rho.\label{eqn:fderiv}
\end{equation}

If we denote 
$$f'_+(0) = \lim_{\varepsilon\to 0^+} \frac{1}{\varepsilon}(f(\varepsilon)-f(0)),$$
then by a property of convex functions \cite[Theorem 24.1]{roc},
for any sequence of $\left\{\varepsilon_n\right\}_{n\in\mathbb{N}}$ such that $\varepsilon_0 >  \varepsilon_n\downarrow 0$ and $f$ is differentiable at $\varepsilon_n>0$, we have 
$$f'_+(0)  = \lim_{n\to\infty}f'(\varepsilon_n) = \lim_{n\to\infty} \int_S g^*_{\varepsilon_n}d\rho.$$
By the same argument used in the proof of Theorem \ref{optimizer} (paragraphs following Lemma \ref{opt_integrability}), i.e., by applying the Arzel\`{a}-Ascoli theorem to $\{g_{\varepsilon_n}\}$ on each compact set $K_m\subset S$, and then doing a diagonalization argument, 
%\footnote{Y. for now just make the reference to the place where the argument appears more precise. In the future may want to make a lemma.(Done more)} 
there exists a subsequence of $\left\{n_k\right\}_{k\geq 0} \subset \left\{n\right\}_{n\geq 0}$, such that $g_{\varepsilon_{n_k}}^*$ converges pointwise to a function that we denote by $g_0^*\in\mathrm{Lip}(c,S)$. To simplify the notation, let $n$ denote the convergent subsequence.

Since $\rho = \rho_+ - \rho_-$, where $\rho_+\in L^1(a)$ and $\mu+\varepsilon_0 \rho\in P(S)$, $\mu\in L^1(a)$, we have
$$0\leq \int a d(\mu+\varepsilon_0 \rho) = \int a d(\mu + \varepsilon_0\rho_+ - \varepsilon_0\rho_-).$$
Thus 
$$\int a \rho_- \leq \frac{1}{\varepsilon_0} (\int a d\mu + \varepsilon_0\int a d\rho_+) < \infty,$$
which implies $\rho_-\in L^1(a)$. Therefore
$$\int_S a d|\rho| < \infty.$$
Here $|\rho|=\rho_+ + \rho_-$. Recall that for any $\varepsilon\in(0,\varepsilon_0)$, $g^*_\varepsilon(0)=0$. For any $x\in S$,
$$g^*_\varepsilon(x)\leq g^*_{\varepsilon}(0) + c(0,x) \leq a(0) + a(x).$$
Thus by the dominated convergence theorem
$$f'_+(0) = \lim_{n\to\infty} \int _S g^*_{\varepsilon_{n}} d\rho = \int_S g^*_0 d\rho.$$

Lastly, to connect $g_0^*$ back to $g^*$ defined in (\ref{the_opt}), note that by the lower semicontinuity of $G(\cdot\lVert\nu)$,
\begin{align*}
G(\mu\lVert\nu)&\leq \liminf_{n\to\infty} G(\mu+\varepsilon_{n}\rho\lVert\nu)\\
& = \liminf_{n\to\infty}\left( \int_S g^*_{\varepsilon_{n}}d(\mu+\varepsilon_{n}\rho) - \log\int_S e^{g^*_{\varepsilon_{n}}}d\nu\right)\\
& = \liminf_{n\to\infty} \int_S g^*_{\varepsilon_{n}}d(\mu+\varepsilon_{n}\rho) - \limsup_{n\to\infty}\log\int_S e^{g^*_{\varepsilon_{n}}}d\nu\\
&\leq \int_S g^*_0 d\mu -\log\int_S e^{g_0^*}d\nu \\
%&\leq \sup_{g\in\mathrm{Lip}(c,S;C_b(S))}\left\{\int_S g d\mu - \log \int_S e^g d\nu\right\}\\
&\leq G(\mu\lVert\nu).
\end{align*}
The second inequality uses dominated convergence, \eqref{eqn:fderiv},
and that by Fatou's lemma
$$\limsup_{n\to\infty} \int_S e^{g^*_{\varepsilon_{n}}}d\nu\geq\liminf_{n\to\infty} \int_S e^{g^*_{\varepsilon_{n}}}d\nu \geq \int_S e^{g^*_0}d\nu.$$
The third inequality uses Lemma \ref{beyond_bounded}.
%\footnote{Y. which line (preecisely) and what part of the proof. again, in a future revision these repeated arguments should point to a lemma (Lemma changed, Done)}

Since both sides of the inequality coincide, $g^*_0$ must be the optimizer for variational expression (\ref{Vari}). By Theorem \ref{optimizer} and  equation (\ref{biggest}),
we have $g^*_0(x) \leq g^*(x)$ for all $x\in S$.
Thus
\begin{equation}
 f'_+(0)=\int_Sg^*_0d\rho \leq \int_Sg^*d\rho,  \label{eqn:UB}
\end{equation}
the other direction of the inequality is proved. 
Combining \eqref{eqn:UB} and \eqref{eqn:LB} gives 
$$\lim_{\varepsilon\to 0^+} \frac{1}{\varepsilon}\left(G(\mu+\varepsilon\rho\lVert\nu) - G(\mu\lVert\nu)\right)=\int_S g^*d\rho.$$
\end{proof}

\begin{remark}
When $\rho\in\mathcal{M}_0(S)$ is taken such that there exists $\varepsilon_0>0$ such that for $\varepsilon\in[-\varepsilon_0,\varepsilon_0]$, $\mu+\varepsilon\rho\in P(S)$, then by applying the above theorem to $\rho$ and $-\rho$ respectively, we can conclude $G(\mu+\varepsilon\rho\lVert\nu)$ as a function of $\varepsilon$ is differentiable at $\varepsilon = 0$ with derivative $\int_S g^*d\rho$.
\end{remark}

\begin{remark}
We call $g^*$ defined in (\ref{the_opt}) the unique potential associated with $G(\mu\lVert\nu)$. This $g^*$ is similar to the Kantorovich potential in the optimal transport literature. However, for the optimal transport cost $W_c(\mu,\nu)$ more conditions are needed(e.g. \cite{san1}[Proposition 7.18]) to ensure the uniqueness of the Kantorovich potential. Here under very mild conditions we are able to confirm the uniqueness of the potential, and prove that it is the directional derivative of the corresponding $\Gamma$-divergence, as is case of the Kantorovich potential for optimal transport cost when its uniqueness is established.  
\end{remark}

\section{Examples}
In this section, we present some explicit examples where we can compute the exact $\Gamma$ divergence. Throughout the exploration, we investigate the interaction between relative entropy and optimal transport cost within the outcome of $\Gamma$ divergence, and exploit some intuition based on the examples. In Section \ref{elementary_egs}, we investigate two elementary examples, where $\mu$ and $\nu$ are two continuous probability distribution while having different support. In Section \ref{general_egs}, we consider generalization of examples in Section \ref{elementary_egs}, as well as examples where $\mu$ and $\nu$ are discrete probability measures. Section \ref{more_eqs} considers even more generalization of examples in Section \ref{general_egs}. Section \ref{other_directions} discusses potential other directions in examples of $\Gamma$-divergence. Section \ref{limits_and_approximations} considers the scaling property of $\Gamma$-divergence when the corresponding set of functions $\Gamma$ changes.

Recall the definition of $G_\Gamma(\mu\lVert\nu)$ from Definition \ref{def:defofV},
\begin{align}\label{definition}
G_{\Gamma}(\mu\lVert\nu)\doteq\sup_{g\in\Gamma}\left\{  \int_{\mathbb{R}} gd\mu-\log\int_{\mathbb{R}}
e^{g}d\nu\right\},
\end{align}
and the alternative representation from Theorem \ref{thm:main},
\begin{align}\label{variational_expression}
G_\Gamma(\mu\lVert\nu) = \inf_{\gamma\in\mathcal{P}(\mathbb{R})}\left\{R(\gamma\lVert\nu) + W_\Gamma(\mu-\gamma)\right\},
\end{align}
where $W_\Gamma(\mu-\gamma) \doteq\sup_{g\in\Gamma}\left\{\int g (d\mu-d\gamma^*)\right\} $.
In this section, we only consider $\Gamma$ of the form $\mathrm{Lip}(c,S;C_b(S))$, where technical conditions of Theorem \ref{optimizer} are satisfied. We use Theorem \ref{optimizer} to verify the optimizer pair $(\gamma^*,g^*)$ for the variational expressions above, i.e., $(\gamma^*, g^*)$ is the optimizer for (\ref{variational_expression}) if and only if the following two rules hold.\\
(i) $g^*\in \mathrm{Lip}(c,S)$ and for $x\in\mathrm{supp}(\nu)$,
\begin{align}\label{verification_conditon_1}
\frac{d\gamma^*}{d\nu}(x) = \frac{e^{g^*(x)}}{\int e^{g^*(y)}\nu(dy)}.
\end{align}
(ii)  
\begin{align}\label{verification_condition_2}
W_\Gamma(\mu-\gamma^*) = \int g^* (d\mu-d\gamma^*).
\end{align}
For the following within this section, with an abuse of notation, we denote $W_\Gamma(\mu,\gamma) \doteq W_\Gamma(\mu-\gamma)$. Notice for admissible $\Gamma$, as defined in Definition \ref{access}, $g\in\Gamma$ implied $-g\in\Gamma$, which tells us that $W_\Gamma(\mu,\gamma) = W_\Gamma(\gamma,\mu)$. 

The author wants to mention here that we do not claim to have a formula to get the optimizer for any pair of $\mu$, $\nu$ given fixed $\Gamma$, rather under some cases based on intuition, we can "guess" the form of right $\gamma^*$ and $g^*$, and then verify they are indeed the optimizers. The intuition comes from the following observations. First, when $\Gamma = \mathrm{Lip}(c)$,  where $c: X\times X \to \mathbb{R}_+$ is a metric function on $X$, $W_\Gamma(\mu-\gamma)$ can be interpreted as the optimal transport cost between $\mu$ and $\gamma$ with cost function $c$. Let's consider the second variational expression of $G_\Gamma(\mu\lVert\nu)$ in (\ref{variational_expression}), from which one can interpret $G_{\Gamma}(\mu\left\Vert
\nu\right.  )$ as a two step procedure together with optimization. First move
mass from $\nu$ to $\gamma$ and pay a relative entropy cost, then move mass
from $\gamma$ to $\mu$ and pay the optimal transport cost, and finally
optimize over the intermediate measure $\gamma$ to get $\gamma^*$. Although one does not usually
interpret $R(\gamma\left\Vert \nu\right.  )$ in terms of \textquotedblleft
moving mass,\textquotedblright\ we will find it convenient to do so here,
since the properties of $G_{\Gamma}(\mu\left\Vert \nu\right.  )$ will reflect
how the two very different mechanisms provided by relative entropy and optimal
trasport interact to move mass \textquotedblleft cheaply\textquotedblright%
\ from $\nu$ to $\mu$. 

Note that relative entropy must move first, and that there is always the
absolute continuity restriction $\gamma\ll\nu$. One can interpret that
relative entropy will first rearrange mass subject to this constraint at
relative entropy cost, and then hand a new distribution $\gamma^{\ast}$ (the
optimizer) off to optimal transport for the final rearrangement. In the two
stages there are very different mechanisms at work. In particular, we note
that optimal transport is (by definition) sensitive to the distance that the
mass must travel (or more generally the cost it must incur) in moving a bit of
mass from one point to another. This contrasts sharply with relative entropy,
which is in a certain sense completely indifferent to any distance that mass
must travel when reshaping $\nu$ into $\gamma^{\ast}$. This is a crucial
point. It says when relative entropy is rearranging $\nu$ into $\gamma^{\ast}$
prior to handing off to optimal transport to finish the job, it can (and
indeed will) anticipate the distance sensitive nature of optimal transport.
This point will be made more precise as we explore various examples.

Before we proceed to examples, we use an expression of relative entropy when $\gamma^*\ll\mu$, which is an alternative but equivalent version of the expression (\ref{RE_exp}):

$$R(\gamma^*\lVert\nu) = \int_X \log\left(\frac{d\gamma^*}{d\nu}(x)\right)\frac{d\gamma^*}{d\nu}(x) \nu(dx).$$
It is a function of the Radon-Nikodym derivative of two distributions, which does not depend on the relative location of different points. Also, for optimal transport cost $W_\Gamma$, we have a dual representation of $W_\Gamma$ as 
\begin{align}\label{Mass_transport_characterization}
W_\Gamma(\mu,\gamma^*) = \inf_{\pi\in\Pi(\mu,\gamma^*)}\left\{\int_{X\times X} c(x,y)\pi(dxdy)\right\},
\end{align}
where $\Pi(\mu,\gamma^*)\doteq \{\pi\in \mathcal{P}(X\times X): \pi_x = \mu, \pi_y = \gamma^*\}$, $\pi_x$ and $\pi_y$ denotes the first the second marginal of $\pi$ respectively. Under mild condition, we can get the existence of $\pi^* \in \Pi(\mu,\gamma^*)$, such that 

$$W_\Gamma(\mu,\gamma^*) = \int_{X\times X} c(x,y) \pi^*(dxdy).$$
Notice that we also have from (\ref{verification_condition_2}),
\begin{align*}
W_\Gamma(\mu,\gamma^*)  &= \int_X g^* d(\mu - \gamma^*) \\
&= \int_X g^*(x) \mu(dx) - \int_X g^*(x) \gamma^*(dx)\\
& = \int_{X\times X}  g^*(x) \pi^*(dxdy) - \int_{X\times X} g^*(y) \pi^*(dxdy)\\
& = \int_{X \times X} (g^*(x) - g^*(y)) \pi^*(dxdy)\\
&\leq \int_{X\times X} c(x,y)\pi^*(dxdy)\\
& = W_\Gamma(\mu,\gamma^*).
\end{align*}
Since both ends match, the only inequality in this long expression must be equal. Thus we have for $(x,y) \in \mathrm{supp}(\pi^*)$,
\begin{align*}\label{mass_transfer_condition}
g^*(x) - g^*(y)=c(x,y).
\end{align*}
This piece of information will turn out to be valuable when making the guess for $g^*$ and $\gamma^*$ especially when the optimal coupling of $\pi^*$ of $(\mu,\gamma^*)$ is easy to guess. This can be seen in for examples in Section \ref{elementary_egs}.

%is "location-insensitive", by which I mean relative entropy stays the same when one swaps the probability of two points of $\gamma^*$ as long as the underlying probability density of $\nu$ at these two points are the same. The quantity of relative entropy depends mainly on the structure of 

In the next section, we focus on the case where space $X=\mathbb{R}$, and $\Gamma=\mathrm{Lip}(1;C_b(\mathbb{R}))$, which is the set of bounded Lipshitz functions with respect to $c(x,y) = |x-y|$ and Lipshitz constant 1. We also denote $\mathrm{Lip}(1)$ as the set of Lipshitz functions (not necessarily bounded) with Lipschitz constant 1 on $\mathbb{R}$. Before we proceed, we need a theorem to compute Wasserstein distance explicitly for this choice of $\Gamma$.

\begin{theorem}\cite{val}\label{1d_wass_computation}
For two probability distribution $P,Q\in \mathcal{P}(\mathbb{R})$, 

$$W_\Gamma(P,Q) = \int_{-\infty}^\infty |F(x) - G(x)|dx, $$
where $F$ and $G$ are the cumulative distribution function (c.d.f.) of the distributions $P$ and $Q$, respectively.
\end{theorem}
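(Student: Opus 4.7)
The plan is to combine the primal representation of the transport cost from Theorem \ref{massdual} with an explicit optimizing coupling. The cost $c(x,y) = |x-y|$ is a continuous metric on $\mathbb{R}$, so by the remark following Condition \ref{con:crep} it satisfies that condition, and Theorem \ref{massdual} therefore yields
$$W_\Gamma(P,Q) = \inf_{\pi \in \Pi(P,Q)} \int_{\mathbb{R}^2} |x-y|\,\pi(dx\,dy).$$
It thus suffices to evaluate this transport cost.

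The central tool is the layer--cake identity $|x-y| = \int_{-\infty}^{\infty} |\mathbf{1}_{\{x \leq t\}} - \mathbf{1}_{\{y \leq t\}}|\,dt$, which holds because the integrand equals $1$ precisely for $t$ strictly between $x \wedge y$ and $x \vee y$. Integrating against any $\pi \in \Pi(P,Q)$ and applying Fubini--Tonelli converts the cost into
$$\int_{\mathbb{R}^2}|x-y|\,\pi(dx\,dy) = \int_{-\infty}^{\infty} \pi\bigl(\{X \leq t\} \triangle \{Y \leq t\}\bigr)\,dt.$$
Since $\pi$ has marginals $P$ and $Q$, the two events have $\pi$-measure $F(t)$ and $G(t)$ respectively, and the elementary inequality $\pi(A \triangle B) \geq |\pi(A) - \pi(B)|$ yields the lower bound $W_\Gamma(P,Q) \geq \int_{-\infty}^{\infty} |F(t) - G(t)|\,dt$.

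To see the bound is sharp, I construct the classical monotone coupling. Let $U$ be uniformly distributed on $[0,1]$ and take $\pi^*$ to be the law of $(F^{-1}(U), G^{-1}(U))$, where $F^{-1},G^{-1}$ denote left-continuous generalized inverses. A routine verification based on $\{F^{-1}(U) \leq t\} = \{U \leq F(t)\}$ a.s.\ shows that $\pi^* \in \Pi(P,Q)$. Moreover, under $\pi^*$ the sets $\{X \leq t\}$ and $\{Y \leq t\}$ coincide a.s.\ with $\{U \leq F(t)\}$ and $\{U \leq G(t)\}$, which are nested sub-intervals of $[0,1]$, so their symmetric difference has Lebesgue measure exactly $|F(t) - G(t)|$. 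Hence the above inequality becomes an equality for $\pi^*$, which therefore attains the infimum and produces the stated formula.

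The main technical points to handle carefully are the verification of the marginals of $\pi^*$ in the presence of atoms of $F$ or $G$, and the case $\int |F-G|\,dt = \infty$, in which both sides of the claimed identity are simultaneously infinite and the displayed identities remain valid in $[0,\infty]$. Neither of these presents a serious obstacle.
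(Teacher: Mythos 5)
Your proposal is correct, but it cannot be ``the same approach as the paper'' because the paper offers no proof at all: Theorem \ref{1d_wass_computation} is simply quoted from \cite{val}. Judged on its own, your argument is a complete and standard proof of Vallender's formula, and it fits well with the machinery the paper has already set up: since $c(x,y)=|x-y|$ is a continuous metric, the remark after Condition \ref{con:crep} shows the condition holds, so Theorem \ref{massdual} legitimately converts $W_\Gamma$ (a supremum over \emph{bounded} $1$-Lipschitz functions) into the primal transport cost — this is exactly the step that would otherwise require a truncation argument, so invoking it is the right move. The layer-cake identity, Fubini--Tonelli, the inequality $\pi(A\triangle B)\geq|\pi(A)-\pi(B)|$, and the quantile coupling $(F^{-1}(U),G^{-1}(U))$ (with the exact equivalence $F^{-1}(u)\leq t \Leftrightarrow u\leq F(t)$ handling atoms) are all used correctly, and the infinite-cost case is covered since every displayed identity holds in $[0,\infty]$. (Minor nit: the indicator $|\mathbf{1}_{\{x\leq t\}}-\mathbf{1}_{\{y\leq t\}}|$ equals $1$ on the half-open interval $[x\wedge y, x\vee y)$ rather than the open one, which is immaterial for the integral.) For comparison, the other classical route stays entirely on the dual side: one writes $\int g\,d(P-Q)=\int g'(t)\,(G(t)-F(t))\,dt$ for smooth bounded $g$ via Fubini, bounds $|g'|\leq 1$ to get the upper bound $\int|F-G|$, and approximates $g'=\mathrm{sgn}(G-F)$ by admissible functions for the lower bound; that avoids duality and couplings altogether, whereas your primal route has the added benefit of exhibiting an explicit optimal transport plan, which is in the spirit of how the paper uses optimal couplings elsewhere (e.g., in the proof of Theorem \ref{optimizer}).
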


\subsection{Two elementary examples}\label{elementary_egs}
Let's first use two special examples to have a taste on how $G_\Gamma(\mu\lVert\nu)$ behaves compared to relative entropy and optimal transport cost alone.
\begin{example}\label{first_example}
$\mu = \mathrm{Unif}[0,1+c], \nu = \mathrm{Unif}[0,1]$ and $\Gamma = \mathrm{Lip}(1;C_b(\mathbb{R}))$.
\end{example}
Since the optimizing intermediate measure $\gamma^*$ must have the same support as $\nu$, by considering mass transfer from $\gamma^*$ to $\mu$, there should be "mass" transport from left to right. We want to make use of Theorem \ref{optimizer} to guess the optimizing $g^*$ and $\gamma^*$. First notice that from (\ref{definition}), we have 

\begin{align*}
G_{\Gamma}(\mu\lVert\nu)\doteq\sup_{g\in\Gamma}\left\{  \int_{\mathbb{R}} gd\mu-\log\int_{\mathbb{R}}
e^{g}d\nu\right\}.
\end{align*}
In this example, we can write explicitly for $G_\Gamma(\mu\lVert\nu)$ as 

\begin{align*}
G_\Gamma(\mu\lVert\nu) &= \sup_{g\in\Gamma}\left\{\int_0^{1+c} g(x) \mu(dx) - \log \int_0^1 e^{g(x)} \nu(dx)\right\}  \\
& = \sup_{g\in\Gamma} \left\{\frac{1}{1+c}\int_0^{1+c} g(x) dx - \log\int_0^1 e^{g(x)} dx\right\}.
\end{align*}
Since the value of $g$ in $[1,1+c]$ is only present in the first integral, for the optimizing $g^*$ for this variational problem, we will have $g^*(x) = g^*(1) + (x-1)$ for $x\in[1,1+c]$. From the intuition that relative entropy, considered as a cost for transferring the first measure into the second one, can relocate the mass without considering how far the mass has been transported, while optimal transport cost is sensitive to the distance of mass being transferred, we guess that $\gamma^*$ will allocate as much mass as possible to the right of interval $[0,1]$, up to the constraint (\ref{verification_conditon_1}), while remaining the same as $\mu$ within the left side of the interval $[0,1]$. Thus we guess there exists $b\in[0,1]$ such that

\begin{equation*}
g^*(x)=\left\{
\begin{aligned}
& 0 & 0\leq x\leq b,\\
& x-b & b< x \leq 1+c. \\
\end{aligned}
\right.
\end{equation*}
and 
\begin{equation*}
\gamma^*(dx)=\left\{
\begin{aligned}
& \mu(dx) & 0\leq x\leq b,\\
& e^{x-b}\mu(dx) & b< x \leq 1. \\
\end{aligned}
\right.
\end{equation*}
Here $g^*$ is only determined up to constant addition, so for simplicity, we fix the value of $g^*$ at $0$ as $0$. In order to make $\gamma^*$ a probability distribution, we need to have 

\begin{align*}
1 &= \int_0^1 \gamma^*(dx) = \int_0^b \gamma^*(dx) + \int_b^1 \gamma^*(dx)\\
& = \frac{b}{1+c} + \frac{1}{1+c}\int_b^1 \exp(x-b) dx\\
& = \frac{b}{1+c} + \frac{\exp(1-b) - 1}{1+c}.
\end{align*}
Thus $b$ solves
\begin{align}\label{condition_b}
\exp(1-b) - (1-b) = 1 + c.    
\end{align}
When $c=0$, $b=1$ solves the equation. When $c>0$ is small, $1-b$ is also close to $0$. From (\ref{condition_b}), we can show that $1-b$ can be written as an analytic function of $\sqrt{c}$ around $0$ by the following argument. By taking Taylor expansion for $\exp(1-b)$ in (\ref{condition_b}), we get
\begin{align}\label{condition_b_expansion}
(1-b)^2 F(1-b) = c,
\end{align}
where $F(x)$ is an analytic function, with $F(0)\neq 0$. Since we want to solve for the solution of (\ref{condition_b}) with $b<1$, by taking square root on both sides of (\ref{condition_b_expansion}), and denoting $z= 1-b$, $w = \sqrt{c}$, $G(x) = \sqrt{F(x)}$, we get
$$z G(z) = w.$$
Since $\frac{d}{dz} (zG(z))|_{z=0} = G(0) = \sqrt{F(0)} \neq 0$, by inverse function for analytic functions we know $z$ can also be written as an analytic function of $w$ around point $0$. Thus we have shown $1-b$ can be written as an analytic function of $\sqrt{c}$ around $0$. By writing out $1-b = a_0 + a_1 \sqrt{c} + a_2 c + O(c^{3/2})$, plugging this expression back in (\ref{condition_b}) and matching the coefficients, we can solve

\begin{align}\label{expansion_for_b}
1-b = \sqrt{2}\sqrt{c} - \frac{1}{3} c + O(c^{3/2})
\end{align}

Now we have shown for choosing $b$ satisfying (\ref{condition_b}), $\gamma^*$ is a probability distribution. Now we verify $(g^*$, $\gamma^*)$ pair satisfies (\ref{verification_conditon_1}) and (\ref{verification_condition_2}).\\
(i) To check for (\ref{verification_conditon_1}),
\begin{equation*}
\frac{d\gamma^*}{d\nu}(x)=\left\{
\begin{aligned}
& \frac{1}{1+c} & 0\leq x\leq b,\\
& \frac{1}{1+c} e^{x-b} & b< x \leq 1+c. \\
\end{aligned}
\right.
\end{equation*}
Notice that  
\begin{align*}
\int_0^{1} e^{g^*(x)} \nu(dx) &= b + \int_b^1 \exp(x-b) dx \\
&= b + \exp(1-b) - 1\\
& = 1+c.    
\end{align*}
where the last equation is because of the choice (\ref{condition_b}). So we can conclude for $x\in\mathrm{supp}(\nu) = [0,1]$,
$$\frac{d\gamma^*}{d\nu}(x)  = \frac{e^{g^*(x)}}{\int_\mathbb{R}e^{g^*(y)}\nu(dy)}.$$
(ii) To verify (\ref{verification_condition_2}), by Theorem \ref{1d_wass_computation}, we can compute 
\begin{align*}
W_\Gamma(\mu,\gamma^*) = \int_0^{1+c} |F_\mu(x) - F_{\gamma^*}(x)|dx.
\end{align*}
Here $F_\mu$ and $F_{\gamma^*}$ are c.d.f.'s for $\mu$ and $\gamma^*$ respectively. Notice that since for $x\in[0,1+c]$, $F_\mu(x) \leq F_{\gamma^*}(x)$, we have 

\begin{align*}
W_\Gamma(\mu,\gamma^*) &= \int_0^{1+c} |F_\mu(x) - F_{\gamma^*}(x)|dx\\
& = \int_0^{1+c} F_{\gamma^*}(x) - F_\mu(x) dx\\
& = \int_0^{1+c} \left(\int_0^{1+c} 1_{y\leq x} \gamma^*(dy) - \int_0^{1+c} 1_{y\leq x} \mu(dy)\right) dx\\
&= \int_0^{1+c} \left(\int_0^{1+c} 1_{y\leq x} \left(\gamma^*(dy) -  \mu(dy)\right)\right) dx\\
& = \int_0^{1+c} \left(\int_0^{1+c} 1_{x\geq y} dx\right) \left(\gamma^*(dy) - \mu(dy)\right)\\
& = \int _0^{1+c} (1+c-y) \left(\gamma^*(dy) - \mu(dy)\right)\\
& = \int_0^{1+c} (y-1-c) \left(\mu(dy) - \gamma^*(dy)\right)\\
& = \int_b^{1+c} (y-1-c) \left(\mu(dy) - \gamma^*(dy)\right)\\
& = \int_b^{1+c} g^*(y) \left(\mu^*(dy) - \gamma^*(dy)\right)\\
& = \int_0^{1+c} g^*(y) \left(\mu^*(dy) - \gamma^*(dy)\right).
\end{align*}
where the third last line and last line is because $\mu(dy) = \gamma^*(dy)$ for $x\in[0,b]$, and the second last line is because both $\mu$ and $\gamma^*$ are probability distributions, so adding constant to the function to be integrated will not change the value of the integral. \\
\\
Now both conditions (i) and (ii) are verified, thus $(g^*,\gamma^*)$ pair are the optimal choice. We can compute $\Gamma$ divergence between $\mu$ and $\nu$.

\begin{align*}
G_\Gamma(\mu\lVert\nu) &= R(\gamma^* \lVert \nu) + W_\Gamma(\mu,\gamma^*)\\
& = \int_0^1 \log \left(\frac{d\gamma^*}{d\nu}(x)\right)\gamma^*(dx) + \int_0^{1+c} g^*(x) (\mu-\gamma^*)(dx)\\
& = \int_0^b \log\left(\frac{1}{1+c}\right)\frac{1}{1+c}dx + \int_b^1 \left(\log\left(\frac{1}{1+c}\right) + x-b\right) \frac{\exp(x-b)}{1+c}dx \\
& + \int_b^{1+c}(x-b) \frac{1}{1+c} dx - \int_b^1 (x-b) \frac{\exp(x-b)}{1+c}dx\\
& = \log\left(\frac{1}{1+c}\right) + \frac{1}{2(1+c)}(1+c-b)^2
\end{align*}
To analyze the above quantity and compare with Wasserstein distance, we have 

$$W_\Gamma(\mu,\nu) = \sup_{h\in\Gamma}\left\{\int h d\mu - \int h d\nu\right\} = \int x d\mu - \int x d\nu = \frac{1}{2} (\frac{1}{1+c} (1+c)^2 - 1) = \frac{c}{2}.$$
For small $c$, from (\ref{expansion_for_b}), we have 

\begin{align*}
G_\Gamma(\mu\lVert\nu) &= -\log(1+c) + \frac{1}{2(1+c)}(c + \sqrt{2c} - \frac{1}{3}c + O(c^{3/2}))^2\\
& = - c + O(c^2) + \frac{1}{2}(1-c+O(c^2)) (2c + \frac{4\sqrt{2}}{3} c^{3/2} + O(c^2))\\
& = \frac{2\sqrt{2}}{3} c^{3/2} + O(c^2).
\end{align*}
Thus we can conclude for small $c>0$, $G_\Gamma(\mu\lVert\nu)$ goes to $0$ much faster than $W_\Gamma(\mu,\nu)$. 
The last part is to compute what's the portion of relative entropy and Wasserstein in the composition of $\Gamma$ divergence. Notice that

\begin{align*}
R(\gamma^*\lVert\nu) &= \int_0^b \log\left(\frac{1}{1+c}\right)\frac{1}{1+c}dx + \int_b^1 \left(\log\left(\frac{1}{1+c}\right) + x-b\right) \frac{e^{(x-b)}}{1+c}dx \\
& =\frac{b}{1+c}\log\left(\frac{1}{1+c}\right) + \frac{e^{(1-b)} - 1}{1+c}\log\left(\frac{1}{1+c}\right)\\ &\quad + \frac{1}{1+c}\left((1-b)e^{(1-b)} - e^{(1-b)} + 1\right)\\
& = \log\left(\frac{1}{1+c}\right) + \frac{1}{1+c}\left((1-b)e^{(1-b)} - e^{(1-b)} + 1\right)\\
& =\log\left(\frac{1}{1+c}\right) + \frac{1}{1+c}\left((1-b)(c+1+1-b) - (c+1+1-b) +1\right)\\
&= \log\left(\frac{1}{1+c}\right) + \frac{1}{1+c}\left((1-b)^2+c(1-b) - c\right).
\end{align*}
where the third and fourth equations use the fact that $\exp(1-b) - 1 = c+1+1-b$. Doing the expansion for the last line, and using (\ref{expansion_for_b}), we have 

\begin{align*}
R(\gamma^*\lVert\nu) &=  -\log(1+c) + \frac{1}{1+c}\left(( \sqrt{2c} - \frac{1}{3}c)^2 +c( \sqrt{2c} - \frac{1}{3}c )) -c+O(c^2))\right)  \\
&= - c + O(c^2) + (1-c + O(c^2))\cdot \left(c+\frac{\sqrt{2}}{3}c^{3/2} + O(c^2)) \right)\\
& = \frac{\sqrt{2}}{3}c^{3/2} + O(c^2).
\end{align*}
Similarly, one will get 
$$W_\Gamma(\mu,\gamma^*) = \frac{\sqrt{2}}{3} c^{3/2} + O(c^2).$$
However, if we compute $W_\Gamma(\gamma^*,\nu)$, with the help of Theorem \ref{1d_wass_computation}, we will get

\begin{align*}
W_\Gamma(\gamma^*,\nu) &= \sup_{h\in\Gamma}\left\{\int h d(\gamma^*-\nu)\right\}\\  
&= \int_0^1 -x d\gamma^* + \int_0^1 x d\nu\\
&= \int_0^1 x d\nu - \int_0^1 x d\mu + \int_0^{1+c} x d\mu - \int_0^{1+c} x d\gamma^* \\
& = W_\Gamma(\mu,\nu) - W_\Gamma(\mu,\gamma^*)\\
&= \frac{c}{2} + o(c).
\end{align*}
We notice that $R(\gamma^*\Vert\nu)$ is much smaller than $W_\Gamma(\gamma^*,\nu)$, which is the reason why $G_\Gamma(\mu\lVert\nu)$ is much smaller than $W_\Gamma(\mu,\nu)$. \\
Also notice that in this example, since $\mu\not\ll\nu$, $R(\mu\lVert\nu) = \infty$. So one can't use relative entropy as a measure to measure closeness between $\mu$ and $\nu$ in this example.

\begin{example}\label{second}
$\mu = \mathrm{Unif}[0,1-c]$, $\nu = \mathrm{Unif}[0,1]$, $\Gamma = \mathrm{Lip}(1;C_b(\mathbb{R}))$.
\end{example}
It's worth noticing that this example is similar to Example \ref{first_example}. However, in contrast to Example \ref{first_example} where $R(\mu\lVert\nu)$ is infinite, $R(\mu\lVert\nu)$ is finite in this example. Still, intuition about how relative entropy and optimal transport cost interact from Example \ref{first_example} carries through. 
Similar to the intuition in the first example, we can guess
\begin{equation*}
g^*(x)=\left\{
\begin{aligned}
& 0 & 0\leq x\leq b,\\
& -x+b & b< x \leq 1. \\
\end{aligned}
\right.
\end{equation*}
and 
\begin{equation*}
\gamma^*(dx)=\left\{
\begin{aligned}
& \frac{1}{1-c}dx & 0\leq x\leq b,\\
& \frac{1}{1-c}e^{-x+b}dx & b< x \leq 1. \\
\end{aligned}
\right.
\end{equation*}
To make $\gamma^*$ a probability distribution, we would need 

\begin{align*}
1 &= \int_0^1 \gamma^*(dx) = \frac{b}{1-c} + \frac{1}{1-c}\int_b^1 e^{-x+b}dx\\
&=\frac{b}{1-c} + \frac{1- e^{-1+b}}{1-c}.
\end{align*}
So $b\in(0,1)$ will need to solve 
\begin{align}\label{second_b}
e^{b-1} - 1 - (b-1) = c.
\end{align}
Similar to the solution of (\ref{condition_b}) Example \ref{first_example}, but noticing here $b-1<0$, we can get a similar expression as (\ref{expansion_for_b}) as
\begin{align}\label{expansion2_for_b}
1-b = \sqrt{2}\sqrt{c} + \frac{1}{3}c +O(c^{3/2}).
\end{align}
We now verify $g^*$ and $\gamma^*$ satisfies Theorem \ref{optimizer}.\\
(i) To show it satisfy (\ref{verification_conditon_1}), since $\nu(dx) = dx$ for $x\in[0,1]$, we have 
\begin{equation*}
\frac{d\gamma^*}{d\nu}(x)=\left\{
\begin{aligned}
& \frac{1}{1-c} & 0\leq x\leq b,\\
& \frac{1}{1-c}e^{-x+b} & b< x \leq 1. \\
\end{aligned}
\right.
\end{equation*}
Notice by equation (\ref{second_b}), 
$$\int_0^1 e^{g^*(x)}dx = \int_0^b dx + \int_b^1 e^{-x+b} dx = b + 1 - e^{-1+b} = 1-c.$$ So we can conclude for $x\in\mathrm{supp}(\nu)=[0,1]$,

$$\frac{d\gamma^*}{d\nu}(x) = \frac{e^{g^*(x)}}{\int_0^1 e^{g^*(y)}dy}.$$
(ii) To show it satisfies (\ref{verification_condition_2}), by Theorem \ref{1d_wass_computation}, we can get
\begin{align*}
W_\Gamma(\mu,\gamma^*)& = \int_0^1 |F_\mu(x) - F_\gamma^*(x)|dx\\
&= \int_0^1 F_\mu(x) - F_{\gamma^*}(x) dx\\
&= \int_0^1 \left(\int_0^1 1_{y\leq x} \mu(dy) - \gamma^*(dy)\right) dx\\
& = \int_0^1 \left(\int_0^1 1_{x\geq y} dx \right) \mu(dy) - \gamma^*(dy)\\
& = \int_0^1 (-y+1) \mu(dy) - \gamma^*(dy)\\
& = \int_0^1 g^*(y) \mu(dy) - \gamma^*(dy).
\end{align*}
\\
Here the second line follows from the fact that $F_\mu(x)\geq F_{\gamma^*}(x)$ for any $x\in[0,1]$ and the fourth line follows from Fubini's rule. Now by Theorem \ref{verif}, $(g^*,\gamma^*)$ is the optimizer pair for $G_\Gamma(\mu\lVert\nu)$. Similar to Example \ref{first_example}, we compute $G_\Gamma(\mu\lVert\nu)$ here. 
\begin{align*}
G_\Gamma(\mu\lVert\nu) &= \int g^* d\mu - \log \int e^{g^*} d\nu\\
&= \int_b^{1-c} (-x+b) \frac{1}{1-c}dx - \log \left(\int_0^b 1 dx + \int_b^1 e^{-x+b}dx\right)\\
& = -\frac{(1-c-b)^2}{2(1-c)} - \log(b+1- e^{b-1})\\
& = -\frac{(1-c-b)^2}{2(1-c)} - \log(1-c).
\end{align*}
The last equation comes from (\ref{second_b}). By putting the expansion (\ref{expansion2_for_b}) into the expression above, we can get 
$$G_\Gamma(\mu\lVert\nu)  = \frac{2\sqrt{2}}{3} c^{3/2} + O(c^2).$$
Compared to $W_\Gamma(\mu-\nu)$ which can be computed as 
$$W_\Gamma(\mu-\nu) = \sup_{h\in\Gamma} \left\{\int h d\mu - \int h d\nu\right\} = \int -x d\mu - \int -x d\nu = \frac{c}{2}.$$
So for small $c>0$, $G_\Gamma(\mu\lVert\nu)$ is much smaller than $W_\Gamma(\mu,\nu).$ One can also compute $R(\mu\lVert\nu)$ in this example to get
\begin{align*}
R(\mu\lVert\nu) &= \int \log\left(\frac{d\mu}{d\nu}\right)d\mu\\
& = \int_0^{1-c} \log \left(\frac{1}{1-c}\right)\frac{1}{1-c} dx\\
& = \log\left(\frac{1}{1-c}\right)\\
& = -\log(1-c) = c + o(c).
\end{align*}
From which we can also see $G_\Gamma(\mu\lVert\nu)$ is also much smaller than $R(\mu\lVert\nu)$.

Lastly, for the decomposition of $G_\Gamma(\mu\lVert\nu)$ as relative entropy, we can also compute the corresponding value.
\begin{align*}
R(\gamma^*\lVert\nu) &= \int g^* d\gamma^* - \log\int e^{g^*} d\nu\\
&= \int_b^1 (-x+b) \frac{1}{1-c} e^{-x+b} dx - \log(1-c)\\
& = (2-b)e^{-(1-b)} - 1 - \log(1-c)\\
& = \frac{\sqrt{2}}{3}c^{3/2} + O(c^2).
\end{align*}
Here the last line we use (\ref{second_b}), the expansion (\ref{expansion2_for_b}) and  Taylor expansion for logarithemic function. By $W_\Gamma(\mu-\gamma^*) = G_\Gamma(\mu\lVert\nu) - R(\gamma^*\lVert\nu)$, we can conclude 
$$W_\Gamma(\mu,\gamma^*) = \frac{\sqrt{2}}{3}c^{3/2} + O(c^2).$$
Similar computation as Example \ref{first_example} can derive that
$$W_\Gamma(\gamma^*,\nu) = \frac{c}{2} + o(c).$$
It's interesting to notice both $G_\Gamma(\mu\lVert\nu)$ and its relative entropy and optimal transport cost component are of the same behavior as the first example, although there is no direct symmetry between these two examples. And in both examples relative entropy plays a big role to make $G_\Gamma(\mu\lVert\nu)$ much smaller than $W_\Gamma(\mu,\nu)$. In this example we can also see with the help of optimal transport cost, $G_\Gamma(\mu\lVert\nu)$ is also much smaller than $R(\mu\lVert\nu)$.
\subsection{More General Examples}\label{general_egs}
In the following, we consider more general examples. Example \ref{example_same_density_extended_support} is a generalization of Example \ref{first_example}, in which we consider non-constant densities, while still retaining the structure that $\mu$ has a larger support than $\nu$. In Example \ref{discrete_pts_add_point}, we consider discrete measures, where both $\mu$ and $\nu$ are uniform distribution over its support, while $\mathrm{supp}(\mu)$ has exactly one more point than $\mathrm{supp}(\nu)$.
\begin{example}\label{example_same_density_extended_support}
$X = \mathbb{R}$, $\Gamma = \mathrm{Lip}(1;C_b(\mathbb{R}))$. Fix $f:\mathbb{R} \to \mathbb{R}_+$. $\nu(dx) = \frac{f(x)}{\int_0^1 f(y)dy}dx$ for $x\in[0,1]$, and $0$ elsewhere, and let $\mu(dx) = \frac{f(x)}{\int_0^{1+c}f(y)dy} dx$ for $x\in [0,1+c]$, and $0$ elsewhere. 
\end{example}
For this example, we apply the same idea as in Example \ref{first_example}. We know the existence of an optimal intermediate measure $\gamma^*$ and the corresponding function $g^*$. Since $\gamma^*$ has the same support as $\nu$, which is $[0,1]$, for mass of $\mu$ on the interval $[1,1+c]$, it must be transported to points in the support of $\gamma^*$, which tells $g^*(x) = g^*(1) + (x-1)$ for $x\in[1,1+c]$. One would guess that these mass transfer between $\mu$ and $\gamma^*$ happens only on $[b,1+c]$ for some $b\in (0,1)$, in which case $g^*$ and $\gamma^*$ should have the following representation:

\begin{equation*}
g^*(x)=\left\{
\begin{aligned}
& 0 & 0\leq x\leq b,\\
& x-b & b< x \leq 1+c. \\
\end{aligned}
\right.
\end{equation*}
and 
\begin{equation*}
\gamma^*(dx)=\left\{
\begin{aligned}
& \mu(dx) & 0\leq x\leq b,\\
& e^{x-b}\mu(dx) & b< x \leq 1. \\
\end{aligned}
\right.
\end{equation*}
The restriction which needs to be satisfied to make $\gamma^*$ a probability distribution is 
\begin{align*}
1 &= \int_0^1 \gamma^*(dx) = \int_0^b \gamma^*(dx) + \int_b^1 \gamma^*(dx)\\
& = \int_0^b \mu(dx) + \int_b^1 e^{x-b} \mu(dx)\\
& = \frac{\int_0^b f(x) dx}{\int_0^{1+c} f(x) dx} + \frac{\int_b^1 e^{x-b} f(x) dx}{\int_0^{1+c} f(x) dx}.
\end{align*}
After redistribution and cancellation, we conclude 
\begin{align}\label{example_4_condition}
\int_b^1 e^{x-b} f(x) dx = \int_b^{1+c} f(x) dx.    
\end{align}
We denote $H(b) = \int_b^1 e^{x-b} f(x) dx - \int_b^{1+c} f(x) dx$. Since $f(x)>0$ for all $x\in\mathbb{R}$, we have $H(1) = 0 - \int_1^{1+c} f(x)dx <0$, and $H$ is strictly decreasing on $[0,1]$ with 

$$H'(b) = -e^{b-b}f(b) - \int_b^1 e^{x-b}f(x) dx + f(x)= -\int_b^1 e^{x-b}f(x)dx <0.$$
In the following, we separate the cases depending on whether  $H(0)>0$ holds or not.\\
Case 1: $H(0) = \int_0^1 e^x f(x) dx -\int_0^{1+c} f(x) dx$ is positive. We can conclude by intermediate value theorem, there exists $b\in(0,1)$ such that $H(b) = 0$. Let's denote this solution as $b^*$. Now we use Theorem 4.8 to verify the $(\gamma^*,g^*)$ pair with $b^*$ inserted is indeed the optimal choice pair.\\
(i) To check for (\ref{verification_conditon_1}), 
\begin{equation*}
\frac{d\gamma^*}{d\nu}(x)=\left\{
\begin{aligned}
& \frac{d\mu}{d\nu}(x) &= \frac{\int_0^1 f(y)dy}{\int_0^{1+c}f(y)dy} \quad & 0\leq x\leq b^*,\\
& e^{x-b^*}\frac{d\mu}{d\nu}(x)&= e^{x-b^*} \frac{\int_0^1 f(y)dy}{\int_0^{1+c}f(y)dy}\quad & b^*< x \leq 1. \\
\end{aligned}
\right.
\end{equation*}
On the other hand, 
\begin{equation*}
e^{g^*(x)}=\left\{
\begin{aligned}
& 1 & 0\leq x\leq b^*,\\
& e^{x-b^*} & b^*< x \leq 1+c. \\
\end{aligned}
\right.
\end{equation*}
And 
\begin{align*}
\int_0^{1} e^{g^*(x)} \nu(dx) & = \int_0^{b^*} e^{g^*(x)} \nu(dx)  + \int_{b^*}^{1} e^{g^*(x)} \nu(dx)\\
& = \frac{\int_0^{b^*} f(x) dx}{\int_0^1 f(y) dy} + \frac{\int_{b^*}^{1} e^{x-b^*} f(x) dx}{\int_0^1 f(y) dy} \\
&= \frac{\int_0^{b^*} f(x) dx}{\int_0^1 f(y) dy} + \frac{\int_{b^*}^{1+c} f(x) dx}{\int_0^1 f(y) dy} \\
& = \frac{\int_0^{1+c} f(x) dx}{\int_0^1 f(y) dy}.
\end{align*}
Here the second last line uses (\ref{example_4_condition}). Thus, for $x\in [0,1+c]$, 
$$\frac{d\gamma^*}{d\nu}(x) = \frac{e^{g^*(x)}}{\int_0^1 e^{g^*(y)}\nu(dy)}.$$
(ii) To check for (\ref{verification_condition_2}), we use Theorem \ref{1d_wass_computation} to compute $W_\Gamma(\mu,\gamma^*)$. Denote $F_\mu$ and $F_{\gamma^*}$ as the cdf's for $\mu$ and $\gamma^*$ respectively. Notice that $F_\mu(x)\leq F_{\gamma^*}(x)$ for all $x\in [0,1+c]$.
\begin{align*}
W_\Gamma(\mu,\gamma^*) & = \int_0^{1+c} |F_\mu(x) - F_{\gamma^*}(x)| dx\\
& = \int_0^{1+c} (F_{\gamma^*}(x) - F_\mu(x)) dx\\
& = \int_0^{1+c} \left(\int_0^{1+c} 1_{y\leq x} \gamma^*(dy) - \int_0^{1+c} 1_{y\leq x} \mu(dy)\right) dx\\
&= \int_0^{1+c} \left(\int_0^{1+c} 1_{y\leq x} \left(\gamma^*(dy) -  \mu(dy)\right)\right) dx\\
& = \int_0^{1+c} \left(\int_0^{1+c} 1_{x\geq y} dx\right) \left(\gamma^*(dy) - \mu(dy)\right)\\
& = \int _0^{1+c} (1+c-y) \left(\gamma^*(dy) - \mu(dy)\right)\\
& = \int_0^{1+c} (y-1-c) \left(\mu(dy) - \gamma^*(dy)\right)\\
& = \int_{b^*}^{1+c} (y-1-c) \left(\mu(dy) - \gamma^*(dy)\right)\\
& = \int_{b^*}^{1+c} g^*(y) \left(\mu(dy) - \gamma^*(dy)\right)\\
& = \int_0^{1+c} g^*(y) \left(\mu(dy) - \gamma^*(dy)\right).
\end{align*}
Here the fifth line uses Fubini's theorem, the third last line and last line uses the fact $\mu(dx) = \gamma^*(dx)$ for $x\in [0,b^*]$, and the second last line uses the fact that $\mu$ and $\gamma^*$ have the same total mass on $[b^*,1+c]$. \\
Now both (i) and (ii) of Theorem 4.8 are checked, and it's verified that $(g^*,\gamma^*)$ are the optimal pair. Then one can compute $G_\Gamma(\mu\lVert\nu)$ as 
\begin{align*}
G_\Gamma(\mu\lVert\nu) &= \int g^*(x) \mu(dx) - \log\int e^{g^*(x)}\nu(dx) \\
& = \int_{b^*}^{1+c} (x-b^*) \mu(dx) - \log\left(\int_0^{b^*} \nu(dx) + \int_{b^*}^1 e^{x-b^*}\nu(dx)\right)\\
& = \frac{\int_{b^*}^{1+c}(x-b^*)f(x) dx}{\int_0^{1+c} f(x) dx} - \log\left(\frac{\int_0^{b^*}f(x) dx + \int_{b^*}^1 e^{x-b^*} f(x) dx}{\int_0^1 f(x) dx}\right)\\
& = \frac{\int_{b^*}^{1+c}(x-b^*)f(x) dx}{\int_0^{1+c} f(x) dx} - \log\left(\frac{\int_0^{b^*}f(x) dx + \int_{b^*}^{1+c} f(x) dx}{\int_0^1 f(x) dx}\right)\\
& = \frac{\int_{b^*}^{1+c}(x-b^*)f(x) dx}{\int_0^{1+c} f(x) dx} - \log\left(\frac{\int_0^{1+c}f(x) dx }{\int_0^1 f(x) dx}\right),
\end{align*}
where the fourth line uses (\ref{example_4_condition}).\\
Case 2: $H(0) \leq 0$. Then it actually tells us $b=0$, which is the case that for the optimal transport from $\mu$ to $\gamma^*$, the mass of $\mu$ on $[1,1+c]$ is actually transported to $\gamma^*$ over the whole interval $[0,1]$. Thus one would expect there exists some $C_0>0$, such that
\begin{equation*}
g^*(x)= x \quad 0\leq x \leq 1+c,
\end{equation*}
and 
\begin{equation*}
\gamma^*(dx)=
C_0 e^x \nu(dx) \quad 0\leq x \leq 1.
\end{equation*}
To make $\gamma^*$ a probability distribution, we must have $\int \gamma^*(dx) = 1$, which implies $C_0 = \frac{\int_0^1 f(x) dx}{\int_0^1 e^x f(x) dx}$. $H(0)\leq 0$ makes sure that $\frac{d\gamma^*}{d\mu}(x) \geq 1$ for $0\leq x\leq 1$. Similar to Case 1, one can easily verify that this pair of $(g^*, \gamma^*)$ satisfies condition (\ref{verification_conditon_1}) and (\ref{verification_condition_2}).

\begin{example}\label{discrete_pts_add_point}
$X = \mathbb{R}$, $\Gamma = \mathrm{Lip}(1;C_b(\mathbb{R}))$. $N \doteq \{x_1,x_2,\dots, x_n\} \subset X$, where we assume for simplicity that $x_i\neq x_j$ for any $i\neq j$. Let $\nu$ be uniform distribution over $N$. $\mu$ be uniform over $N\cup\{y\}$, where $y\notin N$. Without loss of generality, let's assume $x_1< x_2< \dots < x_n$.
\end{example}
Let's first consider the special case that $y> x_n$. By the intuition that optimal transport cost is "location-sensitive" and relative entropy is not, we guess that $\gamma^*$ is the same as $\mu$ for points far away from $y$, and accumulates more mass for points near $y$. So we guess there exists $k\in\{1,2,\dots,n\}$ such that 

\begin{equation*}
g^*(x)=\left\{
\begin{aligned}
& x-y & x_k\leq x\leq y,\\
& c_0 & x\leq x_{k-1}, \\
\end{aligned}
\right.
\end{equation*}
and 
\begin{equation*}
\gamma^*(x)=\left\{
\begin{aligned}
& \frac{e^{g^*(x)}}{\int e^{g^*(y)}\nu(dy)}\nu(x) & x\in \{x_k, x_{k+1}, \dots, x_n\}, \\
& \mu(x) & x\in \{x_1, x_2, \dots, x_{k-1}\}.\\
\end{aligned}
\right.
\end{equation*}
Here we fix the value of $g^*$ at $y$ as $0$. It's worth noticing that this will introduce $c_0$ as a variable to be determined, which is a little bit different from previous examples. This guess is based on the idea that for optimal transport from $\mu$ to $\gamma^*$, the mass of $\mu$ at the extra point $y$ is spread to $x_j$ where $k\leq j\leq n$. To make this pair $(g^*,\gamma^*)$ satisfy the condition from Theorem \ref{optimizer}, one needs to have \\
In the case k>1, \\
1) $g^* \in \mathrm{Lip}(1)$, where in this example is equivalent to  $|g^*(x_k) - g^*(x_{k-1})| = |x_k - y - c_0| \leq x_k - x_{k-1}$.\\
2) To satisfy (\ref{verification_conditon_1}) for $x\in\{x_1,x_2,\dots,x_{k-1}\}$,
$$\frac{1}{n+1} = \mu(x) = \gamma^*(x)= \frac{e^{g^*(x)}\nu(x)}{\int e^{g^*(y)}\nu(dy) } = \frac{e^{c_0}}{(k-1)e^{c_0}+\sum_{i=k}^n e^{g^*(x_i)}}.$$
After rearrangement, we get
$$c_0 = \log\left(\frac{1}{n+2-k}\sum_{i=k}^n e^{g^*(x_i)}\right) = \log\left(\frac{1}{n+2-k}\sum_{i=k}^n e^{x_i - y}\right).$$
3) To satisfy (\ref{verification_condition_2}), which is the optimal transport constraint, one needs to have $\gamma^*(x_k)\geq \gamma^*(x_{k-1})$, which combined with the last condition implies $c_0 \leq g^*(x_k) = x_k -y$.\\
In the case $k=1$, condition 1) and 2) above are automatically satisfied. Thus, the only requirement left is to satisfy (\ref{verification_condition_2}), which is
$$\gamma^*(x_1) = \frac{e^{x_1 - y}}{\sum_{i=1}^n e^{x_i - y}} \geq \mu(x_1) = \frac{1}{n+1},$$
which after rearrangement becomes
$$(n+1)e^{x_1 - y} \geq \sum_{i=1}^n e^{x_i - y}.$$
One can prove by induction that there exists $k\in\{1,2,\dots,n\}$ such that the above conditions hold. We do the induction as follows.\\
Step 1: If $k=n$ satisfies the conditions above, then we are done. If $k=n$ does not satisfy the condition, we denote the $c_0$ solved from $k=n$ as $c_0^{(n)}$. Then $c_0^{(n)} = \log\left(\frac{1}{2}e^{x_n - y}\right) = x_n - y - \log(2)< x_n - y$. So the only problem that $k=n$ does not satisfy is that $g^*$ defined is not in $\mathrm{Lip}(1)$, which is equivalent to $x_{n-1} - y> c_0^{(n)}$.\\
Step 2: If $k= m>2$ does not satisfy the condition above, then by step 1, we can conclude that $c_0^{(m)} < x_{m-1} - y$. Thus we will have 

\begin{align*}
c_0^{(m-1)} &= \log\left(\frac{1}{n+2-(m-1)}\sum_{i=m-1}^n e^{x_i - y}\right) \\
&= \log\left(\frac{1}{n+2-(m-1)}\left((n+2-m)e^{c_0^{(m)}} + e^{x_{m-1}-y}\right)\right)\\
&> \log\left(\frac{1}{n+2-(m-1)}\left((n+2-m)e^{c_0^{(m)}} + e^{c_0^{(m)}}\right)\right)\\
& = c_0^{(m)},
\end{align*}
and 
\begin{align*}
c_0^{(m-1)} &= \log\left(\frac{1}{n+2-(m-1)}\sum_{i=m-1}^n e^{x_i - y}\right) \\
&= \log\left(\frac{1}{n+2-(m-1)}\left((n+2-m)e^{c_0^{(m)}} + e^{x_{m-1}-y}\right)\right)\\
&< \log\left(\frac{1}{n+2-(m-1)}\left((n+2-m)e^{x_{m-1}-y} + e^{x_{m-1}-y)}\right)\right)\\
& = x_{m-1}-y,
\end{align*}
If $c_0^{(m-1)}$ satisfies $c_0^{(m-1)} > x_{m-2}-y$, then $k=m-1$ will make $(g^*,\gamma^*)$ defined in the beginning of this section the optimal pair we want. If not, then $c_0^{(m-1)}<  x_{m-2} - y$, and we can continue this step with $k=m-1$ as long as $m-1 >2$.\\
Step 3: If $k=2$ still does not satisfy the condition above. We have $c_0^{(2)} < x_1 - y$, which is equivalent to
\begin{align*}
c_0^{(2)} = \log\left(\frac{1}{n}\sum_{i=2}^n e^{x_i-y}\right) &\leq   x_1 - y\\
\Longleftrightarrow\frac{1}{n}\sum_{i=2}^n e^{x_i - y} & \leq  e^{x_1 - y}\\
\Longleftrightarrow\sum_{i=1}^n e^{x_i - y} &\leq (n+1) e^{x_1 - y},
\end{align*}
which tells us $k=1$ satisfies the condition.

Now let's consider the case if $y$ is not on the side, rather there exists $1\leq l <n$ such that $x_l < y < x_{l+1}$. Then through out intuition from the interaction between relative entropy and optimal transport cost, we can consider a new set of points by $\left\{x_1,x_2,\dots,x_l, 2y - x_{l+1}, 2y - x_{l+2},\dots, 2y - x_{n}\right\}$, where for all the point larger than $y$, we replace them with their symmetric point about $y$. Now for this new set of points, $y$ is larger than the largest point of them. We can redo what has been done in the beginning of this example, and notice that by mirroring these point around $y$, the cost for transporting mass from $y$ to these points are unchanged, and relative entropy cost does not care about location, so we get the exact $G_\Gamma(\mu\lVert\nu)$ from the new set of points, and repeat what has done before. 
\begin{remark}
The idea to find the optimal intermediate measure $\gamma^*$ in the last example can actually be carried to higher dimensional settings, where one can order the points according to their distance with respect to $y$. Example \ref{add_pt_multi_dim} discusses this extension.
\end{remark}

\subsection{More examples}\label{more_eqs}
In the following are more examples which are extensions of examples in section \ref{general_egs}. Based on intuition developed so far, we want to discuss how one can compute $\Gamma$ divergence for more general cases. 
\begin{example}[Extension of Example \ref{example_same_density_extended_support}]
$X = \mathbb{R}$, $\Gamma = \mathrm{Lip}(1;C_b(\mathbb{R}))$. Fix $f_1,f_2:\mathbb{R} \to (0,+\infty)$ as two continuous functions which are differentiable almost everywhere in $[0,1]$, such that $\log\left(\frac{f_1}{f_2}\right)$ as a function constrained at interval $[0,1]$, is in $\Gamma$. $\nu(dx) = \frac{f_2(x)}{\int_0^1 f_2(y)dy}dx$ for $x\in[0,1]$, and $0$ elsewhere. $\mu(dx) = \frac{f_1(x)}{\int_0^{1+c}f_1(y)dy} dx$ for $x\in [0,1+c]$, and $0$ elsewhere.
\end{example}
First we notice that if we multiply $f_1$ and $f_2$ by two different constant respectively, $\log\left(\frac{f_1}{f_2}\right)$ will only differ by a constant. Without loss of generality, let's assume $\int_0^{1+c} f_1(y)dy = \int_0^1 f_2(y)dy = 1$. It's worth noted that when $c=0$, $G_\Gamma(\mu\lVert\nu) = R(\mu\lVert\nu)$, since in this case, the only interval that matters is $[0,1]$, and it can be easily verified that $(g^*|_{[0,1]},\gamma^*) = \left(\log\left(\frac{f_1}{f_2}\right)|_{[0,1]},\mu\right)$ is the optimal pair for $\Gamma$-divergence. For $c>0$, similar to the intuition from Example \ref{example_same_density_extended_support}, we guess there exists $b\in[0,1)$, such that 
\begin{equation*}
g^*(x)=\left\{
\begin{aligned}
& \log\left(\frac{f_1(x)}{f_2(x)}\right) & 0\leq x\leq b,\\
& x-b + \log\left(\frac{f_1(b)}{f_2(b)}\right) & b< x \leq 1+c. \\
\end{aligned}
\right.
\end{equation*}
and 
\begin{equation*}
\gamma^*(dx)=\left\{
\begin{aligned}
& \mu(dx) & 0\leq x\leq b,\\
& e^{g^*(x)}\nu(dx) & b< x \leq 1. \\
\end{aligned}
\right.
\end{equation*}
It's obvious $g^* \in \mathrm{Lip}(1)$, and $\gamma^*$ is constructed so that (\ref{verification_conditon_1}) and (\ref{verification_condition_2}) are satisfied if $\gamma^*$ is a probability measure. So the only thing need to be satisfied is $\gamma^*$ being a probability measure, which is 
\begin{align*}
1 &= \int_0^b \mu(dx) + \int_b^1 e^{g^*(x)}\nu(dx)\\
&= \int_0^b f_1(x) dx + \int_b^1 e^{x-b} \frac{f_1(b)}{f_2(b)} f_2(x) dx.
\end{align*}
Notice that $\int_0^{1+c}f_1(x)dx =1$, so the condition that needs to be satisfied is in turn \begin{align}
\int_b^{1+c} f_1(x) dx - \int_b^{1}e^{x-b} \frac{f_1(b)}{f_2(b)}f_2(x) dx = 0.    
\end{align}
Denote $H(b) = \int_b^{1+c} f_1(x) dx - \int_b^{1+c}e^{x-b} \frac{f_1(b)}{f_2(b)}f_2(x) dx.$ Notice 
$$H(1) = \int_1^{1+c} f_1(x) dx > 0.$$ Since $\log\left(\frac{f_1}{f_2}\right)_{[0,1]}\in \mathrm{Lip}(1)$,  for any $x$ and $x+\Delta x\in[0,1)$, $\log\left(\frac{f_1(x+\Delta x)}{f_2(x+\Delta x)}\right)\leq \log\left(\frac{f_1(x)}{f_2(x)}\right) + \Delta x$, which is equivalent to $\frac{f_1(x+\Delta x)}{e^{x+\Delta x}f_2(x+\Delta x)}\leq \frac{f_1(x)}{e^x f_2(x)}$. By taking the difference of two sides of the inequality, divided by $\Delta x$, and taking $\Delta x \to 0$, we get $\left(\frac{f_1(x)}{e^x f_2(x)}\right)'\leq 0$. Thus 
\begin{align*}
H'(b) &= -f_1(b) + e^{b-b} \frac{f_1(b)}{f_2(b)} f_2(b) - \int_b^{1+c}e^x f_2(x) \left(\frac{f_1(b)}{e^b f_2(b)}\right)'dx\\
& = - \int_b^{1+c}e^x f_2(x) \left(\frac{f_1(b)}{e^b f_2(b)}\right)'dx\\
& \geq 0.
\end{align*}
Thus $H(b)$ is an increasing function. So depending on whether $H(0) <0$ hold or not, we will have two different scenarios.\\
1) $H(0)<0$, then by intermediate value theorem and the fact $H$ is monotone, there exists $b^*\in (0,1)$ such that $H(b^*) = 0$. There might be multiple $b$ such that $H(b) = 0$, but these $b$ must be on an interval where $\left(\frac{f_1(x)}{e^x f_2(x)}\right)'= 0$. Thus any $b^*$ in this interval will result in the same $\gamma^*$ and $g^*$, and nonetheless we can just choose $b^*$ as the right end of that interval.\\
2) $H(0)\geq 0$. Then similar to Example \ref{example_same_density_extended_support}, $b^*=0$ with $g^*$ can be specified as $g^*(x) = x$ for $x\in[0,1+c]$ and there will exists $C_0>0$ as a constant such that $\gamma^*(dx) = C_0 e^x\nu(dx)$ holds for $x\in [0,1]$.

In the next example, we will look at an alternate situation compared to Example \ref{discrete_pts_add_point}. When we compare $\nu$ to $\mu$, instead of adding an extra point in the support as in Example \ref{discrete_pts_add_point}, we will take an point away from the support.
\begin{example}[Alternation of Example \ref{discrete_pts_add_point}]\label{discrete_pts_minus_pt}
$X = \mathbb{R}, \Gamma = \mathrm{Lip}(1;C_b(\mathbb{R}))$. \\$N\doteq\{x_1, x_2, \dots, x_n\}\subset X$, where we assume that $x_i\neq x_j$ for $i\neq j$. Without loss of generality we assume that $x_1 < x_2 < \dots < x_n$. $\nu$ is the uniform distribution on $N$, and $\mu$ is the uniform distribution on $N\backslash \{x_j\}$ for some $j\in\{1,2,\dots,n\}$.
\end{example}
Similar to discussion in Example \ref{discrete_pts_add_point}, our intuition tells us that $\gamma^*$ should be the same as $\mu$ for points which are more than certain distance away from $x_j$, and has decreasing mass towards $x_j$ for points which are closer to $x_j$, where the log-likelihood of $\gamma^*$ with respect to $\nu$ has derivative $1$ for these closer points. Here we use a different notation as in Example \ref{discrete_pts_add_point} which can be generalized later on. Let's denote $d: X\times X \to \mathbb{R}$ as the distance function here. In this example $d(x,y)=|x-y|$. Denote $D_R \doteq \{x\in \mathbb{R}: d(x,x_j)\leq R\}$, and guess there exists $R>0$ and $c_0\in\mathbb{R}$ such that $g^*$ and $\gamma^*$ defined on $N$ are
\begin{equation}\label{discrete_minus_pt_g}
g^*(x)=\left\{
\begin{aligned}
& d(x,x_j) & x\in N\cap D_R,\\
& c_0 & x \in N\backslash D_R, \\
\end{aligned}
\right.
\end{equation}
and 
\begin{equation}\label{discrete_minus_pt_gamma}
\gamma^*(x)=\left\{
\begin{aligned}
& \frac{e^{g^*(x)}}{\int e^{g^*(y)}\nu(dy)}\nu(x) & x\in N \cap D_R, \\
& \mu(x) & x\in N\backslash D_R,\\
\end{aligned}
\right.
\end{equation}
These guesses are based on (\ref{verification_conditon_1}) and (\ref{verification_condition_2}). To make them finally satisfy 
these two conditions, we separate into two subcases: $N\backslash D_R \neq \varnothing$ or $N\backslash D_R = \varnothing$. \\
We first consider the case $N\backslash D_R\neq \varnothing.$\\
1) $g\in\Gamma$, where in this example we only need to check for any $x\in N\cap D_R$, and any $y\in N\backslash D_R$,
$$|g^*(x) - g^*(y)|\leq d(x,y).$$
By (\ref{discrete_minus_pt_g}), $d(x,y) = |x-y|$, we conclude that the condition above is implied by
$$\max_{x\in N\cap D_R} d(x,x_j) \leq c_0\leq \min_{x\in N\backslash D_R} d(x,x_j).$$
2) We need for $x\in N\backslash D_R$,
$$\frac{d\gamma^*}{d\nu}(x) = \frac{e^{g^*(x)}}{\int e^{g^*(y)}\nu(dy)}.$$
Together with (\ref{discrete_minus_pt_g}) and (\ref{discrete_minus_pt_gamma}), we have for $x\in N\backslash D_R$,
\begin{align*}
\frac{n}{n-1} &= \frac{d\mu}{d\nu}(x) = \frac{d\gamma^*}{d\nu}(x)\\
& = \frac{e^{g^*(x)}}{\int e^{g^*(y)}\nu(dy)}\\
&=\frac{e^{c_0}}{\left(\sum_{x\in N\cap D_R}e^{d(x,x_j)} + \sum_{x\in N\backslash D_R}e^{c_0}\right)/n}.
\end{align*}
Denote $k = \# (N\cap D_R)$ the number of points in $N\cap D_R$, then by solving the equation above we have 
\begin{align}\label{condition_constant_minus_pt_1d}
c_0 =\log\left( \frac{1}{k-1} \sum_{x\in N\cap D_R} e^{d(x,x_j)}\right).
\end{align}
3) To satisfy (\ref{verification_condition_2}), one needs to have 
$$W_\Gamma(\mu,\gamma^*) = \int g^* d(\mu-\gamma^*).$$
A sufficient condition is 
\begin{align}\label{condition_constant_2_minus_pt_1d}
c_0 > \max_{x\in N\cap D_R} d(x,x_j).
\end{align} 
This is because when (\ref{condition_constant_2_minus_pt_1d}) holds, we can pick a coupling $\pi^*$ of $\mu$ and $\gamma^*$, as $\pi^*(x_j,x) = \mu(x) - \gamma^*(x)$ for any $x\in N\backslash\{x_j\}$, $\pi^*(x,x) = \gamma^*(x)$ for any $x\in N\backslash\{x_j\}$. Notice that $(\ref{condition_constant_2_minus_pt_1d})$ makes sure the definition of $\pi^*$ above is a probability measure. By (\ref{Mass_transport_characterization}) and the cost here between points is $d(x,y) = |x-y|$, we have
\begin{align*}
W_\Gamma(\mu,\gamma^*) &= \inf_{\pi\in\Pi(\mu,\gamma^*)}\left\{\int_{\mathbb{R}\times \mathbb{R}} d(x,y)\pi(dxdy)\right\}\\
&\geq \int_{\mathbb{R}\times \mathbb{R}} d(x,y)\pi^*(dxdy)\\
&= \sum_{x\in N\backslash \{x_j\}}d(x,x_j)(\mu(x_j) - \gamma^*(x_j)).
\end{align*}
On the other hand, from the definition of $W_\Gamma(\mu,\gamma^*)$,
\begin{align*}
W_\Gamma(\mu,\gamma^*) &= \sup_{g\in\Gamma}\left\{\int_{\mathbb{R}} g d(\mu-\gamma^*)\right\}\\
&\leq \int_{\mathbb{R}} g^* d(\mu-\gamma^*)\\
&=\sum_{x\in N\cap D_R}g^*(x)(\mu(x) - \gamma^*(x))\\
&= \sum_{x\in N\backslash\{x_j\}}g^*(x)(\mu(x)-\gamma^*(x)).
\end{align*}
where the last line comes from the fact that for $x\in N\backslash D_R$, $\mu(x) = \gamma^*(x)$, and $g^*(x_j) = d(x_j,x_j) = 0$.\\
In the case $N\backslash D_R = \varnothing$, the conditions from (\ref{discrete_minus_pt_g}) and (\ref{discrete_minus_pt_gamma}) only consist of the first line, so the only sufficient condition here is for any $x\in N\backslash\{x_j\}$,
$$\gamma^*(x)<\mu(x),$$
which reduces to 
$$\max_{x\in N}\gamma^*(x) =\frac{e^{\max_{x\in N}d(x,x_j)}}{\sum_{x\in N}e^{d(x,x_j)}} \nu(x) <\mu(x),$$
which is equivalent to 
\begin{align}\label{discrete_minus_pt_large_R_condtion}
e^{\max_{x\in N}d(x,x_j)} < \frac{n}{n-1}\sum_{x\in N}e^{d(x,x_j)}.
\end{align}
Here we can use mathematical induction by increasing $R$ similar as in Example \ref{discrete_pts_add_point} to show there exists $R>0$ such that the above conditions are met. By using (\ref{condition_constant_minus_pt_1d}) to
define $c_0(R)$ as a function from $\mathbb{R}_+$ to $\mathbb{R}$, it can be easily checked that it's a piecewise constant and right continuous function, where the jumps happen at $R=d(x,x_j)$ for $x\in N$. By increasing $R$ to all through all these jump points, it can be checked by induction that either there exists $R>0$ such that $N\backslash D_R\neq \varnothing$, and the three conditions above are met, or for $R$ large enough such that $N\subset D_R$, the condition (\ref{discrete_minus_pt_large_R_condtion}) is met.

The intuition for what happens for $\Gamma$ divergence to compute the cost to go from $\mu$ to $\nu$ in this example is that $R$ determines in which neighbourhood there would be mass flowing from $\mu$ to the missing point $x_j$ to form $\gamma^*$, and relative entropy takes care of the cost between the intermediate measure $\gamma^*$ and $\nu$.

Next we consider a generalization of Example \ref{discrete_pts_add_point} to higher dimensions.
\begin{example}\label{add_pt_multi_dim}
$X = \mathbb{R}^m$, $\Gamma = \mathrm{Lip}(1;C_b(\mathbb{R}^m))$ with respect to Euclidean distance. $N \doteq \{x_1,x_2,\dots, x_n\} \subset X$, where we assume for simplicity that $x_i\neq x_j$ for any $i\neq j$. $\nu$ is uniform distribution over $N$. $\mu$ is uniform over $N\cup\{y\}$, where $y\notin N$. 
\end{example}
For this example, we apply the same notation as in Example \ref{discrete_pts_minus_pt}. To move from $\mu$ to $\nu$ through an intermediate measure $\gamma^*$, one expects the mass of $\mu$ at the new point $y$ to be transferred to nearby points of $y$ to form $\gamma^*$, and then let relative entropy take care of the cost between $\gamma^*$ and $\nu$. We denote $D_R = \{x\in X: d(x,y)\leq R\}$, and guess there exists $R>0$ and $c_0\in \mathbb{R}$ such that $g^*$ defined on $N\cup\{y\}$ is 
\begin{equation*}
g^*(x)=\left\{
\begin{aligned}
& 0 & x=y,\\
& -d(y,x) & x\in N\cap D_R,\\
& c_0 & x \in N\backslash D_R, \\
\end{aligned}
\right.
\end{equation*}
and 
\begin{equation*}
\gamma^*(dx)=\left\{
\begin{aligned}
& \frac{e^{g^*(x)}}{\int e^{g^*(z)}\nu(dz)}\nu(dx) & x\in N \cap D_R, \\
& \mu(dx) & x\in N\backslash D_R.\\
\end{aligned}
\right.
\end{equation*}
To make them satisfy 
(\ref{verification_conditon_1}) and (\ref{verification_condition_2}), we need \\
1) $g^*\in\Gamma$.\\
2) If $N\backslash D_R\neq \varnothing$, then we need for $x\in N\backslash D_R$
$$\frac{d\gamma^*}{d\nu}(x) = \frac{e^{g^*(x)}}{\int e^{g^*(z)}\nu(dz)},$$
which gives
$$\frac{n}{n+1} = \frac{d\mu}{d\nu}(x) = \frac{e^{c_0}}{\left(\sum_{x\in N\cap D_R}e^{-d(x,y)} + \sum_{x\in N\backslash D_R}e^{c_0}\right)/n}.$$
Denote $k = \# (N\cap D_R)$ the number of points in $N\cap D_R$, then by solving the equation above we have 
\begin{align}\label{condition_constant_add_pt_higher_d}
c_0 =\log\left( \frac{1}{k+1} \sum_{x\in N\cap D_R} e^{-d(x,y)}\right).
\end{align}\\
3)  Similar to Example \ref{discrete_pts_add_point}, a sufficient condition for (\ref{verification_condition_2}) to hold is for any $x\in N$, $\mu(x)\leq \gamma^*(x)$. If $N\backslash D_R \neq \varnothing$, this is equivalent to $c_0 < -\max_{x\in N\cap D_R}d(x,y)$.
If $N\backslash D_R = \varnothing$, this condition reduces to 
$$\min_{x\in N}\gamma^*(x) =\frac{e^{-\max_{x\in N}d(x,y)}}{\sum_{x\in N}e^{-d(x,y)}} \nu(x) >\mu(x),$$
which is equivalent to 
$$e^{-\max_{x\in N}d(x,y)} < \frac{n}{n+1}\sum_{x\in N}e^{-d(x,y)}.$$
Similar approach as in Example \ref{discrete_pts_add_point} and Example \ref{discrete_pts_minus_pt}, one can show there exists $R$ such that the conditions above are satisfied. We omit the details here.

\subsection{Other Directions}\label{other_directions}
There are many other directions which can be explored about $\Gamma$-divergence. Possible direction involve
\begin{itemize}
    \item both $\mu$ and $\nu$ are distributions over discrete points, where $\mathrm{supp}(\mu)$ has a few points more, or less than $\mathrm{supp}(\nu)$.
    \item $\mu$ is a discrete probability distribution, while $\nu$ is a continuous probability distribution, or vise versa.
    \item Numerical Estimation of $\Gamma$-divergence for general pair of probability measures.\\
    For example, one can make use of the first variational expression of (\ref{Vari}) to do numerical approximation of $G_\Gamma(\mu\lVert\nu)$. In \cite{belbarraj}, variational representation (\ref{eqn:var_forms}), together with neural networks, are used to get numerical estimates of mutual information, which is a specific type of relative entropy, of a joint probability distribution of two random variables from samples generated by the distribution. It is expected one can do estimation for $\Gamma$-divergence using similar approaches. 
\end{itemize}

\subsection{Limits and Approximations of $\Gamma$-divergence}\label{limits_and_approximations}

In this section, we consider limits that are obtained as the admissible set gets large or small,
and the $\Gamma$-divergence will be approximated by relative entropy or a transport distance, respectively. We also consider in special cases more informative expansions.
Throughout the section we assume the conditions of Theorem \ref{optimizer}.

Fix an admissible set of $\Gamma_0 \subset C_b(S)$, and take $\Gamma = b\Gamma_0 =\left\{b\cdot  f:f\in\Gamma_0\right\}$ for $b>0$. Then the following proposition holds.
\begin{proposition}\label{lim_RE}
For $\mu,\nu\in \mathcal{P}(S)$, 
$$\lim_{b\to\infty} G_{b\Gamma_0}(\mu\lVert\nu) = R(\mu\lVert\nu).$$
\end{proposition}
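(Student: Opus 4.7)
My plan is to combine the inf-convolution representation from Theorem \ref{thm:main} with the tightness of level sets of $R(\cdot\lVert\nu)$. Since $g\in b\Gamma_0$ iff $g=bh$ with $h\in\Gamma_0$, the set $b\Gamma_0$ is itself admissible for each $b>0$ (convexity, symmetry, constants and the determining property are all preserved under scaling), and $W_{b\Gamma_0}(\eta)=b\,W_{\Gamma_0}(\eta)$. Applying Theorem \ref{thm:main} to $b\Gamma_0$ gives
\[
G_{b\Gamma_0}(\mu\lVert\nu) = \inf_{\gamma\in\mathcal{P}(S)}\left\{R(\gamma\lVert\nu) + b\,W_{\Gamma_0}(\mu-\gamma)\right\}.
\]
Choosing $\gamma=\mu$ (or invoking Lemma \ref{basic}) yields the uniform upper bound $G_{b\Gamma_0}(\mu\lVert\nu)\le R(\mu\lVert\nu)$, hence $\limsup_{b\to\infty}G_{b\Gamma_0}(\mu\lVert\nu)\le R(\mu\lVert\nu)$.

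The matching lower bound is obtained by extracting near-minimizers that must concentrate at $\mu$. Fix $\varepsilon>0$ and for each $b$ choose $\gamma_b\in\mathcal{P}(S)$ with $R(\gamma_b\lVert\nu)+b\,W_{\Gamma_0}(\mu-\gamma_b)\le G_{b\Gamma_0}(\mu\lVert\nu)+\varepsilon$. Writing $L\doteq\liminf_{b\to\infty}G_{b\Gamma_0}(\mu\lVert\nu)$ and assuming $L<\infty$ (otherwise there is nothing to show), take a subsequence $b_k\to\infty$ with $G_{b_k\Gamma_0}(\mu\lVert\nu)\to L$. Then $R(\gamma_{b_k}\lVert\nu)$ stays bounded and $W_{\Gamma_0}(\mu-\gamma_{b_k})=O(1/b_k)\to 0$. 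The first bound plus \cite[Lemma 1.4.3(c)]{dupell4} gives tightness of $\{\gamma_{b_k}\}$, so a further subsequence converges weakly to some $\gamma_\infty\in\mathcal{P}(S)$.

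The decisive step is identifying $\gamma_\infty=\mu$. Because $W_{\Gamma_0}$ is a supremum of weakly continuous linear functionals, it is weakly lower semicontinuous, hence $W_{\Gamma_0}(\mu-\gamma_\infty)\le\liminf_k W_{\Gamma_0}(\mu-\gamma_{b_k})=0$. Symmetry of $\Gamma_0$ upgrades this to $\int g\,d(\mu-\gamma_\infty)=0$ for every $g\in\Gamma_0$, and the measure-determining clause of admissibility (Definition \ref{access}) then forces $\gamma_\infty=\mu$. Lower semicontinuity of $R(\cdot\lVert\nu)$ (cf.\ \cite[Lemma 1.4.3(b)]{dupell4}) gives
\[
R(\mu\lVert\nu)=R(\gamma_\infty\lVert\nu)\le\liminf_k R(\gamma_{b_k}\lVert\nu)\le L+\varepsilon,
\]
and sending $\varepsilon\downarrow 0$ closes the argument. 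The case $R(\mu\lVert\nu)=\infty$ is absorbed automatically: the same chain forces $L=\infty$, which is consistent with the trivial inequality we set aside.

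The step I expect to be the main obstacle is the identification $\gamma_\infty=\mu$ from $W_{\Gamma_0}(\mu-\gamma_\infty)\le 0$: it is where both the symmetry and the determining property in the definition of admissibility are genuinely used, and it is what converts a quantitative vanishing of $W_{\Gamma_0}$ along the sequence into a structural statement about the weak limit. Everything else is routine soft analysis (tightness from bounded relative entropy, weak lower semicontinuity, and selection of near-minimizers).
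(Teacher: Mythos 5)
Your proof is correct and follows essentially the same route as the paper's: the inf-convolution representation from Theorem \ref{thm:main}, the upper bound $G_{b\Gamma_0}(\mu\lVert\nu)\le R(\mu\lVert\nu)$, tightness of (near-)minimizers from bounded relative entropy via \cite[Lemma 1.4.3(c)]{dupell4}, weak lower semicontinuity of $R(\cdot\lVert\nu)$ and $W_{\Gamma_0}$, and admissibility of $\Gamma_0$ to force the weak limit to equal $\mu$. The only cosmetic difference is that you work with $\varepsilon$-near-optimizers where the paper invokes the exact optimizers supplied by Theorem \ref{optimizer} (and handles $R(\mu\lVert\nu)=\infty$ by a separate contradiction argument), which makes your version marginally more self-contained but does not change the substance.
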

\begin{proof}
We separate the proof into two cases, $R(\mu\lVert\nu)<\infty$ and $R(\mu\lVert\nu)=\infty$.

\vspace{\baselineskip}\noindent
1) If $R(\mu\lVert\nu)<\infty$, then for any $b>0$, 
\begin{align}
G_{b\Gamma_0}(\mu\lVert\nu) = \inf_{\gamma\in\mathcal{P}(S)}\left\{W_{b\Gamma_0}(\mu,\gamma)+R(\gamma\lVert\nu)\right\}\leq R(\mu\lVert\nu)<\infty.\label{eqn:bbounds}
\end{align}
From Theorem \ref{optimizer} we know there exists a unique optimizer $\gamma^*$ for each $b$, which we write as $\gamma_b^*$. Note that
 $$R(\gamma_b^*\lVert \nu)
 %\leq W_{b\Gamma_0}(\mu,\gamma_b^*)+R(\gamma_b^*\lVert \nu)=G_{b\Gamma_0}(\mu\lVert\nu) 
 \leq R(\mu\lVert\nu)<\infty,$$
and therefore  $\left\{\gamma_b^*\right\}_{b>0}$ is precompact in the weak topology \cite[Lemma 1.4.3(c)]{dupell4}.
 Given any subsequence $b_k$, there exists a further subsequence (again denoted by $b_k$) and $\gamma_{\infty}^*\in\mathcal{P}(S)$ such that $\gamma^*_{b_k}\Rightarrow\gamma^*_\infty$. On the other hand,
 \begin{align*}
 W_{b\Gamma_0}(\mu,\gamma^*_b) &= \sup_{f\in b\Gamma_0}\left\{\int_S f d(\mu-\gamma_b^*)\right\}\\
 &=b\sup_{f\in \Gamma_0}\left\{\int_S f d(\mu-\gamma_b^*)\right\}=b W_{\Gamma_0}(\mu,\gamma_b^*),
 \end{align*}
 and 
 $W_{b\Gamma_0}(\mu,\gamma_b^*)\leq G_{b\Gamma_0}(\mu\lVert\nu) \leq R(\mu\lVert\nu)<\infty.$
 Thus
 \begin{align*}
 W_{\Gamma_0}(\mu,\gamma_\infty^*)&\leq \liminf_{k\to\infty}W_{\Gamma_0}(\mu,\gamma_{b_k}^*) = \liminf_{k\to\infty}\frac{1}{b_k}W_{b_k\Gamma_0}(\mu,\gamma_{b_k}^*)\\
 &\leq \liminf_{k\to\infty}\frac{1}{b_k}R(\mu\lVert\nu)=0,
 \end{align*}
and since $\Gamma_0$ is admissible,
 $\gamma_\infty^*=\mu$.  We thus conclude that
 \begin{align*}
 \liminf_{k\to\infty}G_{b_k\Gamma_0}(\mu\lVert\nu)&=\liminf_{k\to\infty}\left(W_{b_k\Gamma_0}(\mu,\gamma_{b_k}^*)+R(\gamma_{b_k}^*\lVert \nu)   \right)\\
 &\geq \liminf_{k\to\infty}R(\gamma_{b_k}^*\lVert \nu)\\
 & \geq R(\mu\lVert\nu),
 \end{align*}
 and since the original subsequence was arbitrary  
 $$\liminf_{b\to\infty}G_{b\Gamma_0}(\mu\lVert\nu)\geq R(\mu\lVert\nu).$$
 On the other hand, we have by \eqref{eqn:bbounds} that
 $$\limsup_{b\to\infty}G_{b\Gamma_0}(\mu\lVert\nu)\leq R(\mu\lVert\nu),$$
 and the statement is proved.
 
\vspace{\baselineskip}\noindent
 2) $R(\mu\lVert\nu)=\infty.$ For this case, we want to prove that 
 $$\liminf_{b\to\infty}G_{b\Gamma_0}(\mu\lVert\nu)=\infty.$$
 If not, then there exists a subsequence $\left\{b_k\right\}_{b\in\mathbb{N}}$ such that 
 $$\lim_{k\to\infty}G_{b_k\Gamma_0}(\mu\lVert\nu)<\infty.$$
 For this subsequence, we can apply the argument used in part 1) to conclude there exists $\gamma_{b_k}^*$ such that 
 $$G_{b_k\Gamma_0}(\mu\lVert\nu)=W_{b_k\Gamma_0}(\mu,\gamma_{b_k}^*)+R(\gamma_{b_k}^*\lVert\nu)  .$$
Moreover there exists a further subsequence of this sequence, which for simplicity we also denote by $\left\{b_k\right\}_{k\in\mathbb{N}}$, which satisfies $\gamma_{b_k}^*\Rightarrow \mu$. Then by the same argument as in 1), we would conclude
$$\lim_{k\to\infty}G_{b_k\Gamma_0}(\mu\lVert\nu)\geq R(\mu\lVert\nu)=\infty.$$
This contradiction proves the statement.
\end{proof}

\vspace{\baselineskip}
On the other hand, if $\Gamma=\delta\Gamma_0$ for small $\delta>0$, we can approximate the $\Gamma$-divergence in terms of the $W_{\Gamma_0}$.

\begin{proposition}
For $\mu,\nu\in\mathcal{P}(S)$
$$\lim_{\delta\to 0}\frac{1}{\delta} G_{\delta\Gamma_0}(\mu\lVert\nu)=W_{\Gamma_0}(\mu,\nu).$$
\end{proposition}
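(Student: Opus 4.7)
The plan is to bracket $\tfrac{1}{\delta}G_{\delta\Gamma_0}(\mu\lVert\nu)$ from above by $W_{\Gamma_0}(\mu,\nu)$ for every $\delta>0$, and then match this bound from below in the limit $\delta\to 0^+$ by a Taylor expansion argument applied one test function at a time. Because each $g\in\delta\Gamma_0$ can be uniquely written $g=\delta f$ with $f\in\Gamma_0$, the definition gives
\[
\tfrac{1}{\delta}G_{\delta\Gamma_0}(\mu\lVert\nu)=\sup_{f\in\Gamma_0}\left\{\int_S f\,d\mu - \tfrac{1}{\delta}\log\int_S e^{\delta f}d\nu\right\},
\]
so everything reduces to controlling $\tfrac{1}{\delta}\log\int_S e^{\delta f}d\nu$.

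For the upper bound, the key step is Jensen's inequality applied to the convex function $e^{x}$, which yields $\int_S e^{\delta f}d\nu\geq e^{\delta\int_S f d\nu}$ and hence $\log\int_S e^{\delta f}d\nu\geq \delta\int_S f\,d\nu$. Substituting into the displayed formula above gives, for every $f\in\Gamma_0$ and every $\delta>0$,
\[
\int_S f\,d\mu - \tfrac{1}{\delta}\log\int_S e^{\delta f}d\nu \leq \int_S f\,d(\mu-\nu).
\]
Taking the supremum over $f\in\Gamma_0$ on both sides yields $\tfrac{1}{\delta}G_{\delta\Gamma_0}(\mu\lVert\nu)\leq W_{\Gamma_0}(\mu,\nu)$ for every $\delta>0$, which immediately gives $\limsup_{\delta\to 0^+}\tfrac{1}{\delta}G_{\delta\Gamma_0}(\mu\lVert\nu)\leq W_{\Gamma_0}(\mu,\nu)$.

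For the lower bound, I would fix an arbitrary $f\in\Gamma_0$ and note that since $\Gamma_0\subset C_b(S)$, $f$ is uniformly bounded by some $M<\infty$. The elementary bound $|e^{t}-1-t|\leq \tfrac{1}{2}t^{2}e^{|t|}$ applied pointwise with $t=\delta f(x)$, followed by integration and the expansion $\log(1+x)=x+O(x^{2})$, yields
\[
\tfrac{1}{\delta}\log\int_S e^{\delta f}d\nu=\int_S f\,d\nu+O(\delta) \quad\text{as }\delta\to 0^+,
\]
with the $O(\delta)$ constant depending only on $M$. Therefore for this fixed $f$,
\[
\liminf_{\delta\to 0^+}\tfrac{1}{\delta}G_{\delta\Gamma_0}(\mu\lVert\nu)\geq \lim_{\delta\to 0^+}\left(\int_S f\,d\mu-\tfrac{1}{\delta}\log\int_S e^{\delta f}d\nu\right)=\int_S f\,d(\mu-\nu).
\]
Since $f\in\Gamma_0$ was arbitrary, taking the supremum over $\Gamma_0$ on the right-hand side gives $\liminf_{\delta\to 0^+}\tfrac{1}{\delta}G_{\delta\Gamma_0}(\mu\lVert\nu)\geq W_{\Gamma_0}(\mu,\nu)$, and the two bounds combine to prove the proposition.

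There is no real obstacle here; the only subtlety is handling the case $W_{\Gamma_0}(\mu,\nu)=\infty$, which is automatic because the lower-bound argument is carried out for each individual $f\in\Gamma_0$ before supremizing, so one simply uses a sequence $f_n\in\Gamma_0$ with $\int_S f_n\,d(\mu-\nu)\to\infty$ and reads off that the $\liminf$ is also $+\infty$. (An alternative route via the inf-convolution representation $\tfrac{1}{\delta}G_{\delta\Gamma_0}(\mu\lVert\nu)=\inf_{\gamma}\{W_{\Gamma_0}(\mu-\gamma)+\tfrac{1}{\delta}R(\gamma\lVert\nu)\}$ would also work, since sending $\delta\to 0^+$ forces $\gamma\to\nu$, but the direct dual approach above is cleaner and avoids re-proving a $\Gamma$-convergence-style statement for the penalized functional.)
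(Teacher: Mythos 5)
Your proof is correct. The upper bound via Jensen's inequality is exactly the paper's argument, but for the lower bound you take a genuinely different and more elementary route: you work directly with the dual (sup) definition, fix one $f\in\Gamma_0$, use its boundedness to expand $\tfrac{1}{\delta}\log\int_S e^{\delta f}d\nu=\int_S f\,d\nu+O(\delta)$, and only then supremize, which also disposes of the case $W_{\Gamma_0}(\mu,\nu)=\infty$ with no extra work. The paper instead argues through the inf-convolution representation: it invokes Theorem \ref{optimizer} (whose hypotheses are assumed throughout that subsection) to obtain optimizers $\gamma^*_\delta$, notes $R(\gamma^*_\delta\lVert\nu)\leq G_{\delta\Gamma_0}(\mu\lVert\nu)\leq \delta\,W_{\Gamma_0}(\mu,\nu)\to 0$ so that $\gamma^*_\delta\Rightarrow\nu$ along subsequences, and concludes by lower semicontinuity of $W_{\Gamma_0}(\mu,\cdot)$, with a separate contradiction argument when $W_{\Gamma_0}(\mu,\nu)=\infty$. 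Your argument is lighter---it needs only $\Gamma_0\subset C_b(S)$, and no tightness, lower-semicontinuity, or optimizer-existence machinery---whereas the paper's heavier route buys the additional qualitative fact that the optimizing intermediate measure collapses onto $\nu$ as $\delta\to 0$, in line with the section's theme of how the transport and entropy components share the cost. One small remark: the claim that each $g\in\delta\Gamma_0$ is \emph{uniquely} written as $\delta f$ is unnecessary (and not needed for the equality of the two suprema), so it can simply be dropped.
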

\begin{proof}
For any $\delta>0$, Jensen's inequality implies
\begin{align*}
\frac{1}{\delta}G_{\delta \Gamma_0}(\mu\lVert\nu) &= \frac{1}{\delta} \sup_{g\in\delta\Gamma_0}\left\{\int_S g d\mu - \log\int_S e^gd\nu \right\}\\
&\leq \frac{1}{\delta} \sup_{g\in\delta\Gamma_0}\left\{\int_S g d\mu - \int_S g d\nu\right\}\\
&=\sup_{g\in\Gamma_0}\left\{\int_S g d\mu - \int_S g d\nu\right\}\\
&= W_{\Gamma_0}(\mu,\nu),
\end{align*}
and therefore 
$$\limsup_{\delta\to 0}\frac{1}{\delta}G_{\delta \Gamma_0}(\mu\lVert\nu)\leq W_{\Gamma_0}(\mu,\nu).$$

For the reverse inequality we consider two cases.

\vspace{\baselineskip}\noindent
1) $W_{\Gamma_0}(\mu,\nu)<\infty.$ For $0<\delta<1$ the argument used above shows  
$$G_{\delta\Gamma_0}(\mu\lVert\nu)\leq \delta W_{\Gamma_0}(\mu,\nu) \leq W_{\Gamma_0}(\mu,\nu)
<\infty.$$
By Theorem \ref{optimizer}, we know there exists $\gamma^*_\delta\in\mathcal{P}(S)$, such that 
$$G_{\delta\Gamma_0}(\mu\lVert\nu) = W_{\delta\Gamma_0}(\mu,\gamma^*_\delta)+R(\gamma^*_\delta\lVert\nu).$$
Since $R(\gamma^*_\delta\lVert\nu)<G_{\delta\Gamma_0}(\mu\lVert\nu)\leq W_{\Gamma_0}(\mu,\nu)$ for $\delta\in(0,1)$, for any sequence $\delta_k\subset (0,1)$ there a further a subsequence (again denoted $\delta_k$) such that $\delta_k$ is decreasing, $\lim_{k\to\infty}\delta_k=0$, and $\gamma^*_{\delta_k}$ converges weakly to a probability measure, which we denote as $\gamma_0^*$. Then
by the lower semicontinuity of $R(\cdot \lVert\nu)$
$$R(\gamma_0^*\lVert\nu)\leq \liminf_{k\to\infty} R(\gamma_{\delta_k}^*\lVert\nu)\leq \liminf_{k\to\infty}G_{\delta_k\Gamma_0}(\mu,\nu)\leq \lim_{k\to\infty}\delta_k W_{\Gamma_0}(\mu,\nu)=0.$$
Since $R(\gamma_0^*\lVert\nu)\geq 0$ with equality if and only if $\gamma_0^*=\nu$, we conclude $R(\gamma_0^*\lVert\nu)=0$ and $\gamma_0^*=\nu$. Therefore
\begin{align*}
\liminf_{k\to\infty}\frac{1}{\delta_k}G_{\delta_k \Gamma_0}(\mu\lVert\nu)&\geq \liminf_{k\to\infty}\frac{1}{\delta_k}W_{\delta_k\Gamma_0}(\mu,\gamma^*_{\delta_k})\\
&=\liminf_{k\to\infty}W_{\Gamma_0}(\mu,\gamma^*_{\delta_k})\\
&\geq W_{\Gamma_0}(\mu,\gamma^*_0)=W_{\Gamma_0}(\mu,\nu),
\end{align*}
and since the original sequence was arbitrary
$$\liminf_{\delta\to 0}\frac{1}{\delta}G_{\delta\Gamma_0}(\mu\lVert\nu)\geq W_{\Gamma_0}(\mu,\nu).$$

\vspace{\baselineskip}\noindent
2) $W_{\Gamma_0}(\mu,\nu)=\infty.$
If $\liminf_{\delta\to 0}\frac{1}{\delta}G_{\delta\Gamma_0} (\mu\lVert\nu)<\infty$, then there is a subsequence $\left\{\delta_l\right\}_{l\in\mathbb{N}}\subset (0,1)$ that achieves this $\liminf$. 
From essentially the same proof above applied to this subsequence, it can be shown there exists a further subsequence (again denoted $\left\{\delta_{l}\right\}$)  and  $\gamma_0^*\in\mathcal{P}(S)$ such that 
$$G_{\delta_{l}\Gamma_0}(\mu\lVert\nu) =W_{\delta_{l}\Gamma_0}(\mu,\gamma^*_\delta)+ R(\gamma^*_{\delta_{l}}\lVert\nu),$$
and 
$$\gamma^*_{l}\Rightarrow \gamma_0^*.$$
Denote $M\doteq \liminf_{\delta\to 0}\frac{1}{\delta}G_{\delta\Gamma_0} (\mu\lVert\nu)=\lim_{l\to\infty}\frac{1}{\delta_{l}}G_{\delta_{l}\Gamma_0} (\mu\lVert\nu)<\infty$. 
Since for $l$ large enough
$$R(\gamma^*_{\delta_{l}}\lVert\nu)\leq G_{\delta_{l}\Gamma_0}(\mu\lVert\nu)\leq \delta_{l}(M+1),$$
we have 
$$R(\gamma_0^*\lVert\nu)\leq \liminf_{l\to\infty}R(\gamma_{\delta_{l}}^*\lVert\nu)\leq \lim_{l\to\infty}\delta_{l}(M+1)=0,$$
and thus $\gamma_0^*=\nu$. However this leads to 
\begin{align*}
M=\lim_{l\to\infty}\frac{1}{\delta_{l}}G_{\delta_{l}\Gamma_0} (\mu\lVert\nu)&\geq \lim_{l\to\infty}\frac{1}{\delta_{l}}W_{\delta_{l}\Gamma_0} (\mu,\gamma_{\delta_{l}}^*)\\
&=\lim_{l\to\infty}W_{\Gamma_0}(\mu,\gamma_{\delta_{l}}^*)\geq W_{\Gamma_0}(\mu,\nu)=\infty.
\end{align*}
This contradiction implies $$\liminf_{\delta\to 0}\frac{1}{\delta}G_{\delta\Gamma_0} (\mu\lVert\nu)=\infty = W_{\Gamma_0}(\mu,\nu).$$
%and combining the two inequalities completes the proof.
\end{proof}

\vspace{\baselineskip}
We now consider more refined approximations when $b$ is large.
Previously we described the limiting behavior when we vary the size of $\Gamma$. From Proposition \ref{lim_RE}, we know that when $\mu\not\ll\nu$, $\lim_{b\to\infty} G_{b\Gamma_0}(\mu\lVert\nu) = \infty$. In some applications one might use a large transport cost as ``penalty'' so that while allowing non-absolutely continuous perturbations, control on  $G_\Gamma(\mu\lVert\nu)$ will ensure that $\mu$ is not too far away from $\nu$. 

In the rest of this section, we investigate the behavior when $b\to\infty$, and in particular how $G_{b\Gamma_0}(\mu\lVert\nu)$ will behave for fixed $\mu$ and $\nu$. We only consider the case that $\Gamma_0=\mathrm{Lip}(c,S;C_b(S))$ for some function $c$ satisfies the condition of Theorem \ref{massdual}, Assumption \ref{mea-det} and Assumption \ref{finite}, and $\mu,\nu\in L^1(a)$ with $a$ in Assumption \ref{finite}. We separate the cases depending on whether $\mu$ and $\nu$ are discrete or continuous. The results presented here are only for special cases,
and further development of these sorts of expansions would be useful. 

\subsubsection{Finitely supported discrete measures}
We will consider the case where $\mathrm{supp}(\nu)$ has finite cardinality, and $\mu$ is also discrete with finite support. 
%\footnote{Y. I am removing the comments on the continuous case for now.(Okay)}
%Notice that with slight modification, the following theorem also holds when $\mu$ is continuous. 

\begin{theorem}
Suppose $\nu$ and $\mu$ are discrete with finite support, where $\mathrm{supp}(\nu) = \left\{x_i\right\}_{1\leq i\leq N}$ and $\mathrm{supp}(\mu) = \left\{y_j\right\}_{1\leq j\leq M}$. Then there exists $\gamma^*\in\mathcal{P}(S)$ with $\gamma^*\ll\nu$ such that 
\begin{equation}
G_{b\Gamma_0}(\mu\lVert\nu) =bW_{\Gamma_0}(\mu,\gamma^*)+  R(\gamma^*\lVert
\nu)+o(b),
     \label{eqn:exp}
\end{equation}
where $o(b)\leq 0$ satisfies $o(b)\rightarrow 0$ as $b\rightarrow \infty$.
Furthermore, we can characterize $\gamma^*$ as the measure that minimizes $R(\gamma\lVert\nu)$ over the collection of $\gamma\in P(S)$ that satisfy the constraint 
%\footnote{Y. on a revision we need to argue existence (New remark added)}
\begin{align}\label{min_gamma}
W_{\Gamma_0}(\mu,\gamma) = \inf_{\theta\ll\nu}W_{\Gamma_0}(\mu,\theta).
\end{align}
If we further assume that 
$$c(y_j,x_i) \neq c(y_j,x_l)$$ 
for $1\leq j\leq M$ and $1\leq i\neq l\leq N$, which is to avoid ties, then $\gamma^*$ has the following form.
Let $S_i$ be the indicies $j$ in $\{1,\ldots,M\}$ for which $x_i$ is the point in $\left\{x_l\right\}_{1\leq l\leq N}$ closest to $y_j$.
Then for $1\leq i\leq N$,
$$\gamma^*(\{x_i\})= \sum_{j\in S_i}\mu(\{y_j\}).$$
\end{theorem}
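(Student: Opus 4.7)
The plan is to exploit the inf-convolution representation from Theorem \ref{thm:main} with the cost $bc$ in place of $c$, giving
\[G_{b\Gamma_0}(\mu\lVert\nu)=\inf_{\gamma\in\mathcal{P}(S)}\left\{bW_{\Gamma_0}(\mu,\gamma)+R(\gamma\lVert\nu)\right\}.\]
Since $R(\gamma\lVert\nu)<\infty$ forces $\gamma\ll\nu$, the infimum reduces to the simplex $\Delta_N$ of probability vectors on $\{x_1,\ldots,x_N\}$, a finite-dimensional compact convex set on which $W_{\Gamma_0}(\mu,\cdot)$ is convex and continuous and $R(\cdot\lVert\nu)$ is continuous (using $0\log 0=0$) and strictly convex on its natural domain.

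The first step is to set $W_*\doteq\inf_{\theta\in\Delta_N}W_{\Gamma_0}(\mu,\theta)$ and let $\mathcal{M}\subset\Delta_N$ be the nonempty compact convex set of minimizers; strict convexity of $R(\cdot\lVert\nu)$ then singles out a unique $\gamma^*\in\mathcal{M}$ minimizing $R(\cdot\lVert\nu)$ over $\mathcal{M}$. Plugging $\gamma=\gamma^*$ into the inf-convolution yields the easy upper bound $G_{b\Gamma_0}(\mu\lVert\nu)\leq bW_*+R(\gamma^*\lVert\nu)$.

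For the lower bound I would apply Theorem \ref{optimizer} with cost $bc$ to obtain an optimizer $\gamma_b^*\in\Delta_N$ realizing $G_{b\Gamma_0}(\mu\lVert\nu)=bW_{\Gamma_0}(\mu,\gamma_b^*)+R(\gamma_b^*\lVert\nu)$. Comparing with the upper bound gives
\[b\bigl(W_{\Gamma_0}(\mu,\gamma_b^*)-W_*\bigr)\leq R(\gamma^*\lVert\nu)-R(\gamma_b^*\lVert\nu),\]
so the left-hand side is nonnegative and bounded, forcing $W_{\Gamma_0}(\mu,\gamma_b^*)\to W_*$ and $R(\gamma_b^*\lVert\nu)\leq R(\gamma^*\lVert\nu)$. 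Compactness of $\Delta_N$ then implies that every subsequential limit $\bar\gamma$ of $\gamma_b^*$ lies in $\mathcal{M}$ with $R(\bar\gamma\lVert\nu)\leq R(\gamma^*\lVert\nu)$; uniqueness of $\gamma^*$ forces the full sequence $\gamma_b^*\to\gamma^*$ and, by continuity of $R(\cdot\lVert\nu)$ on $\Delta_N$, $R(\gamma_b^*\lVert\nu)\to R(\gamma^*\lVert\nu)$. Writing $o(b)\doteq G_{b\Gamma_0}(\mu\lVert\nu)-bW_{\Gamma_0}(\mu,\gamma^*)-R(\gamma^*\lVert\nu)=b(W_{\Gamma_0}(\mu,\gamma_b^*)-W_*)+(R(\gamma_b^*\lVert\nu)-R(\gamma^*\lVert\nu))$, the displayed inequality together with this convergence yields $o(b)\leq 0$ and $o(b)\to 0$.

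For the no-ties characterization, the plan is a direct transport argument: for each $y_j$ let $x_{k(j)}$ be the unique nearest $\nu$-support point. Any $\pi\in\Pi(\mu,\theta)$ with $\theta\in\Delta_N$ satisfies $\int c\,d\pi\geq\sum_j\mu(\{y_j\})c(y_j,x_{k(j)})$, with equality only if each $y_j$'s mass is sent entirely to $x_{k(j)}$; this pins down $\mathcal{M}$ as the singleton $\{\gamma^\#\}$ with $\gamma^\#(\{x_i\})=\sum_{j\in S_i}\mu(\{y_j\})$, so $\gamma^*=\gamma^\#$. The main obstacle I anticipate is the convergence step for $o(b)$; the key payoff of restricting to finite support is that $R(\cdot\lVert\nu)$ is genuinely continuous on $\Delta_N$ (rather than merely lower semicontinuous), which together with uniqueness of $\gamma^*$ upgrades subsequential precompactness of $\gamma_b^*$ into full convergence and makes the residual terms vanish.
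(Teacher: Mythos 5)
Your proposal is correct and takes essentially the same route as the paper: the inf-convolution representation, the upper bound from plugging in $\gamma^*$, and a lower bound obtained by comparing with (near-)optimizers $\gamma_b$ of $\inf_{\gamma}\{bW_{\Gamma_0}(\mu,\gamma)+R(\gamma\lVert\nu)\}$ and passing to the limit via compactness and (semi)continuity of $R(\cdot\lVert\nu)$ and $W_{\Gamma_0}(\mu,\cdot)$; your use of exact optimizers from Theorem \ref{optimizer}, strict convexity of $R$ on the simplex, and the resulting full-sequence convergence with a squeeze for $o(b)$ is a mild streamlining permitted by the finite-support setting. As a bonus, you also prove the nearest-point form of $\gamma^*$ under the no-ties assumption, which the paper asserts but omits as straightforward.
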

\begin{remark}
In discrete case, is easily checked that the infimum in (\ref{min_gamma}) is achieved. Take a sequence of $\theta_n\ll\nu$ such that 

$$W_{\Gamma_0}(\mu,\theta_n)\leq \inf_{\theta\ll\nu}W_{\Gamma_0}(\mu,\theta) + 1/n.$$
Since $\theta_n$ is supported on the compact set $\mathrm{supp}(\nu)=\left\{x_i\right\}_{1\leq i\leq N}$
%we can view $\theta_n$ as a vector in $\mathbb{R}^N$ with its each entry between $[0,1]$, whose sum equals to $1$. So 
$\left\{\theta_n\right\}_{n\in\mathbb{N}}$ is compact, and hence there exist $\theta^*\ll\nu$ 
%such that there exists 
and a subsequence $\left\{\theta_{n_k}\right\}_{k\in\mathbb{N}}$
%$\subset \left\{\theta_n\right\}_{n\in\mathbb{N}}$ 
that converges to $\theta^*$ weakly.
%(actually coordinate by coordinate) as $k\to\infty$. Now 
By the lower semicontinuity of $W_{\Gamma_0}$
$$W_{\Gamma_0}(\mu,\theta^*) \leq \liminf_{n\to\infty}W_{\Gamma_0}(\mu,\theta_n) \leq \inf_{\theta\ll\nu}W_{\Gamma_0}(\mu,\theta),$$
and therefore $\theta^*$ achieves the infimum of (\ref{min_gamma}).
\end{remark}

% \begin{remark}
% In the discrete setting, it can be easily verified by construction that we can find $\gamma$ such that 

% $$W_{\Gamma_0}(\mu,\gamma) = \inf_{\theta\ll\nu}W_{\Gamma_0}(\mu,\theta).$$
% However, this is not obvious that the same thing is correct in the continuous setting, while the authors expect the same intuition would carry through.
% \end{remark}

\begin{proof}
We use the  representation $G_\Gamma(\mu\lVert\nu)=\inf_{\gamma\in\mathcal{P}(S)}\left\{  R(\gamma\lVert
\nu)+W_{\Gamma}(\mu,\gamma)\right\}$. 
First note that
\begin{align*}
G_{b\Gamma_0}(\mu\lVert\nu) &=\inf_{\gamma\in\mathcal{P}(S)}\left\{  R(\gamma\lVert
\nu)+W_{b\Gamma_0}(\mu,\gamma)\right\}\\
&\leq R(\gamma^*\lVert\nu)+ W_{b\Gamma_0}(\mu,\gamma^*)\\
&=R(\gamma^*\lVert\nu)+ bW_{\Gamma_0}(\mu,\gamma^*).
\end{align*}
Next, fix any $\varepsilon>0$, and take a near optimizer $\gamma_b$, so that for each $b$ 
$$G_{b\Gamma_0}(\mu\lVert\nu)\geq R(\gamma_b\lVert\nu) + W_{b\Gamma_0}(\mu,\gamma_b)-\varepsilon.$$
We must have $\gamma_b\ll\nu$. By (\ref{min_gamma}), we know 
$$W_{b\Gamma_0}(\mu,\gamma_b)= b W_{\Gamma_0}(\mu,\gamma_b) \geq b W_{\Gamma_0}(\mu,\gamma^*)= W_{b\Gamma_0}(\mu,\gamma^*).$$
Thus
\begin{align}
R(\gamma^*\Vert\nu)+W_{b\Gamma_0}(\mu,\gamma^*)&\geq \inf_{\gamma\in P(S)}\left\{  R(\gamma\lVert
\nu)+W_{b\Gamma_0}(\mu,\gamma)\right\} \nonumber\\
& = G_{b\Gamma_0}(\mu,\nu)\nonumber\\
&\geq R(\gamma_b\lVert\nu) + W_{b\Gamma_0}(\mu,\gamma_b)-\varepsilon\nonumber\\
&\geq R(\gamma_b\lVert\nu) + W_{b\Gamma_0}(\mu,\gamma^*)-\varepsilon.\label{eqn:ble}
\end{align}

Since $W_{b\Gamma_0}(\mu,\gamma^*)$ is finite we can subtract it on both sides, and get
$$R(\gamma_b\lVert\nu)\leq R(\gamma^*\lVert\nu)+\varepsilon$$
for any $b<\infty $. Then by \cite[Lemma 1.4.3(c)]{dupell4} $\left\{\gamma_b\right\}_{b\in(0,\infty)}$ is tight. Take a convergent subsequence $\left\{\gamma_{b_k}\right\}$, and denote its limit by $\gamma_{\infty}$. It is easily checked that $\gamma_{\infty}\ll\nu$, so $W_{\Gamma_0}(\mu,\gamma_\infty) \geq W_{\Gamma_0}(\mu,\gamma^*)$. On the other hand, by \eqref{eqn:ble}
\begin{align*}
W_{\Gamma_0}(\mu,\gamma_\infty)- W_{\Gamma_0}(\mu,\gamma^*)&\leq \liminf_{k\to\infty} W_{\Gamma_0}(\mu,\gamma_{b_k})- W_{\Gamma_0}(\mu,\gamma^*)\\
&=\liminf_{k\to\infty}\frac{1}{b_k}(W_{b_k\Gamma_0}(\mu,\gamma_{b_k})-W_{b_k\Gamma_0}(\mu,\gamma^*))\\
&\leq \liminf_{k\to\infty}\frac{1}{b_k}(R(\gamma^*\lVert\nu)-R(\gamma_{b_k}\lVert\nu)+\varepsilon)\\
&\leq \liminf_{k\to\infty}\frac{1}{b_k}(R(\gamma^*\lVert\nu)+\varepsilon)\\
& = 0.
\end{align*}

Thus we conclude that $W_{\Gamma_0}(\mu,\gamma_\infty)= W_{\Gamma_0}(\mu,\gamma^*)$. By the definition of $\gamma^*$  we must have $R(\gamma_\infty\lVert\nu)\geq R(\gamma^*\lVert\nu)$. Choose $k_0$ such that $b_{k_0}\geq 1$.
Then
\begin{align*}
&\liminf_{k\to\infty} \left(G_{b_k\Gamma_0}(\mu\lVert\nu)-[R(\gamma^*\lVert\nu)+b_k W_{\Gamma_0}(\mu,\gamma^*)]\right)\\
&\quad\geq \liminf_{k\to\infty}\left(R(\gamma_{b_k}\lVert\nu)+b_kW_{\Gamma_0}(\mu,\gamma_{b_k})-\varepsilon - (R(\gamma^*\lVert\nu) + b_k W_{\Gamma_0}(\mu,\gamma^*))\right)\\
&\quad\geq \liminf_{k\to\infty} (R(\gamma_{b_k}\lVert\nu)-R(\gamma^*\lVert\nu)) +\liminf_{k\to\infty}b_k(W_{\Gamma_0}(\mu,\gamma_{b_k})-W_{\Gamma_0}(\mu,\gamma^*))-\varepsilon\\
&\quad\geq (R(\gamma_{\infty}\lVert\nu)-R(\gamma^*\lVert\nu)) + \liminf_{k\to\infty}(W_{\Gamma_0}(\mu,\gamma_{b_k})-W_{\Gamma_0}(\mu,\gamma^*))-\varepsilon\\
&\quad\geq 0 + (W_{\Gamma_0}(\mu,\gamma_\infty) - W_{\Gamma_0}(\mu,\gamma^*))-\varepsilon\\
&\quad\geq -\varepsilon
\end{align*}
%where the third inequality is because that $W_{\Gamma_0}(\mu,\gamma_{b_k})-W_{\Gamma_0}(\mu,\gamma^*)>0$, and $b_k$ is increasing, so we can just take a fixed $k_0>0$ and pull $b_{k_0}$ out, and 
where the fourth inequality is because $R(\gamma_\infty\lVert\nu) \geq R(\gamma^*\lVert\nu)$ and the lower semi-continuity of $W_{\Gamma_0}(\mu,\cdot)$. Since $\varepsilon>0$ is arbitrary, this establishes \eqref{eqn:exp} along the given subsequence. For any other sequence $\{b_k\}_{k\in\mathbb{N}}$ along which \\$\lim_{k\to\infty}\left(G_{b_k\Gamma_0}(\mu\lVert\nu)-[R(\gamma^*\lVert\nu)+b_k W_{\Gamma_0}(\mu,\gamma^*)]\right)$ has a limit, we can also take a subsequence from it according to the discussion above. Thus the statement is proved.

The proof of the claimed form for $\gamma^*$ under the stated additional conditions is straightforward and omitted.
% statement 
% then we can substitute the right hand side of the above inequality by $0$.
% Notice we can always take $\left\{b_k\right\}_{k\geq 1}$ to be the subsequence of the subsequence of $\left\{b\right\}_{b\geq 0}$ that achieves 
% $$\liminf_{b\to\infty}\left(G_{b\Gamma_0}(\mu\lVert\nu)-(R(\gamma^*\lVert\nu)+b W_{\Gamma_0}(\mu,\gamma^*)\right),$$ 
% so the above inequality holds also for $\liminf_{b\to\infty}$ subtituting $\liminf_{k\to\infty}$.
% Combining the two sides, we can conclude
% $$0\leq\liminf_{b\to\infty} G_{b\Gamma_0}(\mu\lVert\nu)-(bW_{\Gamma_0}(\mu,\gamma^*)+ R(\gamma^*\lVert\nu))\leq 0.$$
% Thus 
% $$G_{b\Gamma_0}(\mu\lVert\nu)= bW_{\Gamma_0}(\mu,\gamma^*)+ R(\gamma^*\lVert\nu)+o_b(1).$$
\end{proof}

\subsubsection{An example with $\nu$ is continuous}
To illustrate an interesting scaling phenomenon, here we consider the example with 
$S = \mathbb{R}$, $c(x,y)=|x-y|$,  $\nu=\mbox{Unif}([0,1])$, $\mu=\delta_0$.
Consider $\gamma^*(dx) = c_0 e^{-bx}dx$ and $g^*(x) = -bx$ for $0\leq x\leq 1$, where $c_0$ is the normalizing constant. 
For this example $\Gamma_0=\mathrm{Lip}(c,S;C_b(S))$ is the set of bounded functions over $\mathbb{R}$ with Lipschitz constant 1.  It is easily checked using Theorem \ref{verif} that $\gamma^*$ and $g^*$ are the optimizers in
$$G_{b\Gamma_0}(\mu\lVert\nu)=\inf_{\gamma\in\mathcal{P}(S)}\left\{ 
W_{b\Gamma_0}(\mu,\gamma)+R(\gamma\lVert\nu)\right\}
=\sup_{g\in b\Gamma_0} \left\{\int_S g d\mu-\log\int_S e^g\nu\right\}.$$
Thus we have 
$$G_{b\Gamma_0}(\mu\lVert\nu)=-\int_0^1 bx d\mu - \log\int_0^1 e^{-bx}d\nu =-\log\int_0^1 e^{-bx} dx= \log\left(\frac{b}{1-e^{-b}}\right),$$
and in this case, $G_{b\Gamma_0}(\mu\lVert\nu)$ scales as $\log(b)+o(\log(b))$.

For comparison we consider the optimal transport cost between $\mu$ and $\nu$.
We have
\begin{align*}
W_{bc}(\mu,\nu) &\doteq \sup_{g\in b\Gamma_0}\left\{\int_S g d\mu -\int_S gd\nu\right\} \\
&= b\sup_{g\in \Gamma_0}\left\{\int_S g d\mu -\int_S gd\nu\right\}=bW_c(\mu,\nu) 
\end{align*}
and one can calculate that $W_{c}(\mu,\nu)=1/2.$ Thus $W_{bc}(\mu,\nu)=b/2$, and so $G_{b\Gamma_0}(\mu\lVert\nu)$ gives a much smaller divergence between non absolutely continuous measures $\mu$ and $\nu$ than the corresponding optimal transport cost when the admissible $\Gamma = b\Gamma_0$ is becoming large. 

\section{Application to Uncertainty Quantification in Static Case}\label{application_static}
In this section, we consider the application of $\Gamma$-divergence in deriving uncertainty bounds in the static case. By using Definition \ref{def:defofV}, Theorem \ref{thm:main} and similar approaches as in \cite{dupkatpanple}, we derive the improved uncertainty information quantification upper and lower bounds (\ref{new_quant_upper}) and (\ref{new_quant_lower}). Next, we establish the the linearization bounds of (\ref{new_quant_upper}) and (\ref{new_quant_lower}) in Section \ref{uq_linearization}. Finally, in Section \ref{discrete}, we use a discrete example to investigate what the linearization bounds (\ref{sens_bound}) represent in the sensitivity analysis situation, and investigate the optimization problem associated with it.

\subsection{Static Case Setup}

Consider $S$ a Polish space, and $\mu,\nu$ two probability measures on $S$. The question here is to get a bound for 

\begin{align}\label{QoI}
\left|\int f d\mu - \int f d\nu\right|
\end{align}
where $f\in C_b(S)$. In \cite{dupkatpanple}, uncertainty quantification information inequalities (UQIIs) based on relative entropy between $\mu$ and $\nu$ are derived. However, in some
applications, $R(\mu\lVert\nu)$ is not guaranteed to be finite, for example when $\mu\ll\nu$ does not hold, then the UQIIs derived from \cite{dupkatpanple} only provide an ineffective bound. Enlightened by the idea from $\Gamma$-divergence, we want to restrict the test functions $f$ to be in a subset $\Gamma\subset C_b(S)$ and get a bound for (\ref{QoI}).

\subsection{classic method}
We consider $f\in\Gamma$, where $\Gamma$ is an admissible subset of $C_b(S)$ as defined in Definition \ref{access}. 
By similar ideas as in \cite{dupkatpanple}, we can do the following. According to Definition \ref{def:defofV}, 
$$G_\Gamma(\mu\lVert\nu) = \sup_{g\in\Gamma}\left\{\int g d\mu - \log \int e^gd\nu\right\},$$
we get
$$G_\Gamma (\mu\lVert\nu) \geq \int f d\mu - \log\int e^f d\nu,$$
which is equivalent to 
$$\int f d\mu \leq \log\int e^f d\nu + G_\Gamma(\mu\lVert\nu).$$
For $c>0$, we would have $c(f-\int f d\nu)\in c\Gamma\doteq\left\{cg: g\in \Gamma\right\}$. So by substituting $f$ with $c(f-\int fd\nu)$ and $\Gamma$ with $c\Gamma$, we have 

$$c(\int f d\mu - \int f d\nu) \leq \log \int e^{c(f- \int f d\nu)} d\nu + G_{c\Gamma} (\mu\lVert\nu),$$
which in turn is equivalent to 

\begin{align}\label{Gamma_div_bound}
\int f d\mu - \int f d\nu \leq \frac{1}{c}\log \int e^{c(f- \int f d\nu)} d\nu + \frac{1}{c}G_{c\Gamma}(\mu\lVert\nu).
\end{align}
By Theorem \ref{thm:main}, $$G_\Gamma(\mu\lVert\nu) = \inf_{\gamma\in P(X)}\left\{R(\gamma\lVert\nu)+W_\Gamma(\mu-\gamma)\right\},$$
where $W_\Gamma(\mu-\gamma) = \sup_{g\in\Gamma}\left\{\int g d(\mu-\gamma)\right\}$. From now on, we use the notation $W_\Gamma(\mu,\gamma) \doteq W_\Gamma(\mu-\gamma)$ to illustrate the symmetry of the roles of both measures within $W_\Gamma$. Then for any choice of fixed $\gamma$, we will have 

$$G_{c\Gamma}(\mu\lVert\nu)\leq R(\gamma\lVert\nu)+W_{c\Gamma}(\mu,\gamma)= R(\gamma\lVert\nu)+cW_{\Gamma}(\mu,\gamma).$$
Putting this inequality back to (\ref{Gamma_div_bound}), we have 

$$\int f d\mu - \int f d\nu \leq \inf_{c>0,\gamma\in P(X)}\left\{ \frac{1}{c}\log \int e^{c(f- \int f d\nu)} d\nu + \frac{1}{c}R(\gamma\lVert\nu)+W_\Gamma(\mu,\gamma)\right\}.$$
Using the same method, we can get a lower bound of the form
$$\int f d\mu - \int f d\nu \geq \sup_{c>0,\gamma\in P(X)}\left\{ -\frac{1}{c}\log \int e^{-c(f- \int f d\nu)} d\nu - \frac{1}{c}R(\gamma\lVert\nu) - W_\Gamma(\mu,\gamma)\right\}.$$
We call (\ref{new_quant_upper}) and (\ref{new_quant_lower}) the improved uncertainty quantification information bounds.
\begin{align}\label{new_quant_upper}
\inf_{c>0,\gamma\in P(X)}\left\{ \frac{1}{c}\log \int e^{c(f- \int f d\nu)} d\nu + \frac{1}{c}R(\gamma\lVert\nu)+W_\Gamma(\mu,\gamma)\right\}
\end{align}

\begin{align}\label{new_quant_lower}
\sup_{c>0,\gamma\in P(X)}\left\{ -\frac{1}{c}\log \int e^{-c(f- \int f d\nu)} d\nu - \frac{1}{c}R(\gamma\lVert\nu)-W_\Gamma(\mu,\gamma)\right\}
\end{align}

\begin{remark}
Letting $c\to 0$, we have that

$$\lim_{c\to 0}\frac{1}{c}\log\int e^{c(f-\int f d\nu)}d\nu=0,$$
and 
$$\lim_{c\to 0}\frac{1}{c}G_{c\Gamma}(\mu\lVert\nu) = W_\Gamma(\mu,\nu).$$
Thus the upper bound (\ref{new_quant_upper}) is always a better bound than the vanilla $W_\Gamma(\mu,\nu)$. 
On the other hand, when $\mu\ll\nu$,

\begin{align*}
&\inf_{c>0,\gamma\in P(X)}\left\{ \frac{1}{c}\log \int e^{c(f- \int f d\nu)} d\nu + \frac{1}{c}R(\gamma\lVert\nu)+W_\Gamma(\mu,\gamma)\right\} \\
&\leq \inf_{c>0}\left\{ \frac{1}{c}\log \int e^{c(f- \int f d\nu)} d\nu + \frac{1}{c}R(\mu\lVert\nu)+W_\Gamma(\mu,\mu)\right\} \\
&= \inf_{c>0}\left\{ \frac{1}{c}\log \int e^{c(f- \int f d\nu)} d\nu + \frac{1}{c}R(\mu\lVert\nu)\right\} 
\end{align*}
So the upper bound (\ref{new_quant_upper}) we get here is always not worse than inequalities derived by \cite{dupkatpanple} when $f\in\Gamma$ and $\mu\ll\nu$. It is typically strictly better, and in cases can be much better, even when $\mu\ll\nu$ holds.
\end{remark}

\subsection{linearization result}\label{uq_linearization}
As considered in \cite[section 2.3]{dupkatpanple}, we consider the linearization result here. We will first fix choices of $\gamma$ and optimize over $c$ to bring in the variance of $f$ as a crucial parameter when $R(\gamma\lVert\nu)$ is small, then move on to consider the optimum of $\gamma$. Fixing $\nu$, we consider $\mu\in P(S)$ such that $W_\Gamma(\mu,\nu)$ is finite. For fixed $\gamma\in P(S)$, we have 
$$\int f d\mu - \int f d\nu \leq \inf_{c>0}\left\{\frac{1}{c}\log \int e^{c(f- \int f d\nu)} d\nu + \frac{1}{c}R(\gamma\lVert\nu)\right\}+W_\Gamma(\mu,\gamma).$$
By \cite[Theorem 2.12]{dupkatpanple}, we can get when $f\neq E_\nu[f]$ $\nu-a.s.$,
\begin{align*}
\int f d\mu - \int f d\nu \leq \sqrt{2Var_\nu (f)}\sqrt{R(\gamma\lVert\nu)}+W_\Gamma(\mu,\gamma)+O(R(\gamma\lVert\nu)).
\end{align*}
By doing the same procedure to lower bound, we will also get 

\begin{align*}
\int f d\mu - \int f d\nu \geq -\sqrt{2Var_\nu (f)}\sqrt{R(\gamma\lVert\nu)}-W_\Gamma(\mu,\gamma)+O(R(\gamma\lVert\nu)).
\end{align*}
Thus by combining them together, we get

\begin{align}\label{sens_bound}
\left|\int f d\mu - \int f d\nu \right|\leq \sqrt{2Var_\nu (f)}\sqrt{R(\gamma\lVert\nu)}+W_\Gamma(\mu,\gamma)+O(R(\gamma\lVert\nu)).
\end{align}

To simplify our investigation, we only focus our attention on the main term, and neglect the higher order term $O(R(\gamma\lVert\nu))$, and consider the case that ${Var_\nu(f)}\neq 0$. First, let's establish the existence of an optimizer $\gamma^*$ that achieves the infimum 
\begin{align}\label{linearization_bound}
\inf_{\gamma\in P(X)}\left\{\sqrt{2Var_\nu (f)}\sqrt{R(\gamma\lVert\nu)}+W_\Gamma(\mu,\gamma)\right\}.
\end{align}
\begin{lemma}\label{linearized_opt}
Assume $W_\Gamma(\mu,\nu)<\infty$. Then there exists $\gamma^*\in P(X)$ such that 

$$\gamma^* = \arg\inf_{\gamma\in P(X)}\left\{\sqrt{2Var_\nu (f)}\sqrt{R(\gamma\lVert\nu)}+W_\Gamma(\mu,\gamma)\right\}. $$
\end{lemma}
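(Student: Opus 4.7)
The plan is a direct-method argument: take a minimizing sequence, extract a weakly convergent subsequence using that sublevel sets of relative entropy are tight, and conclude via lower semicontinuity.

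First, the infimum in \eqref{linearization_bound} is finite, since the choice $\gamma=\nu$ gives the value $\sqrt{2\mathrm{Var}_\nu(f)}\sqrt{R(\nu\lVert\nu)}+W_\Gamma(\mu,\nu)=W_\Gamma(\mu,\nu)<\infty$ by hypothesis. Denote this infimum by $m$, and pick a minimizing sequence $\{\gamma_n\}\subset\mathcal{P}(S)$ so that
\[
\sqrt{2\mathrm{Var}_\nu(f)}\sqrt{R(\gamma_n\lVert\nu)}+W_\Gamma(\mu,\gamma_n)\to m.
\]
Since both terms are non-negative, there is a constant $C<\infty$ such that $R(\gamma_n\lVert\nu)\leq C$ for every $n$.

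Next, by \cite[Lemma 1.4.3(c)]{dupell4} the sublevel set $\{\gamma\in\mathcal{P}(S):R(\gamma\lVert\nu)\leq C\}$ is tight, so $\{\gamma_n\}$ is tight. Extract a subsequence (which I relabel as $\gamma_n$) converging weakly to some $\gamma^*\in\mathcal{P}(S)$. Now I invoke two lower semicontinuity facts already established in the paper: $R(\cdot\lVert\nu)$ is lower semicontinuous in the weak topology (\cite[Lemma 1.4.3(b)]{dupell4}, used in the proof of Theorem~\ref{thm:main}), and $W_\Gamma(\mu,\cdot)=W_\Gamma(\mu-\cdot)$ is lower semicontinuous in the weak topology as noted in the proof of Theorem~\ref{thm:main} (being a supremum of continuous linear functionals). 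Since $t\mapsto\sqrt{t}$ is continuous and non-decreasing on $[0,\infty)$, lower semicontinuity is preserved under composition, so $\sqrt{R(\cdot\lVert\nu)}$ is also lower semicontinuous.

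Combining these,
\[
\sqrt{2\mathrm{Var}_\nu(f)}\sqrt{R(\gamma^*\lVert\nu)}+W_\Gamma(\mu,\gamma^*)\leq\liminf_{n\to\infty}\left\{\sqrt{2\mathrm{Var}_\nu(f)}\sqrt{R(\gamma_n\lVert\nu)}+W_\Gamma(\mu,\gamma_n)\right\}=m,
\]
so $\gamma^*$ is a minimizer. I do not expect any real obstacle here; the only point requiring a modicum of care is the appeal to tightness via the uniform bound on $R(\gamma_n\lVert\nu)$, which follows immediately from non-negativity of both summands.
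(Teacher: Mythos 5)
Your proposal is correct and follows essentially the same route as the paper: a minimizing sequence, a uniform bound on $R(\gamma_n\lVert\nu)$, tightness via \cite[Lemma 1.4.3(c)]{dupell4}, and lower semicontinuity of $R(\cdot\lVert\nu)$ and $W_\Gamma(\mu,\cdot)$ to pass to the weak limit. The only point worth stating explicitly is that the bound on $R(\gamma_n\lVert\nu)$ uses $\mathrm{Var}_\nu(f)>0$ (assumed in the surrounding text of the paper), not merely non-negativity of the two summands.
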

\begin{proof}
For simplicity of writing, let's write $C_{\nu,f} = \sqrt{2Var_\nu(f)}$. The existence can be given by first taking a series of near optimizer $\gamma_n$, such that 

$$C_{\nu,f}\sqrt{R(\gamma_n\lVert\nu)}+W_\Gamma(\mu,\gamma_n)\leq \inf_{\gamma\in P(X)}\left\{C_{\nu,f}\sqrt{R(\gamma\lVert\nu)}+W_\Gamma(\mu,\gamma)\right\}+\frac{1}{n}.$$
Since $C_{\nu,f}> 0$ by our assumption, we would have 

\begin{align*}
\sqrt{R(\gamma_n\lVert\nu)}&\leq \frac{1}{C_{\nu,f}}\left(\inf_{\gamma\in P(X)}\left\{C_{\nu,f}\sqrt{R(\gamma\lVert\nu)}+W_\Gamma(\mu,\gamma)\right\}+\frac{1}{n}\right)\\
&\leq \frac{1}{C_{\nu,f}} (W_\Gamma(\mu,\nu)+1).
\end{align*}
So $\left\{R(\gamma_n\lVert\nu)\right\}_{n\in\mathbb{N}}$ is bounded. By \cite[Lemma 1.4.3(c)]{DupEll}, $\left\{\gamma_n\right\}_{n\geq 1}$ are tight, and we can extract a convergent subsequence, whose limit we denote as $\gamma^*$. By lower semi-continuity of $R(\cdot\lVert\nu)$ and $W_\Gamma(\mu,\cdot)$, we can conclude that $\gamma^*$ is the minimizer of the variational expression we consider.
\end{proof}
%\begin{remark}
%For $\gamma^*$ found in Lemma \ref{linearized_opt}, by the fact it's the minimizer of (\ref{linearization_bound}), we have 
%\begin{align*}
%\sqrt{2 Var_\nu(f)}\sqrt{R(\gamma^*\lVert\nu)} + W_\Gamma(\mu,\gamma^*)&\leq \sqrt{2 Var_\nu(f)}\sqrt{R(\nu\lVert\nu)} + W_\Gamma(\mu,\nu)\\
%&=W_\Gamma(\mu,\nu),
%\end{align*}
%where the first inequality is simply because we take $\gamma=\nu$ on the right hand side, which is not the optimal choice. Thus we get the bound for $R(\gamma^*\lVert\nu)$ by 
%$$R(\gamma^*\lVert\nu)\leq \frac{W_\Gamma(\mu,\nu)^2}{2 Var_\nu(f)}.$$
%When one consider fixed $f$ and $\mu$ as a small perturbation of $\nu$, where $W_\Gamma(\mu,\nu)$ is small, $O(R(\gamma^*\lVert\nu))$ in (\ref{sens_bound}) is actually the higher order term which is negligible compared to other terms. 
%\end{remark}

Next let's use a discrete example to show how the linearization bound (\ref{sens_bound}) can be used to obtain a sensitivity bound.
\subsection{A Discrete Example}\label{discrete}
Let's consider the following example in the space $S = \mathbb{R}^d$.

\begin{example}
$S = \mathbb{R}^d$, $\Gamma = \mathrm{Lip}(1)\cap C_b(S)$. Let $n\in\mathbb{N}$. For $\theta\in\mathbb{R}$ in some open neighborhood of $0$ and each $i\in \{1,\ldots,n\}$,
let $\theta \rightarrow p_i(\theta) \in (0,1)$ and $\theta \rightarrow x_i(\theta) \in S$ be smooth functions, 
with $\sum_{i=1}^n p_i(\theta)=1$. Denote $\mu_\varepsilon = \sum_{i=1}^n p_i(\varepsilon ) \delta_{x_i(\varepsilon)}$ and $\nu =\mu_0 = \sum_{i=1}^n p_i(0)\delta_{x_i(0)}$. For given $f\in\Gamma$, we are interested in getting upper bounds for $\lim_{\varepsilon\to 0^+}\frac{1}{\varepsilon}\left|\int f d\mu_\varepsilon- \int f d\nu\right|$.
\end{example}

We use the bound (\ref{sens_bound}). We assume that for each $i$, $x_i(\varepsilon)$ is not constant in $\varepsilon$. In order to make  relative entropy part finite, a general choice of $\gamma_\varepsilon$ is $\gamma_\varepsilon = \sum_{i=1}^n q_i(\varepsilon)\delta_{x_i(0)}$, where $q_i(0) = p_i(0)$, and $\theta \to q_i(\theta)$ is smooth for $i=1,2,\dots,n$. We do the Taylor expansion for $R(\gamma_\varepsilon\lVert\nu)$. 
In the fourth equation below we use the Taylor expansion  $\log(1+x) = x - \frac{1}{2}x^2 + o(x^2)$ for $|x|<1$. And the last equation is because $\sum_{i=1}^n q'_i(0) = \sum_{i=1}^n q_i''(0) = 0$, which is due to the fact that $\sum_{i=1}^n q_i(\varepsilon) = 1$ always holds.

\begin{align*}
R(\gamma_\varepsilon\lVert\nu) &= \sum_{i=1}^n q_i(\varepsilon)\log\left(\frac{q_i(\varepsilon)}{p_i(0)}\right)\\
&= \sum_{i=1}^n \left(q_i(0)+\varepsilon q_i'(0) + O(\varepsilon^2)\right)\log\left(\frac{q_i(0)+\varepsilon q_i'(0) + \frac{1}{2} \varepsilon^2 q_i''(0)+O(\varepsilon^3)}{p_i(0)}\right)\\
&= \sum_{i=1}^n \left(q_i(0)+\varepsilon q_i'(0) + O(\varepsilon^2)\right)\log\left(1+\frac{\varepsilon q_i'(0) + \frac{1}{2} \varepsilon^2 q_i''(0)+O(\varepsilon^3)}{p_i(0)}\right)\\
& = \sum_{i=1}^n \left(p_i(0) + \varepsilon q_i'(0) + O(\varepsilon^2) \right) \left(\varepsilon\frac{q_i'(0)}{p_i(0)}+\frac{1}{2}\varepsilon^2 \left(\frac{q_i''(0)}{p_i(0)}-\frac{q_i'(0)^2}{p_i(0)^2}\right)+O(\varepsilon^3)\right)\\
& = \varepsilon\sum_{i=1}^n q_i'(0) + \frac{1}{2}\varepsilon^2 \sum_{i=1}^n \left(q_i''(0)+\frac{q_i'(0)^2}{p_i(0)}\right) + O(\varepsilon^3)\\
&= \frac{1}{2}\varepsilon^2\sum_{i=1}^n \frac{q_i'(0)^2}{p_i(0)} + O(\varepsilon^3).
\end{align*}
Now we introduce a lemma for the expansion $W_\Gamma(\mu_\varepsilon,\gamma_\varepsilon)$.

\begin{lemma}\label{general_expansion_Wasserstein}
$$W_\Gamma(\mu_\varepsilon,\gamma_\varepsilon) = \varepsilon\sum_{i=1}^n p_i(0)\lVert x_i'(0)\lVert_2 + \varepsilon W_\Gamma(\rho,\tilde{\rho}) + o(\varepsilon),$$
where $\lVert\cdot\lVert_2$ is the Euclidean norm, $\rho = \sum_{i=1}^n p_i'(0)\delta_{x_i(0)}$ and $\tilde{\rho} = \sum_{i=1}^n q_i'(0) \delta_{x_i(0)}$ are two signed measures on $S$, and $W_\Gamma(\rho,\tilde{\rho})=\sup_{g\in\Gamma}\left\{\int g d(\rho-\tilde{\rho})\right\}$.
\end{lemma}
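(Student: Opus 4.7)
The plan is to establish matching upper and lower bounds on $W_\Gamma(\mu_\varepsilon,\gamma_\varepsilon)$ that agree to first order in $\varepsilon$. The key organizing device will be the auxiliary probability measure $\eta_\varepsilon \doteq \sum_{i=1}^n p_i(\varepsilon)\delta_{x_i(0)}$, which carries the weights of $\mu_\varepsilon$ but sits on the support of $\gamma_\varepsilon$. I will assume the $x_i(0)$ are distinct and write $d_{\min}>0$ for their minimum pairwise separation. Two general facts about $W_\Gamma$ will be used throughout: sublinearity and positive homogeneity on signed measures of total mass zero (from the dual representation together with the fact that $\Gamma$ is balanced), and the triangle inequality $W_\Gamma(\mu,\gamma)\leq W_\Gamma(\mu,\theta)+W_\Gamma(\theta,\gamma)$ for probability measures (from gluing transport plans via Theorem \ref{massdual}).

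For the upper bound, I will apply the triangle inequality to split off $\eta_\varepsilon$. The term $W_\Gamma(\mu_\varepsilon,\eta_\varepsilon)$ is bounded by the cost of the obvious coupling pairing $p_i(\varepsilon)\delta_{x_i(\varepsilon)}$ with $p_i(\varepsilon)\delta_{x_i(0)}$; Taylor expansion of $p_i$ and $x_i$ yields $\varepsilon\sum_i p_i(0)\|x_i'(0)\|_2+O(\varepsilon^2)$. For $W_\Gamma(\eta_\varepsilon,\gamma_\varepsilon)$ I will write $\eta_\varepsilon-\gamma_\varepsilon=\varepsilon(\rho-\tilde\rho)+r_\varepsilon$, where $r_\varepsilon$ is a signed measure on $\{x_i(0)\}$ of total mass zero and total variation $O(\varepsilon^2)$; sublinearity and homogeneity then give $\varepsilon W_\Gamma(\rho,\tilde\rho)$ for the leading part, and the remainder is $O(\varepsilon^2)$ because any $g\in\Gamma$ can be centered at one atom so its oscillation on the finite set $\{x_i(0)\}$ is at most the diameter of that set.

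For the lower bound, I will exhibit a near-optimal test function and take a double limit. Fix $\alpha\in(0,1)$ and pick $g_0\in\Gamma$ with $\int g_0\,d(\rho-\tilde\rho)\geq W_\Gamma(\rho,\tilde\rho)-\alpha$. Set $h_i\doteq(1-\alpha)g_0(x_i(0))$ and define $g_\varepsilon^{\alpha}$ on the finite set $\{x_i(0),x_i(\varepsilon)\}_{i=1}^{n}$ by $g_\varepsilon^{\alpha}(x_i(0))=h_i$ and $g_\varepsilon^{\alpha}(x_i(\varepsilon))=h_i+(1-\alpha)\varepsilon\|x_i'(0)\|_2$, then extend to $\mathbb{R}^d$ by McShane's theorem and truncate far from the support to land inside $\Gamma=\mathrm{Lip}(1;C_b(\mathbb{R}^d))$. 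The point of the $(1-\alpha)$ shrinkage is that it opens a strict slack of order $\alpha d_{\min}$ in the pairwise Lipschitz constraints between the $h_i$; a direct four-case check (pairs inside $\{x_i(0)\}$, inside $\{x_i(\varepsilon)\}$, of the form $(x_i(\varepsilon),x_i(0))$, and of the form $(x_i(\varepsilon),x_j(0))$ with $i\neq j$) shows that for $\varepsilon<\varepsilon_\alpha$ this slack absorbs all $O(\varepsilon)$ cross-terms, so $g_\varepsilon^{\alpha}$ is genuinely $1$-Lipschitz on the finite set. Plugging $g_\varepsilon^\alpha$ into the dual representation and Taylor-expanding $p_i(\varepsilon)-q_i(\varepsilon)=\varepsilon(p_i'(0)-q_i'(0))+O(\varepsilon^2)$ produces
\[
\tfrac{1}{\varepsilon}W_\Gamma(\mu_\varepsilon,\gamma_\varepsilon)\;\geq\;(1-\alpha)\Bigl[\sum_{i=1}^n p_i(0)\|x_i'(0)\|_2+W_\Gamma(\rho,\tilde\rho)\Bigr]-\alpha(1-\alpha)+O(\varepsilon),
\]
and letting $\varepsilon\to0^+$ then $\alpha\to0^+$ yields the desired lower bound.

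The hard part will be the construction of $g_\varepsilon^\alpha$ in the lower bound: the naive candidate obtained by adding to $g_0$ a local directional bonus $\langle x_i'(0)/\|x_i'(0)\|_2,\,\cdot-x_i(0)\rangle$ near each $x_i(0)$ violates the global Lipschitz constraint by $O(\varepsilon)$ whenever the inequality $|g_0(x_i(0))-g_0(x_j(0))|\leq\|x_i(0)-x_j(0)\|$ is nearly saturated for some pair $i\neq j$. The preliminary shrinkage factor $1-\alpha$ is what creates the uniform strict slack, and is the reason a single-scale $\varepsilon\to0$ argument must be replaced by the iterated limit in $\varepsilon$ and then $\alpha$.
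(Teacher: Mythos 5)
Your proposal is correct and follows essentially the same strategy as the paper: an upper bound obtained by separating the displacement of the atoms (controlled by the $1$-Lipschitz property, giving $\varepsilon\sum_i p_i(0)\|x_i'(0)\|_2$) from the change of weights on $\{x_i(0)\}$ (giving $\varepsilon W_\Gamma(\rho,\tilde\rho)$ up to an $O(\varepsilon^2)$ zero-mass remainder), and a lower bound from a test function built by shrinking a (near-)optimizer of the finite-dimensional problem for $W_\Gamma(\rho,\tilde\rho)$ to create slack in the pairwise Lipschitz constraints and then adding a distance bonus at the displaced points $x_i(\varepsilon)$. The only differences are technical: the paper shrinks by the $\varepsilon$-proportional factor $1-5\varepsilon\max_i\|x_i'(0)\|_2/\min_{i<j}\|x_i(0)-x_j(0)\|_2$ and uses the exact bonus $\|x_i(\varepsilon)-x_i(0)\|_2$, obtaining an $O(\varepsilon^2)$ error in a single limit, whereas your fixed $(1-\alpha)$ shrinkage with the iterated limit in $\varepsilon$ and then $\alpha$ yields $o(\varepsilon)$, which is exactly what the lemma asserts.
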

\begin{proof}
For simplicity, in the proof, let's denote $x_i(0)$, $x_i'(0)$, $p_i(0)$, $p_i'(0)$, $q_i'(0)$ as $x_i$, $x_i'$, $p_i$, $p_i'$, $q_i'$ respectively. Recalling $\Gamma = \mathrm{Lip}(1)\cap C_b(S)$, for $g\in\Gamma$, $|g(x_i)- g(x_i(\varepsilon))|\leq \lVert x_i - x_i(\varepsilon)\lVert_2= \varepsilon\lVert x_i'\lVert_2 + O(\varepsilon^2)$. Then we have
\begin{align*}
W_\Gamma(\mu_\varepsilon,\gamma_\varepsilon) & = \sup_{g\in\Gamma}\left\{\int g d(\mu_\varepsilon - \gamma_\varepsilon)\right\}\\
& = \sup_{g\in\Gamma}\left\{\sum_{i=1}^n g(x_i(\varepsilon)) (p_i + \varepsilon p_i')  - \sum_{i=1}^n g(x_i) (p_i + \varepsilon q_i') \right\} +O(\varepsilon^2)\\
& = \sup_{g\in\Gamma}\left\{\sum_{i=1}^n p_i \left(g(x_i(\varepsilon)) - g(x_i)\right) - \varepsilon \sum_{i=1}^n g(x_i) (p_i' - q_i') \right. \\
& \quad\left. + \varepsilon\sum_{i=1}^n p_i' \left(g(x_i(\varepsilon)) - g(x_i)\right)\right\}+ O(\varepsilon^2)\\
& \leq \sup_{g\in\Gamma}\left\{\sum_{i=1}^n p_i\left|g(x_i) - g\left(x_i(\varepsilon)\right)\right|\right\} + \varepsilon\sup_{g\in\Gamma}\left\{\sum_{i=1}^n g(x_i)(p_i'- q_i')\right\} \\
& \quad+ \varepsilon\sup_{g\in\Gamma}\left\{\sum_{i=1}^n p_i'|g(x_i)-g\left(x_i(\varepsilon)\right)| \right\} + O(\varepsilon^2)\\
&\leq \sum_{i=1}^n p_i \varepsilon\lVert x_i'\lVert_2 + \varepsilon W_\Gamma(\rho,\tilde{\rho}) + \varepsilon^2 \sum_{i=1}^n |p_i'| \lVert x_i'\lVert_2 + O(\varepsilon^2)\\
& = \varepsilon\sum_{i=1}^n p_i\lVert x_i'\lVert_2 + \varepsilon W_\Gamma(\rho,\tilde{\rho}) + O(\varepsilon^2).
\end{align*}
On the other hand, let's take $g^*=(g^*_1,g^*_2,\dots,g^*_n)\in\mathbb{R}^n$ to be the optimizer satisfying

$$\sum_{i=1}^n g^*_i(p_i'-q_i') = \sup_{g\in\Gamma}\left\{\sum_{i=1}^n g(x_i)(p_i'-q_i')\right\},$$
and $|g^*_i - g^*_j|\leq \lVert x_i - x_j\lVert_2$ for all $1\leq i<j\leq n$. The existence of $g^*$ is easy, which the author omits here. Now consider 
\begin{align}\label{Cons_g_star}
g^*_\varepsilon = \left(1 - 5\varepsilon\frac{\max_{1\leq i\leq n}\lVert x_i'\lVert_2}{\min_{1\leq i<j\leq n}\lVert x_i- x_j\lVert_2}\right)g^*.
\end{align}
When $\varepsilon>0$ is small enough, $1 - 5\varepsilon\frac{\max_{1\leq i\leq n}\lVert x_i'\lVert_2}{\min_{1\leq i<j\leq n}\lVert x_i- x_j\lVert_2}>0$. Since for any $1\leq i<j\leq n$, $|g_i^*-g_j^*|\leq \lVert x_i - x_j\lVert_2$, we have 
\begin{align*}
|g^*_{\varepsilon,i} - g^*_{\varepsilon,j}| &= \left(1 - 5\varepsilon\frac{\max_{1\leq i\leq n}\lVert x_i'\lVert_2}{\min_{1\leq i<j\leq n}\lVert x_i- x_j\lVert_2}\right)|g^*_i - g^*_j|\\
&\leq \left(1 - 5\varepsilon\frac{\max_{1\leq i\leq n}\lVert x_i'\lVert_2}{\min_{1\leq i<j\leq n}\lVert x_i- x_j\lVert_2}\right)\lVert x_i - x_j\lVert_2\\
&\leq \lVert x_i - x_j\lVert_2 - 5\varepsilon\max_{1\leq i\leq n}\lVert x_i'\lVert_2.
\end{align*}
Next we define function $h$ on points $\{x_i\}_{1\leq i\leq n}\cup \{x_i(\varepsilon)\}_{1\leq i\leq n}$ as follows: For $1\leq i\leq n$,
\begin{align}\label{def_h_0}
h(x_i) = g^*_{\varepsilon,i}\quad for\ 1\leq i\leq n,
\end{align}
and 
\begin{align}\label{def_h_varep}
h(x_i(\varepsilon)) = h(x_i) + \lVert x_i(\varepsilon) - x_i\lVert_2.
\end{align}
We will show that $h\in\Gamma$. To show that, the only thing we need to check is that for any two specified points of $h$, the value of $h$ satisfies the Lipshitz condition. For $1\leq i<j \leq n$, we have

$$|h(x_i)- h(x_j)| = |g^*_{\varepsilon,i} - g^*_{\varepsilon,j}|\leq \lVert x_i - x_j\lVert_2 - 5\varepsilon\max_{1\leq i\leq n}\lVert x_i'\lVert_2\leq \lVert x_i - x_j\lVert_2,$$

\begin{align*}
|h(x_i(\varepsilon)) - h(x_j)| &= |h(x_i) +\lVert x_i(\varepsilon) - x_i\lVert_2 - h(x_j)|\\
&\leq |h(x_i)-h(x_j)| + \lVert x_i(\varepsilon)-x_i\lVert_2\\
&\leq \lVert x_i - x_j\lVert_2 -5\varepsilon\max_{1\leq i\leq n}\lVert x_i'\lVert_2 + \lVert x_i(\varepsilon)-x_i\lVert_2\\
&\leq \lVert x_i -x_j\lVert_2 -\lVert x_i(\varepsilon) - x_i\lVert_2\\
&\leq \lVert x_i(\varepsilon)- x_j\lVert_2,
\end{align*}
Here the second to last inequality is because $\lVert x_i(\varepsilon) - x_i\lVert_2 = \varepsilon\lVert x_i'\lVert_2 + O(\varepsilon^2).$ Similar reasons also apply for the second to last inequality below.

\begin{align*}
|h(x_i(\varepsilon)) - h(x_j(\varepsilon))| &= |h(x_i) + \lVert x_i(\varepsilon) - x_i\lVert_2 - h(x_j) - \lVert x_j(\varepsilon)- x_j\lVert_2 |\\
&\leq |h(x_i) - h(x_j)| + \lVert x_i(\varepsilon)- x_i\lVert_2 + \lVert x_j(\varepsilon) - x_j\lVert_2\\
&\leq \lVert x_i - x_j\lVert_2 - 5\varepsilon\max_{1\leq i\leq n}\lVert x_i'\lVert_2 + \lVert x_i(\varepsilon)- x_i\lVert_2 + \lVert x_j(\varepsilon) - x_j\lVert_2\\
&\leq \lVert x_i - x_j\lVert_2 - \lVert x_i(\varepsilon)- x_i\lVert_2 - \lVert x_j(\varepsilon) - x_j\lVert_2\\
&\leq \lVert x_i(\varepsilon)-x_j(\varepsilon)\lVert_2.
\end{align*}
Now we can derive the other side of the inequality, where the fourth equation comes from (\ref{def_h_0}), (\ref{def_h_varep}) and (\ref{Cons_g_star}).
\begin{align*}
W_\Gamma(\mu_\varepsilon,\gamma_\varepsilon) &= \sup_{g\in\mathrm{Lip}(1)}\left\{\int g d(\mu_\varepsilon - \gamma_\varepsilon)\right\}\\
&\geq \int h d(\mu_\varepsilon-\gamma_\varepsilon)\\
& = \sum_{i=1}^n\left[ h(x_i(\varepsilon))(p_i+\varepsilon p_i') - h(x_i) (p_i + \varepsilon q_i')\right]+O(\varepsilon^2)\\
& = \sum_{i=1}^n p_i(h(x_i(\varepsilon)) - h(x_i)) +\varepsilon\sum_{i=1}^n h(x_i)(p_i' - q_i') \\
&\quad + \sum_{i=1}^n \left(h(x_i(\varepsilon)) - h(x_i)\right) \varepsilon p_i' + O(\varepsilon^2)\\
& = \sum_{i=1}^n p_i \lVert x_i(\varepsilon)- x_i\lVert_2 + \varepsilon (1-5\varepsilon\frac{\max_{1\leq i\leq n}\lVert x_i'\lVert_2}{\min_{1\leq i<j\leq n}\lVert x_i - x_j\lVert_2})\sum_{i=1}^n g^*_i(p_i'-q_i')\\
&\quad + \varepsilon^2\sum_{i=1}^n\lVert x_i(\varepsilon)-x_i\lVert_2 p_i' + O(\varepsilon^2)\\
& = \varepsilon\sum_{i=1}^n p_i\lVert x_i'\lVert_2 +\varepsilon W_\Gamma(\rho,\tilde{\rho}) + O(\varepsilon^2).
\end{align*}
This completes the other side of the inequality. This lemma is proved.
\end{proof}
Using the expansion of relative entropy  and Lemma \ref{general_expansion_Wasserstein}, bound (\ref{sens_bound}) becomes 

\begin{align*}
&\sqrt{2Var_\nu (f)}\sqrt{R(\gamma_\varepsilon\lVert\nu)}+W_\Gamma(\mu_\varepsilon,\gamma_\varepsilon)\\
= &\varepsilon\left(\sqrt{Var_\nu(f)}\sqrt{\sum_{i=1}^n\frac{q_i'(0)^2}{p_i(0)}} \right)+W_\Gamma(\mu_\varepsilon,\gamma_\varepsilon)+o(\varepsilon)\\
= &\varepsilon\left(\sqrt{Var_\nu(f)}\sqrt{\sum_{i=1}^n\frac{q_i'(0)^2}{p_i(0)}} +\sum_{i=1}^n p_i(0) \lVert x_i'(0)\lVert_2 + W_\Gamma(\rho,\tilde{\rho})\right) + o(\varepsilon),
\end{align*}
where $\rho=\sum_{i=1}^n p_i'(0)\delta_{x_i(0)}$ and $\tilde{\rho}=\sum_{i=1}^n q_i'(0)\delta_{x_i(0)}$. Since the only thing we can choose is the vector $q'(0)$, by getting rid of the constant term $\sum_{i=1}^n p_i(0)\lVert x_i'(0)\lVert_2$, the optimization problem reduces to

$$\inf_{q'(0)\in\mathbb{R}^n_0}\left\{\sqrt{Var_\nu(f)}\sqrt{\sum_{i=1}^n\frac{q_i'(0)^2}{p_i(0)}} + W_\Gamma \left(\sum_{i=1}^n p_i'(0)\delta_{x_i(0)},\sum_{i=1}^n q_i'(0)\delta_{x_i(0)}\right)\right\},$$
where $\mathbb{R}^n_0$ is the space of vectors in $\mathbb{R}^n$ with sum of all the coordinates being 0. Use the definition for $W_\Gamma(\mu'(0),\gamma'(0))$, we can transform the problem above into a min-max problem 
\begin{align}\label{minmax_target}
\inf_{q'(0)\in\mathbb{R}^n_0}\sup_{g\in\Gamma} \left\{\sqrt{Var_\nu(f)}\sqrt{\sum_{i=1}^n\frac{q_i'(0)^2}{p_i(0)}} + \sum_{i=1}^n g(x_i(0))(p'_i(0)-q_i'(0))\right\}.
\end{align}

\subsubsection{Investigate the min-max problem (\ref{minmax_target})}
Since in (\ref{minmax_target}) only values of $g$ on $x_i(0)$ $i=1,2,\dots,n$ are used, for simplicity, let's write $g = (g_1,g_2,\dots, g_n)$, where $g_i = g(x_i(0))$, $i=1,2,\dots,n$. Denote $p = (p_1,p_2,\dots,p_n)$, $p' = (p'_1, p'_2, \dots, p'_n)$, $q' = (q'_1,q'_2,\dots,q'_n)$, where $p_i = p_i(0)$, $p'_i= p'_i(0)$, $q'_i = q'_i(0)$ for $i= 1,2,\dots, n$ respectively. Furthermore, since the bound is invariant when each coordinate of $g$ is changed by the same constant, without loss of generality, we fix $g_1 = 0$. By Lipshitz condition on $\Gamma$, $|g_i |\leq \lVert x_i(0) - x_1(0)\lVert_2 + |g_1| = \lVert x_i(0) - x_1(0)\lVert_2$ is bounded. Without introducing further notation, let's call this set $\Gamma_0$, and notice that $\Gamma_0$ is a compact, convex subset of $\mathbb{R}^n$. Now the quantity we want to estimate is 
\begin{align}\label{real_target}
\inf_{q'\in\mathbb{R}^n_0}\sup_{g\in\Gamma_0} \left\{\sqrt{Var_\nu(f)}\sqrt{\sum_{i=1}^n\frac{q_i'^2}{p_i}} + \sum_{i=1}^n g_i(p'_i-q_i')\right\}.
\end{align}

We want to swap the order of the optimization above. Luckily, Sion's minimax theorem provides such a tool. To make the illustration self contained, let's state Sion's theorem as follows. In the following theorem, $M$ and $N$ are subsets of finite dimensional Euclidean spaces.

\begin{theorem}\cite[Corollary 3.5]{sio}\label{minimax}
Let $M$ and $N$ be convex spaces one of which is compact, and $f$ a function on $M\times N$ satisfying\\
1) $f$ is convex and lower semicontinuous in $M$,\\
2) $f$ is concave and upper semicontinuous in $N$.\\
Then the following holds.
$$\inf_{x\in M} \sup_{y\in N} f(x,y) = \sup_{y\in N}\inf_{x\in M} f(x,y).$$
\end{theorem}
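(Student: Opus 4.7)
The plan is to first dispatch the trivial direction $\sup_{y\in N}\inf_{x\in M}f(x,y)\leq\inf_{x\in M}\sup_{y\in N}f(x,y)$, which follows for any function $f$ on any product since $\inf_{x'\in M}f(x',y_0)\leq f(x_0,y_0)\leq\sup_{y'\in N}f(x_0,y')$ for every $(x_0,y_0)\in M\times N$. This direction uses neither convexity nor semicontinuity and so will not occupy the rest of the proof.

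For the nontrivial inequality I would assume without loss of generality that $M$ is the compact factor, and argue by contradiction: suppose there exists $\alpha\in\mathbb{R}$ with
\[
\sup_{y\in N}\inf_{x\in M}f(x,y)<\alpha<\inf_{x\in M}\sup_{y\in N}f(x,y).
\]
For each $y\in N$ set $A_y\doteq\{x\in M:f(x,y)\leq\alpha\}$. Lower semicontinuity of $f(\cdot,y)$ makes $A_y$ closed, convexity in $x$ makes it convex, and the left-hand inequality above guarantees $A_y\neq\varnothing$. The right-hand inequality says that for every $x$ there is $y$ with $f(x,y)>\alpha$, so $\bigcap_{y\in N}A_y=\varnothing$. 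By compactness of $M$ there is a finite subcollection $y_1,\dots,y_k\in N$ with $\bigcap_{i=1}^k A_{y_i}=\varnothing$, and I would induct downward on the least such $k$.

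The base case $k=1$ gives $A_{y_1}=\varnothing$, contradicting its nonemptiness. The inductive step is where all the content of Sion's theorem lies: given $y_1,\dots,y_k$ minimal with $\bigcap_i A_{y_i}=\varnothing$, write $B\doteq A_{y_3}\cap\cdots\cap A_{y_k}$ (so $B\neq\varnothing$ by minimality, and $A_{y_1}\cap B$ and $A_{y_2}\cap B$ are disjoint nonempty closed convex subsets of $M$). Along the segment $\{y_t=(1-t)y_1+ty_2:t\in[0,1]\}\subset N$ I would consider how the closed convex sets $A_{y_t}\cap B$ vary with $t$, using the concavity and upper semicontinuity of $f(x,\cdot)$ on this segment: for each fixed $x\in B$ the function $t\mapsto f(x,y_t)$ is concave and upper semicontinuous, so its superlevel set $\{t:f(x,y_t)>\alpha\}$ is an open interval. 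A connectedness / intermediate-value argument on $[0,1]$ then produces a $t^*\in(0,1)$ such that $A_{y_{t^*}}\cap B=\varnothing$, contradicting the minimality of $k$ (we have reduced to $k-1$ sets $A_{y_{t^*}},A_{y_3},\dots,A_{y_k}$).

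The hard part is precisely this last step: without concavity in $y$ one cannot interpolate at all, and without upper semicontinuity the superlevel sets are not well-behaved enough to support the connectedness argument. The cleanest execution uses a two-pronged attack: separate $A_{y_1}\cap B$ from $A_{y_2}\cap B$ inside the compact convex set $B$ by a hyperplane (or, in the general convex-space setting, by a careful segment argument inside $B$), then show that as $t$ moves from $0$ to $1$ the fiber $A_{y_t}\cap B$ must pass through the empty set, because the concavity of $f(x,\cdot)$ forces $f(x,y_{t^*})>\alpha$ uniformly in $x\in B$ for some intermediate $t^*$. Since the theorem is quoted verbatim from \cite{sio}, I would ultimately cite Sion's argument rather than reproduce this technical interpolation in full, but the outline above is the conceptual skeleton I would follow if asked to supply a self-contained proof.
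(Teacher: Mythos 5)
The paper offers no proof of this statement at all: it is quoted as an external result from \cite{sio}, so your decision to ultimately defer to Sion's argument is exactly what the paper does, and at that level the two "approaches" coincide. Your sketch of how a self-contained proof would go is the classical Sion--Komiya skeleton (trivial inequality first; then, arguing by contradiction with a level $\alpha$, the closed convex level sets $A_y=\{x\in M: f(x,y)\le\alpha\}$, the finite intersection property on the compact factor, and the key reduction replacing a pair $y_1,y_2$ by a single point on the segment joining them), and that skeleton is correct in outline. Two caveats would matter if you actually filled it in: (i) upper semicontinuity of $f(x,\cdot)$ makes the superlevel sets $\{t: f(x,y_t)\ge\alpha\}$ closed, not the strict ones open, so $\{t: f(x,y_t)>\alpha\}$ is an interval by concavity but need not be open, and the "intermediate-value" step as you state it does not quite run; the standard repair is to introduce a second threshold $\beta$ with $\alpha<\beta$ strictly below $\min_{x\in B}\max\{f(x,y_1),f(x,y_2)\}$ and run the connectedness argument on the nested families $\{f(\cdot,y_t)\le\alpha\}\subset\{f(\cdot,y_t)\le\beta\}$, which is what makes the two parameter sets ($t$'s whose level set sits inside $C_{y_1}$, resp.\ $C_{y_2}$) both closed and yields the contradiction with connectedness of $[0,1]$; (ii) full convexity/concavity is more than is needed---Sion's theorem requires only quasiconvexity in $x$ and quasiconcavity in $y$, which is in fact all your level-set argument uses. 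Neither point affects the paper, which invokes the theorem only as a black box (with $M=\mathbb{R}^n_0$, $N=\Gamma_0$ compact, and an $F$ that is genuinely convex--concave and continuous).
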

In our application of the theorem, $M = \mathbb{R}_0^n$, $N = \Gamma_0$, and 

$$F(q',g) = \sqrt{Var_\nu(f)}\sqrt{\sum_{i=1}^n\frac{q_i'^2}{p_i}} + \sum_{i=1}^n g_i(p'_i-q_i') $$
is convex in $q'$, concave in $g$ and continuous in both coordinates. Notice that both $M$ and $N$ are convex with $N$ being compact, the conditions of Theorem \ref{minimax} is satisfied, thus 

\begin{align}
&\inf_{q'\in\mathbb{R}^n_0}\sup_{g\in\Gamma_0} \left\{\sqrt{Var_\nu(f)}\sqrt{\sum_{i=1}^n\frac{q_i'^2}{p_i}} + \sum_{i=1}^n g_i(p'_i-q_i')\right\}\label{target_minmax}\\
= &\sup_{g\in\Gamma_0}\inf_{q'\in\mathbb{R}^n_0} \left\{\sqrt{Var_\nu(f)}\sqrt{\sum_{i=1}^n\frac{q_i'^2}{p_i}} + \sum_{i=1}^n g_i(p'_i-q_i')\right\}\label{target_maxmin}\\
= & \sup_{g\in\Gamma_0}\left\{\sum_{i=1}^n g_i p'_i - \sup_{q'\in\mathbb{R}^n_0}\left\{\sum_{i=1}^n g_i q_i' - \sqrt{Var_\nu(f)}\sqrt{\sum_{i=1}^n\frac{q_i'^2}{p_i}}\right\}\right\}.\label{maxmin}
\end{align}
Now for any fixed $g\in\Gamma_0$, we want to first look at the inner optimization, which is  

\begin{align*}
&\sup_{q'\in\mathbb{R}^n_0}\left\{\sum_{i=1}^n g_i q_i' - \sqrt{Var_\nu(f)}\sqrt{\sum_{i=1}^n\frac{q_i'^2}{p_i}}\right\}\\
=& \sup_{q'\in\mathbb{R}^n}\left\{\sum_{i=1}^n g_i q_i' - \sqrt{Var_\nu(f)}\sqrt{\sum_{i=1}^n\frac{q_i'^2}{p_i}}-\infty 1_{(\sum_{i=1}^n q'_i \neq 0)}\right\}.
\end{align*}
What we have done is to transform the optimization question from $\sup$ over $\mathbb{R}^n_0$, which is a subset of $\mathbb{R}^n$ to $\sup$ over the whole space $\mathbb{R}^n$. Notice here now the question turns to be the convex dual of the sum of two convex functions. We first investigate the convex dual of each of these two functions. We introduce the following lemmas. 

\begin{lemma}\label{two_convex_dual}
Denote $\mathbf{1} = (1,1,\dots,1)\in\mathbb{R}^n$. For $h = (h_1,\dots,h_n)\in \mathbb{R}^n$, The following two statements holds.\\
1) 
$$\sup_{q'\in\mathbb{R}^n}\left\{\sum_{i=1}^n h_i q'_i - \infty 1_{(\sum_{i=1}^n q'_i \neq 0)}\right\} = \infty 1_{\left\{h\neq c\mathbf{1}, \forall c\in\mathbb{R}\right\}},$$
2)
$$\sup_{q'\in\mathbb{R}^n}\left\{\sum_{i=1}^n h_i q'_i - \sqrt{Var_\nu(f)}\sqrt{\sum_{i=1}^n\frac{q_i'^2}{p_i}}\right\} = \infty 1_{(\sum_{i=1}^n h_i^2 p_i> Var_\nu(f))}.$$
Moreover, for 2), the optimizing $q'$ exists when $\sum_{i=1}^n h_i^2 p_i\leq Var_\nu(f)$ as the following:\\
i) when $\sum_{i=1}^n h_i^2 p_i< Var_\nu(f)$, $q'_i=0$ for $i=1,2,\dots,n$;\\
ii) when $\sum_{i=1}^n h_i^2 p_i = Var_\nu(f)$, $q'_i = c \cdot h_ip_i$ for any fixed $c\geq 0$.
\end{lemma}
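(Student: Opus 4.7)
The plan is to handle the two parts independently, since each is a short computation of a convex dual on a finite-dimensional space.

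For part 1), the approach is to split into cases on whether $h$ is a constant vector. If $h = c\mathbf{1}$ for some $c \in \mathbb{R}$, then on the feasible set $\{q' : \sum_i q'_i = 0\}$ the objective reduces to $c \sum_i q'_i = 0$, so the supremum equals $0$. Otherwise, choose indices $i \neq j$ with $h_i \neq h_j$, and take the direction $q'$ whose only nonzero components are $q'_i = 1$, $q'_j = -1$; this lies in $\mathbb{R}^n_0$ and gives $\sum_k h_k q'_k = h_i - h_j \neq 0$. Scaling by an arbitrary positive real number and possibly flipping sign drives the objective to $+\infty$, which yields the claimed value.

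For part 2), the plan is to reduce to the standard dual of a Euclidean norm via the substitution $r_i \doteq q'_i/\sqrt{p_i}$, which is a bijection of $\mathbb{R}^n$ onto itself since each $p_i > 0$. Under this change of variables the objective becomes $\langle v, r\rangle - K\|r\|$, where $v_i \doteq h_i\sqrt{p_i}$ and $K \doteq \sqrt{\mathrm{Var}_\nu(f)}$, and $\|\cdot\|$ is the Euclidean norm. Writing the supremum in polar form as $\sup_{t \geq 0}\,t(\|v\| - K)$, one sees it equals $0$ when $\|v\|^2 = \sum_i h_i^2 p_i \leq \mathrm{Var}_\nu(f)$ and $+\infty$ otherwise, which is the asserted formula.

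To identify the optimizers in the non-infinite case, I read off from the polar computation: when $\|v\| < K$ the unique maximizing $t$ is $0$, corresponding to $q' = 0$; when $\|v\| = K$ any $t \geq 0$ is a maximizer with $r = t v/\|v\|$, and translating back gives $q'_i = c\, h_i p_i$ for an arbitrary scalar $c \geq 0$. There is no genuine obstacle here; the only minor subtleties are handling the degenerate case $v = 0$ (in which $q' = 0$ is trivially optimal) and checking that the change of variables does not affect the supremum, both of which are immediate.
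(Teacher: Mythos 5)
Your proposal is correct and follows essentially the same route as the paper: the same two-case argument with a two-coordinate perturbation scaled to infinity for part 1), and for part 2) a diagonal change of variables reducing the problem to the dual of the Euclidean norm, solved in polar form, with the optimizers (including the equality case $q_i' = c\,h_i p_i$, $c \geq 0$) read off from when the Cauchy--Schwarz/direction bound is tight. The only difference is the trivial choice of scaling in the substitution (you keep $\sqrt{\mathrm{Var}_\nu(f)}$ as a multiplier of the norm, while the paper absorbs it into the variables), which changes nothing of substance.
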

\begin{remark}
The reason that we want to investigate the optimizing $q'$ is that we will consider the saddle point of (\ref{real_target}).
\end{remark}
\begin{proof}
For 1), when there exists $c$ such that $h = c\mathbf{1}$,

\begin{align*}
&\sup_{q'\in\mathbb{R}^n}\left\{\sum_{i=1}^n h_i q_i' -\infty 1_{(\sum_{i=1}^n q_i'\neq 0)}\right\}\\
= &\sup_{q'\in\mathbb{R}^n}\left\{ c \sum_{i=1}^n q_i'  -\infty 1_{(\sum_{i=1}^n q_i'\neq 0)}\right\}\\
= & 0.
\end{align*}
On the other hand, if there exists $i\neq j$, such that $h_i\neq h_j$. Without loss of generality, let's assume that $h_1 > h_2$. Now by taking $q_1' = - q_2' = k>0 $ and $q_i' = 0$ for $i\geq 3$, we would have 

$$\sum_{i=1}^n h_i q_i' -\infty 1_{(\sum_{i=1}^n q_i'\neq 0)} = k(h_1 - h_2).$$
Thus,
\begin{align*}
&\sup_{q'\in\mathbb{R}^n}\left\{\sum_{i=1}^n h_i q_i' -\infty 1_{(\sum_{i=1}^n q_i'\neq 0)}\right\}\\
\geq &\sup_{k> 0}\left\{k(h_1 - h_2)\right\}\\
= &  \infty.
\end{align*}
Now 1) is proved. For 2), by a change of variable $$\tilde{q}' = \left(\sqrt{\frac{Var_\nu(f)}{p_1}}q_1',\sqrt{\frac{Var_\nu(f)}{p_2}}q_2',\dots,\sqrt{\frac{Var_\nu(f)}{p_n}}q_n'\right),$$ we have 
\begin{align*}
& \sup_{q'\in\mathbb{R}^n}\left\{\sum_{i=1}^n h_i q'_i - \sqrt{Var_\nu(f)}\sqrt{\sum_{i=1}^n\frac{q_i'^2}{p_i}}\right\} \\
= &\sup_{q'\in\mathbb{R}^n}\left\{\sum_{i=1}^n \sqrt{\frac{p_i}{Var_\nu(f)}}h_i \cdot \sqrt{\frac{Var_\nu(f)}{p_i}}q_i' - \sqrt{\sum_{i=1}^n \left(\sqrt{\frac{Var_\nu(f)}{p_i}}q_i'\right)^2}\right\} \\
= &\sup_{\tilde{q}'\in\mathbb{R}^n}\left\{\sum_{i=1}^n \sqrt{\frac{p_i}{Var_\nu(f)}}h_i \tilde{q_i}' - \sqrt{\sum_{i=1}^n\tilde{q}_i'^2}\right\}.
\end{align*}
Let's denote $\tilde{h}=(\sqrt{\frac{p_1}{Var_\nu(f)}}h_1,\sqrt{\frac{p_2}{Var_\nu(f)}}h_2,\dots, \sqrt{\frac{p_n}{Var_\nu(f)}}h_n)$. When $\tilde{q}'\neq \mathbf{0}$, the quantity being optimized is 

$$\tilde{h}\cdot \tilde{q}' - \sqrt{\tilde{q}'\cdot\tilde{q}'} = (\lVert\tilde{q}'\lVert_2)\left(\tilde{h}\cdot \frac{\tilde{q}'}{\lVert\tilde{q}'\lVert_2} -1\right).$$
When $\tilde{q}'=\mathbf{0}$, the expression is 0. Thus,
\begin{align*}
& \sup_{q'\in\mathbb{R}^n}\left\{\sum_{i=1}^n h_i q'_i - \sqrt{Var_\nu(f)}\sqrt{\sum_{i=1}^n\frac{q_i'^2}{p_i}}\right\} \\
= &\max\left(\sup_{\tilde{q}'\in\mathbb{R}^n,\tilde{q}'\neq \mathbf{0}}\left\{(\lVert\tilde{q}'\lVert_2)\left(\tilde{h}\cdot \frac{\tilde{q}'}{\lVert\tilde{q}'\lVert_2} -1\right)\right\},0\right) \\
= &\max\left(\sup_{r > 0}\left\{r \left(\lVert\tilde{h}\lVert_2 -1\right)\right\},0\right)\\
= & \infty 1_{(\lVert\tilde{h}\lVert_2> 1)}\\
= & \infty 1_{(\sum_{i=1}^n h_i^2 p_i> Var_\nu(f))}.
\end{align*}
When $\sum_{i=1}^n h_i^2 p_i < Var_\nu(f))$, the optimizer $\tilde{q}' \equiv 0$, which also gives $q' \equiv 0$. When $\sum_{i=1}^n h_i^2 p_i = Var_\nu(f))$, the optimizers $\tilde{q}' \propto \tilde {h}$ (including the case that $\tilde{q}'=0$), which gives that these exists $c\geq 0$ such that $q'_i = c\cdot h_ip_i$.
\end{proof}

Now, we need to use a lemma. Let's first recall Definition \ref{convex_dual_def}, Definition \ref{inf_conv_def}, Definition \ref{lsc_hull_def}, Definition \ref{proper&domain_def}. We make use of Lemma \ref{inf-cov}  with $Y = \mathbb{R}^n$, $m=2$,
and for $q'\in\mathbb{R}^n$, $F_1(q') = \infty 1_{(\sum_{i=1}^n q'_i \neq 0)}$, $F_2(q') = \sqrt{Var_\nu(f)}\sqrt{\sum_{i=1}^n\frac{q_i'^2}{p_i}}$. They satisfy the condition of Lemma \ref{inf-cov}, so we have 
\begin{align*}
&\sup_{q'\in\mathbb{R}^n}\left\{\sum_{i=1}^n g_i q_i' - \sqrt{Var_\nu(f)}\sqrt{\sum_{i=1}^n\frac{q_i'^2}{p_i}}-\infty 1_{(\sum_{i=1}^n q'_i \neq 0)}\right\}\\
= &\overline{\inf_{h\in\mathbb{R}^n}\left\{\infty 1_{(h\neq c\mathbf{1},\forall c\in\mathbb{R})} + \infty 1_{(\sum_{i=1}^n (g_i-h_i)^2 p_i > Var_\nu(f))}\right\}}\\
= &\infty 1_{(Var_\nu(g) > Var_\nu(f))}.
\end{align*}
Here the last equation is because we have to choose $h = c \mathbf{1}$ so that the quantity to be optimized in the second line is not $\infty$, and $c^* = \sum_{i=1}^n g_i p_i$ makes $\sum_{i=1}^n (g_i - c)^2 p_i$ the smallest, and when this $c$ is taken, $\sum_{i=1}^n (g_i - c^*)^2 p_i = Var_\nu(g)$.
\begin{remark}\label{optimizing_q}
Thus the optimizing $q'$ related to the above expression is the optimizing $q'$ of 
$$\sup_{q'\in\mathbb{R}^n} \left\{\sum_{i=1}^n (g_i - c^*)q_i' - \sqrt{Var_\nu(f)}\sqrt{\sum_{i=1}^n\frac{q_i'^2}{p_i}} \right\},$$
where $c^* = \sum_{i=1}^n g_i p_i$ as mentioned above. By Lemma \ref{two_convex_dual}, when $Var_\nu(g) \leq Var_\nu(f)$, the optimizing $q'$ have the form:\\
i) when $Var_\nu(g)< Var_\nu(f)$, $q'_i=0$ for $i=1,2,\dots,n$;\\
ii) when $Var_\nu(g)= Var_\nu(f)$, $q'_i = c \cdot (g_i-\sum_{i=1}^n g_ip_i)p_i$ for any fixed $c\geq 0$.
\end{remark}
Lastly, going back to (\ref{real_target}) and taking the result above, we have

\begin{align*}
&\inf_{q'\in\mathbb{R}^n_0}\sup_{g\in\Gamma_0} \left\{\sqrt{Var_\nu(f)}\sqrt{\sum_{i=1}^n\frac{q_i'^2}{p_i}} + \sum_{i=1}^n g_i(p'_i-q_i')\right\}\\
= & \sup_{g\in\Gamma_0}\left\{\sum_{i=1}^n g_i p'_i - \sup_{q'\in\mathbb{R}^n_0}\left\{\sum_{i=1}^n g_i q_i' - \sqrt{Var_\nu(f)}\sqrt{\sum_{i=1}^n\frac{q_i'^2}{p_i}}\right\}\right\}\\
= & \sup_{g\in\Gamma_0}\left\{\sum_{i=1}^n g_i p'_i - \infty 1_{(Var_\nu(g)> Var_\nu(f))}\right\}\\
= &\sup\left\{\sum_{i=1}^n g_i p_i': g\in\Gamma_0, Var_\nu(g)\leq Var_\nu(f)\right\}.
\end{align*}
Now to summarize, we have the following equations as the sensitivity (upper) bound for $\lim_{\varepsilon\to 0^+}\frac{1}{\varepsilon}|\int f d\mu_\varepsilon - \int f d\nu|$.
\begin{align}
&\inf_{q'\in\mathbb{R}^n_0}\sup_{g\in\Gamma_0} \left\{\sqrt{Var_\nu(f)}\sqrt{\sum_{i=1}^n\frac{q_i'^2}{p_i}} + \sum_{i=1}^n g_i(p'_i-q_i')\right\}\\
= & \sup_{g\in\Gamma_0}\inf_{q'\in\mathbb{R}^n_0} \left\{\sqrt{Var_\nu(f)}\sqrt{\sum_{i=1}^n\frac{q_i'^2}{p_i}} + \sum_{i=1}^n g_i(p'_i-q_i')\right\}\\
= & \sup_{g\in\Gamma_0}\left\{\sum_{i=1}^n g_i p_i': g\in\Gamma_0, Var_\nu(g)\leq Var_\nu(f)\right\}.\label{final_bound}
\end{align}

We want to interpret this sensitivity bound, especially what $q'$ is the best choice in (\ref{real_target}), which represent the best intermediate measure in the original expression (\ref{sens_bound}). First we establish the saddle point property for minimax problems in the following lemma. We assume $M$ and $N$ satisfy the setting of Theorem \ref{minimax}.

\begin{lemma}\label{saddle_pt}
Assume $f$ is a function on $M\times N$ satisfying condition of Theorem \ref{minimax}. If $x^*\in M$ achieves the infimum, i.e.

$$\sup_{y\in N} f(x^*,y) = \inf_{x\in M} \sup_{y\in N} f(x,y),$$
and $y^*\in Y$ achieves the superior, i.e.

$$\inf_{x\in M} f(x,y^*) = \sup_{y\in N} \inf_{x\in M} f(x,y),$$
then $(x^*,y^*)$ satisfies

$$f(x^*, y^*) = \inf_{x\in M}\sup_{y\in N} f(x,y) = \sup_{y\in N}\inf_{x\in M} f(x,y).$$
\end{lemma}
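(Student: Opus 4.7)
The plan is to exploit Sion's minimax theorem (Theorem \ref{minimax}) to equate $\inf_{x\in M}\sup_{y\in N} f(x,y)$ and $\sup_{y\in N}\inf_{x\in M} f(x,y)$, call this common value $V$, and then sandwich $f(x^*,y^*)$ between $V$ and $V$ using the two optimality hypotheses.

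First I would invoke Theorem \ref{minimax}: since $M$ and $N$ are convex with one of them compact, and since $f$ is convex and lower semicontinuous in the first variable and concave and upper semicontinuous in the second, we have $\inf_{x\in M}\sup_{y\in N} f(x,y)=\sup_{y\in N}\inf_{x\in M} f(x,y)\doteq V$. Next I would use the hypothesis on $x^*$, which says $\sup_{y\in N} f(x^*,y)=V$, to deduce $f(x^*,y^*)\leq \sup_{y\in N} f(x^*,y)= V$. Symmetrically, the hypothesis on $y^*$ gives $\inf_{x\in M} f(x,y^*)=V$, so $f(x^*,y^*)\geq \inf_{x\in M} f(x,y^*)=V$. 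Combining the two inequalities yields $f(x^*,y^*)=V$, which is exactly the claim.

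There is no real obstacle here; the statement is essentially a packaging of the standard saddle-point characterization once equality of $\inf\sup$ and $\sup\inf$ has been established. The only thing to be careful about is that nothing in the argument requires additional regularity beyond what Theorem \ref{minimax} already provides, so the proof is complete in just a few lines.
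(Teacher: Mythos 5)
Your argument is correct and coincides with the paper's own proof: both invoke Sion's theorem to identify the common value $V$ of the $\inf\sup$ and $\sup\inf$, then sandwich $f(x^*,y^*)$ via $f(x^*,y^*)\leq \sup_{y\in N} f(x^*,y)=V$ and $f(x^*,y^*)\geq \inf_{x\in M} f(x,y^*)=V$. Nothing further is needed.
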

\begin{proof}
From Theorem \ref{minimax}, $\inf_{x\in M}\sup_{y\in N} f(x,y) = \sup_{y\in N}\inf_{x\in M} f(x,y)$, let's denote its value by $I$. Then by the first condition,

$$I = \sup_{y\in N} f(x^*,y) \geq f(x^*,y^*).$$
By the second condition, 

$$I = \inf_{x\in M} f(x,y^*) \leq f(x^*,y^*).$$
Combine the two inequalities above, we have $f(x^*,y^*) = I$.
\end{proof}
Based on Lemma \ref{minimax}, we can extract information on the optimizer $q'$ of (\ref{real_target}), if it exists, based on the information of optimizer $g^*$ of (\ref{final_bound}). Before we proceed, we first establish the existence of the optimizer $q'^*$ of (\ref{real_target}).

\begin{lemma}\label{inf_opt}
There exists optimizer $q'^*$ for (\ref{real_target}).
\end{lemma}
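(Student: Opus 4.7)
The plan is to apply the direct method of the calculus of variations to the objective
\[
\Phi(q')\doteq \sqrt{Var_\nu(f)}\sqrt{\sum_{i=1}^n \frac{(q_i')^2}{p_i}}+\sup_{g\in\Gamma_0}\sum_{i=1}^n g_i(p_i'-q_i'),
\]
on the closed linear subspace $\mathbb{R}^n_0\subset\mathbb{R}^n$; the quantity in (\ref{real_target}) is then exactly $\inf_{q'\in\mathbb{R}^n_0}\Phi(q')$. The strategy has three ingredients: convexity and continuity of $\Phi$, coercivity of $\Phi$, and a standard minimizing-sequence argument in the finite-dimensional space $\mathbb{R}^n_0$.

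For the first ingredient, $\Phi$ is the sum of two convex functions of $q'\in\mathbb{R}^n$. The first summand is a continuous convex function (composition of $\sqrt{\cdot}$ with a positive-definite quadratic form). The second is convex as a pointwise supremum of affine functions of $q'$; since $\Gamma_0$ was shown above to be a compact convex subset of $\mathbb{R}^n$ (bounded after the normalization $g_1=0$ via the Lipschitz constraint), the supremum is finite for every $q'$. A finite convex function on all of $\mathbb{R}^n$ is automatically continuous, so $\Phi$ is continuous on $\mathbb{R}^n$.

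For coercivity, I would first dispose of the degenerate case $Var_\nu(f)=0$: then $q'^*\doteq p'(0)\in\mathbb{R}^n_0$ (since $\sum_i p_i'(0)=0$) gives $\Phi(p'(0))=0=\inf\Phi$, because both summands of $\Phi$ are nonnegative (the second one equals $W_\Gamma(\rho,\tilde\rho(q'))\geq 0$, using $g=0\in\Gamma_0$). Assuming instead $Var_\nu(f)>0$, the bound $\sum_i (q_i')^2/p_i\geq \|q'\|^2/\max_i p_i$ yields $\Phi(q')\geq c\|q'\|$ with $c=\sqrt{Var_\nu(f)/\max_i p_i}>0$, so $\Phi$ is coercive on $\mathbb{R}^n_0$. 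Any minimizing sequence $\{q'^{(k)}\}\subset\mathbb{R}^n_0$ is therefore bounded and, passing to a subsequence, converges to some $q'^*$; closedness of $\mathbb{R}^n_0$ keeps $q'^*$ in the subspace, and continuity of $\Phi$ gives $\Phi(q'^*)=\inf_{\mathbb{R}^n_0}\Phi$. The only subtlety worth flagging is the need to treat the degenerate case $Var_\nu(f)=0$ separately; once that is addressed, the remainder is a textbook finite-dimensional compactness argument.
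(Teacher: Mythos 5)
Your proof is correct and follows essentially the same route as the paper: the paper bounds the infimum by the value at $q'=p'$ and shows the objective exceeds this bound whenever some coordinate $|q'_j|$ is too large, thereby restricting to an explicit compact subset of $\mathbb{R}^n_0$ and invoking lower semicontinuity of the supremum over $\Gamma_0$ — the same compactness-plus-(semi)continuity argument that your coercivity bound $\Phi(q')\geq c\lVert q'\rVert$ packages. Your separate treatment of the case $Var_\nu(f)=0$ is harmless but not needed, since the paper's setup already assumes $Var_\nu(f)\neq 0$.
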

\begin{proof}
Since $\mathbb{R}^n_0 = \left\{x\in \mathbb{R}^n : \sum_{i=1}^n x_i = 0\right\}$ is not compact, we cannot directly conclude the existence of the optimizer $q'^*$ of (\ref{real_target}). Fortunately, we can change the space to be optimized over to a compact set without changing its value in the following way. Let's now also denote the value of (\ref{real_target}) by $I$. By choosing $q' = p'$ in the outside optimization step, we know 

$$I \leq \sqrt{Var_\nu(f)}\sqrt{\sum_{i=1}^n\frac{p_i'^2}{p_i}}.$$
Then for any $q'\in\mathbb{R}_0^n$, such that there exists $j\in\{1,2,\dots,n\}$, $|q'_j|> \sqrt{p_j \sum_{i=1}^n \frac{p_i'^2}{p_i}}$, 
\begin{align*}
&\quad\sup_{g\in\Gamma_0} \left\{\sqrt{Var_\nu(f)} \sqrt{\sum_{i=1}^n\frac{q_i'^2}{p_i}}+\sum_{i=1}^n g_i(p_i' - q_i')\right\}\\
&\geq \sqrt{Var_\nu(f)} \sqrt{\sum_{i=1}^n\frac{q_i'^2}{p_i}}\\
&\geq \sqrt{Var_\nu(f)} \sqrt{\frac{q_j'^2}{p_j}}\\
&> \sqrt{Var_\nu(f)}\sqrt{\sum_{i=1}^n\frac{p_i'^2}{p_i}} = I.
\end{align*}
So
\begin{align}
I &= \inf_{q'\in \mathbb{R}^n_0}\sup_{g\in\Gamma_0} \left\{\sqrt{Var_\nu(f)} \sqrt{\sum_{i=1}^n\frac{q_i'^2}{p_i}}+\sum_{i=1}^n g_i(p_i' - q_i')\right\}\\
& =\inf_{q'\in \mathbb{R}^n_0, |q'_j|\leq \sqrt{p_j \sum_{i=1}^n \frac{p_i'^2}{p_i}}}\sup_{g\in\Gamma_0} \left\{\sqrt{Var_\nu(f)} \sqrt{\sum_{i=1}^n\frac{q_i'^2}{p_i}}+\sum_{i=1}^n g_i(p_i' - q_i')\right\} \label{new_opt}.
\end{align}
Since $\mathbb{R}^n_0\cap \left\{q'\in\mathbb{R}^n: |q'_j|^2 \leq p_j \sum_{i=1}^n \frac{p_i'^2}{p_i}, j=1,2,\dots,n\right\}$ is compact, and 
$$\sup_{g\in\Gamma_0} \left\{\sqrt{Var_\nu(f)} \sqrt{\sum_{i=1}^n\frac{q_i'^2}{p_i}}+\sum_{i=1}^n g_i(p_i' - q_i')\right\}$$
as a function of $q'$ is lower semi-continuous (supremum of a collection of continous functions results in a lower semi-continuous function), we can conclude there is an optimizer $q'^*$ of the (\ref{new_opt}), which is also the optimization problem of our target.
\end{proof}

Now by Lemma \ref{saddle_pt} and Lemma \ref{inf_opt}, we know for any optimizer $g^*$ of (\ref{final_bound}), the optimizer $q'^*$ together with $g^*$ must be the saddle point of (\ref{real_target}), which requires $q'^*$ to have the structure as shown in Remark \ref{optimizing_q}.  Let's now look at the optimization question (\ref{final_bound}) carefully. 

$$\sup_{g\in\Gamma_0}\left\{\sum_{i=1}^n g_i p_i': g\in\Gamma_0, Var_\nu(g)\leq Var_\nu(f)\right\}$$
Since we can add any constant number to all coordinates of $g$ without changing the value of the optimization, instead of considering $\Gamma_0$, let's consider $\Gamma\cap \mathbb{R}^n_0 = \left\{g\in\Gamma:\sum_{i=1}^n g_i = 0\right\}$. Since $\Gamma$ is characterized by inequalities like $|g_i - g_j|\leq |x_i-x_j|$, $\Gamma\cap\mathbb{R}^n_0$ is also a space characterized by linear inequalities. We can imagine it as a "polytope" type space. On the other hand, let's denote $D \doteq \left\{g\in\mathbb{R}^n: Var_\nu(g) \leq Var_\nu(f)\right\}$. $D$ is characterized by a quadratic inequality, thus is a "ellipse" type space. We also consider $D\cap \mathbb{R}^n_0$, which can be viewed as an "ellipse" in the subspace $\mathbb{R}^n_0$. Recalling $p' = (p_1', p_2',\dots, p_n')\in\mathbb{R}^n_0$, we can now view (\ref{final_bound}) as 

\begin{align}\label{find_g_opt}
\sup\left\{g\cdot p': g\in(\Gamma\cap \mathbb{R}^n_0)\cap (D\cap \mathbb{R}^n_0)\right\},
\end{align}
which can be shown in Figure \ref{fig:my_label}.

\begin{figure}
    \centering
    \includegraphics[scale=0.5]{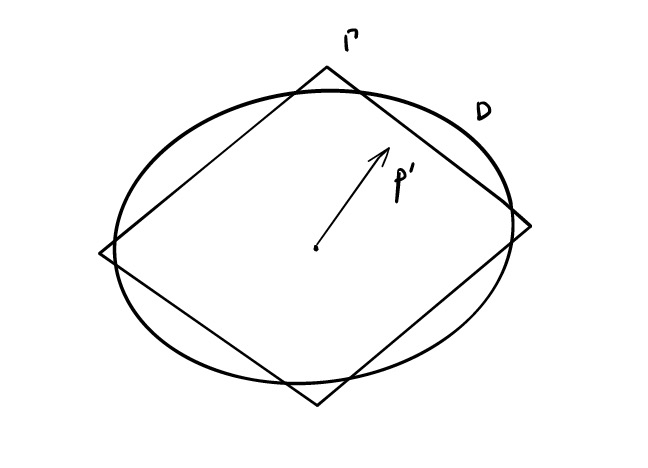}
    \caption{Picture of (\ref{find_g_opt})}
    \label{fig:my_label}
\end{figure}

Since the region $(\Gamma\cap \mathbb{R}^n_0)\cap(D\cap \mathbb{R}^n_0)$ is compact, the optimizing $g^*$ of (\ref{find_g_opt}) exists. Now, we can separate the cases according to where sup is achieves according to whether optimizer $g^*$ (not necessarily unique) is on the boundary of $\Gamma\cap\mathbb{R}^n_0$ or the boundary of $D\cap\mathbb{R}^n_0$. Here optimizer $g^*$ simply means $$g^*\cdot p' = \sup\left\{g\cdot p': g\in(\Gamma\cap \mathbb{R}^n_0)\cap (D\cap \mathbb{R}^n_0)\right\}.$$\\

Depending on the geometry of $(\Gamma\cap \mathbb{R}^n_0)\cap(D\cap \mathbb{R}^n_0)$ and $p'$, there are three different cases on where $g^*$ is located: being on the boundary of $\Gamma\cap \mathbb{R}_0^n$ and not on the boundary of $D\cap \mathbb{R}_0^n$, being on the boundary of $D\cap \mathbb{R}_0^n$ and not on the boundary of $\Gamma\cap \mathbb{R}_0^n$, and being on both the boundary of $\Gamma\cap\mathbb{R}_0^n$ and $D\cap\mathbb{R}_0^n$. It turns out these three cases represent when using only optimal transport cost, using only relative entropy or using both optimal transport cost and relative entropy components gives the best bound. They are discussed in detail in the rest of this section. 

\noindent $\mathbf{Case\ 1}$: $g^*$ is on the boundary of $\Gamma\cap \mathbb{R}^n_0$, and not on the boundary of $D\cap \mathbb{R}^n_0$.\\
In this case, it is implied that $Var_\nu(g^*)< Var_\nu(f)$, then by Lemma \ref{two_convex_dual}, Remark \ref{optimizing_q} and Lemma \ref{saddle_pt}, we have the corresponding optimizing $q'^*$ has to be $\mathbf{0}$, since it's the only $q'^*$ which optimizes 
$$\inf_{q'\in\mathbb{R}^n_0}\left\{\sqrt{Var_\nu(f)}\sqrt{\sum_{i=1}^n\frac{q_i'^2}{p_i}} + \sum_{i=1}^n g^*_i(p'_i-q_i')\right\},$$
with $g^*$ inserted in.

\noindent $\mathbf{Case\ 2}$: $g^*$ is on the boundary of $D\cap \mathbb{R}^n_0$, and not on the boundary of $\Gamma\cap\mathbb{R}^n_0$.\\
In this case, for any optimizing $q'^*$ of (\ref{real_target}), from Lemma \ref{saddle_pt}, we also need to have $g^*$ to be the optimizer for 

$$\sup_{g\in\Gamma\cap\mathbb{R}^n_0}\left\{\sqrt{Var_\nu(f)}\sqrt{\sum_{i=1}^n\frac{(q_i'^*)^2}{p_i}} + \sum_{i=1}^n g_i(p'_i-q'^*_i)\right\}.$$
Here we use $\Gamma\cap \mathbb{R}^n_0$ as the set to be optimized over instead of $\Gamma_0$ before, but since we can add any constant to all coordinates of $g$ without changing the value of the expression, it's valid to use this substitution. Now since $g^*$ is not on the boundary of $\Gamma\cap \mathbb{R}^n_0$, the only possible $q'^*$ for the statement above to be true is that $q'^* = p'$. This is because otherwise, $p'-q'^*\neq 0$, and then $g^*$ as a point in the interior of $\Gamma$ cannot be the optimizer for the expression above.

\noindent $\bold{Case\ 3}$: $g^*$ is on the boundary of $\Gamma\cap \mathbb{R}^n_0$ and the boundary of $D\cap \mathbb{R}^n_0$.\\
In this case, as in Case 2, we also require for any optimizer of (\ref{real_target}), $g^*$ is the optimizer for

$$\sup_{g\in\Gamma\cap\mathbb{R}^n_0}\left\{\sqrt{Var_\nu(f)}\sqrt{\sum_{i=1}^n\frac{(q_i'^*)^2}{p_i}} + \sum_{i=1}^n g_i(p'_i-q'^*_i)\right\}.$$
On the other hand, by Lemma \ref{two_convex_dual}, Remark \ref{optimizing_q} and Lemma \ref{saddle_pt}, we need to have $q_i'^* = c (g^*_i - \sum_{i=1}^n g^*_i p_i)p_i$, for some constant $c\geq 0$. Let's denote the subdifferential set of $g^*$ with respect to $\Gamma\cap\mathbb{R}^n_0$ as 

$$S\doteq \left\{h\in\mathbb{R}^n_0: l_h \cap \mathrm{int}(\Gamma\cap\mathbb{R}^n_0) = \varnothing,\ l_h\doteq \left\{x\in\mathbb{R}^n_0:h\cdot(x-g^*)=0\right\}\right\}.$$
Then from the discussion above, we have the sufficient condition for $q'^*$ to be the optimizer of (\ref{real_target}) is \\
i) There exists $c\geq 0$ such that $q_i'^* = c(g^*_i - \sum_{i=1}^n g^*_i p_i)p_i$,\\
ii) $p'-q'^*\in S$.

If $S$ only contains one element, which means $g^*$ is on the "flat" boundary of the "polytope" $\Gamma\cap\mathbb{R}^n_0$, $q'^*$ will be uniquely determined by the two conditions above. If $S$ contains more than one element, which happens when at $g^*$, multiple inequalities characterizing $\Gamma$ are satisfied, the set of $q'$ satisfying both i) and ii) may also contain multiple elements. However, if for some $c>0$, both conditions i) and ii) are satisfied, we can compute 

\begin{align*}
&\sqrt{Var_\nu(f)}\sqrt{\sum_{i=1}^n\frac{(q_i'^*)^2}{p_i}}\\
= & \sqrt{Var_\nu(f)} \sqrt{\sum_{i=1}^n\frac{c^2(g_i^*-\sum_{i=1}^n g_i^*p_i)^2p_i^2}{p_i}}\\
= & c\sqrt{Var_\nu(f)}\sqrt{\sum_{i=1}^n(g_i^*-\sum_{i=1}^n g_i^*)^2 p_i}\\
= & c \sqrt{Var_\nu(f)}\sqrt{Var_\nu(g^*)}\\
= & c Var_\nu(g^*),
\end{align*}
and
\begin{align*}
\sum_{i=1}^n g^*_i q_i'^* &= \sum_{i=1}^n g^*_i c(g^*_i - \sum_{j=1}^n g^*_j p_j )p_i \\
&= c\left(\sum_{i=1}^n (g^*_i)^2 p_i - (\sum_{i=1}^n g^*_i p_i)^2\right)\\
&= c Var_\nu(g^*).
\end{align*}
Thus for this choice of $q'^*$, 
\begin{align*}
&\sqrt{Var_\nu(f)}\sqrt{\sum_{i=1}^n\frac{(q_i'^*)^2}{p_i}} + \sum_{i=1}^n g^*_i(p'_i-q_i'^*)  \\
=& c Var_\nu(g) + \sum_{i=1}^n g_i^* p_i - c Var_\nu(g) \\
=& \sum_{i=1}^n g_i^* p_i,
\end{align*}
where the last value equals to the value of (\ref{real_target}). Thus condition i) and ii) are also necessary conditions for $q'^*$ to be the optimizer for (\ref{real_target}).

To interpret this result, we return to our consideration of tradeoff between using relative entropy and optimal transport cost. In Case 1, it's more efficient to transport mass using optimal transport cost, so just using the transport method gives the best bound. In Case 2, it's more efficient to use relative entropy, so using the most possible relative entropy mechanism to "move" mass gives the best bound. In Case 3, both relative entropy and optimal transport cost are involved to get the best bound. It is possible that in case 3 the optimizer $q'^*$ for (\ref{real_target}) is not unique. This is due to the fact that depending on specific situations, it's possible that within some neighborhood of one optimizer, the marginal relative entropy and optimal transport cost are exactly the same.
\newpage

\section{Application to Uncertainty Quantification in Diffusion Case}\label{application_diffusion}
In this section, we make use of $\Gamma$ divergence in a specific application of uncertainty quantification, i.e. the Gauss-Markov model case. Gauss-Markov model is a simple model, which are commonly used to approximate more complicated diffusion models (see for example \cite{kus84}) as well as other general models. The role of Gauss-Markov model in the modeling of stochastic process is analogous to the role of Gaussian random variable in the modeling of univariate random variable, and thus is important. For the sake of illustration, we look at the discrete version of the problem, and look at the long time (infinite time) horizon. Formulation of the problem will be given below.

The system we are considering is the 1-dimensional Gauss-Markov model, i.e.

\begin{align}\label{model}
dX_t = -aX_t dt + \sigma dW_t,
\end{align}
with initial condition $X_0 = x_0$, where $a>0$, $\sigma>0$ are constant and $W_t$ is a standard Brownian Motion. The perturbed model is 

\begin{align}\label{model_perturbed}
d\bar{X}_t = -a\bar{X_t} dt + \sigma u(\bar{X_t}) dt + \sigma v(\bar{X_t}) dW_t,
\end{align}
with initial condition $\bar{X}_0 = x_0$. Here $u,v$ are functions of the current state $\bar{X}_t$, and $v>0$. We make the following assumption.
\begin{assumption}\label{assumption_of_uv}
$u:\mathbb{R}\to \mathbb{R}$, $v:\mathbb{R} \to \mathbb{R}_+$ are bounded and continuous. Moreover, there exists a positive constant $\Delta>0$ such that $v(x)\geq \Delta$ holds for all $x\in\mathbb{R}$.
\end{assumption}
The performance measure/ cost we are interested in is 

$$\mathcal{F}(X) = \int_0^T k X_t^2 dt,$$
for large $T$. Take $\mathcal{L}$ as the generator for (\ref{model_perturbed}), where generators are defined in the following sense: For $h\in \mathcal{M}_b(\mathbb{R})$ being bounded measurable functions, 
$$\mathcal{L}h(x) \doteq \lim_{t\to 0^+} \frac{E_x[h(\bar{X}_t)] - h(x)}{t},$$
whenever the above limits exists. It is well known that for $h\in C^2(\mathbb{R})$, 
$$\mathcal{L}h(x) = h'(x) (-ax + \sigma u(x)) + \frac{1}{2}h''(x)\sigma^2v^2(x).$$
Take $V(x) = \frac{1}{2}x^2$, then 
$$\mathcal{L}V(x) = x(-ax + \sigma u(x)) + \frac{1}{2}\sigma^2v^2(x).$$
Under Assumption \ref{assumption_of_uv}, it can be shown there exists $M>0$, such that $\mathcal{L}V(x)<0$ holds for $|x|\geq M$. So $V(x)$ is a Lyapunov function for (\ref{model_perturbed}), then by classical argument (see for example the last paragraph of Proof of \cite[Theorem 2.6]{dupwil} in its appendix) one can show the existence of stationary distribution of (\ref{model_perturbed}). For uniqueness of stationary distribution, let's consider the Markov chain $\{\bar{X}_k\}_{k\in\mathbb{N}}$ where $\bar{X}_k$ is the random variable which is at time $k$ of the stochastic process (\ref{model_perturbed}). It can be easily checked that any stationary distribution of (\ref{model_perturbed}) is also a stationary distribution for $\{\bar{X}_k\}$. Under Assumption \ref{assumption_of_uv}, it can be checked that Markov chain $\{\bar{X}_k\}_{k\in\mathbb{N}}$ is indecomposable (see \cite[Definition 7.14]{bre}), thus the stationary distribution of $\{\bar{X}_k\}_{k\in\mathbb{N}}$ is unique (see \cite[Theorem 7.16]{bre}). So we can conclude the stationary distribution of (\ref{model_perturbed}) is also unique. Let's now denote $\pi_q$ as the stationary distribution of (\ref{model_perturbed}). For $T$ large, we know
$$\mathcal{F}(\bar{X}) = \int_0^T k \bar{X}_t^2 dt  = kT \int_\mathbb{R} x^2 d\pi_q + O(1).$$
Thus we are interested in getting bounds, especially upper bounds for 
\begin{align}\label{last_section_qoi}
\int_\mathbb{R} x^2 d\pi_q.
\end{align}

This type of problems are investigated by people using relative entropy as the type of divergence \cite{petjamdup}, however when the diffusion coefficient of (\ref{model}) is perturbed, like in (\ref{model_perturbed}), the bounds derived from relative entropy turn out to be not useful, since these bounds will be $\infty$. We will show in this section by using $\Gamma$ divergence using a specific $\Gamma$, one can get a meaningful upper bound for the quantity of interest (\ref{last_section_qoi}).

The following is the guideline for this section.
\begin{enumerate}
    \item $\Gamma$-divergence with $\Gamma$ being functions with Lipschitz first derivative.
    \item  Forward mapping specification and the scaling dependence of input cost.
    \item The optimal variational bound for perturbation.
\end{enumerate}

\subsection{$\Gamma$-divergence with a special choice of $\Gamma$}\label{new_Gamma_investigation}

To tackle the problem considering Brownian motion with different diffusion coefficient, we need to use a different $\Gamma$. The $\Gamma$ we are going to consider in this section will be mainly the following type. To illustrate the idea, let's take the space $S$ to be $\mathbb{R}$.
\begin{align}\label{choice_of_Gamma_diffusion}
\Gamma = \left\{g\in C_b(\mathbb{R}): g\in C^1(\mathbb{R})\ and\ g'\in\mathrm{Lip(1)}\right\}.
\end{align}
With this choice of $\Gamma$, we will be able to compare second moment information for two distributions.

\subsubsection{Admissibility of choice (\ref{choice_of_Gamma_diffusion})}
First, we need to show that this $\Gamma$ is admissible. We recall the definition of admissible here, which is the first defined at Definition \ref{access}.

\begin{definition}
Let $\Gamma$ be a subset of $C_b(\mathbb{R})$. 
We call $\Gamma$ $\mathbf{admissible}$ if  $\Gamma$ satisfies the following criteria:

1) $\Gamma$ is convex and closed.

2) $\Gamma$ is symmetric in that $g\in\Gamma$ implies $-g\in\Gamma$, and $\Gamma$ contains all contant functions.

3) $\Gamma$ is determining for $\mathcal{P}(\mathbb{R})$, i.e., $\forall \mu\neq\nu\in \mathcal{P}(\mathbb{R})$, there exists $g\in\Gamma$ such that 

$$\int g d\mu\neq \int g d\nu.$$
\end{definition}
The first two conditions can be checked easily. For the third condition, it can be proved by contradiction. Suppose for fixed two measures $\mu, \nu\in\mathcal{P}(\mathbb{R})$ and any $g\in \Gamma$, $\int g d\mu = \int g d\nu$. Notice that for any $h\in \mathrm{Lip(1)}\cap C_b(\mathbb{R})$, there exists a sequence of positive numbers $\{c_k\}_{k\in\mathbb{N}}$ and $\{g_k\}_{k\in\mathbb{R}}\subset \Gamma$, such that $\lim_{k\to\infty}c_k g_k = h$, and $|c_kg_k|\leq |h|$. Then by dominated convergence theorem where $|h|$ is the dominating function, we conclude 

$$\int h d\mu = \lim_{k\to\infty}\int c_k g_k d\mu = \lim_{k\to\infty} \int c_k g_k d\nu = \int h d\nu.$$
Thus,
$$\sup_{h\in \mathrm{Lip}(1)\cap C_b(\mathbb{R})}\{\int h d(\mu-\nu)\}=0.$$ Since $\mathrm{Lip(1)}\cap C_b(\mathbb{R})$ is measure determining(see for example, \cite[Remark A.3.5]{dupell4}), then we have $\mu=\nu$. Thus the third condition is verified, and the statement $\Gamma$ is admissible is proved.

\subsubsection{Investigate $W_\Gamma(\mu,\nu)$}
In this section, we investigate $W_\Gamma(\mu,\nu) = \sup_{g\in\Gamma }\{\int g d(\mu-\nu)\}$, especially for $\mu$ and $\nu$ being two normal distribution on $\mathbb{R}$. Here throughout this section, we assume $\mu$ and $\nu$ have finite first and second moment. First we introduce two lemmas.

\begin{lemma}\label{diff_mean_infinity}
$W_\Gamma(\mu,\nu)=\infty$ if $E_\mu X\neq E_\nu X$, where $X$ represents random variables with distribution according to the subscript respectively.
\end{lemma}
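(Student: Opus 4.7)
The idea is to produce a family $\{g_M\}_{M \geq 1} \subset \Gamma$ for which $\int g_M\, d(\mu-\nu) \to \infty$; then $W_\Gamma(\mu,\nu) = \infty$ follows directly from the definition. Writing $\Delta \doteq E_\mu X - E_\nu X$, we may assume $\Delta > 0$ (else replace $g$ by $-g$, using symmetry of $\Gamma$). The obvious choice $g(x) = x$ witnesses $\int g\, d(\mu-\nu) = \Delta \neq 0$ but fails to lie in $\Gamma$ because it is unbounded, so the task is to construct a bounded $C^1$ approximation of the identity whose derivative is $1$-Lipschitz.

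A natural candidate is $g_M(x) \doteq M \tanh(c_M x)$ with $c_M \doteq 1/\sqrt{2M}$. Each $g_M$ is smooth and bounded by $M$, and the second derivative $g_M''(x) = -2 M c_M^2 \tanh(c_M x)\,\mathrm{sech}^2(c_M x)$ is uniformly bounded in absolute value by $2 M c_M^2 = 1$, so $g_M'$ is $1$-Lipschitz and hence $g_M \in \Gamma$.

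To estimate the integral, I would decompose
\[
\int g_M\, d(\mu-\nu) \;=\; M c_M\, \Delta \;-\; M \int \bigl(c_M x - \tanh(c_M x)\bigr)\, d(\mu-\nu).
\]
Using the elementary pointwise bound $|y - \tanh y| \leq y^2$ (verified by comparing derivatives and values at $0$) together with the standing finite-second-moment hypothesis on $\mu$ and $\nu$, the remainder is bounded in absolute value by $M c_M^2 \bigl(E_\mu X^2 + E_\nu X^2\bigr) = \tfrac{1}{2}\bigl(E_\mu X^2 + E_\nu X^2\bigr)$, uniformly in $M$, while the leading term $M c_M \Delta = \sqrt{M/2}\,\Delta \to \infty$. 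Hence $W_\Gamma(\mu,\nu) \geq \int g_M\, d(\mu-\nu) \to \infty$.

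The main point to appreciate is the tradeoff forced by the constraint $g' \in \mathrm{Lip}(1)$: increasing the amplitude $M$ in any reasonable approximation of the identity requires decreasing the slope like $c_M \sim 1/\sqrt{M}$, so the useful signal in $\int g_M\, d(\mu-\nu)$ grows only like $\sqrt{M}$ rather than linearly in $M$. The finite-second-moment assumption is exactly what is needed to ensure that the nonlinear remainder does not overwhelm this $\sqrt{M}$ growth; without such an assumption, one would have to cut off more carefully in the tails.
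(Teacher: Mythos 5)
Your proof is correct, and it rests on the same underlying idea as the paper's: under the constraint $g'\in\mathrm{Lip}(1)$, one can build bounded elements of $\Gamma$ that mimic arbitrarily steep linear functions, so the integral picks up arbitrarily large multiples of $E_\mu X - E_\nu X$. The execution differs, though. The paper takes explicit piecewise-quadratic ``capped'' versions $g_{n,k}$ of the line $kx$ (linear of slope $k$ on $[-n/k,n/k]$, quadratic turnarounds, constant tails), fixes $k$, sends $n\to\infty$ by dominated convergence with dominating function $k|x|$ --- which needs only finite first moments --- and then lets $k\to\infty$. You instead use the single smooth family $g_M(x)=M\tanh(x/\sqrt{2M})$, with amplitude and slope coupled so that $\lvert g_M''\rvert\le 1$, and replace the limiting argument by the quantitative pointwise bound $\lvert y-\tanh y\rvert\le y^2$, which controls the remainder uniformly in $M$ at the price of invoking finite second moments. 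That hypothesis is indeed a standing assumption in this section of the paper, so your argument is valid as stated; the only tradeoff is that the paper's double-limit route proves the lemma under the weaker requirement of finite first moments, while yours is shorter, avoids the dominated-convergence step entirely, and makes the $\sqrt{M}$ growth rate of the attainable ``signal'' explicit. All the verifications you would need (that $g_M\in\Gamma$ via $\lvert g_M''\rvert\le 2Mc_M^2=1$, the elementary inequality, and the reduction to $\Delta>0$ by symmetry of $\Gamma$) check out.
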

\begin{proof}
When $E_\mu X\neq E_\nu X$, without loss of generality, let's assume $E_\mu X > E_\nu X$. For fixed $k>0$, we take 

\begin{equation*}
g_{n,k}(x)=\left\{
\begin{aligned}
&\frac{1}{2}k^2 + n \quad &x\geq k+\frac{n}{k}\\
&-\frac{1}{2}x^2 + (k + \frac{n}{k})x - \frac{1}{2}(\frac{n}{k})^2 \quad &\frac{n}{k}< x< k+ \frac{n}{k} \\
&kx \quad &-\frac{n}{k}\leq x \leq \frac{n}{k} \\
&\frac{1}{2}x^2 - (k + \frac{n}{k})x +\frac{1}{2}(\frac{n}{k})^2 \quad &-\frac{n}{k} - k < x< -\frac{n}{k}\\
&-\frac{1}{2}k^2 - n \quad &x\leq -k-\frac{n}{k}
\end{aligned}
\right.
\end{equation*}
which is a bounded approximation with Lipshitz-1 first derivative version of the function $kx$. Notice that $g_{k,n}(x)\to kx$ pointwise and that $\mu$ and $\nu$ has finite first moment, we have $$\lim_{n\to\infty} \int g_{n,k}(x) \mu(dx) = \int kx \mu(dx) = k E_\mu X,$$ and $$\lim_{n\to\infty} \int g_{n,k}(x) \nu(dx) = \int kx \nu(dx) = k E_\nu X.$$ Then 

\begin{align*}
W_\Gamma(\mu,\nu) &= \sup_{g\in\Gamma}\left\{\int g d(\mu-\nu)\right\}\\
&\geq \lim_{n\to\infty} \int g_{n,k} d(\mu-\nu)\\
& = k (E_\mu X - E_\nu X).
\end{align*}
Since $k>0$ can be arbitrary, and $E_\mu X > E_\nu X$, we have 
$W_\Gamma(\mu,\nu) = \infty.$
\end{proof}
In the proof of the last Lemma, we constructed a series of functions in $\Gamma$ to approximate function $kx$. Actually, similar approximation methods can be applied to a family of functions. We introduce the following lemma.
\begin{lemma}\label{approx_by_Gamma_quadratic}
For $h(x) = bx^2 + cx + d$, where $b,c,d\in\mathbb{R}$ and $|b|\leq \frac{1}{2}$, there exists a sequence of functions $\{g_n\}_{n\in\mathbb{N}}$ in $\Gamma$ as  defined by (\ref{choice_of_Gamma_diffusion}) such that for any $x\in\mathbb{R}$,
$$\lim_{n\to\infty} g_n(x) = h(x),$$
and there exists constant $M$, which only depends on $b,c,d$, such that
$$|g_n(x)|\leq \max(|h(x)|,M).$$
\end{lemma}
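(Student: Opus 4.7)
The plan is to mimic the piecewise construction used in the proof of Lemma \ref{diff_mean_infinity}, but now one level higher: instead of truncating the \emph{function} $kx$ so it matches a constant at infinity, we truncate the \emph{derivative} $h'(x)=2bx+c$ so that it matches $0$ at infinity. Concretely, for each $n\in\mathbb N$ I would define $g_n$ by setting $g_n(x)=h(x)$ on $[-n,n]$, and on the right tail $x>n$ letting the derivative transition linearly from $h'(n)=2bn+c$ down to $0$, at slope $-\mathrm{sgn}(2bn+c)$, over an interval of length $|2bn+c|$, after which $g_n$ is set to the resulting constant value. I would do the symmetric truncation on $x<-n$. Explicitly on the right, letting $L_n=2bn+c$ (assume WLOG $L_n\ge 0$; otherwise the construction is mirrored), set
\[
g_n(x)=
\begin{cases}
h(x), & |x|\le n,\\[2pt]
h(n)+L_n(x-n)-\tfrac12(x-n)^2, & n\le x\le n+L_n,\\[2pt]
h(n)+\tfrac12 L_n^2, & x\ge n+L_n,
\end{cases}
\]
and symmetrically for $x\le -n$.

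The verifications that $g_n\in\Gamma$ are all routine and I would handle them in order: $g_n$ is bounded (by a constant depending on $n$, which is enough for membership in $C_b(\mathbb R)$); $g_n$ is $C^1$ because at the gluing point $x=n$ both $g_n$ and its derivative match $h(n)$ and $h'(n)=L_n$ respectively, and at $x=n+L_n$ both sides give the constant derivative $0$; and $g_n'$ is piecewise linear with slopes $2b$ on $[-n,n]$ and $\pm 1$ in the transition regions, so since $|2b|\le 1$ we have $g_n'\in\mathrm{Lip}(1)$. Pointwise convergence is immediate: for any fixed $x$, once $n>|x|$ we have $g_n(x)=h(x)$.

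The main obstacle, and the one that constrains the construction, is producing the $n$-independent envelope constant $M$ in $|g_n(x)|\le\max(|h(x)|,M)$. The key observation is that in the right transition region one has $g_n'(x)=L_n-(x-n)\le 2bn+c\le 2bx+c=h'(x)$ for $x\ge n$ (using $b\ge 0$ in the representative case), and $g_n(n)=h(n)$, so integrating gives $g_n(x)\le h(x)$ on $[n,\infty)$; a symmetric inequality holds on the left tail. For $|x|\le n$ the functions agree. Thus $g_n(x)\le h(x)$ globally, and the dual bound $g_n(x)\ge h(x)$ holds in the $b\le 0$ case. To convert this one-sided control into $|g_n|\le\max(|h|,M)$, I would split into $b>0$, $b<0$, and $b=0$: when $b\ne 0$, $|h|$ is eventually large, so the set $\{|h|<M_0\}$ is contained in a bounded interval $[-R,R]$, and for $n>R$ the one-sided inequality combined with $g_n(\pm n)=h(\pm n)$ yields $|g_n|\le|h|$ on $\{|x|>R\}$ and $g_n=h$ on $[-R,R]$; the finitely many small $n$ contribute only finitely many bounded functions whose sup norm can be absorbed into $M$. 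The $b=0$ case reduces exactly to the construction in Lemma \ref{diff_mean_infinity}, so only a cosmetic check is needed there.
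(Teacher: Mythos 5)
Your construction is exactly the paper's: keep $h$ on $[-n,n]$, glue in parabolic transition pieces with second derivative $\mp 1$ until the derivative reaches zero, then extend by a constant; the paper writes these formulas explicitly for $h(x)=bx^2$ and handles the linear term by completing the square and translating, whereas you build the general quadratic directly, which is only a cosmetic difference. Your verification of the envelope bound $|g_n(x)|\le\max(|h(x)|,M)$ (one-sided comparison of derivatives on the tails for large $n$, absorbing the finitely many small $n$ into $M$) is in fact more detailed than the paper's, which simply asserts the conditions can be checked directly.
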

\begin{proof}
When $b=0$, the construction can be done similar to the one given in the proof of Lemma \ref{diff_mean_infinity}. When $b\neq 0$, we first consider $h(x) = bx^2$. For $h(x) = bx^2$ with $0<b\leq \frac{1}{2}$. We can construct the following $g_n(x)$.
\begin{equation}\label{approx_b_pos}
g_{n}(x)=\left\{
\begin{aligned}
&(2b^2 + b)n^2 \quad &x\geq (2b+1)n\\
&-\frac{1}{2}x^2 + (2b+1)nx -(b+\frac{1}{2})n^2  \quad &n< x< (2b+1)n\\
&bx^2 \quad &-n\leq x \leq n \\
&-\frac{1}{2}x^2 - (2b+1)nx -(b+\frac{1}{2})n^2 \quad &-(2b+1)n < x< -n\\
&(2b^2+b)n^2 \quad &x\leq -(2b+1)n
\end{aligned}
\right.
\end{equation}
For $h(x) = bx^2$ with $-\frac{1}{2}\leq b <0$, we can construct the following $g_n(x)$.
\begin{equation}\label{approx_b_neg}
g_{n}(x)=\left\{
\begin{aligned}
&(-2b^2 + b)n^2 \quad &x\geq (-2b+1)n\\
&\frac{1}{2}x^2 + (2b-1)nx -(b+\frac{1}{2})n^2  \quad &n< x< (-2b+1)n\\
&bx^2 \quad &-n\leq x \leq n \\
&\frac{1}{2}x^2 - (2b-1)nx -(b+\frac{1}{2})n^2 \quad &-(-2b+1)n < x< -n\\
&(-2b^2+b)n^2 \quad &x\leq -(-2b+1)n
\end{aligned}
\right.
\end{equation}
It can be checked directly the $\{g_n\}_{n\in\mathbb{N}}$ constructed above satisfies the expected condition. For general $h(x) = bx^2 + cx +d$ with $0<|b|\leq \frac{1}{2}$, we can rewrite $h(x)$ as $h(x) = b(x+\frac{c}{2b})^2 + d - \frac{c^2}{4b}$. Notice that if one denotes the construction (\ref{approx_b_pos}) or (\ref{approx_b_neg}) depending on whether $b$ is positive or negative as $\{g_n\}_{n\in\mathbb{N}}$, $\{g_n(x+\frac{c}{2b}) + d - \frac{c^2}{4b}\}_{n\in\mathbb{N}}$ is automatically an approximation to $h(x)$ which satisfies the condition of the current Lemma. 
\end{proof}

\begin{lemma}\label{Gamma0}
For $\mu,\nu\in\mathcal{P}(X)$ satisfying $E_\mu X = E_\nu X$, 
$$W_\Gamma (\mu,\nu) = W_{\Gamma_0} (\mu,\nu),$$
where $\Gamma_0=\left\{ g\in C_b(\mathbb{R}): |g''|\leq 1, g(0)=0, g'(0)=0 \right\}$.
\end{lemma}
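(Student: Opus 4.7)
The plan is to prove the two inequalities $W_\Gamma(\mu,\nu)\leq W_{\Gamma_0}(\mu,\nu)$ and $W_\Gamma(\mu,\nu)\geq W_{\Gamma_0}(\mu,\nu)$ separately. Throughout, we use the standing assumption that $\mu$ and $\nu$ have finite first and second moments (so that $\int|x|\, d\mu,\int x^2 d\mu,$ etc., are all finite), together with the hypothesis $E_\mu X=E_\nu X$. I read the definition of $\Gamma_0$ as omitting a literal boundedness requirement (since $|g''|\leq 1$ with $g(0)=g'(0)=0$ forces $|g(x)|\leq x^2/2$, which is unbounded); the integrals $\int g\, d\mu$ and $\int g\, d\nu$ are still well-defined because of the finite second moments.

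For the inequality $W_\Gamma(\mu,\nu)\leq W_{\Gamma_0}(\mu,\nu)$, take any $g\in\Gamma$ and set $\tilde g(x)\doteq g(x)-g(0)-g'(0)x$. Then $\tilde g(0)=0$, $\tilde g'(0)=0$, and $\tilde g''=g''$ a.e., so $\tilde g'\in\mathrm{Lip}(1)$ and hence $\tilde g\in\Gamma_0$. Moreover, since $\mu,\nu$ are probability measures,
\[
\int \tilde g \, d(\mu-\nu) = \int g \, d(\mu-\nu) - g(0)\cdot 0 - g'(0)(E_\mu X-E_\nu X) = \int g\, d(\mu-\nu),
\]
by the assumption $E_\mu X=E_\nu X$. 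Taking suprema yields the desired inequality.

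For the reverse inequality $W_{\Gamma_0}(\mu,\nu)\leq W_\Gamma(\mu,\nu)$, the idea is to approximate any $g\in\Gamma_0$ by a sequence $\{g_n\}\subset\Gamma$ and pass to the limit in $\int g_n\, d(\mu-\nu)$. For each $n$, define $g_n$ by keeping $g_n=g$ on $[-n,n]$ and then extending $g_n'$ outside $[-n,n]$ as a piecewise linear function with slope $\pm 1$ that drives $g_n'$ from the boundary values $g'(\pm n)$ to $0$, after which we continue $g_n$ as a constant. Explicitly, for $x>n$ set $g_n'(x)=g'(n)-\mathrm{sgn}(g'(n))\bigl(x-n\bigr)$ on the interval where this stays between $g'(n)$ and $0$, and $g_n'(x)=0$ thereafter; similarly for $x<-n$. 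Then $g_n$ is $C^1$, its derivative is Lipschitz with constant $1$, and it is bounded (approaches a constant at $\pm\infty$), so $g_n\in\Gamma$.

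The hard part is the uniform integrability needed for convergence, and this is where the specific structure of the truncation matters. A direct computation using $|g_n'(t)|\leq |t|$ for all $t$ (which holds because $|g'(t)|\leq |t|$ on $[-n,n]$ from $g(0)=g'(0)=0$ and $|g''|\leq 1$, while outside $[-n,n]$ the truncated $|g_n'(t)|$ only decreases linearly from $|g'(n)|\leq n\leq |t|$) yields the uniform bound $|g_n(x)|\leq x^2/2$ for all $n$ and all $x$. Combined with $g_n(x)\to g(x)$ pointwise and $\int x^2 d(\mu+\nu)<\infty$, the dominated convergence theorem gives $\int g_n\, d(\mu-\nu)\to \int g\, d(\mu-\nu)$, hence
\[
\int g\, d(\mu-\nu)=\lim_{n\to\infty}\int g_n\, d(\mu-\nu)\leq W_\Gamma(\mu,\nu).
\]
Taking the supremum over $g\in\Gamma_0$ completes the proof. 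The main technical obstacle, as just indicated, is verifying the uniform quadratic bound on $g_n$; the linear-taper truncation of the \emph{derivative} (rather than a direct truncation of $g$) is precisely what preserves both the Lipschitz constraint on $g_n'$ and the $|g_n(x)|\leq x^2/2$ bound.
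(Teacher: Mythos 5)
Your two core ideas are exactly the paper's: the hypothesis $E_\mu X=E_\nu X$ lets you subtract the affine part $g(0)+g'(0)x$ from any $g\in\Gamma$ without changing $\int g\,d(\mu-\nu)$, and the finite second moments give the domination needed to pass truncated approximants to the limit by dominated convergence. However, your opening reinterpretation of $\Gamma_0$ is unfounded and slightly misdirects the write-up. The bound $|g(x)|\le x^2/2$ is only an upper bound, so it does not force elements of $\Gamma_0$ to be unbounded; the class $\{g\in C_b(\mathbb{R}):|g''|\le 1,\ g(0)=g'(0)=0\}$ is nonempty and rich (the capped piecewise-quadratic functions the paper builds in Lemma \ref{approx_by_Gamma_quadratic} are examples), and the paper means exactly this bounded class. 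As written, your argument establishes $W_\Gamma=W_{\tilde\Gamma_0}$ for the \emph{unbounded} class $\tilde\Gamma_0$, with the approximation step pointed in the wrong direction (you approximate elements of your $\Gamma_0$ by elements of $\Gamma$, whereas for the stated lemma the nontrivial direction needs the affine-corrected $\tilde g$, which is generally unbounded, approximated by \emph{bounded} elements of $\Gamma_0$).

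The good news is that your own ingredients reassemble into the paper's proof of the stated lemma with no new ideas: one inequality, $W_{\Gamma_0}(\mu,\nu)\le W_\Gamma(\mu,\nu)$, is immediate from the inclusion $\Gamma_0\subset\Gamma$ (no approximation needed); for the other, take $g\in\Gamma$, form $\tilde g=g-g(0)-g'(0)x$ as you do, and observe that your linear-taper truncations $g_n$ of $\tilde g$ satisfy $g_n(0)=0$, $g_n'(0)=0$, $|g_n''|\le 1$ a.e.\ and are bounded, hence lie in the bounded $\Gamma_0$; your uniform bound $|g_n(x)|\le x^2/2$ (or the paper's choice of dominating function $|g(x)|+|g'(0)||x|+|g(0)|$) plus dominated convergence then gives $\int g\,d(\mu-\nu)=\lim_n\int g_n\,d(\mu-\nu)\le W_{\Gamma_0}(\mu,\nu)$, and taking the supremum over $g\in\Gamma$ finishes. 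So the proposal is essentially correct and essentially the paper's argument, modulo fixing the reading of $\Gamma_0$ and re-aiming the truncation step accordingly.
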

\begin{proof}
First since $\Gamma_0\subset \Gamma$, we have 
$$W_\Gamma(\mu,\nu) = \sup_{g\in\Gamma}\{\int g d(\mu-\nu)\}\geq \sup_{g\in\Gamma_0}\{\int g d(\mu-\nu)\} = W_{\Gamma_0} (\mu,\nu).$$
Since $E_\mu X = E_\nu X$, we have for any $a,b\in\mathbb{R}$, $E_\mu(aX+b) = E_\nu(aX+b)$. Thus for any $g\in \Gamma$, we can take $f(x) = g(x)-(g'(0)x + g(0))$, and we will have 

$$\int g d(\mu-\nu) = \int f d(\mu-\nu). $$
Notice that $f$ satisfies $|f''|\leq 1$, $f(0)= 0$ and $f'(0)= 0$. Although $f\not\in C_b(\mathbb{R})$, we can take a sequence of functions $\{f_n\}\subset\Gamma_0$ such that $f_n\to f$ pointwise and $|f_n(x)|\leq |f(x)|$, similar as the construction in the proof of Lemma \ref{approx_by_Gamma_quadratic}. By dominated convergence theorem with dominating function as $|g(x)| + |g'(0)||x| + |g(0)|$, we have 
$$\lim_{n\to\infty} \int f_n d(\mu-\nu) = \int f d(\mu-\nu).$$
Then for any $g\in\Gamma$, 
\begin{align*}
W_{\Gamma_0}(\mu,\nu)  = \sup_{h\in\Gamma_0}\{\int h d(\mu-\nu)\}&\geq \lim_{n\to\infty} \int f_n d(\mu-\nu)\\ 
&= \int f d(\mu-\nu) = \int g d(\mu-\nu).    
\end{align*}
Thus 
$$W_{\Gamma_0}(\mu,\nu) \geq \sup_{g\in\Gamma}\{\int g d(\mu-\nu)\} = W_\Gamma(\mu,\nu).$$
Since we have inequalities for both directions, we have proven this lemma.
\end{proof}

Now we move on to compute the $W_\Gamma$ distance between two normal distributions on $\mathbb{R}$. Let's take $\mu = N(0,\sigma_1^2)$ and $\nu = N(0,\sigma_2^2)$. Since both $\mu$ and $\nu$ are symmetric about $0$, we have 
\begin{align*}
W_{\Gamma_0}(\mu,\nu)&=\sup_{g\in\Gamma_0}\left\{\int_{-\infty}^{\infty} g d(\mu-\nu)\right\}\\
&= 2 \sup_{g\in\Gamma_0}\left\{\int_{0}^{\infty} g d(\mu-\nu) \right\}
\end{align*}
We define new probability measures $\bar{\mu}$ and $\bar{\nu}$ on $[0,\infty)$ as the folded ones of $\mu$ and $\nu$ to the positive semi-line, i.e. $\forall 0<a<b$,
\begin{align}
\bar{\mu}([a,b])&=2\mu([a,b]),\\
\bar{\nu}([a,b])&=2\nu([a,b]).
\end{align}
Then 
\begin{align}\label{folded_normal}
W_{\Gamma_0}(\mu,\nu)= W_{\Gamma_0}(\bar{\mu},\bar{\nu}).
\end{align}
Define $c(x,y)=\frac{1}{2}|x^2-y^2|$ for $x,y>0$, and denote $\mathbb{R}_+ = [0,+\infty)$. Notice
for any $g\in\Gamma_0$, and any $z>0$, $|g'(z)| = |\int_0^z g''(w)dw| \leq \int_0^z |g''(w)|dw= \int_0^z 1 dw = z$. Thus for any $x>y>0$, 
$$|g(x)-g(y)| = |\int_y^x g'(z) dz|\leq \int_y^z |g'(z)|dz = \int_y^x zdz = \frac{1}{2}(x^2-y^2).$$
So by considering $\Gamma_0$ as the functions defined on $\mathbb{R}_+$,
$$\Gamma_0\subset \mathrm{Lip}\left(c,\mathbb{R}_+;C_b(\mathbb{R}_+)\right)\doteq \left\{g\in C_b\left(\mathbb{R}_+\right): g(x)-g(y)\leq c(x,y), \forall x,y>0\right\}.$$
Thus, 
$$W_{\Gamma_0}(\bar{\mu},\bar{\nu})\leq W_{\mathrm{Lip}\left(c,\mathbb{R}_+;C_b(\mathbb{R}_+)\right)}(\bar{\mu},\bar{\nu})\doteq \sup_{g\in \mathrm{Lip}\left(c,\mathbb{R}_+;C_b(\mathbb{R}_+)\right)}\left\{\int_0^\infty g d(\bar{\mu}-\bar{\nu}) \right\}.$$
We cite a useful lemma here. In the following lemma, $X$ is a Polish space, \\ $\mathrm{Lip}(c,X;C_b(X))\doteq\left\{f\in C_b(X): f(x)-f(y)\leq c(x,y)\quad\forall x,y\in S\right\}$ and $\Pi(\mu,\nu)$ denotes the collection of all probability measures on $X\times X$ with marginals being $\mu$ and $\nu$ on the first and second arguments respectively.

Using Theorem \ref{massdual} with $X =\mathbb{R}_+$ and 

$$Q=\left\{\min(\frac{1}{2}x^2,n), \max(-\frac{1}{2}x^2,-n):n\in\mathbb{N}\right\},$$ 
we have 

$$W_{\mathrm{Lip}\left(c,\mathbb{R}_+;C_b(\mathbb{R}_+)\right)}(\bar{\mu},\bar{\nu})=\inf_{\pi\in \Pi(\bar{\mu},\bar{\nu})}\left\{\int_{[0,+\infty)\times [0,+\infty)}c(x,y)\pi (dx,dy))\right\},$$
where $\Pi(\bar{\mu},\bar{\nu}) = \{\pi\in\mathcal{P}(\mathbb{R}\times \mathbb{R}): \pi_1 = \bar{\mu}, \pi_2 = \bar{\nu}\}$, here $\pi_1,\pi_2$ are the two marginals of $\pi$ respectively. Notice that the right hand side of the equation is the optimal transport cost between $\bar{\mu}$ and $\bar{\nu}$ with cost function $c(x,y)$.

We need a lemma for computing $W_{\mathrm{Lip}\left(c,\mathbb{R}_+;C_b(\mathbb{R}_+)\right)}(\bar{\mu},\bar{\nu})$.

\begin{lemma}\label{WGauss}
Let $\mu$, $\nu$ be the distributions $N(0,\sigma_1^2)$ and $N(0,\sigma_2^2)$, respectively. Without loss of generality, let's assume that $0<\sigma_1<\sigma_2$. Define $\bar{\mu}, \bar{\nu}$ as (\ref{folded_normal}). Then 

$$W_{\mathrm{Lip}\left(c,\mathbb{R}_+;C_b(\mathbb{R}_+)\right)}(\bar{\mu},\bar{\nu})=\frac{1}{2}(\sigma_2^2-\sigma_1^2).$$
\end{lemma}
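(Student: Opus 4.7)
The plan is to reduce this to a one-dimensional optimal transport problem with cost $|u-v|$ via the change of variables $T(x) = x^2/2$, for which Theorem \ref{1d_wass_computation} gives a clean closed-form expression in terms of CDFs.

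First, I would apply Theorem \ref{massdual} (using the representation from the remark with $Q = \{\min(\tfrac{1}{2}x^2, n), \max(-\tfrac{1}{2}x^2,-n) : n \in \mathbb{N}\}$, appropriately restricted to $\mathbb{R}_+$) so that
\[
W_{\mathrm{Lip}(c,\mathbb{R}_+;C_b(\mathbb{R}_+))}(\bar\mu,\bar\nu) = \inf_{\pi \in \Pi(\bar\mu,\bar\nu)} \int_{\mathbb{R}_+ \times \mathbb{R}_+} c(x,y)\, \pi(dx,dy).
\]
Since $T(x) = x^2/2$ is a continuous bijection of $\mathbb{R}_+$ onto itself with $c(x,y) = |T(x) - T(y)|$, the pushforward map $\pi \mapsto (T \otimes T)_* \pi$ is a bijection between $\Pi(\bar\mu,\bar\nu)$ and $\Pi(\tilde\mu,\tilde\nu)$, where $\tilde\mu \doteq T_*\bar\mu$ and $\tilde\nu \doteq T_* \bar\nu$. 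The integrals match, so
\[
W_{\mathrm{Lip}(c,\mathbb{R}_+;C_b(\mathbb{R}_+))}(\bar\mu,\bar\nu) = \inf_{\pi' \in \Pi(\tilde\mu,\tilde\nu)}\int_{\mathbb{R}_+ \times \mathbb{R}_+} |u-v|\, \pi'(du,dv).
\]
By Theorem \ref{1d_wass_computation} (extending $\tilde\mu,\tilde\nu$ trivially to all of $\mathbb{R}$, since both are supported on $\mathbb{R}_+$), this infimum equals $\int_{-\infty}^\infty |F_{\tilde\mu}(u) - F_{\tilde\nu}(u)|\, du$.

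Next I would compute this integral explicitly. Since $0 < \sigma_1 < \sigma_2$, for every $x \geq 0$ the folded-normal CDFs satisfy $F_{\bar\mu}(x) = 2\Phi(x/\sigma_1) - 1 \geq 2\Phi(x/\sigma_2) - 1 = F_{\bar\nu}(x)$, and pushing forward under the increasing map $T$ preserves this inequality: $F_{\tilde\mu}(u) = F_{\bar\mu}(\sqrt{2u}) \geq F_{\tilde\nu}(u)$ for $u \geq 0$. Hence the absolute value can be dropped and
\[
\int_0^\infty \bigl(F_{\tilde\mu}(u) - F_{\tilde\nu}(u)\bigr)\, du = \int_0^\infty \bigl(1 - F_{\tilde\nu}(u)\bigr)\, du - \int_0^\infty \bigl(1 - F_{\tilde\mu}(u)\bigr)\, du = E[\tilde\nu] - E[\tilde\mu],
\]
where I have used that both means are finite and the standard identity $E[Z] = \int_0^\infty (1-F_Z)\, du$ for non-negative random variables.

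Finally, by the definition of the pushforward,
\[
E[\tilde\mu] = \int_{\mathbb{R}_+} T(x)\, \bar\mu(dx) = \tfrac{1}{2}\int_{\mathbb{R}_+} x^2 \, \bar\mu(dx) = \tfrac{1}{2}\int_{-\infty}^\infty x^2 \, \mu(dx) = \tfrac{\sigma_1^2}{2},
\]
by the definition of the folding, and similarly $E[\tilde\nu] = \sigma_2^2/2$. Subtracting gives the claimed value $(\sigma_2^2 - \sigma_1^2)/2$. The only step requiring any care is verifying that Condition \ref{con:crep} genuinely applies to the non-negative, lower semicontinuous cost $c(x,y) = \tfrac{1}{2}|x^2 - y^2|$ on $\mathbb{R}_+$ with a suitable $Q \subset C_b(\mathbb{R}_+)$; everything else is bookkeeping via the change of variables $T(x) = x^2/2$ and the one-dimensional CDF formula.
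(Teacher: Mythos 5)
Your proof is correct, but it follows a genuinely different route from the paper's. The paper proves the lemma by exhibiting (near-)optimizers on both sides of the Kantorovich duality: for the upper bound it takes the explicit monotone coupling $\pi^{*}\left([0,x]\times[0,y]\right)=\min\left(\bar{\mu}([0,x]),\bar{\nu}([0,y])\right)$, observes that $x<y$ on $\mathrm{supp}(\pi^{*})$ so that $c(x,y)=\tfrac{1}{2}(y^{2}-x^{2})$ integrates to $\tfrac{1}{2}(\sigma_2^2-\sigma_1^2)$, and for the lower bound it plugs the truncated test functions $g_n(x)=\max(-\tfrac{1}{2}x^{2},-n)$ into the dual (sup) formulation and passes to the limit by dominated convergence. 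You instead transport the whole problem through the change of variables $T(x)=x^{2}/2$, identify the cost with $|u-v|$, and then invoke the one-dimensional CDF formula of Theorem \ref{1d_wass_computation} together with the stochastic dominance $F_{\tilde\mu}\geq F_{\tilde\nu}$ and the tail identity for means, so the answer appears as $E[\tilde\nu]-E[\tilde\mu]$. Your route is conceptually illuminating (it explains the answer as a difference of second moments and extends immediately to any pair of measures on $\mathbb{R}_+$ that are stochastically ordered after the quadratic change of variables), at the cost of invoking duality a second time, for the cost $|u-v|$ on $\mathbb{R}$, to match Theorem \ref{1d_wass_computation} (stated in dual form) with the coupling infimum — this is covered by Theorem \ref{massdual} and the remark following Condition \ref{con:crep}, but should be said explicitly; alternatively you could stay entirely on the dual side by noting $g\mapsto g\circ T^{-1}$ maps $\mathrm{Lip}(c,\mathbb{R}_+;C_b(\mathbb{R}_+))$ onto the bounded $1$-Lipschitz functions on $\mathbb{R}_+$. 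The paper's more hands-on argument has the separate virtue that the explicit optimizer $g^{*}(x)=-\tfrac{1}{2}x^{2}$ and the monotone coupling are reused later (e.g., in Proposition \ref{W_Gamma_two_normal} and Theorem \ref{thm:Gamma_div_two_normal}).
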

\begin{proof}
Notice now we have the dual expression for $W_{\mathrm{Lip}\left(c,\mathbb{R}_+;C_b(\mathbb{R}_+)\right)}(\bar{\mu},\bar{\nu})$ as 
\begin{align*}
W_{\mathrm{Lip}\left(c,\mathbb{R}_+;C_b(\mathbb{R}_+)\right)}(\bar{\mu},\bar{\nu}) &= \sup_{g\in \mathrm{Lip}\left(c,\mathbb{R}_+;C_b(\mathbb{R}_+)\right)}\left\{\int_0^\infty g d(\bar{\mu}-\bar{\nu}) \right\}\\
&= \inf_{\pi\in \Pi(\bar{\mu},\bar{\nu})}\left\{\int_{[0,+\infty)\times [0,+\infty)}c(x,y)\pi (dx,dy))\right\}.
\end{align*}
We take a specific coupling of $\bar{\mu}$ and $\bar{\nu}$, $\pi^*$ as the following: for $x,y\geq 0$, 
$$\pi^*\left([0,x]\times [0,y]\right) = \min\left(\bar{\mu}([0,x]),\bar{\nu}([0,y])\right).$$
Then it can be easily checked that 
$$(x,y) \in \mathrm{supp}(\pi^*)\Longleftrightarrow \bar{\mu}([0,x])=\bar{\nu}([0,y]).$$
Since $\bar{\mu}$ and $\bar{\nu}$ are the folded version of two normal distribution where the first one has a smaller variance, we can thus conclude for $(x,y)\in\mathrm{supp}(\pi^*)$, $x<y$. Thus for $(x,y)\in\mathrm{supp}(\pi^*)$, $c(x,y) = \frac{1}{2}|x^2-y^2| = \frac{1}{2}(y^2-x^2)$. So 

\begin{align*}
W_{\mathrm{Lip}\left(c,\mathbb{R}_+;C_b(\mathbb{R}_+)\right)}(\bar{\mu},\bar{\nu})&=\inf_{\pi\in \Pi(\bar{\mu},\bar{\nu})}\left\{\int_{[0,+\infty)\times [0,+\infty)}c(x,y)\pi (dx,dy))\right\}\\
&\leq \int_{[0,+\infty)\times [0,+\infty)}c(x,y)\pi^* (dx,dy))\\
& = \int_{[0,+\infty)\times [0,+\infty)}\frac{1}{2}(y^2-x^2)\pi^* (dx,dy))\\
& = \frac{1}{2}\left(\int_{[0,+\infty)} y^2 \bar{\nu}(dy) - \int_{[0,+\infty)} x^2 \bar{\mu}(dx)\right)\\
& = \frac{1}{2}(\sigma_2^2 - \sigma_1^2).
\end{align*}
On the other hand, we can take $g_n(x) = \max (-\frac{1}{2}x^2, -n)\in \mathrm{Lip}\left(c,\mathbb{R}_+;C_b(\mathbb{R}_+)\right)$, which converges to $g^*(x) = -\frac{1}{2}x^2$. Since both $\bar{\mu}$ and $\bar{\nu}$ have finite second moment, by the  dominated convergence theorem,

$$\lim_{n\to\infty}\int_{[0,+\infty)}g_n d(\bar{\mu}-\bar{\nu}) = \int_{[0,+\infty)} g^* d(\bar{\mu}-\bar{\nu}).$$
Thus,
\begin{align*}
W_{\mathrm{Lip}\left(c,\mathbb{R}_+;C_b(\mathbb{R}_+)\right)}(\bar{\mu},\bar{\nu}) &= \sup_{g\in \mathrm{Lip}\left(c,\mathbb{R}_+;C_b(\mathbb{R}_+)\right)}\left\{\int_0^\infty g d(\bar{\mu}-\bar{\nu}) \right\}\\
&\geq \lim_{n\to\infty}\int_{[0,+\infty)}g_n d(\bar{\mu}-\bar{\nu}) \\
& = \int_{[0,+\infty)} g^* d(\bar{\mu}-\bar{\nu})\\
& = \int_{[0,+\infty)} -\frac{1}{2}x^2  \bar{\mu} (dx) + \int_{[0,+\infty)} \frac{1}{2}x^2 \bar{\nu}(dx)\\
& = \frac{1}{2}(\sigma_2^2 - \sigma_1^2).
\end{align*}
Combine both directions, we have 
$$W_{\mathrm{Lip}\left(c,\mathbb{R}_+;C_b(\mathbb{R}_+)\right)}(\bar{\mu},\bar{\nu}) = \frac{1}{2}(\sigma_2^2 - \sigma_1^2).$$
\end{proof}
Lastly, let's go back to what we start with $W_\Gamma(\mu,\nu)$. Recall $$\Gamma = \left\{g\in C_b(\mathbb{R}): g\in C^1(\mathbb{R})\ and\ g'\in\mathrm{Lip(1)}\right\}$$ and $$\Gamma_0=\left\{ g\in C_b(\mathbb{R}): |g''|\leq 1, g(0)=0, g'(0)=0 \right\}.$$
\begin{proposition}\label{W_Gamma_two_normal}
Under the same condition as in Lemma \ref{WGauss}, 
$$W_\Gamma(\mu,\nu) = \frac{1}{2}(\sigma_2^2-\sigma_1^2).$$
\end{proposition}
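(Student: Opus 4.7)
The plan is to combine the preceding lemmas in two directions to sandwich $W_\Gamma(\mu,\nu)$ at $\frac{1}{2}(\sigma_2^2-\sigma_1^2)$. Since $\mu$ and $\nu$ are both centered at $0$, the common-mean hypothesis of Lemma \ref{Gamma0} is satisfied and I would first invoke that lemma to replace $\Gamma$ with the simpler class $\Gamma_0=\{g\in C_b(\mathbb{R}):|g''|\le 1,\ g(0)=g'(0)=0\}$, reducing the problem to computing $W_{\Gamma_0}(\mu,\nu)$.

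For the upper bound, I would fold both measures to $\mathbb{R}_+$ and use the identity $W_{\Gamma_0}(\mu,\nu)=W_{\Gamma_0}(\bar\mu,\bar\nu)$ noted in \eqref{folded_normal}. Every $g\in\Gamma_0$ satisfies $|g(x)-g(y)|\le \tfrac{1}{2}|x^2-y^2|=c(x,y)$ for $x,y\ge 0$, as already observed in the text, so $\Gamma_0\subset \mathrm{Lip}(c,\mathbb{R}_+;C_b(\mathbb{R}_+))$ after restriction to $\mathbb{R}_+$. Hence $W_{\Gamma_0}(\bar\mu,\bar\nu)\le W_{\mathrm{Lip}(c,\mathbb{R}_+;C_b(\mathbb{R}_+))}(\bar\mu,\bar\nu)$, and Lemma \ref{WGauss} evaluates the right-hand side to $\tfrac{1}{2}(\sigma_2^2-\sigma_1^2)$. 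This immediately gives $W_\Gamma(\mu,\nu)\le \tfrac{1}{2}(\sigma_2^2-\sigma_1^2)$.

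For the matching lower bound the candidate test function is $h(x)=\tfrac{1}{2}x^2$, since $\int h\,d(\nu-\mu)=\tfrac{1}{2}(\sigma_2^2-\sigma_1^2)$; however $h$ itself is not in $\Gamma$ because it is unbounded. I would apply Lemma \ref{approx_by_Gamma_quadratic} with $b=\tfrac{1}{2}$, $c=d=0$ to obtain a sequence $g_n\in\Gamma$ with $g_n(x)\to h(x)$ pointwise and $|g_n(x)|\le\max(|h(x)|,M)$ for some fixed $M$. Because $\mu$ and $\nu$ have finite second moments, $\max(|h|,M)$ is integrable against both, and the dominated convergence theorem yields $\int g_n\,d(\nu-\mu)\to \tfrac{1}{2}(\sigma_2^2-\sigma_1^2)$. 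Therefore $W_\Gamma(\mu,\nu)\ge \tfrac{1}{2}(\sigma_2^2-\sigma_1^2)$, completing the proof.

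The work is essentially bookkeeping: no step is genuinely hard once the preceding lemmas are in hand. The one place where care is required is the application of dominated convergence for the lower bound, since one must verify that the approximation $g_n$ stays dominated by an integrable envelope uniformly in $n$; the quantitative bound $|g_n|\le\max(|h|,M)$ provided by Lemma \ref{approx_by_Gamma_quadratic} is exactly what makes this step routine.
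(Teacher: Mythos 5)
Your proof is correct and follows essentially the same route as the paper: the upper bound via Lemma \ref{Gamma0}, the folding identity \eqref{folded_normal}, the inclusion $\Gamma_0\subset\mathrm{Lip}(c,\mathbb{R}_+;C_b(\mathbb{R}_+))$ and Lemma \ref{WGauss}, and the lower bound by approximating a quadratic within the admissible class and passing to the limit by dominated convergence using finite second moments. The only cosmetic differences are that you invoke Lemma \ref{approx_by_Gamma_quadratic} directly on $\mathbb{R}$ (with the sign handled by the symmetry $g\in\Gamma\Rightarrow-g\in\Gamma$, since $W_\Gamma(\mu,\nu)=\sup_g\int g\,d(\mu-\nu)$ calls for $-\tfrac12 x^2$ rather than $+\tfrac12 x^2$) instead of the paper's construction of $h_n\in\Gamma_0$ on the folded half-line, which changes nothing of substance.
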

\begin{proof}
First, notice that when considered as functions over $\mathbb{R}_+$, 
$$\Gamma_0\subset \mathrm{Lip}\left(c,\mathbb{R}_+;C_b(\mathbb{R}_+)\right).$$
Thus,
$$W_{\Gamma_0}(\bar{\mu},\bar{\nu})\leq W_{\mathrm{Lip}\left(c,\mathbb{R}_+;C_b(\mathbb{R}_+)\right)}(\bar{\mu},\bar{\nu}) = \frac{1}{2}(\sigma_2^2 - \sigma_1^2).$$
On the other hand, one can construct $h_n\in\Gamma_0$ similar to the way in the proof of Lemma \ref{diff_mean_infinity}, such that $h_n(x)\to -\frac{1}{2}x^2$ pointwise for $x\geq 0$. Then by dominated convergence theorem and definition of $W_{\Gamma_0}$,
\begin{align*}
W_{\Gamma_0}(\bar{\mu},\bar{\nu}) & = \sup_{h\in\Gamma_0}\{\int_{[0,\infty)}h d(\bar{\mu}-\bar{\nu})\}\\
& \geq \lim_{n\to\infty} \int_{[0,\infty)}h_n d(\bar{\mu}-\bar{\nu})\\
& = \int_{[0,\infty)} -\frac{1}{2}x^2 (\bar{\mu}-\bar{\nu})(dx)\\
& = \frac{1}{2}(\sigma_2^2-\sigma_1^2).
\end{align*}
So we conclude $W_{\Gamma_0}(\bar{\mu},\bar{\nu}) = \frac{1}{2}(\sigma_2^2-\sigma_1^2)$. Lastly, by Lemma \ref{Gamma0} and (\ref{folded_normal}), we have 
$$W_\Gamma(\mu,\nu) = W_{\Gamma_0}(\mu,\nu) = W_{\Gamma_0}(\bar{\mu},\bar{\nu}) = \frac{1}{2}(\sigma_2^2-\sigma_1^2).$$
\end{proof}

\subsubsection{$\Gamma$ divergence for two Gaussians}
Now we can turn to our main target $G_\Gamma(\mu\lVert\nu)$, where $\mu$ and $\nu$ are normal distributions here. Let's assume $\mu= N(b_1,\sigma_1^2)$ and $\nu = N(b_2,\sigma_2^2)$. Recall the definition of $G_\Gamma(\mu\lVert\nu)$ from Definition \ref{def:defofV}
\begin{align}
G_{\Gamma}(\mu\lVert\nu)\doteq\sup_{g\in\Gamma}\left\{  \int_{\mathbb{R}} gd\mu-\log\int_{\mathbb{R}}
e^{g}d\nu\right\},
\end{align}
and the alternative representation from Theorem \ref{thm:main},
\begin{align}
G_\Gamma(\mu\lVert\nu) = \inf_{\gamma\in\mathcal{P}(\mathbb{R})}\left\{R(\gamma\lVert\nu) + W_\Gamma(\mu,\gamma)\right\}.
\end{align}
Since the choice of $\Gamma$ in this section is different from what is used in Section \ref{Wass}, we can't directly use theorems from Section \ref{Wass}. We will establish the following theorem to deal with this specific case. The idea from Theorem \ref{optimizer} and Theorem \ref{verif} carries over here.

\begin{theorem}\label{thm:Gamma_div_two_normal}
Let $\mu,\nu$ be two normal distributions with distribution $N(b_1,\sigma_1^2)$ and $N(b_2,\sigma_2^2)$ on $\mathbb{R}$ respectively. $\Gamma = \{g\in C_b(\mathbb{R}): g\in C^1(\mathbb{R})\ and\ g'\in\mathrm{Lip(1)}\}$. Then the following conclusions hold:\\
1) There exists a unique $\gamma^*\in\mathcal{P}(\mathbb{R})$ such that it is the optimizer for (\ref{variational_expression}).\\
2) One can get the exact $\gamma^*$ in all possible situations:\\
i) When $|\frac{1}{\sigma_1^2} - \frac{1}{\sigma_2^2}|\leq 1$, $\gamma^*=N(b_2,\sigma_1^2)$.\\
ii) When $\frac{1}{\sigma_1^2} - \frac{1}{\sigma_2^2}>1$, $\gamma^* = N(b_2,\frac{\sigma_2^2}{1+\sigma_2^2})$;\\
iii) When $\frac{1}{\sigma_1^2} - \frac{1}{\sigma_2^2} < -1$, $\gamma^* = N(b_2, \frac{\sigma_2^2}{1-\sigma_2^2})$.\\
\end{theorem}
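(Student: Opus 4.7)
The plan is to prove the theorem in three stages, paralleling the treatment of the Lipschitz case in Theorem \ref{optimizer} and Theorem \ref{verif}: first, establish abstract existence and uniqueness of an optimizer in the inf-convolution representation; second, restrict to a Gaussian ansatz and carry out a one-dimensional minimization; third, verify that the Gaussian candidate is globally optimal by exhibiting a saturating potential $g^{*}$. Throughout, Lemma \ref{diff_mean_infinity} forces $b_{1}=b_{2}$ whenever $G_{\Gamma}(\mu\lVert\nu)<\infty$, so I would work under the implicit assumption $b=b_{1}=b_{2}$; taking $\gamma=\mu$ in the inf-convolution then gives $G_{\Gamma}(\mu\lVert\nu)\leq R(\mu\lVert\nu)<\infty$. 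For existence, a minimizing sequence $\gamma_{n}$ satisfies $R(\gamma_{n}\lVert\nu)\leq G_{\Gamma}(\mu\lVert\nu)+1$, hence is tight by \cite[Lemma 1.4.3(c)]{dupell4}; a weakly convergent subsequence together with lower semicontinuity of $R(\cdot\lVert\nu)$ and of $W_{\Gamma}(\mu,\cdot)$ produces an optimizer $\gamma^{*}$. Uniqueness follows from strict convexity of $R(\cdot\lVert\nu)$ combined with convexity of $W_{\Gamma}(\mu,\cdot)$, exactly as in Theorem \ref{optimizer}.

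For the second stage I would restrict to the Gaussian ansatz $\gamma=N(b,s)$ and minimize the one-parameter function
\[
F(s)\doteq R(N(b,s)\lVert N(b,\sigma_{2}^{2}))+\tfrac{1}{2}|s-\sigma_{1}^{2}|=\tfrac{1}{2}\log\frac{\sigma_{2}^{2}}{s}+\frac{s-\sigma_{2}^{2}}{2\sigma_{2}^{2}}+\tfrac{1}{2}|s-\sigma_{1}^{2}|,
\]
using the closed-form Gaussian relative entropy and the value of $W_{\Gamma}$ between centered Gaussians from Proposition \ref{W_Gamma_two_normal}. Since $F$ is strictly convex and piecewise smooth with a single kink at $s=\sigma_{1}^{2}$, setting $F'=0$ on the branch $s>\sigma_{1}^{2}$ yields the interior critical point $s=\sigma_{2}^{2}/(1+\sigma_{2}^{2})$, valid precisely when $\frac{1}{\sigma_{1}^{2}}-\frac{1}{\sigma_{2}^{2}}>1$; on the branch $s<\sigma_{1}^{2}$ it yields $s=\sigma_{2}^{2}/(1-\sigma_{2}^{2})$, valid precisely when $\frac{1}{\sigma_{1}^{2}}-\frac{1}{\sigma_{2}^{2}}<-1$. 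In the remaining range the minimum is pinned at the kink $s=\sigma_{1}^{2}$. These three situations produce the candidates in i)--iii).

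The third stage is where the real work lies, and showing the Gaussian ansatz is globally optimal (not merely the best among Gaussians) is the main difficulty. Following the verification template of Theorem \ref{verif}, I would exhibit a potential
\[
g^{*}(x)=-\tfrac{1}{2}\Bigl(\tfrac{1}{s}-\tfrac{1}{\sigma_{2}^{2}}\Bigr)(x-b)^{2},
\]
forced (up to additive constant) by the Donsker--Varadhan saturation $e^{g^{*}}/\!\int e^{g^{*}}\,d\nu=d\gamma^{*}/d\nu$. Its second derivative has magnitude $|\frac{1}{s}-\frac{1}{\sigma_{2}^{2}}|$, which equals $|\frac{1}{\sigma_{1}^{2}}-\frac{1}{\sigma_{2}^{2}}|\leq 1$ in case i) and exactly $1$ in cases ii) and iii), so $g^{*}$ lies in the natural $C^{2}$ extension of $\Gamma$. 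A direct moment computation yields $\int g^{*}\,d(\mu-\gamma^{*})=-\tfrac{1}{2}(\tfrac{1}{s}-\tfrac{1}{\sigma_{2}^{2}})(\sigma_{1}^{2}-s)$, and a case-by-case inspection shows this coincides with $\tfrac{1}{2}|\sigma_{1}^{2}-s|=W_{\Gamma}(\mu,\gamma^{*})$, so the optimal-transport saturation condition $W_{\Gamma}(\mu,\gamma^{*})=\int g^{*}\,d(\mu-\gamma^{*})$ also holds.

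The hard part is that $g^{*}$ is an unbounded quadratic and therefore not literally in $\Gamma$. The remedy is Lemma \ref{approx_by_Gamma_quadratic}: since the leading coefficient of $g^{*}$ has absolute value at most $1/2$, there is a sequence $g_{n}\in\Gamma$ with $g_{n}\to g^{*}$ pointwise and $|g_{n}|\leq\max(|g^{*}|,M)$ for some constant $M$. Combining the explicit calculation $\int e^{g^{*}}\,d\nu=\sqrt{s/\sigma_{2}^{2}}<\infty$ with finiteness of the second moments of $\mu$ and $\nu$ lets dominated convergence handle $\int g_{n}\,d\mu\to\int g^{*}\,d\mu$ and $\int e^{g_{n}}\,d\nu\to\int e^{g^{*}}\,d\nu$, giving
\[
G_{\Gamma}(\mu\lVert\nu)\geq\lim_{n\to\infty}\Bigl(\int g_{n}\,d\mu-\log\int e^{g_{n}}\,d\nu\Bigr)=R(\gamma^{*}\lVert\nu)+W_{\Gamma}(\mu,\gamma^{*}).
\]
Together with the trivial reverse inequality from the inf-convolution this forces equality, and uniqueness from the first stage identifies $\gamma^{*}$ as the optimizer described in i)--iii).
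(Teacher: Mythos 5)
Your overall architecture matches the paper's proof: existence and uniqueness of $\gamma^*$ via a near-minimizing sequence, tightness from the relative entropy bound, lower semicontinuity and strict convexity; then global optimality of the Gaussian candidate by exhibiting the quadratic potential $\log(d\gamma^*/d\nu)$, approximating it by elements of $\Gamma$ through Lemma \ref{approx_by_Gamma_quadratic}, passing to the limit, and matching the resulting lower bound against the upper bound $R(\gamma^*\lVert\nu)+W_\Gamma(\mu,\gamma^*)$ from the inf-convolution. Your stage 2 (the explicit one-parameter convex minimization of $F(s)$ over the Gaussian family) is a pleasant way to generate the three candidates that the paper simply writes down, but it is not a substantive difference.

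The genuine gap is your handling of the means. Lemma \ref{diff_mean_infinity} does not imply that $G_\Gamma(\mu\lVert\nu)<\infty$ forces $b_1=b_2$: it only says that $W_\Gamma$ between measures with different first moments is infinite, which constrains the transport leg, not the divergence. Since any two non-degenerate Gaussians satisfy $\mu\ll\nu$, taking $\gamma=\mu$ in the inf-convolution gives $G_\Gamma(\mu\lVert\nu)\le R(\mu\lVert\nu)<\infty$ for arbitrary $b_1,b_2$; the mean discrepancy is absorbed by the relative entropy leg and contributes the $(b_1-b_2)^2/(2\sigma_2^2)$ term visible in the remark after the theorem. What the lemma actually forces is that any $\gamma$ with $W_\Gamma(\mu,\gamma)<\infty$, in particular the optimizer, has mean $b_1$, the mean of $\mu$ (the statement's $N(b_2,\cdot)$ versus the proof's $N(b_1,\cdot)$ is an internal inconsistency of the paper; the logic gives mean $b_1$). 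As written, your argument covers only the sub-case $b_1=b_2$ and rests on a false finiteness claim. The repair is routine and is exactly what the paper does: take the ansatz $\gamma=N(b_1,s)$, note that the extra mean term in $R(\gamma\lVert\nu)$ is constant in $s$ so your minimization of $F$ is unchanged, and observe that the saturating potential then has an additional linear term, still covered by Lemma \ref{approx_by_Gamma_quadratic} (leading coefficient of magnitude at most $1/2$) and cancelling in $\int g^*\,d(\mu-\gamma^*)$ because $\mu$ and $\gamma^*$ share the mean $b_1$. One further caution: the two-sided bound $|g_n|\le\max(|g^*|,M)$ only yields $e^{g_n}\le e^{\max(|g^*|,M)}$, which need not be $\nu$-integrable when $g^*$ is concave with large curvature; for the convergence of $\log\int e^{g_n}\,d\nu$ you need a one-sided domination such as $g_n\le\max(g^*,M)$, which the explicit construction behind Lemma \ref{approx_by_Gamma_quadratic} provides and which the paper uses in the form $e^{g_m}\le\max(d\mu/d\nu,1)$.
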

\begin{proof}
1) First, by picking $\gamma = \mu$ in (\ref{variational_expression}), we have the inequality
$$G_\Gamma(\mu\lVert\nu) \leq R(\mu\lVert\nu) + W_\Gamma(\mu,\mu) = R(\mu\lVert\nu) < \infty.$$
For a sequence of near optimizers $\{\gamma_n\}_{n\geq 1}$, where
$$R(\gamma_n\lVert\nu) + W_\Gamma(\mu,\gamma_n)\leq G_\Gamma(\mu\lVert\nu) + \frac{1}{n},$$
we have 
$$R(\gamma_n\lVert\nu) \leq R(\gamma_n\lVert\nu) + W_\Gamma(\mu,\gamma_n)\leq G_\Gamma(\mu\lVert\nu) + \frac{1}{n} \leq R(\mu\lVert\nu) + 1.$$
Then by \cite[Lemma 1.4.3(c)]{dupell4} $\{\gamma_{n}\}_{n\geq1}$ is precompact in the weak topology, and thus
has a convergent subsequence $\left\{\gamma_{n_{k}}\right\}_{k\geq 1}$. Denote $\gamma^{\ast}\doteq
\lim_{k\rightarrow\infty}\gamma_{n_{k}}$. The following is analogous to the proof of Theorem \ref{optimizer}, where one can conclude $\gamma^*$ is the unique optimizer of (\ref{variational_expression}).\\
2) Now we try to get the exact $\gamma^*$ in different situations.\\
i) When $|\frac{1}{\sigma_1^2}-\frac{1}{\sigma_2^2}|\leq 1$, 
\begin{align*}
\log\left(\frac{d\mu}{d\nu}(x)\right) &= \log\left(\frac{\frac{1}{\sqrt{2\pi\sigma_1^2}}\exp(-\frac{(x-b_1)^2}{2\sigma_1^2})}{\frac{1}{\sqrt{2\pi\sigma_2^2}}\exp(-\frac{(x-b_2)^2}{2\sigma_2^2})}\right)\\
&=-\frac{1}{2}(\frac{1}{\sigma_1^2} - \frac{1}{\sigma_2^2})x^2+(\frac{b_1}{\sigma_1^2}-\frac{b_2}{\sigma_2^2})x - (\frac{b_1^2}{2\sigma_1^2} - \frac{b_2^2}{2\sigma_2^2})+\log\left(\frac{\sigma_2}{\sigma_1}\right)
\end{align*}
is a quadratic function with second derivative being between $[-1,1]$. Then by Lemma \ref{approx_by_Gamma_quadratic}, one can find a series of functions $g_m\in \Gamma$ and a constant $M>0$ depending on $b_1,b_2,\sigma_1,\sigma_2$, such that $g_m\to \log\left(\frac{d\mu}{d\nu}\right)$ pointwise, and $|g_m(x)|\leq \max(|\log\left(\frac{d\mu}{d\nu}(x)\right)|,M) \leq  \frac{1}{2}|\frac{1}{\sigma_1^2}-\frac{1}{\sigma_2^2}|x^2 + |\frac{b_1}{\sigma_1^2}-\frac{b_2}{\sigma_2^2}||x| + |\frac{b_1^2}{2\sigma_1^2} - \frac{b_2^2}{2\sigma_2^2}|+M$ for all $x\in\mathbb{R}$. Since $ \frac{1}{2}|\frac{1}{\sigma_1^2}-\frac{1}{\sigma_2^2}|x^2 + |\frac{b_1}{\sigma_1^2}-\frac{b_2}{\sigma_2^2}||x| + |\frac{b_1^2}{2\sigma_1^2} - \frac{b_2^2}{2\sigma_2^2}|+M$ is integrable by $\mu$, and $e^{g_m(x)}\leq \max(e^{\log(\frac{d\mu}{d\nu}(x)},1)=\max(\frac{d\mu}{d\nu}(x),1)$ which is integrable by $\nu$, by dominated convergence theorem we have
$$\lim_{m\to\infty}\left\{\int_{\mathbb{R}}g_m d\mu - \log\int_{\mathbb{R}}e^{g_m}d\nu\right\} = \int_{\mathbb{R}}\log\left(\frac{d\mu}{d\nu}\right)d\mu - \log\int_{\mathbb{R}}e^{\log\left(\frac{d\mu}{d\nu}\right)}d\nu.$$
Thus,
\begin{align*}
G_\Gamma(\mu\lVert\nu) &= \sup_{g\in\Gamma}\left\{ \int_{\mathbb{R}} gd\mu-\log\int_{\mathbb{R}}
e^{g}d\nu\right\}\\
& \geq \limsup_{m\to\infty}\left\{\int_{\mathbb{R}} g_m d\mu - \log \int_{\mathbb{R}} e^{g_m} d\nu \right\}\\
& = \int_{\mathbb{R}}\log\left(\frac{d\mu}{d\nu}\right)d\mu - \log\int_{\mathbb{R}}e^{\log\left(\frac{d\mu}{d\nu}\right)}d\nu\\
&= \int_{\mathbb{R}}\log\left(\frac{d\mu}{d\nu}\right)d\mu - \log \int_{\mathbb{R}}\frac{d\mu}{d\nu}d\nu\\
& = R(\mu\lVert\nu).
\end{align*}
On the other hand, we know from the beginning of this proof that $G_\Gamma(\mu\lVert\nu)\leq R(\mu\lVert\nu)$. Thus $G_\Gamma(\mu\lVert\nu) = R(\mu\lVert\nu)$, which is expected, since $\log\left(\frac{d\mu}{d\nu}(x)\right)$ can be approximated pointwise by functions in $\Gamma$. By taking $\gamma=\mu$, we can achieve $R(\mu\lVert\nu)$ in (\ref{variational_expression}). Since the uniqueness of the optimizer $\gamma^*$ holds, $\gamma^* = \mu$.\\
ii) When $\frac{1}{\sigma_1^2} - \frac{1}{\sigma_2^2} > 1$. Consider the potential intermediate measure $\bar{\gamma} = N(b_1,\frac{\sigma_2^2}{1+\sigma_2^2})$. Notice that  $\frac{1}{\sigma_1^2} - \frac{1}{\sigma_2^2} > 1$ if and only if $\sigma_1^2 < \frac{\sigma_2^2}{1+\sigma_2^2}$. By Proposition \ref{W_Gamma_two_normal}, we have 
\begin{align}\label{W_Gamma_ii}
W_\Gamma(\mu,\bar{\gamma}) = \frac{1}{2}(\frac{\sigma_2^2}{1+\sigma_2^2}-\sigma_1^2).
\end{align}
By (\ref{variational_expression}), 
\begin{align*}
G_\Gamma(\mu\lVert\nu) \leq R(\bar{\gamma}\lVert\nu) + W_\Gamma(\mu,\bar{\gamma}).\label{ii_upper_bound}
\end{align*}
On the other hand, taking $\bar{g}= \log\left(\frac{d\bar{\gamma}}{d\nu}\right)$, although $\bar{g}$ is not in $\Gamma$ directly, our choice of $\bar{\gamma}$ implies that the coefficient of $x^2$ in $\bar{g}(x)$ is $-\frac{1}{2}\left(\frac{1+\sigma_2^2}{\sigma_2^2}-\frac{1}{\sigma_2^2}\right)=-\frac{1}{2}$, and therefore
makes sure that $|\bar{g}''|=1$, so like (i), we can find a sequence of functions $\{g_m\}_{m\geq 1}$ in $\Gamma$ which converges to $\bar{g}$. By dominated convergence theorem similar to the one used in (i), one can get
\begin{align}
G_\Gamma(\mu\lVert\nu) &= \sup_{g\in\Gamma}\left\{ \int_{\mathbb{R}} gd\mu-\log\int_{\mathbb{R}}
e^{g}d\nu\right\}\nonumber\\
& \geq \limsup_{m\to\infty}\left\{\int_{\mathbb{R}} g_m d\mu - \log \int_{\mathbb{R}} e^{g_m} d\nu \right\}\nonumber\\
& = \int_\mathbb{R} \log\left(\frac{d\bar{\gamma}}{d\nu}\right)d\mu - \log\int_{\mathbb{R}}e^{\log\left(\frac{d\bar{\gamma}}{d\nu}\right)}d\nu \nonumber\\
&= \int_\mathbb{R} \log\left(\frac{d\bar{\gamma}}{d\nu}\right)d\mu - 0 \\
& = \left(\int_\mathbb{R} \log\left(\frac{d\bar{\gamma}}{d\nu}\right)d\mu - \int_\mathbb{R} \log\left(\frac{d\bar{\gamma}}{d\nu}\right)d\bar{\gamma}\right) + \int_\mathbb{R}\log\left(\frac{d\bar{\gamma}}{d\nu}\right)d\bar{\gamma} \label{ii_lower_bound}. 
\end{align}
Notice that $\log\left(\frac{d\bar{\gamma}}{d\nu}\right)= -\frac{1}{2}x^2 + c_1 x + c_0$ for some constant $c_1,c_0\in\mathbb{R}$, and $\mu$ and $\bar{\gamma}$ are two normal distribution with the same mean, then together with (\ref{W_Gamma_ii}) we can conclude
\begin{align}\label{ii_lower_bound_W_Gamma}
\int_\mathbb{R} \log\left(\frac{d\bar{\gamma}}{d\nu}\right)d\mu - \int_\mathbb{R} \log\left(\frac{d\bar{\gamma}}{d\nu}\right)d\bar{\gamma} = \frac{1}{2}\left(-\sigma_1^2 +  \frac{\sigma_2^2}{1+\sigma_2^2}\right)=W_\Gamma(\mu,\bar{\gamma}).
\end{align}
And
\begin{align}\label{ii_lower_bound_re}
\int_\mathbb{R} \log\left(\frac{d\bar{\gamma}}{d\nu}\right)d\bar{\gamma} = R(\bar{\gamma}\lVert\nu).
\end{align}
So putting (\ref{ii_lower_bound_W_Gamma}) and (\ref{ii_lower_bound_re}) back to (\ref{ii_lower_bound}), we have 
$$G_\Gamma(\mu\lVert\nu)\geq W_\Gamma(\mu,\bar{\gamma}) + R(\bar{\gamma}\lVert\nu).$$
Together with (\ref{ii_upper_bound}) we can conclude $G_\Gamma(\mu\lVert\nu) = R(\bar{\gamma}\lVert\nu) + W_\Gamma(\mu,\bar{\gamma})$, which tells $\bar{\gamma}$ is the optimizer $\gamma^*$ for (\ref{variational_expression}). By the uniqueness of $\gamma^*$ from 1) of the current theorem, we can conclude $\gamma^* = \bar{\gamma} = N(b_1,\frac{\sigma_2^2}{1+\sigma_2^2})$.\\
(iii) This part is similar to (ii), so the author omits the proof here.
\end{proof}
\begin{remark}
In case ii) and iii), let's compute the exact $G_\Gamma(\mu\lVert\nu)$. \\
In case ii), denoting $\sigma_3^2 = \frac{\sigma_2^2}{1+\sigma_2^2}$, then
\begin{align*}
G_\Gamma(\mu\lVert\nu) &= R(\gamma^*\lVert\nu) + W_\Gamma(\mu,\gamma^*)\\
&= \log(\frac{\sigma_2}{\sigma_3}) + \frac{\sigma_3^2 + (b_1-b_2)^2}{2\sigma_2^2}-\frac{1}{2} + \frac{1}{2}(\sigma_3^2 - \sigma_1^2)\\
& = \frac{1}{2}\log(1 + \sigma_2^2) + \frac{(b_1-b_2)^2}{2\sigma_2^2} -\frac{\sigma_2^2}{2(1+\sigma_2^2)} + \frac{1}{2}(\frac{\sigma_2^2}{1+\sigma_2^2}-\sigma_1^2)\\
& = \frac{1}{2}\log(1+\sigma_2^2) + \frac{(b_1-b_2)^2}{2\sigma_2^2} -\frac{1}{2}\sigma_1^2.
\end{align*}
When $\sigma_2^2$ is small, (which in this case also implied $\sigma_1^2$ is small,) we can do the Taylor expansion $\log(1+x) = x+ O(x^2)$ on the first term to get
$$G_\Gamma(\mu\lVert\nu) = \frac{1}{2}(\sigma_2^2-\sigma_1^2) + \frac{(b_1-b_2)^2}{2\sigma_2^2} + O(\sigma_2^4)= \frac{1}{2}|\sigma_2^2-\sigma_1^2| + \frac{(b_1-b_2)^2}{2\sigma_2^2} + O(\sigma_2^4).$$
In case iii), let's also denote $\sigma_3^2 = \frac{\sigma_2^2}{1-\sigma_2^2}$, then
\begin{align*}
G_\Gamma(\mu\lVert\nu) &= R(\gamma^*\lVert\nu) + W_\Gamma(\mu,\gamma^*)\\
&=\log(\frac{\sigma_2}{\sigma_3}) + \frac{\sigma_3^2 + (b_1-b_2)^2}{2\sigma_2^2}-\frac{1}{2} + \frac{1}{2}(\sigma_1^2 - \sigma_3^2)\\
&= \frac{1}{2}\log(1-\sigma_2^2) + \frac{(b_1-b_2)^2}{2\sigma_2^2} + \frac{1}{2}\sigma_1^2
\end{align*}
Similarly, we can do the Taylor expansion when $\sigma_2^2$ is small and get
$$G_\Gamma(\mu\lVert\nu) = \frac{1}{2}(\sigma_1^2-\sigma_2^2) + \frac{(b_1-b_2)^2}{2\sigma_2^2} + O(\sigma_2^4)= \frac{1}{2}|\sigma_1^2-\sigma_2^2| + \frac{(b_1-b_2)^2}{2\sigma_2^2} + O(\sigma_2^4).$$
So to combine both cases together, we can conclude when $|\frac{1}{\sigma_1^2}-\frac{1}{\sigma_2^2}|>1$, 
\begin{align}\label{Gamma_div_two_normal}
G_\Gamma(\mu\lVert\nu)= \frac{1}{2}|\sigma_1^2-\sigma_2^2| + \frac{(b_1-b_2)^2}{2\sigma_2^2} + O(\sigma_2^4).
\end{align}
\end{remark}

\begin{remark}\label{Gamma_k}
The choice of taking constant as $1$ in $\Gamma = \{g\in C_b(\mathbb{R}): g\in C^1(\mathbb{R})\ and\ g'\in\mathrm{Lip(1)}\}$ is somewhat arbitrary. Actually, if we denote $\Gamma_k = \{g\in C_b(\mathbb{R}): g\in C^1(\mathbb{R})\ and\ g'\in\mathrm{Lip(k)}\}$, we can use the exact same discussion within this section, and get the result similar as (\ref{Gamma_div_two_normal}): Take $\mu = N(b_1,\sigma_1^2)$, $\nu = N(b_2,\sigma_2^2)$. When $|\frac{1}{\sigma_1^2}-\frac{1}{\sigma_2^2}|> k$, 
\begin{align}\label{Gamma_div_two_normal_Lip_k}
G_{\Gamma_k}(\mu\lVert\nu) = \frac{k}{2}|\sigma_1^2 -\sigma_2^2| + \frac{(b_1-b_2)^2}{2\sigma_2^2} + O(\sigma_2^4).
\end{align}
This result is the main component for deriving uncertainty bounds in the next section.
\end{remark}

\subsection{Average Cost per Unit Time (ACUT)}
Our application is in average cost per unit time case of discrete version of (\ref{model}) and (\ref{model_perturbed}). The model and perturbed model are as following.

\begin{align}\label{discrete_model}
X_{k+1}^{(N)} = (1-\frac{a}{N})X_k^{(N)} + \sigma W_k
\end{align}
and 
\begin{align}\label{discrete_model_perturbed}
\bar{X}_{k+1}^{(N)} = (1-\frac{a}{N})\bar{X}_k^{(N)} + \sigma \bar{W}_k,
\end{align}
where $W_k \sim i.i.d. N(0,1/N)$ and $\bar{W}_k \sim N(u_k/N, v_k/N)$, $k=0,1,\dots,NT-1$. Here  $u_k, v_k$ are functions of $\bar{X}_k^{(N)}$ which satisfies Assumption \ref{assumption_of_uv}. We take cost function at any given time point as $f(x) = \frac{1}{2} qx^2$. Now we will define an auxillary set to help us.

\subsubsection{An auxillary set}
Here we take space as $\mathbb{R}$, and \\
$\Gamma = \left\{g\in C_b(\mathbb{R}): g\in C^1(\mathbb{R})\ and\ g'\in\mathrm{Lip(1)}\right\}$ as in Section \ref{new_Gamma_investigation}. We define the following anxillary set.
\begin{definition}
For a transition kernel $p$, let 
\[
\mathcal{R}(\Gamma,p) = \left\{ -\log \int_S e^{-g(y)}p(x,dy)-g(x)+\lambda: g \in \Gamma\mbox{ and }\lambda \in \mathbb{R}\right\}.
\]
\end{definition}
The usage of this definition lies in next theorem.
\begin{theorem}
\label{thm:ergcost}
Suppose that $f \in \mathcal{R}(\Gamma,p)$ for some $g$ and $\lambda$. Consider any transition kernel $q$ on $S$ and any stationary probability measure $\pi_q$ of $q$.
Then
\begin{align*}
\int_S f(x)\pi_q(dx)&\leq \int_S G_{\Gamma}(q(x,\cdot)\lVert p(x,\cdot)) \pi_q(dx) + \lambda.
\end{align*}
\end{theorem}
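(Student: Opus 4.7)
The plan is to unpack the definition $f(x) = -\log \int_S e^{-g(y)} p(x,dy) - g(x) + \lambda$, compare $-\log \int e^{-g} dp(x,\cdot)$ to $G_\Gamma(q(x,\cdot) \lVert p(x,\cdot))$ using the variational definition of $G_\Gamma$ with the test function $-g$, then integrate against $\pi_q$ and exploit stationarity to cancel a telescoping term involving $g$.

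First I would use the admissibility of $\Gamma$, specifically symmetry (item 2 of Definition \ref{access}), so that $-g \in \Gamma$ whenever $g \in \Gamma$. Then from the definition
\[
G_\Gamma(q(x,\cdot)\lVert p(x,\cdot)) = \sup_{h\in\Gamma}\left\{\int_S h\, dq(x,\cdot) - \log\int_S e^h\, dp(x,\cdot)\right\}
\]
applied with $h = -g$, I obtain
\[
G_\Gamma(q(x,\cdot)\lVert p(x,\cdot)) \geq -\int_S g(y)\, q(x,dy) - \log\int_S e^{-g(y)}\, p(x,dy).
\]
Rearranging gives the pointwise bound
\[
-\log\int_S e^{-g(y)}p(x,dy) \leq G_\Gamma(q(x,\cdot)\lVert p(x,\cdot)) + \int_S g(y)\, q(x,dy),
\]
so substituting into $f(x)$ yields
\[
f(x) \leq G_\Gamma(q(x,\cdot)\lVert p(x,\cdot)) + \int_S g(y)\, q(x,dy) - g(x) + \lambda.
\]

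Next I would integrate both sides against $\pi_q(dx)$. The hoped‑for cancellation comes from stationarity of $\pi_q$ under $q$: by definition, for every bounded measurable $g$,
\[
\int_S \int_S g(y)\, q(x,dy)\, \pi_q(dx) = \int_S g(y)\, \pi_q(dy) = \int_S g(x)\, \pi_q(dx),
\]
so the term $\int_S(\int_S g(y)q(x,dy) - g(x))\pi_q(dx)$ vanishes, leaving exactly the claimed inequality.

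The only real subtlety is justifying the use of Fubini and the cancellation when $g$ and $f$ are just bounded continuous (which they are, since $g \in \Gamma \subset C_b(S)$ and then $f$ is also bounded measurable as a composition), so integrability against the probability measures $\pi_q$ and $q(x,\cdot)$ is automatic and the stationarity identity applies without further care. The main conceptual point — and essentially the only nontrivial step — is the choice of the test function $h = -g$ in the variational representation of $G_\Gamma$, which is legitimate precisely because admissibility of $\Gamma$ forces symmetry.
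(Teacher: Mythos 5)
Your proof is correct and follows essentially the same route as the paper: exploit symmetry of the admissible set so that $-g\in\Gamma$, use the duality between $-\log\int_S e^{-g}\,dp(x,\cdot)$ and $G_\Gamma(\cdot\lVert p(x,\cdot))$ evaluated at the measure $q(x,\cdot)$ (the paper quotes Theorem \ref{incredible}, you use the defining supremum of $G_\Gamma$ directly, which is the same easy direction of that duality), and then integrate against $\pi_q$ and cancel the $g$-terms by stationarity. No gaps; the boundedness of $g$ makes the integrability and stationarity steps routine, exactly as you note.
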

\begin{proof}
Since $g \in \Gamma$, $-g$ is also in $\Gamma$. Then by Theorem \ref{incredible},
\begin{align*}
g(x) &= -f(x) - \log\int_S e^{-g(y)}p(x,dy) + \lambda\\
&= -f(x) + \inf_{q(x,dy)}\left[G_\Gamma (q(x,\cdot)\lVert p(x,\cdot)) + \int_S g(y)q(x,dy)\right]+\lambda.
\end{align*}
For any given specific transition kernel $q$,
$$g(x)\leq -f(x)+ \left[G_\Gamma (q(x,\cdot)\lVert p(x,\cdot)) + \int_S g(y)q(x,dy)\right]+a.$$
Integrating both sides with respect to $\pi_q(dx)$ and using $\int_S q(x,dy)\pi_q(dx)=\pi_q(dy)$ gives the result.
\end{proof}
Before applying Theorem \ref{thm:ergcost}, we first investigate $\mathcal{R}(\Gamma,p)$ for Gaussian transition kernel.

\subsubsection{$\mathcal{R}(\Gamma,p)$ with $p(x,\cdot)\sim N(\alpha x, \sigma^2)$}
In this section, we compute what's included in $\mathcal{R}(\Gamma,p)$ when $p$ is a normal transition kernel in the type of $N(\alpha x,\sigma^2)$ for $\alpha\in (0,1)$. First, let's take $g(x) = -bx^2 - cx -d$, where $b,c,d\in\mathbb{R}$. Then with direct computation one can get when $1-2b\sigma^2 >0$,
\begin{align}
&\quad -\log\int_S e^{-g(y)}p(x,dy) - g(x) + \lambda \label{forward_input}\\
&=-\frac{b\alpha^2 x^2+c\alpha x + c^2\sigma^2/2}{1-2b\sigma^2} + \frac{1}{2} \log (1-2b\sigma^2) +bx^2+cx + \lambda\\
& = b\left(1- \frac{\alpha^2}{1-2b\sigma^2}\right)x^2 + c\left(1-\frac{\alpha}{1-2b\sigma^2}\right)x +\lambda - \frac{c^2\sigma^2}{2(1-2b\sigma^2)} + \frac{1}{2}\log(1-2b\sigma^2),\label{forward_map}
\end{align}
and when $1-2b\sigma^2\leq 0$, the above expression equals $\infty$. When $c=0$, the expression (\ref{forward_map}) simplifies to 
$$b\left(1- \frac{\alpha^2}{1-2b\sigma^2}\right)x^2 +\lambda+ \frac{1}{2}\log(1-2b\sigma^2).$$
%Consider the case $1-2b\sigma^2>0$, and denote 
%\begin{align}\label{coeff_second_order}
%k(b) = b (1- \frac{\alpha^2}{1-2b\sigma^2}),
%\end{align}
%then by taking derivative with respect to $b$, one has 
%$$k'(b) = 1- \frac{\alpha^2}{(1-2b\sigma^2)^2}.$$
%$k'(b) = 0$, when $1-2b\sigma^2 = \alpha$, which is $b=\frac{1-\alpha}{2\sigma^2}$. It can be inferred that $k(b)$ is increasing on the interval $(-\infty,\frac{1-\alpha}{2\sigma^2})$, reaches its maximum at $b=\frac{1-\alpha}{2\sigma^2}$, and is decreasing on the interval $(\frac{1-\alpha}{2\sigma^2},\frac{1}{2\sigma^2})$. Also the maximum for $k(b)$ is $k(\frac{1-\alpha}{2\sigma^2}) = \frac{(1-\alpha)^2}{2\sigma^2}$.
Notice that although for our choice of $g(x)$ is not in $\Gamma$ directly, our goal is to apply Theorem \ref{thm:ergcost} to function of the form (\ref{forward_map}). As long as $b\in[-\frac{1}{2},\frac{1}{2}]\cap(-\infty,\frac{1}{2\sigma^2})$, by Lemma \ref{approx_by_Gamma_quadratic}), one can take a sequence of functions from $\Gamma$ which converge pointwise to $g$ with absolute value dominated by $\max(|g|,M)$, where $M$ is a constant depending only on the coefficient of $g$. Thus by dominated convergence theorem, we can take $f$ as (\ref{forward_map}) in Theorem \ref{thm:ergcost} and the inequality there still holds.

\subsubsection{Back to uncertainty bounds in ACUT problem}
Now let's reconsider the dynamics from (\ref{discrete_model}) and (\ref{discrete_model_perturbed}). By considering them as Markov chain, the transition kernel for for (\ref{discrete_model}) is $p_N(x,\cdot)\sim N((1-\frac{a}{N})x,\frac{\sigma^2}{N})$, and the transition kernel for (\ref{discrete_model_perturbed}) is $q_N(x,\cdot)\sim N((1-\frac{a}{N})x+\sigma\frac{u}{N},\sigma^2\frac{v^2}{N})$, where $u,v$ are functions of $x$ satisfying Assumption \ref{assumption_of_uv}. One can show the existence and uniqueness of the stationary distribution of (\ref{discrete_model_perturbed}) using similar arguments which are used for stationary distributions of (\ref{model_perturbed}). We denote the stationary distribution of (\ref{discrete_model_perturbed}) by $\pi_{q_N}$. The bound we are going to get from Theorem \ref{thm:ergcost} by taking $g(x) = bx^2$ will be
\begin{align}\label{quadratic_bound_discretized}
\int_{\mathbb{R}} b(1-\frac{(1-a/N)^2}{1-2b\sigma^2/N})x^2d\pi_{q_N}\leq \int_\mathbb{R} G_\Gamma(q_N(x,\cdot)\lVert p_N(x,\cdot))d\pi_{q_N} -\frac{1}{2} \log(1-2b\sigma^2/N).
\end{align}
To make the most tight bound, we use the set $\Gamma_{2|b|}$ as in Remark \ref{Gamma_k} instead of the plain $\Gamma$. If $v = 1$, then by Theorem \ref{thm:Gamma_div_two_normal}, 
$$G_{\Gamma_{2|b|}}\left(q_N(x,\cdot)\lVert p(x,\cdot)\right)  = R(q_N(x,\cdot)\lVert p(x,\cdot) = \frac{\sigma^2 (u/N)^2}{2\sigma^2/N} = \frac{u^2}{2N}.$$
When $v\neq 1$, then for any fixed $b$, there exists large enough $N$ such that the difference of the inverse variance of the two transition kernel $p_N(x,\cdot) $ and $q_N(x,\cdot)$, $|\frac{N}{\sigma^2} - \frac{N}{\sigma^2v^2}| = \frac{N}{\sigma^2}|1-\frac{1}{v^2}| > 2|b|$. Then by (\ref{Gamma_div_two_normal_Lip_k}), we get 
$$G_{\Gamma_{2|b|}}(q_N(x,\cdot)\lVert p_N(x,\cdot)) = |b|\frac{\sigma^2}{N}|v^2-1| + \frac{\sigma^2 (u/N)^2}{2\sigma^2/N} + O(\frac{1}{N^2})=\frac{|b|\sigma^2}{N}|v^2-1| + \frac{u^2}{2N}+O(\frac{1}{N^2}).$$
Putting these two cases together, we can conclude for $N$ large enough, 
\begin{align}\label{discretized_Gamma_diffusion}
G_{\Gamma_{2|b|}}(q_N(x,\cdot)\lVert p_N(x,\cdot)) = \frac{|b|\sigma^2}{N}|v-1| + \frac{u^2}{2N}+O(\frac{1}{N^2}). 
\end{align}
Putting (\ref{discretized_Gamma_diffusion}) back into the inequality (\ref{quadratic_bound_discretized}) with $\Gamma_{2|b|}$ substituting $\Gamma$, we have 
$$\int_\mathbb{R}b(1-\frac{(1-a/N)^2}{1-2b\sigma^2/N})x^2d\pi_{q_N}\leq \int_\mathbb{R} \left(\frac{|b|\sigma^2}{N}|v^2-1| + \frac{u^2}{2N}\right)d\pi_{q_N} -\frac{1}{2} \log(1-2b\sigma^2/N)+O(\frac{1}{N^2}).$$
Notice that one has the expansion
\begin{align*}
1-\frac{(1-a/N)^2}{1-2b\sigma^2/N} &=  \frac{1}{1-2b\sigma^2/N}\left((1-2b\sigma^2/N) - (1-a/N)^2 \right)\\
& = \frac{1}{1-2b\sigma^2/N} \left(2(a-b\sigma^2)/N - a^2/N^2\right)\\
& = 2(a-b\sigma^2)/N + O(\frac{1}{N^2}).
\end{align*}
For $b\in (0, \frac{a}{\sigma^2})$, we divide both sides by $b(1-\frac{(1-a/N)^2}{1-2b\sigma^2/N})$ and put in the expansion above, we get
\begin{align}
\int_\mathbb{R} x^2 d\pi_{q_N} &\leq \int_\mathbb{R}\left(\frac{\sigma^2|v^2-1|/N}{1-\frac{(1-a/N)^2}{1-2b\sigma^2/N}} + \frac{u^2/2N}{b(1-\frac{(1-a/N)^2}{1-2b\sigma^2/N})}\right)d\pi_{q_N} - \frac{1}{2b(1-\frac{(1-a/N)^2}{1-2b\sigma^2/N})}\cdot\\
&\quad \log(1-2b\sigma^2/N) + O(\frac{1}{N})\\
&\leq \int_\mathbb{R}\left(\frac{\sigma^2|v^2-1|}{2(a-b\sigma^2)} + \frac{u^2/2}{2b(a-b\sigma^2)}\right)d\pi_{q_N} + \frac{2b\sigma^2/N}{4b(a-b\sigma^2)/N} + O(\frac{1}{N})\\
& = \int_\mathbb{R}\left(\frac{\sigma^2|v^2-1|}{2(a-b\sigma^2)} + \frac{u^2/2}{2b(a-b\sigma^2)}\right)d\pi_{q_N} + \frac{\sigma^2}{2(a-b\sigma^2)} + O(\frac{1}{N}). \label{discretized_diffusion_bound}
\end{align}

Next we want to establish the connection between the stationary distribution of the discrete system (\ref{discrete_model_perturbed}) $\pi_{q_N}$ and the stationary distribution of the continuous system (\ref{model_perturbed}) $\pi_q$. We put a lemma here. This is standard but included for completeness.
\begin{lemma}
$\pi_{q_N}$ converges weakly to $\pi_q$.
\end{lemma}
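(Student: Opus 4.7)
The plan is to establish tightness of $\{\pi_{q_N}\}_{N\in\mathbb{N}}$, then characterize any weak limit point as a stationary distribution of (\ref{model_perturbed}), and finally invoke the uniqueness of $\pi_q$ noted earlier in this section.

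First I would establish tightness using $V(x)=x^2/2$ as a Lyapunov function for the discrete chain. A direct computation of $\mathbb{E}[V(\bar X_{k+1}^{(N)})\mid \bar X_k^{(N)}=x]-V(x)$ gives, using $\bar W_k\sim N(u(x)/N, v(x)^2/N)$,
\[
\frac{1}{N}\Bigl(-a\,x^2+\sigma x u(x)+\tfrac{1}{2}\sigma^2 v(x)^2\Bigr)+O(N^{-2}),
\]
and by Assumption \ref{assumption_of_uv} there exist constants $C_1>0$ and $C_2<\infty$ independent of $N$ (for $N$ large enough) such that the above one-step drift of $V$ is bounded above by $-C_1 V(x)/N+C_2/N$ outside a compact set. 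Integrating against the stationary $\pi_{q_N}$ and using $\int(\mathbb{E}[V(\bar X_{k+1})|\cdot]-V)\,d\pi_{q_N}=0$ yields a uniform-in-$N$ bound $\int x^2\,d\pi_{q_N}\le C_3$. This uniform second moment gives tightness of $\{\pi_{q_N}\}$ on $\mathbb{R}$.

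Next, I would take any weakly convergent subsequence, again denoted $\pi_{q_N}\Rightarrow\pi_\infty$, and show that $\pi_\infty$ is an invariant distribution of the diffusion (\ref{model_perturbed}). The cleanest route is to verify $\int_\mathbb{R}\mathcal{L}\varphi\,d\pi_\infty=0$ for every $\varphi\in C_c^2(\mathbb{R})$, where $\mathcal{L}\varphi(x)=\varphi'(x)(-ax+\sigma u(x))+\tfrac12\sigma^2 v(x)^2\varphi''(x)$. For each $N$, define the scaled discrete generator
\[
\mathcal{L}_N\varphi(x)\doteq N\bigl(\mathbb{E}[\varphi(\bar X_{k+1}^{(N)})\mid \bar X_k^{(N)}=x]-\varphi(x)\bigr).
\]
Stationarity of $\pi_{q_N}$ implies $\int\mathcal{L}_N\varphi\,d\pi_{q_N}=0$. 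A Taylor expansion of $\varphi$ around $x$, together with the explicit Gaussian moments of $\bar W_k$ and the uniform bounds on $u, v$ from Assumption \ref{assumption_of_uv}, shows $\mathcal{L}_N\varphi\to\mathcal{L}\varphi$ uniformly on $\mathrm{supp}(\varphi)$, with remainder $O(1/N)$ uniformly in $x$ (here I use that $\varphi\in C_c^2$ controls the higher-order Taylor terms). Weak convergence $\pi_{q_N}\Rightarrow\pi_\infty$ combined with continuity of $\mathcal{L}\varphi$ (using continuity of $u,v$) then gives
\[
0=\lim_{N\to\infty}\int\mathcal{L}_N\varphi\,d\pi_{q_N}=\int\mathcal{L}\varphi\,d\pi_\infty.
\]
Since this holds for all $\varphi\in C_c^2(\mathbb{R})$, and uniqueness of the stationary distribution of (\ref{model_perturbed}) was established earlier in this section, we conclude $\pi_\infty=\pi_q$. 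As every weak subsequential limit equals $\pi_q$, the full sequence converges: $\pi_{q_N}\Rightarrow\pi_q$.

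The main technical obstacle is the uniform-in-$x$ control of the Taylor remainder when passing from $\mathcal{L}_N\varphi$ to $\mathcal{L}\varphi$, since the discrete Gaussian increment $\bar W_k$ has variance of order $1/N$ but is unbounded, so the estimate must absorb moments of the Gaussian against $\varphi'''$ (or finite differences thereof). This is standard but requires that $\varphi$ have compact support (or at least bounded third derivative), which is why I restrict to test functions in $C_c^2(\mathbb{R})$; the class $C_c^2$ is known to be a core characterizing invariant measures for nondegenerate one-dimensional diffusions under our regularity assumptions on $u,v$, so this restriction is harmless.
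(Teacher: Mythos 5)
Your proposal is correct and follows the same overall skeleton as the paper: uniform second-moment bounds giving tightness, convergence of the scaled discrete generators $\mathcal{L}_N\varphi\to\mathcal{L}\varphi$ uniformly on compactly supported smooth test functions, an Echeverria-type characterization of the weak limit as a stationary distribution of (\ref{model_perturbed}), and then uniqueness plus the subsequence argument. Two points of difference are worth noting. First, for tightness the paper simply reads off the uniform bound on $\int x^2\,d\pi_{q_N}$ from the already-derived inequality (\ref{discretized_diffusion_bound}) with the choice $b=\frac{a}{2\sigma^2}$, whereas you re-derive it from scratch with a discrete Lyapunov drift computation for $V(x)=x^2/2$; your route is self-contained and does not lean on the $\Gamma$-divergence machinery, but the identity $\int\bigl(\mathbb{E}[V(\bar X_{k+1}^{(N)})\mid\cdot]-V\bigr)d\pi_{q_N}=0$ is not automatic, since a priori $\int V\,d\pi_{q_N}$ could be infinite; you would need a truncation (e.g.\ replace $V$ by $V\wedge M$ and let $M\to\infty$) or a similar localization to make this step rigorous, a complication the paper's route avoids entirely. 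Second, your assertion that $\int_{\mathbb{R}}\mathcal{L}\varphi\,d\pi_\infty=0$ for all $\varphi\in C_c^2(\mathbb{R})$ forces $\pi_\infty$ to be invariant is exactly Echeverria's theorem, which the paper cites as \cite{ethkur}, Theorem 4.9.17; this is the one genuinely nontrivial input and should be cited rather than described as ``known to be a core,'' but with that citation in place your argument closes in the same way as the paper's.
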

\begin{proof}
First from (\ref{discretized_diffusion_bound}), by taking $b=\frac{a}{2\sigma^2}$, and Assumption \ref{assumption_of_uv}, we can conclude $\{\int_\mathbb{R}x^2d\pi_{q_N}\}_{N\geq 1}$ is uniformly bounded, by which the author means there exists a constant $M>0$ such that $\int_\mathbb{R}x^2d\pi_{q_N}\leq M$ for all $N$. Then by Chebyshev inequality, one can conclude $\{\pi_N\}_{N\geq 1}$ is tight. By Prohorov's theorem \cite{dupell4}[Theorem A.3.15], $\{\pi_N\}_{N\geq 1}$ is precompact. Thus for any subsequence of this sequence of measures, we can always extract a further subsequence of it which converges weakly. By an abuse of notation, let's also denote the convergent subsequence $\{\pi_{N}\}_{N\geq 1}$, and denote its weak limit by $\pi^*$. Denote $\mathcal{L}_N$ as the generator for (\ref{discrete_model_perturbed}), and $\mathcal{L}$ the generator for (\ref{model_perturbed}), where generators are defined in the following sense: For $h\in \mathcal{M}_b(\mathbb{R})$ being bounded measurable functions, 
$$\mathcal{L}h(x) \doteq \lim_{t\to 0^+} \frac{E_x[h(\bar{X}_t)] - h(x)}{t},$$
whenever the above limits exists. Here $\bar{X}_t$ follows the dynamics of (\ref{model_perturbed}) with $X_0 = x$. 
$$\mathcal{L}_N h(x) \doteq N \left(E_x[h(\bar{X}_1^{(N)})] - h(x)\right),$$
where $\bar{X}$ follows the dynamics of (\ref{discrete_model_perturbed}) with $\bar{X}_0 = x$.\\
For $h\in C^\infty_0(\mathbb{R})$ which has derivative of any order and compact support, it can be shown  that for all $x\in\mathbb{R}$,
$$\lim_{N\to\infty} \mathcal{L}_N h(x) = \mathcal{L} h(x),$$
and the convergence is uniform. So we have 
\begin{align*}
& \quad\limsup_{N\to\infty} |\int_\mathbb{R} \mathcal{L}_N h(x) d\pi_{q_N} - \int_\mathbb{R} \mathcal{L} h(x) d\pi^*| \\
&= \limsup_{N\to\infty} |\int_\mathbb{R} (\mathcal{L}_N h(x) -\mathcal{L} h(x)) d\pi_{q_N}  + \int_\mathbb{R} \mathcal{L} h(x) d\pi_{q_N}- \int_\mathbb{R} \mathcal{L} h(x) d\pi^* | \\
&\leq \limsup_{N\to\infty} |\int_\mathbb{R} (\mathcal{L}_N h(x) -\mathcal{L} h(x)) d\pi_{q_N}| + \limsup_{N\to\infty}|\int_\mathbb{R} \mathcal{L} h(x) d\pi_{q_N}- \int_\mathbb{R} \mathcal{L} h(x) d\pi^*|\\
&=0.
\end{align*}
Since $q_N$ is the stationary distribution for (\ref{discrete_model_perturbed}), $\int_\mathbb{R} \mathcal{L}_N h(x) d\pi_{q_N}=0$. Thus from the equations above, we can conclude $\int_\mathbb{R} \mathcal{L} h(x) d\pi^*=0$ for all $h\in C_0^2(\mathbb{R})$. By Echeverria's theorem \cite{ethkur}[Theorem 4.9.17], $\pi^*$ is a stationary distribution of (\ref{model_perturbed}). Since the stationary distribution of (\ref{model_perturbed}) is unique, thus $\pi^* = \pi_q$. 

Finally, we can show by contradiction that the weak limit of any convergent subsequence of $\{\pi_{q_N}\}_{N\geq 1}$. Combine this with the fact $\{\pi_{q_N}\}_{N\geq 1}$ is precompact, we prove the statement of the lemma.
\end{proof}

Then since $u,v$ are bounded and continuous functions of $x$, by letting $N\to\infty$, we have 
$$\int_\mathbb{R} x^2 d\pi_q \leq \int_\mathbb{R} \left(\frac{\sigma^2|v^2-1|}{2(a-b\sigma^2)} + \frac{u^2/2}{2b(a-b\sigma^2)}\right)d\pi_{q} + \frac{\sigma^2}{2(a-b\sigma^2)}$$
holds for any $b\in(0,\frac{a}{\sigma^2})$. Lastly, we can get the bound
\begin{align}\label{uncertainty_bound_diffusion_model}
\int_\mathbb{R} x^2 d\pi_q \leq \inf_{b\in (0,\frac{a}{\sigma^2})}\left\{\int_\mathbb{R} \left(\frac{\sigma^2|v^2-1|}{2(a-b\sigma^2)} + \frac{u^2/2}{2b(a-b\sigma^2)}\right)d\pi_{q} + \frac{\sigma^2}{2(a-b\sigma^2)}\right\}.    
\end{align}
In one specific choice of $b= \frac{a}{2\sigma^2}$, we can get the bound
$$\int_\mathbb{R} x^2 d\pi_q \leq \int_\mathbb{R} \left(\frac{\sigma^2|v^2-1|}{a} + \frac{u^2\sigma^2}{a^2}\right)d\pi_{q} + \frac{\sigma^2}{a}.$$

\begin{remark}
The stationary distribution $\pi_p$ of (\ref{model}) is $N(0,\frac{\sigma^2}{2a})$. So when there is no perturbation, 
$$\int_\mathbb{R} x^2 d\pi_p = \frac{\sigma^2}{2a},$$
which equals to the right hand side of (\ref{uncertainty_bound_diffusion_model}) when $u\equiv 0$ and $v\equiv 1$. 

If $v\equiv 1$, which means there is no perturbation on the diffusion coefficient,  (\ref{uncertainty_bound_diffusion_model}) becomes 
\begin{align}
\int_\mathbb{R} x^2 d\pi_q \leq \inf_{b\in (0,\frac{a}{\sigma^2})}\left\{\int_\mathbb{R} \left( \frac{u^2/2}{2b(a-b\sigma^2)}\right)d\pi_{q} + \frac{\sigma^2}{2(a-b\sigma^2)}\right\}.
\end{align}
Actually, one can get this same bound by substituting $G_\Gamma(q_N(x,\cdot)\lVert p_N(x,\cdot))$ with $R(q_N(x,\cdot)\lVert p_N(x,\cdot))$ in (\ref{quadratic_bound_discretized}), and take $N\to \infty$. On the other hand, when $v\not\equiv 1$, by substituting $G_\Gamma(q_N(x,\cdot)\lVert p_N(x,\cdot))$ with $R(q_N(x,\cdot)\lVert p_N(x,\cdot))$ in (\ref{quadratic_bound_discretized}), and take $N\to \infty$, the upper bound one gets is $\infty$, which is useless. So in the situation where diffustion coefficient is perturbed, by using $\Gamma$-divergence compared to relative entropy, one can get useful uncertainty bounds.
\end{remark}

\pagebreak

% ----------------------------------------------------------------

%%% Local Variables: 
%%% mode: latex
%%% TeX-master: "main"
%%% End: 

% ----------------------------------------------------------------

\singlespacing

% ----------------------------------------------------------------

% the page number on the first page of the bibliography has to be
% centered at the bottom (!)

\thispagestyle{plain}

% the bibliography has to be single-spaced within items but
% double-spaced between items

\let\oldthebibliography=\thebibliography
\let\endoldthebibliography=\endthebibliography
\renewenvironment{thebibliography}[1]{
  \begin{oldthebibliography}{#1}
    \setlength{\itemsep}{3ex}
  }{
    \end{oldthebibliography}
  }

\bibliographystyle{abbrv}
\bibliography{biblio}

% ----------------------------------------------------------------

%%% Local Variables: 
%%% mode: latex
%%% TeX-master: "main"
%%% End: 

% ----------------------------------------------------------------

\end{document}